\documentclass[11pt,a4paper]{article}
\usepackage{graphicx}
\usepackage{amscd}
\usepackage{amsmath,amsthm}
\usepackage{esint}
\usepackage{amsfonts}
\usepackage{amssymb}
\usepackage{stmaryrd}
\usepackage{mathrsfs}
\usepackage{latexsym}
\usepackage[all]{xy}
\usepackage{mathtools}
\usepackage{indentfirst}
\usepackage{cite}
\usepackage{bm}
\usepackage{centernot}
\usepackage{physics}
\usepackage{comment}
\usepackage{booktabs}
\usepackage{tabularx}
\usepackage{ragged2e}
\usepackage{newclude}
\usepackage{imakeidx}
\makeindex
\usepackage[dvipsnames,svgnames,x11names]{xcolor}
\usepackage[
colorlinks=true,
linkcolor=black,
anchorcolor=black,
citecolor=black,
urlcolor=black,
bookmarksopen=true
]{hyperref}
\usepackage{geometry}
\usepackage{tikz}
\usepackage{tikz-cd}
\usepackage{pgfplots}
\usetikzlibrary{
	positioning,
	arrows.meta,
	calc,
	shapes.geometric,
	shapes.misc,
	intersections,
	through,
	decorations.markings,
	decorations.pathmorphing,
	backgrounds,
	fit,
	quotes,
	angles,
	patterns
}
\newtheorem{lemma}{Lemma}[section]
\newtheorem{theorem}{Theorem}[section]

\newtheorem{remark}{Remark}[section]
\newtheorem{corollary}{Corollary}[section]

\newtheorem{proposition}{Proposition}[section]

\newtheorem{assumption}{Assumption}[section]

\newcommand{\ignore}[1]{}
\allowdisplaybreaks
\title{On the Cauchy problem for the multi-dimensional compressible Navier–Stokes–Korteweg system: Global strong solutions with arbitrarily large initial data}
\date{}
\author{
	\bf\large  Xiangdi Huang$^{a}$\thanks{E-mail addresses: xdhuang@amss.ac.cn (X. Huang); leimuxi25@mails.ucas.ac.cn (M. Lei); zhouhuitao141@163.com (H. Zhou).}, Muxi Lei$^{a}$, Huitao Zhou$^{a}$\\
	\small a. State Key Laboratory of Mathematical Sciences, Academy of Mathematics and Systems Science,\\
	\small Chinese Academy of Sciences, Beijing 100190, China;\\
}

\begin{document}
	
	\maketitle
	\begin{abstract}
		Since the pioneering work of Korteweg (1901) and the subsequent refinement of capillary fluid models by Dunn and Serrin (1985), the global existence of strong solutions to the multi-dimensional compressible Navier–Stokes–Korteweg (NSK) system with arbitrarily large initial data has stood as a formidable open problem in fluid mechanics. This challenge was recently addressed by [Gu-Huang-Meng-Zhou, arXiv:2603.11762], who established the global existence of strong solutions for arbitrarily large initial data on the periodic domain $\mathbb{T}^N$ ($N=2,3$), provided that the viscosity coefficients satisfy a BD-type algebraic relation ($\mu(\rho) = \nu \rho^\alpha, \lambda(\rho) = 2\nu(\alpha-1)\rho^\alpha$) and the Korteweg stress tensor complies with a generalized Bohm identity ($\kappa(\rho) = \varepsilon^2 \alpha^2 \rho^{2\alpha-3}$). However, the existence of global strong solutions for the Cauchy problem under these conditions has remained an open question. In this paper, we resolve this problem by proving the global existence of strong solutions for the Cauchy problem ($\mathbb{R}^N$, $N=2,3$) with arbitrarily large initial data and non-vacuum far-field density. By employing a refined truncation analysis combined with an original modified Nash-Moser type iteration scheme, we overcome the difficulties arising from the lack of integrability for the density in the whole space. This result extends the large-data theory of compressible Navier-Stokes-Korteweg equations from bounded torus $\mathbb{T}^N$ to unbounded whole space $\mathbb{R}^N$, thus applicable to more general physical settings.\\[4mm]
		{\bf Keywords:} compressible Navier-Stokes-Korteweg system; density-dependent viscosity; global strong solutions; large initial data; Cauchy problem.\\[4mm]
		{\bf Mathematics Subject Classifications (2020):} 35D35; 35Q30; 35Q35; 35Q40; 76N10.\\[4mm]
	\end{abstract}
	\section{Introduction}\label{sec1}
	
	In this paper, we study the Cauchy problem for the Navier--Stokes--Korteweg system in $\mathbb{R}^N$ with $N=2,3$:
	\begin{equation}\label{eq1}
		\left\{
		\begin{aligned}
			&\rho_t+\operatorname{div}(\rho u)=0,\\
			&(\rho u)_t+\operatorname{div}(\rho u\otimes u)+\nabla \rho^\gamma
			=\operatorname{div}\bigl(2\mu(\rho)\mathbb{D}u\bigr)
			+\nabla\bigl(\lambda(\rho)\operatorname{div}u\bigr)
			+\operatorname{div}\mathbb{K},
		\end{aligned}
		\right.
	\end{equation}
	where $\gamma\ge 1$, and $\rho=\rho(x,t)$, $u=(u_1,\dots,u_N)^{\top}=u(x,t)$ and $P=\rho^\gamma(\gamma\ge 1)$ denote the fluid density, velocity and pressure, respectively. Moreover,
	\[
	\mathbb{D}u=\frac{\nabla u+(\nabla u)^{\top}}{2},
	\]
	is the deformation tensor. The Korteweg stress tensor $\mathbb{K}$ is defined by
	\begin{equation}\label{eq2}
		\mathbb{K}
		=
		\Bigl(
		\rho\operatorname{div}\bigl(\kappa(\rho)\nabla\rho\bigr)
		-\frac{\rho\kappa'(\rho)-\kappa(\rho)}{2}\,|\nabla\rho|^2
		\Bigr)I
		-\kappa(\rho)\nabla\rho\otimes\nabla\rho,
	\end{equation}
	where $I$ denotes the $N\times N$ identity matrix. A direct computation yields
	\begin{equation}\label{eq3}
		\operatorname{div}\mathbb{K}
		=
		\nabla\Bigl(
		\rho\kappa(\rho)\Delta\rho
		+\frac{\kappa(\rho)+\rho\kappa'(\rho)}{2}\,|\nabla\rho|^2
		\Bigr)
		-\operatorname{div}\bigl(\kappa(\rho)\nabla\rho\otimes\nabla\rho\bigr).
	\end{equation}
	System \eqref{eq1} is based on the capillarity model originally formulated by Korteweg \cite{Korteweg}, where the stress tensor is augmented with density gradient terms. This formulation was subsequently modernized and generalized by Dunn and Serrin \cite{Dunn-Serrin}. The well-posedness of this equation has received extensive attention recently, leading to several important breakthroughs.\par
	With the choice of viscosity and capillarity coefficients satisfying
	\begin{equation}
		\mu(\rho) = \nu_0\rho, \quad \lambda(\rho) = 0, \quad \kappa(\rho) = \frac{\kappa_0^2}{\rho},
	\end{equation}
	the compressible Navier-Stokes-Korteweg equations degenerate into the quantum Navier-Stokes system. In the one-dimensional setting, some advancements have been achieved concerning the well-posedness of the system. For the case where the viscosity and capillarity coefficients are balanced ($\nu_0 = \kappa_0$), Jüngel \cite{Jüngel} established the global existence of smooth solutions in the whole space $\mathbb{R}$, provided the initial density is away from vacuum. Recently, this result was extended by Chen and Zhao \cite{Chen-Zhao} to a broader regime where $\nu_0 \ge \kappa_0$ and $\gamma \ge 1$. They obtained the global classical solutions and further investigated the large-time behavior of these solutions in the absence of vacuum. For the multi-dimensional case, the exploration of global-in-time solutions has primarily focused on weak or renormalized solutions, particularly when vacuum states are involved. In the periodic setting $\Omega=\mathbb{T}^N$, Jüngel \cite{Jüngel-2} first established the global existence of weak solutions allowing vacuum for the case $\nu_0 < \kappa_0$, under the technical constraints $\gamma \ge 1$ for $N=2$ and $\gamma > 3$ for $N=3$. Subsequently, Antonelli and Spirito \cite{Antonelli-Spirito} constructed global weak solutions in the absence of vacuum for various regimes of $\nu_0$ and $\kappa_0$, providing a standard framework for $N=2, 3$. Later, Lacroix-Violet and Vasseur \cite{Lacroix-Violet-Vasseur} proved the existence of global renormalized solutions for $\Omega=\mathbb{T}^N$ with $\gamma > 1$ and $\kappa_0 \ge 0$, a result that allowed vacuum. More recently, the focus has shifted toward the Cauchy problem in the whole space $\mathbb{R}^N$. Notably, Antonelli, Hientzsch and Spirito \cite{Antonelli-Hientzsch-Spirito} obtained global weak solutions allowing vacuum for the multi-dimensional system in $\mathbb{R}^2$ and $\mathbb{R}^3$, provided that the viscosity and capillarity coefficients satisfy $\nu_0 = \kappa_0$. However, the global-in-time existence of strong solutions to the multi-dimensional generalized NSK system remained a long-standing open problem. This challenge was first addressed by Huang, Meng and Zhang \cite{Huang-Meng-Zhang} in the periodic setting $\mathbb{T}^N$. They established the global existence of strong solutions away from vacuum for the case $\nu_0 = \kappa_0$, covering the adiabatic exponent range $\gamma \in [1, \infty)$ for $N=2$ and $\gamma \in [1, 7/3)$ for $N=3$. Subsequently, by introducing a truncation mechanism, Huang, Gu and Lei \cite{Huang-Gu-Lei} successfully extended these results to the Cauchy problem in the whole space $\mathbb{R}^N$ in the absence of vacuum, while further improving the range of $\gamma$ to $[1, 8/3)$ in the three-dimensional case.\par
	However, the situation becomes much more delicate when the system \eqref{eq1} is considered with general coefficient structure. For the one-dimensional space case, the theory has been relatively fruitful. Tsyganov \cite{Tsyganov} proved the global existence and asymptotic behaviors of weak solutions with large data away from vacuum. For systems with density-dependent coefficients, Germain and LeFloch \cite{Germain-LeFloch} established the global existence of finite energy weak solutions for the Cauchy problem and their convergence toward entropy solutions of the Euler system. Furthermore, Burtea and Haspot \cite{Burtea-Haspot}  explored the capillarity vanishing limit, while Antonelli, Bresch and Spirito \cite{Antonelli-Bresch-Spirito} obtained global weak solutions for the periodic problem with large data under certain conditions on $\mu(\rho)=\rho^\alpha$ and $\kappa(\rho)=\rho^\beta$. However, when moving to high-dimensional problems, the available results for the general NSK system are much more constrained. For density-dependent cases, Danchin and Desjardins \cite{Danchin-Desjardins} obtained global smooth solutions only for data close to a stable equilibrium. A major milestone for weak solutions was achieved by Bresch, Desjardins and Lin \cite{Bresch-Desjardins-Lin}, who established the global existence of weak solutions allowing for vacuum by introducing a novel a priori entropy estimate (the BD entropy). This framework was further extended to viscous shallow water models with capillarity \cite{Bresch-Desjardins}. More recently, Haspot \cite{Haspot} investigated both local and global strong solutions for various density-dependent coefficients, yet these results still largely rely on smallness assumptions on the initial data. The transition from the 'balanced' quantum coefficients to general coefficients often results in the loss of certain analytical structures (such as the effective velocity), making the global existence of strong solutions with large data an even more demanding problem. Recently, a groundbreaking advancement was achieved by Gu, Huang, Meng and
	Zhou \cite{Gu-Huang-Meng-Zhou}, who resolved the long-standing open problem regarding the global existence of strong solutions for the multi-dimensional NSK model with general coefficients. In their work, they provided an affirmative answer to this challenge by considering a generalized algebraic structure for the coefficients. Specifically, the authors operated under the assumption that the viscosity coefficients satisfy a BD-type algebraic relation of the form $\mu(\rho) = \nu\rho^\alpha$ and $\lambda(\rho) = 2\nu(\alpha - 1)\rho^\alpha$. Furthermore, they assumed that the Korteweg stress tensor complies with a generalized Bohm identity of the form $\kappa(\rho) = \varepsilon^2 \alpha^2 \rho^{2\alpha-3}$. Under these structural conditions, they successfully established the global existence of strong solutions for the 2D and 3D systems on the torus $\mathbb{T}^N$ with arbitrarily large regular initial data. \par
	Despite the aforementioned progress in establishing the global existence of strong solutions with arbitrarily large initial data in the periodic domain for a broad class of general NSK coefficients, the well-posedness of the Cauchy problem remains a more fundamental and pressing issue. The primary mathematical challenges arise from the unbounded measure of the whole space and the non-vanishing density at infinity, which lead to severe difficulties regarding the integrability of the solutions. Addressing these critical issues in the context of the Cauchy problem is the central objective of the present paper.\par
	Throughout the paper, we work under the following structural assumptions on the coefficients:
	\begin{equation}\label{eq4}
		\mu(\rho)=\nu\rho^\alpha,\qquad
		\lambda(\rho)=2\nu(\alpha-1)\rho^\alpha,\qquad
		\kappa(\rho)=\varepsilon^2\alpha^2\rho^{2\alpha-3},\qquad
		\nu\ge \varepsilon>0.
	\end{equation}
	In particular,
	\[
	\lambda(\rho)=2\rho\mu'(\rho)-2\mu(\rho),
	\]
	so that the viscosity coefficients satisfy the BD algebraic relation. Moreover,
	\[
	2\mu(\rho)+N\lambda(\rho)=2\nu\bigl(N\alpha-N+1\bigr)\rho^\alpha.
	\]
	Hence the physical constraints
	\[
	\mu(\rho)\ge 0,\qquad 2\mu(\rho)+N\lambda(\rho)\ge 0
	\]
	require that
	\[
	\alpha\ge \frac{N-1}{N}.
	\]
	
	To focus on the essential difficulties, we prescribe the far-field equilibrium state
	\begin{equation}\label{eq5}
		\rho(x,t)\to 1,\qquad u(x,t)\to 0,\qquad \text{as } |x|\to\infty.
	\end{equation}
	The initial data are given by
	\begin{equation}\label{eq6}
		\rho(x,0)=\rho_0(x),\qquad u(x,0)=u_0(x),\qquad x\in\mathbb{R}^N,
	\end{equation}
	and are assumed to satisfy
	\begin{equation}\label{eq7}
		0<\underline{\rho_0}\le \rho_0\le \overline{\rho_0},\qquad
		\rho_0-1\in H^3(\mathbb{R}^N),\qquad
		u_0\in H^2(\mathbb{R}^N),
	\end{equation}
	where $\underline{\rho_0}$ and $\overline{\rho_0}$ are positive constants.
	
	For later use, we introduce
	\begin{equation}\label{eq8}
		\beta=\sqrt{1-\frac{\varepsilon^2}{\nu^2}}\in [0,1).
	\end{equation}
	
	\begin{assumption}[Parameter conditions]\label{ass1}
		Assume that $(\alpha,\gamma,\beta)$ satisfies one of the following conditions:
		\begin{equation}\label{eq9}
			\begin{aligned}
				N = 2, \quad \alpha \in \left( \frac{\sqrt{5}-1}{2}, 1 \right), \quad \gamma \in [1, \infty), \quad \beta \in [0, \beta_2^+(\alpha)); \\
			\end{aligned}
		\end{equation}
		or
		\begin{equation}\label{eq10}
			N = 3, \quad \alpha \in \left( \frac{9\sqrt{3}-4\sqrt{2}}{9\sqrt{3}-2\sqrt{2}}, 1 \right), \quad \gamma \in \left[ 1, \frac{15\alpha-7}{3} \right), \quad \beta \in [0, \beta_3^+(\alpha)).
		\end{equation}
		Here, $\beta_N^+(\alpha)$ $(N=2,3)$ is defined as the unique positive solution to the quadratic equation:$$\left(\beta_N^+(\alpha)\right)^2 + \frac{2(1 - \alpha)(N\alpha^2 + \sqrt{N}\alpha + 1 - N)}{\alpha(1 - \sqrt{N}(1 - \alpha))^2} \beta_N^+(\alpha) + \frac{(1 - \alpha)(-N\alpha^2 - N\alpha + 2N - 2)}{\alpha(1 - \sqrt{N}(1 - \alpha))^2} = 0.$$
	\end{assumption}
	
	We now state the main result of the paper.
	
	\begin{theorem}\label{thm1}
		Let $N\in\{2,3\}$ and assume that Assumption \ref{ass1} holds. Then the Cauchy problem \eqref{eq1}, \eqref{eq4}-\eqref{eq7} admits a unique global strong solution $(\rho,u)$. More precisely, for any $0<T<\infty$, there exists a constant $C(T)>0$ such that
		\[
		(C(T))^{-1}\le \rho(x,t)\le C(T)
		\qquad \text{for all } (x,t)\in \mathbb{R}^N\times[0,T],
		\]
		and
		\[
		\left\{
		\begin{aligned}
			&\rho-1\in C([0,T];H^3(\mathbb{R}^N))\cap L^2(0,T;H^4(\mathbb{R}^N)),\\
			&\rho_t\in C([0,T];H^1(\mathbb{R}^N))\cap L^2(0,T;H^2(\mathbb{R}^N)),\\
			&u\in C([0,T];H^2(\mathbb{R}^N))\cap L^2(0,T;H^3(\mathbb{R}^N)),\\
			&u_t\in L^\infty(0,T;L^2(\mathbb{R}^N))\cap L^2(0,T;H^1(\mathbb{R}^N)).
		\end{aligned}
		\right.
		\]
		Here the constant $C(T)$ depends only on $T$, the parameters of the system, and the norms of the initial data.
	\end{theorem}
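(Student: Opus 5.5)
The plan is the standard continuation argument. Local existence and uniqueness of a strong solution in the class of Theorem \ref{thm1} on some $[0,T_*)$ follows from classical parabolic--hyperbolic theory for data satisfying \eqref{eq7}. The point is therefore to prove a priori estimates on $[0,T]$ that depend only on $T$ and on the data. These estimates are, in order: a positive lower bound and an upper bound for $\rho$, then $H^3\times H^2$ bounds. The structure \eqref{eq4} is used through the effective velocity. Set
\[
v=u+\nu\nabla\varphi(\rho),\qquad \varphi'(\rho)=\alpha\rho^{\alpha-2}\quad(\text{so }\nabla\varphi(\rho)=\nabla\rho^{\alpha}/\rho\text{ up to normalization}).
\]
Because of the BD relation and the Bohm identity $\kappa=\varepsilon^2\alpha^2\rho^{2\alpha-3}$, the momentum equation rewrites as a system for $(\rho,v)$ (or $(\rho,w)$ with $w=u+c\nabla\varphi$, $c=\nu(1\pm\beta)$). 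In this form the Korteweg term is absorbed, $\beta$ measures the remaining imbalance, and $\rho$ solves a parabolic equation of the form $\partial_t\varphi+v\cdot\nabla\varphi\approx\nu\rho^{\alpha-1}\Delta\varphi+\dots$.

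\textbf{Step 1: basic energy and BD entropy in $\mathbb{R}^N$.} We use the relative energy $\int\frac12\rho|u|^2+G(\rho)+\frac{\kappa}{2}|\nabla\rho|^2$ with $G(\rho)=\rho\int_1^\rho(s^\gamma-1)s^{-2}ds$, so that $G(\rho)\sim(\rho-1)^2$ near $1$. This choice handles the non-vacuum far field. We then add the BD entropy for $v$, which gives $\nabla\rho^{\alpha-1/2}\in L^\infty L^2$ and $\nabla^2\rho^{\alpha-1/2}$-type dissipation. All constants must be independent of the size of the domain.

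\textbf{Step 2: upper and lower bounds for $\rho$.} This is the main obstacle. On $\mathbb{T}^N$ one can test the parabolic equation for $\rho^{\pm k}$ and run a De Giorgi/Nash--Moser iteration using $\rho^{p}\in L^1$. In $\mathbb{R}^N$, powers of $\rho$ are not integrable, so I would instead apply the iteration to the truncations $(\rho-K)_+$ and $(\rho^{-1}-K)_+$ for $K\ge 2\max(\overline{\rho_0},\underline{\rho_0}^{-1})$. These vanish near infinity and are controlled in $L^2$ by the relative energy, since $G(\rho)\gtrsim(\rho-K)_+^{2}$ there. I would multiply the equation for $\varphi(\rho)$ by $p\,(\varphi-K)_+^{p-1}$ and bound the drift term $v\cdot\nabla\varphi$ using higher integrability of $v$ obtained from an $L^{r}$ estimate $\int\rho|v|^{r}$ in the style of Mellet--Vasseur. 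The restrictions on $\alpha,\gamma,\beta$ in Assumption \ref{ass1}, including the quadratic defining $\beta_N^+(\alpha)$, are exactly what closes this $L^r$ estimate for some $r$ large enough ($r>N$ suffices) when the pressure term and the $\beta$-cross terms are absorbed. The modified iteration over levels $K_j\uparrow 2K$ and exponents $p_j\to\infty$ then yields an $L^\infty$ bound. The same procedure applied to $\rho^{-1}$ gives the lower bound.

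\textbf{Step 3: higher regularity and uniqueness.} With $C(T)^{-1}\le\rho\le C(T)$ in hand, the system is uniformly parabolic in $u$ and, through $v$, in $\rho$. I would estimate $\nabla u$ in $L^\infty L^2$ by testing with $u_t$ (Hoff-type), then $\sqrt\rho\dot u$, then $\nabla^2\rho,\nabla^3\rho$ and $\nabla^2u$ by elliptic estimates on $\mathbb{R}^N$ for the Lamé operator and for $\Delta\varphi(\rho)$. $L^\infty$ bounds for $\nabla\rho$ come from Gagliardo--Nirenberg inequalities. This gives all the norms listed in Theorem \ref{thm1}, so the local solution extends to $[0,T]$ for every $T$. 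Uniqueness follows from an $L^2$ energy estimate for the difference of two solutions in this regularity class, using Gronwall.
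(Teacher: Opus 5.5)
The lower bound is not ``the same procedure applied to $\rho^{-1}$'', and $r>N$ does not suffice for it. With $\tau=\rho^{-1}$ the density equation reads $\tau_t-c\alpha\operatorname{div}(\tau^{1-\alpha}\nabla\tau)+2c\alpha\tau^{-\alpha}|\nabla\tau|^2+v\cdot\nabla\tau-\tau\operatorname{div}v=0$. Testing with a truncation of $\tau^{p+1}$ leaves the drift term $\int\tau^{p+1+\alpha}|v|^2\,dx$. Your only control on $v$ is $\int\rho|v|^r\,dx$, and its weight degenerates exactly where you iterate: $\tau^{p+1+\alpha}|v|^2=(\rho|v|^r)^{2/r}\tau^{p+1+\alpha+2/r}$. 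The extra factor $\tau^{2/r}$ must be paid for out of the gain $\tau^{2(1-\alpha)}$ of the dissipation $|\nabla\tau^{(p+3-\alpha)/2}|^2$ over the drift, or out of $\tau^{1-\alpha}$ relative to the $L^\infty_tL^{p+2}_x$ level. So $r$ must be large in terms of $(1-\alpha)^{-1}$. The paper needs $r=q+2>2/(1-\alpha)$, i.e.\ $q>2\alpha/(1-\alpha)$, which exceeds $6$ in 3D and blows up as $\alpha\to1$.

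This, and not the closure of the $\int\rho|v|^r$ estimate (which only needs $q<q_{N,\ast}$), is what the quadratic in Assumption~\ref{ass1} encodes. By the identity stated just before Proposition~\ref{prop9}, $\beta<\beta_N^+(\alpha)$ is equivalent to $q_{N,\ast}>2\alpha/(1-\alpha)$. In 3D such a large $q$ is also not available at the start. With only the integrability of $\rho$ known before the upper bound, the pressure term permits just those $q$ with $\gamma<3\alpha-1+\frac{6\alpha-4}{q+2}$. So the order matters:
\begin{enumerate}
\item First prove the upper bound with $q\in(1,2]$. There $r>N$ is indeed the right threshold, and this is where $\gamma<\frac{15\alpha-7}{3}$ enters.
\item Then use $\rho\le C$ to neutralize the pressure and raise $q$ into $(2\alpha/(1-\alpha),q_{N,\ast})$, as in Proposition~\ref{prop8}.
\item Only then iterate on $\tau$.
\end{enumerate}

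There is a further problem at the start of the $\tau$-iteration. Your $G=\Pi$ is bounded near vacuum, so the relative energy gives only $|\{\rho<1/2\}|\le CE_0$, not a bound on $\|(\rho^{-1}-K)_+\|_{L^2}$. Likewise $G\gtrsim(\rho-K)_+^2$ fails for large $\rho$ when $\gamma<2$. The base level of the $\tau$-iteration must therefore be produced from the equation itself. The paper does this by a Gronwall argument with a sublinear exponent at $p=q$, followed by the absorption $\|\tau\|_{L^\infty}\le C\|\tau\|_{L^\infty}^{\sigma}+C$ with $\sigma<1$.

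Step 3 is likewise not routine in 3D. Testing the $u$-equation with $u_t$ meets the third-order Korteweg term, so the estimates have to be carried out for $(\rho,v)$. There the coupled estimate for $\|\nabla^2\rho\|_{L^2}^2+\|\nabla v\|_{L^2}^2$ produces $\int(|\nabla\rho|^2|\nabla^2\rho|^2+|\nabla\rho|^6)\,dx$ on the right-hand side. The dissipation $\|\nabla\Delta\rho^\alpha\|_{L^2}^2$ controls $\|\nabla^3\rho\|_{L^2}^2$ only modulo these same terms. With merely bounded density, Gagliardo--Nirenberg turns them into a superlinear Gronwall inequality. The space-time bound $\nabla\rho\in L^4_{t,x}$ available from the entropy estimates is too weak; one needs something like $\nabla\rho\in L^\infty_tL^4_x$.

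The missing idea is to add $\|\nabla\rho\|_{L^4}^4$ to the energy. Its evolution yields the dissipation $\int\rho^{\alpha-1}|\nabla\rho|^2|\nabla^2\rho|^2\,dx$ minus a multiple of $\int\rho^{\alpha-3}|\nabla\rho|^6\,dx$, and the former controls the latter. The paper's constants make the net dissipation positive when $1-\frac{243(1-\alpha)^2}{8(2-\alpha)^2}>0$, i.e.\ $\alpha>\frac{9\sqrt3-4\sqrt2}{9\sqrt3-2\sqrt2}$. This positive dissipation then absorbs the bad terms above. That, and not the $L^r$ estimate, is the origin of the 3D lower bound on $\alpha$ in Assumption~\ref{ass1}. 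Until this step is done, your appeal to Gagliardo--Nirenberg for $\|\nabla\rho\|_{L^\infty}$ is circular.
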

	\begin{remark}
		Theorem \ref{thm1} can be regarded as the extension of the periodic domain results Gu, Huang, Meng and
		Zhou \cite{Gu-Huang-Meng-Zhou} to the whole space, achieving the same range of indices. In comparison, obtaining the same indices for $\alpha$ and $\gamma$ in the whole space entails significantly more challenges, primarily due to the far-field behavior at infinity and the lack of compactness of the domain. However, according to the $\gamma$-range $\gamma \in [1, 8/3)$ for the quantum NSK system established by Huang, Gu and Lei \cite{Huang-Gu-Lei}, we observe that the limit of $(15\alpha - 7)/3$ as $\alpha \to 1$ is precisely $8/3$. This provides strong justification for the conjecture that $\gamma$ can reach the same range as in the periodic case.
	\end{remark}
	\begin{remark}
		For $N=2$, the lower bound of $\alpha$ arises from the constraints required to establish the density lower bound. In contrast, for $N=3$, the lower bound of $\alpha$ is dictated by the requirements of higher-order estimates, which impose more stringent restrictions than those stemming from the density lower bound. In the specific case where $\alpha=1$, the system reduces to the quantum Navier–Stokes equations. Due to the emergence of critical indices in this setting, the analytical techniques required differ significantly, as discussed in Huang, Gu and Lei \cite{Huang-Gu-Lei}.
	\end{remark}
	The rest of the paper is organized as follows. In Section 2, we provide some notations, the reformulations of the system, and an outline of the proof. In Section \ref{sec2}, we derive all the a priori estimates needed for Theorem \ref{thm1}. More precisely, Section \ref{subsec1} is devoted to the upper bound for the density, Section \ref{subsec2} to the positive lower bound for the density, and Section \ref{subsec3} to the higher-order estimates. Finally, in Section \ref{sec3}, we complete the proof of Theorem \ref{thm1}.
	\section{Notations, Reformulation, and  Outline of the Proof}
	\subsection{Notations}
	
	Throughout the paper, $N\in\{2,3\}$ is fixed and the spatial domain is $\mathbb{R}^N$. For brevity, whenever no confusion can arise, we abbreviate
	\[
	\|f\|_{L^p}:=\|f\|_{L^p(\mathbb{R}^N)},\qquad
	\|f\|_{W^{k,p}}:=\|f\|_{W^{k,p}(\mathbb{R}^N)},\qquad
	\|f\|_{H^k}:=\|f\|_{H^k(\mathbb{R}^N)}.
	\]
	Likewise, if $X$ is a Banach space and $I\subset[0,\infty)$ is an interval, then $L^p(I;X)$ and $C(I;X)$ denote the usual Bochner spaces. The same notation will be used for scalar-, vector-, and matrix-valued functions.
	
	Because the far-field equilibrium is given by $(\rho,u)=(1,0)$, we systematically work with the relative density $\rho-1$ rather than $\rho$ itself. In particular, all Sobolev regularity statements for the density are understood in terms of $\rho-1$. This convention is natural in the whole-space setting and is fully compatible with the far-field condition \eqref{eq5}.
	
	For a scalar function $f$, $\nabla f$ and $\Delta f$ denote the spatial gradient and Laplacian, respectively, while $\nabla^2 f$ stands for the Hessian matrix. More generally, $\nabla^k f$ denotes the collection of all spatial derivatives of order $k$. For a vector field $w=(w_1,\dots,w_N)^{\top}$, we write
	\[
	\nabla w=(\partial_j w_i)_{1\le i,j\le N},
	\qquad
	\operatorname{div}w=\sum_{i=1}^N\partial_i w_i,
	\]
	where $\partial_i=\partial_{x_i}$. If $A=(A_{ij})$ and $B=(B_{ij})$ are $N\times N$ matrices, we set
	\[
	A:B:=\sum_{i,j=1}^N A_{ij}B_{ij},
	\qquad
	|A|:=(A:A)^{1/2},
	\qquad
	A^\top:=(A_{ji})_{1\le i,j\le N},
	\qquad
	\operatorname{tr}A:=\sum_{i=1}^N A_{ii}.
	\]
	For vectors $a,b\in\mathbb{R}^N$, we write
	\[
	a\otimes b:=(a_i b_j)_{1\le i,j\le N}.
	\]
	In particular,
	\[
	|\nabla w|^2=\nabla w:\nabla w,
	\qquad
	\mathbb{D}w=\frac{\nabla w+(\nabla w)^{\top}}{2}.
	\]
	The symbol $|\cdot|$ will also be used for the Euclidean norm of vectors and for the Lebesgue measure of measurable sets; the meaning will always be clear from the context.
	
	If $E\subset\mathbb{R}^N$ is measurable, then $|E|$ denotes its Lebesgue measure, $\mathbf 1_E$ its characteristic function, and $\operatorname{supp}f$ the support of a function $f$. For $s\in\mathbb{R}$, we set
	\[
	s_+:=\max\{s,0\},
	\qquad
	s_-:=\max\{-s,0\}.
	\]
	Whenever no ambiguity is possible, we also write
	\[
	\int f\,dx:=\int_{\mathbb{R}^N}f(x)\,dx.
	\]
	
	Finally, $C$ denotes a generic positive constant, not necessarily the same from line to line.  When it is important to record the dependence explicitly, we write $C=C(\cdots)$. The notation $X\hookrightarrow Y$ means that $X$ is continuously embedded into $Y$.
	\subsection{Reformulation}
Before deriving the a priori estimates, we recast \eqref{eq1} in terms of an effective velocity, following the arguments presented in the appendix of \cite{Gu-Huang-Meng-Zhou}. Set
	\[
	c_\pm=\nu(1\pm\beta)=\nu\pm\sqrt{\nu^2-\varepsilon^2},
	\qquad
	v_\pm=u+c_\pm\alpha\rho^{\alpha-2}\nabla\rho.
	\]
	By \eqref{eq4} and \eqref{eq8}, the constants $c_\pm$ satisfy
	\[
	c_\pm^2-2\nu c_\pm+\varepsilon^2=0.
	\]
	A straightforward computation based on \eqref{eq3} and \eqref{eq1} shows that $(\rho,u)$ solves \eqref{eq1} if and only if $(\rho,v_\pm)$ solves
	\begin{equation}\label{eq11}
		\left\{
		\begin{aligned}
			&\rho_t+\operatorname{div}(\rho v_\pm)-c_\pm\Delta\rho^\alpha=0,\\
			&\rho (v_\pm)_t+\rho\bigl(v_\pm-c_\pm\alpha\rho^{\alpha-2}\nabla\rho\bigr)\cdot\nabla v_\pm+\nabla\rho^\gamma \\
			&\qquad
			=\nu\operatorname{div}(\rho^\alpha\nabla v_\pm)
			+(\nu-c_\pm)\operatorname{div}\bigl(\rho^\alpha(\nabla v_\pm)^\top\bigr)
			+(\alpha-1)(2\nu-c_\pm)\nabla\bigl(\rho^\alpha\operatorname{div}v_\pm\bigr).
		\end{aligned}
		\right.
	\end{equation}
	Moreover, once $(\rho,v_\pm)$ is known, the original velocity is recovered uniquely from
	\begin{equation}\label{eq12}
		u=v_\pm-c_\pm\alpha\rho^{\alpha-2}\nabla\rho.
	\end{equation}
	
	In the rest of the paper we work with the positive branch and write
	\[
	v:=v_+,
	\qquad
	c:=c_+=\nu(1+\beta).
	\]
	Then \eqref{eq11} becomes
	\begin{equation}\label{sys of v}
		\left\{
		\begin{aligned}
			&\rho_t+\operatorname{div}(\rho v)-c\Delta\rho^\alpha=0,\\
			&\rho v_t+\rho\bigl(v-c\alpha\rho^{\alpha-2}\nabla\rho\bigr)\cdot\nabla v+\nabla\rho^\gamma \\
			&\qquad
			=\nu\operatorname{div}(\rho^\alpha\nabla v)
			+(\nu-c)\operatorname{div}\bigl(\rho^\alpha(\nabla v)^\top\bigr)
			+(\alpha-1)(2\nu-c)\nabla\bigl(\rho^\alpha\operatorname{div}v\bigr).
		\end{aligned}
		\right.
	\end{equation}
	Observe that
	\[
	c-\nu=\nu\beta,
	\qquad
	2\nu-c=\nu(1-\beta)>0.
	\]
	
	We next translate the initial conditions for \eqref{eq1} into those for \eqref{sys of v}. Since $N\in\{2,3\}$, the Sobolev embedding
	\[
	H^3(\mathbb{R}^N)\hookrightarrow W^{1,\infty}(\mathbb{R}^N)
	\]
	combined with \eqref{eq7} and the standard composition estimates yields
	\[
	v_0:=u_0+c\alpha\rho_0^{\alpha-2}\nabla\rho_0\in H^2(\mathbb{R}^N).
	\]
	Hence the initial data for \eqref{sys of v} satisfy
	\begin{equation}\label{eq14}
		0<\underline{\rho_0}\le \rho_0\le \overline{\rho_0},
		\qquad
		\rho_0-1\in H^3(\mathbb{R}^N),
		\qquad
		v_0\in H^2(\mathbb{R}^N),
	\end{equation}
	where $\underline{\rho_0}$ and $\overline{\rho_0}$ are the positive constants appearing in \eqref{eq7}. The corresponding far-field condition is
	\begin{equation}\label{eq15}
		\rho(x,t)\to 1,
		\qquad
		v(x,t)\to 0,
		\qquad
		\text{as } |x|\to\infty.
	\end{equation}
	\subsection{Outline of the Proof}
	We now present the outline of the proof for the main results. The strategy is divided into the following key steps:
	
	\begin{itemize}
		\item \textbf{Local Well-posedness:} First, we establish the local-in-time existence of solutions to the Cauchy problem. Since this process is relatively standard for such systems, the details are omitted for brevity.
		
		\item \textbf{Integrability of Momentum:} Second, as a key to improving the integrability of density, we derive the integrability estimates for $\int_{\mathbb{R}^N} \rho |v|^{q+2} \, dx$. Given a fixed value of $\alpha$, obtaining these estimates involves determining the range of the critical index $\beta_N^{+}(\alpha)$ (for $N=2,3$). In particular, the three-dimensional case necessitates certain restrictions on the range of the adiabatic exponent $\gamma$.
		
		\item \textbf{Upper Bound of Density:} Third, equipped with the integrability of $\int_{\mathbb{R}^N} \rho |v|^{q+2} \, dx$ for $q > N-2$, we apply a carefully designed new Nash-Moser iteration to the density equation to establish a uniform upper bound for the density. It is worth noting that as $q$ approaches $N-2$, the permissible lower bound for $\alpha$ becomes broader.
		
		\item \textbf{Lower Bound of Density:} Fourth, by leveraging the newly established density upper bound, we can refine the integrability of $\int_{\mathbb{R}^N} \rho |v|^{q+2} \, dx$. Re-applying another refined Nash-Moser iteration then allows us to derive a strictly positive lower bound for the density, thereby ruling out vacuum formation.
		
		\item \textbf{Global Existence:} Finally, with the uniform upper and lower bounds of density, we obtain the remaining higher-order estimates. By employing a contradiction argument regarding the maximal existence time $T^*$, we conclude the existence of a global-in-time strong solution.
	\end{itemize}
	In the periodic case, the methods for estimating the upper and lower bounds of the density rely heavily on the integrability of the density and the finite measure of the domain, thus completely inapplicable to the whole space. The improvements achieved in this work primarily stem from four technical advancements:\\
	
	\noindent \textbf{§2.3.1 Piecewise analysis of $\gamma$:} We implement a piecewise investigation of the adiabatic exponent $\gamma$ to achieve a \textit{$\gamma$-improvement} for the weighted integrability of the effective velocity.
	
	\medskip
	
	\noindent \textbf{§2.3.2 Truncation procedure for the whole space:} We develop a \textit{truncation procedure} specifically adapted to the whole-space setting, which effectively addresses the difficulties arising from the lack of boundary constraints and compactness.
	\medskip
	
	\noindent \textbf{§2.3.3 Refined Nash-Moser iteration for the lower bound of density:} 
	Establishing the density lower bound is more technically demanding than obtaining the upper bound. For the lower bound analysis, we must construct suitable reverse Hölder inequalities for the equation governing the density's lower bound and guarantee that the iteration converges.
	
	\medskip
	\noindent \textbf{§2.3.4 Decoupling of the strong coupling in the first-level higher-order estimates:} This decoupling method is particularly evident in the case $N=3$. Using the dissipation generated by the estimate of $\|\nabla \rho\|_{L^4}$ as an auxiliary source of damping, we establish a large-scale Gronwall inequality involving $\|\nabla \rho\|_{L^4}$, $\|\nabla^2 \rho\|_{L^2}$, and $\|\nabla v\|_{L^2}$.
	
	\subsubsection{Piecewise analysis of $\gamma$ when $N=3$}
	Prior to establishing the upper bound of the density, it is necessary to obtain the density-weighted estimates for the effective velocity. Testing the momentum equation in \eqref{sys of v} by $|v|^qv$ yields 
	\[
	\frac{1}{q+2}\frac{d}{dt}\int_{\mathbb{R}^3}\rho|v|^{q+2}\,dx
	+\delta_q\int_{\mathbb{R}^3}\rho^\alpha|v|^q|\nabla v|^2\,dx
	\le
	C\left|\int_{\mathbb{R}^3}\nabla\rho^\gamma\cdot |v|^qv\,dx\right|.
	\]
	For different ranges of $\gamma$, we employ distinct sets of known integrability conditions to establish a closed-loop weighted estimate for the effective velocity. Under Assumption \ref{ass1}, the following two cases are sufficient to cover the entire range of the adiabatic exponent $\gamma$.
	\begin{itemize}
		\item \textbf{Case I: $\gamma \in [\frac32\alpha-\frac16,\frac{15\alpha-7}{3})$}. In this case, we choose $1 < q < 2$ such that the following condition is satisfied:$$\frac{3}{2}\alpha - \frac{1}{6} \le \gamma < 3\alpha - 1 + \frac{6\alpha - 4}{q + 2}.$$
		In this step, we exploit the integrability condition 
		$$\left\| \rho^{\frac{3}{2}\alpha-1}-1 \right\|_{L^2(0,T;H^2(\mathbb{R}^3))} \le C,$$
		and by splitting the pressure term according to the domain decomposition $\mathbb{R}^3 = \Omega_1(t) \cup \Omega_2(t)$, we have
		\begin{equation*}
			\left|\int_{\mathbb{R}^3}\nabla\rho^\gamma\cdot |v|^qv\,dx\right|
			\le
			\left|\int_{\Omega_1(t)}\nabla\rho^\gamma\cdot |v|^qv\,dx\right|
			+
			\left|\int_{\Omega_2(t)}\nabla\rho^\gamma\cdot |v|^qv\,dx\right|,
		\end{equation*}
		where the disjoint subsets $\Omega_1(t)$ and $\Omega_2(t)$ are defined as
		\begin{equation} \label{eq:domain-split}
			\Omega_1(t) := \{x \in \mathbb{R}^3 : \rho(x,t) \le 4\}, \quad 
			\Omega_2(t) := \{x \in \mathbb{R}^3 : \rho(x,t) > 4\}.
		\end{equation}
		We establish the density-weighted integrability of the effective velocity within this $\gamma$-interval.
		\item  \textbf{Case II: $\gamma \in [1, \frac{9}{2}\alpha-2)$}. In this case, the main challenge arises from the high-density region $\Omega_2(t)$. By defining $\zeta' := \frac{(5\alpha-3)q+22\alpha-12}{2}$ and employing interpolation techniques, we utilize an additional integrability condition
		\begin{equation*}
			\int_0^T \left( \int_{\Omega_2(t)} \rho^{\zeta'} dx \right)^{\frac{2\lambda}{(q+2)\zeta'}} dt \le C,
		\end{equation*}
		to establish the density-weighted integrability of the effective velocity within this $\gamma$-range.
	\end{itemize}
	\subsubsection{Truncation procedure for the whole space}
	Since $\rho\to1$ at infinity, one cannot iterate directly on $\rho$ or $\rho^{-1}$. Therefore, we introduce suitable truncations of the large-density and low-density parts and performs a modified Nash--Moser iteration only on these truncated quantities. The relative entropy estimate guarantees that their supports have finite measure, which makes the iteration possible. For example, when establishing the upper bound of the density, we choose $\eta\in C^\infty([0,\infty))$ such that
	\[
	0\le \eta\le 1,
	\qquad
	\eta(s)=0 \ \text{for } 0\le s\le 2,
	\qquad
	\eta(s)=1 \ \text{for } s\ge 4,
	\]
	and define
	\[
	f:=\eta(\rho)\rho.\]
	By testing against appropriately chosen smooth-truncated density functions, we focus on improving the $L^p$ integrability of the density within the high-density region, which is also a domain of finite measure. Through a refined Nash-Moser iteration technique, we successfully establish the upper bound for the density. 
	\subsubsection{Refined Nash-Moser iteration for the lower bound of density}
	The lower bound of the density is the most delicate part of the whole-space analysis and the point at which the argument departs decisively from the periodic theory. The difficulty is not simply to iterate on the inverse density. Once one passes to $\tau:=\rho^{-1}$, the low-density regime becomes singular, while the transport generated by the effective velocity remains fully nonlinear. On $\mathbb{R}^N$, where no background integrability for $\tau-1$ is available, a direct iteration on $\rho^{-1}$ cannot be closed.
	
	The argument begins with a truncation of the inverse density, \(f:=\eta(\tau)\tau\), 
	which localizes the analysis to the vacuum region. Since $\rho\to1$ at infinity, the relevant obstruction is the concentration of $\tau$ near vacuum rather than its behavior at spatial infinity. The relative entropy bound then yields the crucial support control: the truncated inverse density is supported on a set of uniformly finite measure in time.
	
	This geometric input must be combined with the refined weighted estimate for the effective velocity, available only after the upper bound of the density has been established. The lower bound is therefore not a formal consequence of Nash--Moser iteration alone. What closes the argument is a carefully designed new Nash--Moser scheme for the truncated inverse density, stable in a singular regime where diffusion and transport remain strongly coupled.
	
	The outcome is a strictly positive lower bound for the density on $\mathbb{R}^N\times[0,T]$. This is the decisive step in the whole-space theory: it excludes vacuum formation and makes the global higher-order analysis possible.
	\subsubsection{Decoupling of the strong coupling in the first-level higher-order estimates}
	To establish the first-level higher-order estimates, we construct a large-scale Gronwall inequality of the form:
	\begin{equation}
		\begin{aligned}
			&\frac{d}{dt} \left( \|\nabla^2\rho(t)\|_{L^2}^2 + \|\nabla v(t)\|_{L^2}^2 + \|\nabla\rho(t)\|_{L^4}^4 \right) \\
			&\quad + \left( \|\nabla^2 v\|_{L^2}^2 + \|\nabla\Delta\rho\|_{L^2}^2 + \| |\nabla \rho| |\nabla^2 \rho| \|_{L^2}^2 \right) \le \mathcal{R}(t).
		\end{aligned}
	\end{equation}
	Specifically, the dissipation term $\| |\nabla \rho| |\nabla^2 \rho| \|_{L^2}^2$ generated by the estimate of $\|\nabla \rho(t)\|_{L^4}^4$ plays a crucial role in canceling the nonlinear terms on the right-hand side. The constant $\frac{9\sqrt{3}-4\sqrt{2}}{9\sqrt{3}-2\sqrt{2}}$ in Assumption \ref{ass1} is precisely the threshold required to ensure that $\| |\nabla \rho| |\nabla^2 \rho| \|_{L^2}^2$ provides the necessary positive dissipation. By coupling these three independent estimates, we establish a comprehensive, closed-loop Gronwall inequality. It is worth noting that without the inclusion of this specific dissipation, the system involving only $\|\nabla^2\rho\|_{L^2}$ and $\|\nabla v\|_{L^2}$ remains strongly coupled and cannot be decoupled. This represents the core technical difficulty encountered in establishing the higher-order estimates. It is precisely what prevents the three-dimensional strong coupling from propagating to higher levels and makes the global higher-order theory available.
	
	\section{A priori estimates}\label{sec2}
	\subsection{Preliminaries}

	Throughout the remainder of the paper, we assume that \eqref{eq4}, \eqref{eq5}, \eqref{eq6}, \eqref{eq7}, and Assumption \ref{ass1} are in force. It follows from the standard local well-posedness theory for \eqref{eq1} together with the above change of variables that the reformulated system \eqref{sys of v}, supplemented with \eqref{eq14} and \eqref{eq15}, admits a unique strong solution
	\[
	(\rho,v)\quad \text{on } \mathbb{R}^N\times[0,T^\ast),
	\]
	where $T^\ast\in(0,\infty]$ denotes the maximal existence time. Moreover, for every fixed $T\in(0,T^\ast)$,
	\[
	\left\{
	\begin{aligned}
		&\rho-1\in C([0,T];H^3(\mathbb{R}^N))\cap L^2(0,T;H^4(\mathbb{R}^N)),\\
		&\rho_t\in C([0,T];H^1(\mathbb{R}^N))\cap L^2(0,T;H^2(\mathbb{R}^N)),\\
		&v\in C([0,T];H^2(\mathbb{R}^N))\cap L^2(0,T;H^3(\mathbb{R}^N)),\\
		&v_t\in L^\infty(0,T;L^2(\mathbb{R}^N))\cap L^2(0,T;H^1(\mathbb{R}^N)).
	\end{aligned}
	\right.
	\]
	In particular, all computations in the a priori estimates below are justified on every fixed interval $[0,T]$.
	
	To prove Theorem \ref{thm1}, we argue by contradiction. If the conclusion of Theorem \ref{thm1} were false, then necessarily
	\[
	T^\ast<\infty.
	\]
	Accordingly, we fix an arbitrary
	\[
	T\in(0,T^\ast)
	\]
	and derive a priori estimates for $(\rho,v)$ on $\mathbb{R}^N\times[0,T]$.
	
	We first introduce the relative internal energy density
	\begin{equation}\label{eq16}
		\Pi(\rho)=
		\begin{cases}
			\dfrac{\rho^\gamma-1-\gamma(\rho-1)}{\gamma-1}, & \gamma>1,\\[2mm]
			\rho\log\rho-\rho+1, & \gamma=1.
		\end{cases}
	\end{equation}
	It is immediate that, for every $\gamma\ge 1$,
	\[
	\Pi(1)=0,
	\qquad
	\Pi'(1)=0,
	\]
	and
	\begin{equation}\label{eq17}
		\rho \nabla \Pi'(\rho)=\nabla \rho^\gamma,
	\end{equation}
	where, in the case $\gamma=1$, \eqref{eq17} is understood as
	\[
	\rho\nabla(\log\rho)=\nabla\rho.
	\]
	
	Accordingly, we define the initial total energy by
	\begin{equation}\label{eq18}
		E_0
		=
		\int_{\mathbb{R}^N}
		\left(
		\frac{1}{2}\rho_0|u_0|^2
		+\Pi(\rho_0)
		+\frac{2\varepsilon^2\alpha^2}{(2\alpha-1)^2}
		\left|\nabla \rho_0^{\alpha-\frac12}\right|^2
		\right)\,dx.
	\end{equation}
	
	Moreover, combining \eqref{eq3} with \eqref{eq4}, a direct computation gives
	\begin{equation}\label{eq19}
		\operatorname{div}\mathbb{K}
		=
		\frac{2\varepsilon^2\alpha^2}{2\alpha-1}\,
		\rho\,\nabla\Bigl(\rho^{\alpha-\frac32}\Delta\rho^{\alpha-\frac12}\Bigr).
	\end{equation}
	
	We first establish the basic energy estimate.
	
	\begin{proposition}\label{prop1}
		Let $N\in\{2,3\}$, $\gamma\ge 1$, and assume that
		\[
		\alpha>\frac{N-1}{N}.
		\]
		Let $\Pi(\rho)$ and $E_0$ be defined by \eqref{eq16} and \eqref{eq18}, respectively. Let $(\rho,u)$ be a sufficiently smooth solution to \eqref{eq1} on $\mathbb{R}^N\times[0,T]$ such that $\rho>0$ and the far-field condition \eqref{eq5} holds. Then, for every $t\in[0,T]$,
		\begin{equation}\label{eq20}
			\begin{aligned}
				&\frac{d}{dt}\int_{\mathbb{R}^N}
				\left(
				\frac12 \rho|u|^2
				+\Pi(\rho)
				+\frac{2\varepsilon^2\alpha^2}{(2\alpha-1)^2}
				\left|\nabla \rho^{\alpha-\frac12}\right|^2
				\right)\,dx \\
				&\qquad
				+2\nu\int_{\mathbb{R}^N}\rho^\alpha|\mathbb{D}u|^2\,dx
				+2\nu(\alpha-1)\int_{\mathbb{R}^N}\rho^\alpha(\operatorname{div}u)^2\,dx
				=0.
			\end{aligned}
		\end{equation}
		In particular, there exists a positive constant
		\[
		C=C(N,\alpha,\gamma,\nu,\varepsilon,E_0)
		\]
		such that
		\begin{equation}\label{est: basic}
			\sup_{0\le t\le T}\int_{\mathbb{R}^N}
			\left(
			\rho|u|^2+\Pi(\rho)+\left|\nabla \rho^{\alpha-\frac12}\right|^2
			\right)\,dx
			+
			\int_0^T\int_{\mathbb{R}^N}\rho^\alpha|\mathbb{D}u|^2\,dx\,dt
			\le C.
		\end{equation}
	\end{proposition}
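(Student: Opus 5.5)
Test the momentum equation of \eqref{eq1} by $u$, add the equation for $\Pi(\rho)$ obtained from the mass equation, and handle the Korteweg term through the form \eqref{eq19}. Everything is written for the relative quantities so that all integrals converge on $\mathbb{R}^N$. The smoothness of the solution together with \eqref{eq5} ensures that boundary terms at infinity vanish. Concretely, we may integrate over a ball $B_R$ (or insert a cutoff $\phi(x/R)$) and let $R\to\infty$, using that $\rho-1$, $u$ and their derivatives decay.

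\textbf{Step 1 (kinetic part and viscous terms).} By the mass equation,
\[
\frac{d}{dt}\int\tfrac12\rho|u|^2\,dx=\int\bigl(-\nabla\rho^\gamma+\operatorname{div}(2\mu\mathbb{D}u)+\nabla(\lambda\operatorname{div}u)+\operatorname{div}\mathbb{K}\bigr)\cdot u\,dx.
\]
Integrating the viscous terms by parts and using \eqref{eq4} gives exactly $-2\nu\int\rho^\alpha|\mathbb{D}u|^2-2\nu(\alpha-1)\int\rho^\alpha(\operatorname{div}u)^2$.

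\textbf{Step 2 (pressure).} By \eqref{eq17}, $-\int\nabla\rho^\gamma\cdot u=-\int\rho u\cdot\nabla\Pi'(\rho)=\int\Pi'(\rho)\operatorname{div}(\rho u)=-\int\Pi'(\rho)\rho_t=-\frac{d}{dt}\int\Pi(\rho)$. Here $\Pi'(1)=0$ makes $\Pi'(\rho)$ decay at infinity, which justifies the integration by parts.

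\textbf{Step 3 (Korteweg term).} This is the main computation. Set $w=\rho^{\alpha-\frac12}$. Then $\rho_t=-\operatorname{div}(\rho u)$ gives
\[
w_t=-(\alpha-\tfrac12)\rho^{\alpha-\frac32}\operatorname{div}(\rho u).
\]
By \eqref{eq19},
\[
\int\operatorname{div}\mathbb{K}\cdot u=\frac{2\varepsilon^2\alpha^2}{2\alpha-1}\int\rho u\cdot\nabla(\rho^{\alpha-\frac32}\Delta w)=-\frac{2\varepsilon^2\alpha^2}{2\alpha-1}\int\operatorname{div}(\rho u)\rho^{\alpha-\frac32}\Delta w .
\]
This equals $\frac{2\varepsilon^2\alpha^2}{(2\alpha-1)(\alpha-\frac12)}\int w_t\Delta w=-\frac{2\varepsilon^2\alpha^2}{(2\alpha-1)^2}\frac{d}{dt}\int|\nabla w|^2$. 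Summing Steps 1–3 yields \eqref{eq20}. The only delicate points are verifying the identity \eqref{eq19} (taken as given) and checking that $\nabla w\in L^2$ and that $\rho^{\alpha-\frac32}\Delta w$ decays, so that the boundary terms vanish. Both follow from $\rho-1\in H^3$ and $\rho$ bounded away from zero on $[0,T]$.

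\textbf{Step 4 (coercivity of the dissipation).} To deduce \eqref{est: basic}, integrate \eqref{eq20} in time. If $\alpha\ge1$, the dissipation is nonnegative termwise. If $\frac{N-1}{N}<\alpha<1$, use $(\operatorname{div}u)^2=(\operatorname{tr}\mathbb{D}u)^2\le N|\mathbb{D}u|^2$ to get
\[
2\nu|\mathbb{D}u|^2+2\nu(\alpha-1)(\operatorname{div}u)^2\ge2\nu\bigl(1-N(1-\alpha)\bigr)|\mathbb{D}u|^2 .
\]
Here $1-N(1-\alpha)>0$ exactly because $\alpha>\frac{N-1}{N}$. Combined with $\Pi\ge0$, this bounds each term by $C(N,\alpha,\nu,\varepsilon)E_0$, where the constant absorbs $\frac{(2\alpha-1)^2}{2\varepsilon^2\alpha^2}$ and $(1-N(1-\alpha))^{-1}$. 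Note $2\alpha-1>0$ since $\alpha>\frac{N-1}{N}\ge\frac12$.
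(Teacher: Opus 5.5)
Your proposal is correct and follows the paper's proof essentially step by step. Both test the momentum equation by $u$, convert the pressure term through $\rho\nabla\Pi'(\rho)=\nabla\rho^\gamma$, treat the Korteweg term via \eqref{eq19} together with $\partial_t\rho^{\alpha-\frac12}=(\alpha-\tfrac12)\rho^{\alpha-\frac32}\rho_t$, and obtain coercivity from $(\operatorname{div}u)^2\le N|\mathbb{D}u|^2$. Your separate treatment of $\alpha\ge1$ is slightly more careful than the paper: the paper's inequality $|\mathbb{D}u|^2+(\alpha-1)(\operatorname{div}u)^2\ge(N\alpha-N+1)|\mathbb{D}u|^2$ holds only for $\alpha\le1$, which is harmless in context because Assumption \ref{ass1} forces $\alpha<1$.
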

	
	\begin{proof}
		Multiply the momentum equation in \eqref{eq1} by $u$ and integrate over $\mathbb{R}^N$. By the continuity equation,
		\[
		\int_{\mathbb{R}^N}
		\Bigl((\rho u)_t+\operatorname{div}(\rho u\otimes u)\Bigr)\cdot u\,dx
		=
		\frac{d}{dt}\int_{\mathbb{R}^N}\frac12\rho|u|^2\,dx.
		\]
		Using \eqref{eq17} and the continuity equation again, we obtain
		\[
		\int_{\mathbb{R}^N}\nabla\rho^\gamma\cdot u\,dx
		=
		\int_{\mathbb{R}^N}\rho\nabla\Pi'(\rho)\cdot u\,dx
		=
		-\int_{\mathbb{R}^N}\Pi'(\rho)\operatorname{div}(\rho u)\,dx
		=
		\int_{\mathbb{R}^N}\Pi'(\rho)\rho_t\,dx
		=
		\frac{d}{dt}\int_{\mathbb{R}^N}\Pi(\rho)\,dx.
		\]
		Furthermore, by \eqref{eq4} and the identity
		\[
		\mathbb{D}u:\nabla u=|\mathbb{D}u|^2,
		\]
		we have
		\[
		\int_{\mathbb{R}^N}\operatorname{div}\bigl(2\mu(\rho)\mathbb{D}u\bigr)\cdot u\,dx
		=
		-2\nu\int_{\mathbb{R}^N}\rho^\alpha|\mathbb{D}u|^2\,dx,
		\]
		and
		\[
		\int_{\mathbb{R}^N}\nabla\bigl(\lambda(\rho)\operatorname{div}u\bigr)\cdot u\,dx
		=
		-2\nu(\alpha-1)\int_{\mathbb{R}^N}\rho^\alpha(\operatorname{div}u)^2\,dx.
		\]
		
		For the Korteweg term, it follows from \eqref{eq19} and the continuity equation that
		\[
		\begin{aligned}
			\int_{\mathbb{R}^N}\operatorname{div}\mathbb{K}\cdot u\,dx
			&=
			-\frac{2\varepsilon^2\alpha^2}{2\alpha-1}
			\int_{\mathbb{R}^N}
			\rho^{\alpha-\frac32}\Delta\rho^{\alpha-\frac12}\,
			\operatorname{div}(\rho u)\,dx \\
			&=
			\frac{2\varepsilon^2\alpha^2}{2\alpha-1}
			\int_{\mathbb{R}^N}
			\rho^{\alpha-\frac32}\Delta\rho^{\alpha-\frac12}\,\rho_t\,dx.
		\end{aligned}
		\]
		Noting that
		\[
		\partial_t\rho^{\alpha-\frac12}
		=
		\frac{2\alpha-1}{2}\rho^{\alpha-\frac32}\rho_t,
		\]
		we infer that
		\[
		\begin{aligned}
			\int_{\mathbb{R}^N}\operatorname{div}\mathbb{K}\cdot u\,dx
			&=
			\frac{4\varepsilon^2\alpha^2}{(2\alpha-1)^2}
			\int_{\mathbb{R}^N}
			\Delta\rho^{\alpha-\frac12}\,
			\partial_t\rho^{\alpha-\frac12}\,dx \\
			&=
			-\frac{2\varepsilon^2\alpha^2}{(2\alpha-1)^2}
			\frac{d}{dt}
			\int_{\mathbb{R}^N}\left|\nabla\rho^{\alpha-\frac12}\right|^2\,dx.
		\end{aligned}
		\]
		Collecting the above identities yields \eqref{eq20}.
		
		Integrating \eqref{eq20} over $(0,t)$ and using \eqref{eq18}, we obtain
		\[
		\begin{aligned}
			&\int_{\mathbb{R}^N}
			\left(
			\frac12\rho|u|^2+\Pi(\rho)
			+\frac{2\varepsilon^2\alpha^2}{(2\alpha-1)^2}
			\left|\nabla\rho^{\alpha-\frac12}\right|^2
			\right)(x,t)\,dx \\
			&\qquad
			+2\nu\int_0^t\int_{\mathbb{R}^N}\rho^\alpha
			\Bigl(
			|\mathbb{D}u|^2+(\alpha-1)(\operatorname{div}u)^2
			\Bigr)\,dx\,ds
			=E_0.
		\end{aligned}
		\]
		On the other hand, since
		\[
		(\operatorname{div}u)^2=(\operatorname{tr}\mathbb{D}u)^2\le N|\mathbb{D}u|^2,
		\]
		it follows that
		\[
		|\mathbb{D}u|^2+(\alpha-1)(\operatorname{div}u)^2
		\ge (N\alpha-N+1)|\mathbb{D}u|^2.
		\]
		Because $\alpha>\frac{N-1}{N}$, the coefficient $N\alpha-N+1$ is strictly positive. Therefore,
		\[
		\begin{aligned}
			&\sup_{0\le t\le T}\int_{\mathbb{R}^N}
			\left(
			\frac12\rho|u|^2+\Pi(\rho)
			+\frac{2\varepsilon^2\alpha^2}{(2\alpha-1)^2}
			\left|\nabla\rho^{\alpha-\frac12}\right|^2
			\right)\,dx \\
			&\qquad
			+2\nu(N\alpha-N+1)\int_0^T\int_{\mathbb{R}^N}\rho^\alpha|\mathbb{D}u|^2\,dx\,dt
			\le E_0.
		\end{aligned}
		\]
		Absorbing the fixed positive coefficients into the constant gives \eqref{est: basic}.
	\end{proof}
	We next derive the BD entropy estimate in the effective velocity variables. Since we work with the relative internal energy $\Pi(\rho)$, the estimate is fully compatible with the far-field condition \eqref{eq15}.
	
	\begin{proposition}\label{prop2}
		Let $N\in\{2,3\}$, let $\gamma\ge 1$, and assume that
		\[
		\alpha\in\left(\frac{N-1}{N},\,1\right).
		\]
		Let $\Pi(\rho)$ and $E_0$ be defined by \eqref{eq16} and \eqref{eq18}, respectively. Let $(\rho,v)$ be a sufficiently smooth solution to \eqref{sys of v} on $\mathbb{R}^N\times[0,T]$ such that $\rho>0$ and the far-field condition \eqref{eq15} holds. Then, for every $t\in[0,T]$,
		\begin{equation}\label{eq22}
			\begin{aligned}
				&\int_{\mathbb{R}^N}\left(\frac12\rho|v|^2+\Pi(\rho)\right)(x,t)\,dx
				+c\gamma\alpha\int_0^t\int_{\mathbb{R}^N}\rho^{\gamma+\alpha-3}|\nabla\rho|^2\,dx\,ds\\
				&\qquad
				+\mu_{\mathrm{GE}}\int_0^t\int_{\mathbb{R}^N}\rho^\alpha|\nabla v|^2\,dx\,ds
				\le
				\int_{\mathbb{R}^N}\left(\frac12\rho_0|v_0|^2+\Pi(\rho_0)\right)\,dx,
			\end{aligned}
		\end{equation}
		where
		\begin{equation}\label{eq23}
			\mu_{\mathrm{GE}}
			:=
			(2\nu-c)(N\alpha-N+1)
			=
			\nu(1-\beta)(N\alpha-N+1)>0.
		\end{equation}
		Consequently, there exists a positive constant
		\[
		C=C(N,\alpha,\gamma,\nu,\varepsilon,E_0)
		\]
		such that
		\begin{equation}\label{eq24}
			\sup_{0\le t\le T}\int_{\mathbb{R}^N}\rho|v|^2\,dx
			+
			\int_0^T\int_{\mathbb{R}^N}\rho^{\gamma+\alpha-3}|\nabla\rho|^2\,dx\,dt
			+
			\int_0^T\int_{\mathbb{R}^N}\rho^\alpha|\nabla v|^2\,dx\,dt
			\le C.
		\end{equation}
		More precisely, one may take
		\begin{equation}\label{eq25}
			C
			=
			\frac{2}{1-\beta}
			\max\left\{2,\,\frac{1}{c\gamma\alpha},\,\frac{1}{\mu_{\mathrm{GE}}}\right\}E_0.
		\end{equation}
	\end{proposition}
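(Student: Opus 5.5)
The plan is to test the momentum equation of \eqref{sys of v} with $v$, integrate over $\mathbb{R}^N$, and read off an exact entropy balance whose dissipative terms are then bounded from below. The point that makes this work is that the continuity equation in \eqref{sys of v} is a transport equation for the drift $w:=v-c\alpha\rho^{\alpha-2}\nabla\rho=v-c\rho^{-1}\nabla\rho^\alpha$. Indeed, $\rho_t+\operatorname{div}(\rho w)=0$, and $w$ is exactly the convecting field in the momentum equation. All boundary terms at infinity vanish because $\rho-1\in H^3$, $v\in H^2$, $0<\rho$ is bounded above and below on $[0,T]$, and $\Pi(\rho)\sim(\rho-1)^2$ near $\rho=1$. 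Hence every integration by parts below is justified for the local strong solution.

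\textbf{Kinetic and pressure terms.} From $\rho_t+\operatorname{div}(\rho w)=0$ one gets
\[
\int\bigl(\rho v_t+\rho w\cdot\nabla v\bigr)\cdot v\,dx=\frac{d}{dt}\int\tfrac12\rho|v|^2\,dx .
\]
For the pressure I use \eqref{eq17} and then $\operatorname{div}(\rho v)=-\rho_t+c\Delta\rho^\alpha$:
\[
\int\nabla\rho^\gamma\cdot v\,dx=-\int\Pi'(\rho)\operatorname{div}(\rho v)\,dx=\frac{d}{dt}\int\Pi(\rho)\,dx+c\int\Pi''(\rho)\nabla\rho\cdot\nabla\rho^\alpha\,dx .
\]
Since $\Pi''(\rho)=\gamma\rho^{\gamma-2}$, the last integral equals $c\gamma\alpha\int\rho^{\gamma+\alpha-3}|\nabla\rho|^2\,dx$. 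This is precisely the good density-dissipation term in \eqref{eq22}; it appears for every $\gamma\ge1$, including $\gamma=1$ where $\Pi''=1/\rho$.

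\textbf{Viscous terms (the main point).} Integrating by parts, the right-hand side of the momentum equation tested by $v$ equals
\[
-\int\rho^\alpha\Bigl(\nu|\nabla v|^2+(\nu-c)(\nabla v)^\top:\nabla v+(\alpha-1)(2\nu-c)(\operatorname{div}v)^2\Bigr)\,dx .
\]
To bound the integrand from below I use $\nu-c=-\nu\beta\le0$ and $|(\nabla v)^\top:\nabla v|\le|\nabla v|^2$, which give
\[
\nu|\nabla v|^2+(\nu-c)(\nabla v)^\top:\nabla v\ge\nu(1-\beta)|\nabla v|^2=(2\nu-c)|\nabla v|^2 .
\]
Since $\alpha<1$, the coefficient $(\alpha-1)(2\nu-c)$ is negative. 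I therefore use $(\operatorname{div}v)^2\le N|\nabla v|^2$, so that
\[
(\alpha-1)(2\nu-c)(\operatorname{div}v)^2\ge-(1-\alpha)N(2\nu-c)|\nabla v|^2 .
\]
Adding the two bounds gives the integrand $\ge(2\nu-c)(N\alpha-N+1)|\nabla v|^2=\mu_{\mathrm{GE}}|\nabla v|^2$, which is positive because $\alpha>\frac{N-1}{N}$. Integrating the resulting differential inequality in time yields \eqref{eq22}.

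\textbf{Constant bound \eqref{eq24}--\eqref{eq25}.} It remains to bound the initial BD entropy by $E_0$.
\begin{itemize}
\item From $v_0=u_0+c\alpha\rho_0^{\alpha-2}\nabla\rho_0$ and $c\alpha\rho_0^{\alpha-2}\nabla\rho_0=\frac{2c\alpha}{2\alpha-1}\rho_0^{-1/2}\nabla\rho_0^{\alpha-\frac12}$, one gets
\[
\tfrac12\rho_0|v_0|^2\le\rho_0|u_0|^2+\frac{4c^2\alpha^2}{(2\alpha-1)^2}\bigl|\nabla\rho_0^{\alpha-\frac12}\bigr|^2 .
\]
\item Since $c_+c_-=\varepsilon^2$, we have $c^2=\varepsilon^2\frac{1+\beta}{1-\beta}\le\frac{2\varepsilon^2}{1-\beta}$.
\item Combining these, $\int(\tfrac12\rho_0|v_0|^2+\Pi(\rho_0))\,dx\le\frac{C}{1-\beta}E_0$ with a numerical constant $C$.
\end{itemize}
Dividing \eqref{eq22} by the coefficients $c\gamma\alpha$ and $\mu_{\mathrm{GE}}$ then gives \eqref{eq24} with a constant of the form \eqref{eq25}; I would tune the numerical factor to match exactly.

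The only genuinely delicate step is the coercivity of the viscous form. It is where the sign $\nu-c\le0$, the restriction $\alpha<1$, and the threshold $\alpha>\frac{N-1}{N}$ all enter.
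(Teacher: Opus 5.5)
Your proof is correct and follows the same structure as the paper's. You test the momentum equation with $v$, and use that the convecting field $v-c\alpha\rho^{\alpha-2}\nabla\rho$ is exactly $u$, so that $\rho_t+\operatorname{div}(\rho u)=0$ closes the kinetic term. You convert the pressure term via \eqref{eq17} into $\frac{d}{dt}\int\Pi(\rho)\,dx$ plus $c\gamma\alpha\int\rho^{\gamma+\alpha-3}|\nabla\rho|^2\,dx$. You then bound the initial BD entropy using $c^2/\varepsilon^2=(1+\beta)/(1-\beta)$.

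The one step you argue differently is the coercivity of the viscous form. The paper splits $A$ into antisymmetric, traceless symmetric and trace parts and obtains the identity $Q(A)=c|A_a|^2+(2\nu-c)|A_0|^2+\frac{\mu_{\mathrm{GE}}}{N}(\operatorname{tr}A)^2$. You instead use $A^\top:A\le|A|^2$ together with $\nu-c=-\nu\beta\le0$, and then $(\operatorname{tr}A)^2\le N|A|^2$. Both give the same optimal constant $\mu_{\mathrm{GE}}$: your two inequalities are equalities at $A=\lambda I$, which is exactly where the paper's identity attains its minimum. Your version is shorter; the identity also shows stronger control of the antisymmetric and traceless parts, which is not needed here.

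On the explicit constant \eqref{eq25}, the unweighted inequality $|a+b|^2\le2|a|^2+2|b|^2$ along your chain only bounds the initial entropy by $\frac{4}{1-\beta}E_0$. The factor $\frac{2}{1-\beta}$ comes from the weighted inequality $|u_0+c\alpha\rho_0^{\alpha-2}\nabla\rho_0|^2\le(1+\theta)|u_0|^2+(1+\theta^{-1})c^2\alpha^2\rho_0^{2\alpha-4}|\nabla\rho_0|^2$ with $\theta=c^2/\varepsilon^2$. For this choice, $1+\theta=(1+\theta^{-1})c^2/\varepsilon^2=\frac{2}{1-\beta}$, which is what the paper uses.

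Do not try to match \eqref{eq25} more precisely than that. Estimate \eqref{eq22} bounds the sum at each fixed $t$, whereas \eqref{eq24} takes the supremum of the first term plus the integrals over all of $(0,T)$. That passage can cost a further factor of up to $2$, which the paper's final ``taking the supremum'' step does not record either.
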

	
	\begin{proof}
		Since
		\begin{equation}\label{eq26}
			\rho_t+\operatorname{div}(\rho u)=0,
		\end{equation}
		we test the momentum equation in \eqref{sys of v} against $v$ and integrate over $\mathbb{R}^N$. Owing to \eqref{eq26},
		\[
		\int_{\mathbb{R}^N}\bigl(\rho v_t+\rho u\cdot\nabla v\bigr)\cdot v\,dx
		=
		\frac{d}{dt}\int_{\mathbb{R}^N}\frac12\rho|v|^2\,dx.
		\]
		On the other hand, by \eqref{eq17} and the first equation in \eqref{sys of v},
		\[
		\begin{aligned}
			\frac{d}{dt}\int_{\mathbb{R}^N}\Pi(\rho)\,dx
			&=
			\int_{\mathbb{R}^N}\Pi'(\rho)\rho_t\,dx\\
			&=
			-\int_{\mathbb{R}^N}\Pi'(\rho)\operatorname{div}(\rho v)\,dx
			+c\int_{\mathbb{R}^N}\Pi'(\rho)\Delta\rho^\alpha\,dx\\
			&=
			\int_{\mathbb{R}^N}\rho v\cdot\nabla\Pi'(\rho)\,dx
			-c\int_{\mathbb{R}^N}\nabla\Pi'(\rho)\cdot\nabla\rho^\alpha\,dx\\
			&=
			\int_{\mathbb{R}^N}v\cdot\nabla\rho^\gamma\,dx
			-c\gamma\alpha\int_{\mathbb{R}^N}\rho^{\gamma+\alpha-3}|\nabla\rho|^2\,dx.
		\end{aligned}
		\]
		Integrating by parts in the viscous terms, we obtain
		\[
		\begin{aligned}
			&\frac{d}{dt}\int_{\mathbb{R}^N}\left(\frac12\rho|v|^2+\Pi(\rho)\right)\,dx
			+c\gamma\alpha\int_{\mathbb{R}^N}\rho^{\gamma+\alpha-3}|\nabla\rho|^2\,dx\\
			&\qquad
			+\int_{\mathbb{R}^N}\rho^\alpha Q(\nabla v)\,dx
			=0,
		\end{aligned}
		\]
		where
		\[
		Q(A):=
		\nu|A|^2+(\nu-c)A:A^\top+(\alpha-1)(2\nu-c)(\operatorname{tr}A)^2.
		\]
		
		We now estimate the quadratic form $Q$. For any $A\in\mathbb{R}^{N\times N}$, set
		\[
		A_s:=\frac{A+A^\top}{2},
		\qquad
		A_a:=\frac{A-A^\top}{2},
		\qquad
		A_0:=A_s-\frac{\operatorname{tr}A}{N}I.
		\]
		Then
		\[
		A=A_a+A_0+\frac{\operatorname{tr}A}{N}I,
		\qquad
		|A|^2=|A_a|^2+|A_0|^2+\frac{(\operatorname{tr}A)^2}{N},
		\]
		and
		\[
		A:A^\top=-|A_a|^2+|A_0|^2+\frac{(\operatorname{tr}A)^2}{N}.
		\]
		Hence
		\[
		\begin{aligned}
			Q(A)
			&=
			c|A_a|^2
			+(2\nu-c)|A_0|^2
			+\frac{(2\nu-c)(N\alpha-N+1)}{N}(\operatorname{tr}A)^2.
		\end{aligned}
		\]
		Since
		\[
		0<N\alpha-N+1<1,
		\qquad
		2\nu-c=\nu(1-\beta)>0,
		\qquad
		c=\nu(1+\beta)\ge 2\nu-c,
		\]
		it follows that
		\[
		Q(A)\ge \mu_{\mathrm{GE}}|A|^2
		\qquad\text{for all }A\in\mathbb{R}^{N\times N},
		\]
		with $\mu_{\mathrm{GE}}$ given by \eqref{eq23}. Taking $A=\nabla v$, we arrive at
		\[
		\frac{d}{dt}\int_{\mathbb{R}^N}\left(\frac12\rho|v|^2+\Pi(\rho)\right)\,dx
		+c\gamma\alpha\int_{\mathbb{R}^N}\rho^{\gamma+\alpha-3}|\nabla\rho|^2\,dx
		+\mu_{\mathrm{GE}}\int_{\mathbb{R}^N}\rho^\alpha|\nabla v|^2\,dx
		\le 0.
		\]
		Integrating this inequality over $(0,t)$ yields \eqref{eq22}.
		
		It remains to estimate the initial term on the right-hand side. Set
		\[
		w_0:=c\alpha\rho_0^{\alpha-2}\nabla\rho_0,
		\qquad
		v_0=u_0+w_0.
		\]
		For any $\theta>0$, Young's inequality gives
		\[
		\frac12\rho_0|v_0|^2
		\le
		(1+\theta)\frac12\rho_0|u_0|^2
		+
		(1+\theta^{-1})\frac12\rho_0|w_0|^2.
		\]
		Moreover,
		\[
		\frac12\rho_0|w_0|^2
		=
		\frac12c^2\alpha^2\rho_0^{2\alpha-3}|\nabla\rho_0|^2
		=
		\frac{c^2}{\varepsilon^2}\,
		\frac{2\varepsilon^2\alpha^2}{(2\alpha-1)^2}
		\left|\nabla\rho_0^{\alpha-\frac12}\right|^2.
		\]
		Choosing
		\[
		\theta=\frac{c^2}{\varepsilon^2},
		\]
		and using
		\[
		\frac{c^2}{\varepsilon^2}
		=
		\frac{\nu^2(1+\beta)^2}{\nu^2(1-\beta^2)}
		=
		\frac{1+\beta}{1-\beta},
		\]
		we infer that
		\[
		1+\theta
		=
		(1+\theta^{-1})\frac{c^2}{\varepsilon^2}
		=
		\frac{2}{1-\beta}.
		\]
		Therefore,
		\[
		\int_{\mathbb{R}^N}\frac12\rho_0|v_0|^2\,dx
		\le
		\frac{2}{1-\beta}
		\int_{\mathbb{R}^N}
		\left(
		\frac12\rho_0|u_0|^2
		+
		\frac{2\varepsilon^2\alpha^2}{(2\alpha-1)^2}
		\left|\nabla\rho_0^{\alpha-\frac12}\right|^2
		\right)\,dx.
		\]
		Since $\Pi(\rho_0)\ge 0$, it follows from \eqref{eq18} that
		\[
		\int_{\mathbb{R}^N}\left(\frac12\rho_0|v_0|^2+\Pi(\rho_0)\right)\,dx
		\le
		\frac{2}{1-\beta}E_0.
		\]
		
		Finally, we derive \eqref{eq24} and \eqref{eq25}. Since $\Pi(\rho)\ge 0$ by \eqref{eq16}, if we set
		\[
		M:=\max\left\{2,\,\frac{1}{c\gamma\alpha},\,\frac{1}{\mu_{\mathrm{GE}}}\right\},
		\]
		then \eqref{eq22} implies
		\[
		\begin{aligned}
			&\int_{\mathbb{R}^N}\rho|v|^2(x,t)\,dx
			+\int_0^t\int_{\mathbb{R}^N}\rho^{\gamma+\alpha-3}|\nabla\rho|^2\,dx\,ds
			+\int_0^t\int_{\mathbb{R}^N}\rho^\alpha|\nabla v|^2\,dx\,ds\\
			&\qquad
			\le
			M\int_{\mathbb{R}^N}\left(\frac12\rho_0|v_0|^2+\Pi(\rho_0)\right)\,dx
			\le
			\frac{2M}{1-\beta}E_0.
		\end{aligned}
		\]
		Taking the supremum over $t\in[0,T]$ completes the proof.
	\end{proof}
	
	For later use, we record two elementary estimates for the relative internal energy density $\Pi(\rho)$.
	
	\begin{lemma}\label{lem1}
		Let $\Pi(\rho)$ be defined by \eqref{eq16}. Then there exist positive constants
		\[
		c_{\mathrm{loc}}(\gamma)>0,
		\qquad
		C_{\mathrm{loc}}(\gamma)>0,
		\]
		depending only on $\gamma$, such that for every $\rho\in[0,4]$,
		\begin{equation}\label{eq27}
			c_{\mathrm{loc}}(\gamma)(\rho-1)^2
			\le
			\Pi(\rho)
			\le
			C_{\mathrm{loc}}(\gamma)(\rho-1)^2.
		\end{equation}
	\end{lemma}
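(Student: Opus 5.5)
The plan is to write $\Pi(\rho)=(\rho-1)^2 g(\rho)$ with $g$ continuous and strictly positive on the compact interval $[0,4]$. The two-sided bound \eqref{eq27} then follows by taking
\[
c_{\mathrm{loc}}(\gamma)=\min_{[0,4]}g,
\qquad
C_{\mathrm{loc}}(\gamma)=\max_{[0,4]}g.
\]

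Since $\Pi(1)=\Pi'(1)=0$ and $\Pi$ is $C^2$ on $(0,\infty)$, Taylor's formula with integral remainder gives
\[
\Pi(\rho)=(\rho-1)^2\int_0^1(1-s)\,\Pi''\bigl(1+s(\rho-1)\bigr)\,ds .
\]
Here $\Pi''(r)=\gamma r^{\gamma-2}$ for $\gamma>1$, and $\Pi''(r)=1/r$ for $\gamma=1$. So on $(0,4]$ we define $g(\rho)$ as this integral. For $\rho\ne1$ we equivalently have $g(\rho)=\Pi(\rho)/(\rho-1)^2$, and $g(1)=\Pi''(1)/2=\gamma/2$. Because $\Pi''>0$ on $(0,\infty)$, we get $g>0$ on $(0,4]$. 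Continuity on $(0,4]$ is clear, either from the quotient form away from $\rho=1$ or from dominated convergence in the integral form near $\rho=1$.

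The only delicate point is the endpoint $\rho=0$. There $\Pi''$ may blow up: this happens for $\gamma<2$, and in particular $\Pi''(r)=1/r$ when $\gamma=1$. To avoid it, I would not use the integral form at $0$. Instead I would note that $\Pi$ extends continuously to $\rho=0$, with
\[
\Pi(0)=\frac{\gamma-1}{\gamma-1}=1 \quad (\gamma>1),
\qquad
\Pi(0)=1 \quad (\gamma=1,\ \text{using } \rho\log\rho\to0).
\]
Hence $g(\rho)=\Pi(\rho)/(\rho-1)^2$ is continuous near $0$ with $g(0)=1>0$. Thus $g$ is continuous and positive on all of $[0,4]$, and compactness gives strictly positive constants depending only on $\gamma$.

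Alternatively, one could split $[0,4]$ into $[0,1/2]$ and $[1/2,4]$. On $[1/2,4]$, $\Pi''$ is bounded above and below by positive constants depending on $\gamma$, so the Taylor form gives the bounds directly. On $[0,1/2]$, $(\rho-1)^2\in[1/4,1]$ and $\Pi$ is continuous and positive (it vanishes only at $1$, by strict convexity), so $\Pi$ is bounded between two positive constants there. Either route works; the endpoint $\rho=0$ is the only step needing care.
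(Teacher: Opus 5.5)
Your proposal is correct and is essentially the paper's argument: the paper defines $H_\gamma(\rho)=\Pi(\rho)/(\rho-1)^2$ with $H_\gamma(1)=\tfrac12\Pi''(1)$. It gets continuity at $\rho=1$ from Taylor's theorem and positivity from strict convexity together with $\Pi(0)=1$, and then takes the minimum and maximum over the compact interval $[0,4]$. Your caution at $\rho=0$ is not actually needed for the integral form: there the integrand is $(1-s)\Pi''(1-s)=\gamma(1-s)^{\gamma-1}$, which is integrable and gives $g(0)=1$ directly.
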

	
	\begin{proof}
		Define
		\[
		H_\gamma(\rho):=
		\begin{cases}
			\dfrac{\Pi(\rho)}{(\rho-1)^2}, & \rho\neq 1,\\[2mm]
			\dfrac12\Pi''(1), & \rho=1.
		\end{cases}
		\]
		Clearly, $H_\gamma$ is continuous on $[0,4]\setminus\{1\}$. Moreover, by Taylor's theorem,
		\[
		\Pi(\rho)=\frac12\Pi''(1)(\rho-1)^2+o\bigl((\rho-1)^2\bigr)
		\qquad (\rho\to1),
		\]
		and hence $H_\gamma$ is also continuous at $\rho=1$. Therefore,
		\[
		H_\gamma\in C([0,4]).
		\]
		
		On the other hand,
		\[
		\Pi''(\rho)=
		\begin{cases}
			\gamma\rho^{\gamma-2}, & \gamma>1,\\[1mm]
			\dfrac1\rho, & \gamma=1,
		\end{cases}
		\qquad \rho>0,
		\]
		so $\Pi$ is strictly convex on $(0,\infty)$. Since
		\[
		\Pi(1)=0,
		\qquad
		\Pi'(1)=0,
		\]
		it follows that $\rho=1$ is the unique minimizer of $\Pi$ on $(0,\infty)$. Noting also that
		\[
		\Pi(0)=1
		\]
		(with the usual convention $0\log 0=0$ when $\gamma=1$), we obtain
		\[
		\Pi(\rho)>0
		\qquad
		\text{for all }\rho\in[0,4]\setminus\{1\}.
		\]
		Since
		\[
		H_\gamma(1)=\frac12\Pi''(1)>0,
		\]
		we conclude that
		\[
		H_\gamma(\rho)>0
		\qquad
		\text{for all }\rho\in[0,4].
		\]
		By the compactness of $[0,4]$, we may define
		\[
		c_{\mathrm{loc}}(\gamma):=\min_{0\le \rho\le 4}H_\gamma(\rho)>0,
		\qquad
		C_{\mathrm{loc}}(\gamma):=\max_{0\le \rho\le 4}H_\gamma(\rho)<\infty.
		\]
		This gives \eqref{eq27}.
	\end{proof}
	
	\begin{lemma}\label{lem2}
		Let $\Pi(\rho)$ be defined by \eqref{eq16}. Then, for every $\rho\ge 4$,
		\begin{equation}\label{eq28}
			\Pi(\rho)\ge c_{\mathrm{high}}(\gamma)\rho^\gamma,
		\end{equation}
		where
		\begin{equation}\label{eq29}
			c_{\mathrm{high}}(\gamma)
			:=
			\frac{\Pi(4)}{4^\gamma}
			=
			\begin{cases}
				\log 4-\dfrac34, & \gamma=1,\\[2mm]
				\dfrac{4^\gamma-1-3\gamma}{(\gamma-1)4^\gamma}, & \gamma>1.
			\end{cases}
		\end{equation}
		In particular, for every measurable set
		\[
		A\subset\{x\in\mathbb{R}^N:\rho(x)\ge 4\},
		\]
		one has
		\begin{equation}\label{eq30}
			\int_A \rho^\gamma\,dx
			\le
			c_{\mathrm{high}}(\gamma)^{-1}\int_A \Pi(\rho)\,dx.
		\end{equation}
	\end{lemma}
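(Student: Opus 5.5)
The plan is to show that the ratio $G(\rho):=\Pi(\rho)/\rho^\gamma$ is nondecreasing on $[4,\infty)$. Then $G(\rho)\ge G(4)=\Pi(4)/4^\gamma=c_{\mathrm{high}}(\gamma)$ for every $\rho\ge4$, which is exactly \eqref{eq28}.

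\emph{Case $\gamma>1$.} Here
\[
(\gamma-1)G(\rho)=1-\rho^{-\gamma}-\gamma\rho^{1-\gamma}+\gamma\rho^{-\gamma},
\]
and differentiating gives
\[
(\gamma-1)G'(\rho)=\gamma\rho^{-\gamma-1}\bigl((\gamma-1)\rho+1-\gamma\bigr)=\gamma(\gamma-1)\rho^{-\gamma-1}(\rho-1)\ge0
\quad\text{for }\rho\ge1.
\]
Hence $G$ is nondecreasing on $[1,\infty)$, and in particular on $[4,\infty)$.

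\emph{Case $\gamma=1$.} Here $G(\rho)=\log\rho-1+1/\rho$, so $G'(\rho)=1/\rho-1/\rho^2\ge0$ for $\rho\ge1$. The value $G(4)=\log4-\tfrac34$ agrees with \eqref{eq29}. In both cases the displayed value of $c_{\mathrm{high}}$ is obtained by direct substitution of $\rho=4$.

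Positivity of $c_{\mathrm{high}}$ follows from $\Pi(4)>0$, which holds by the strict convexity of $\Pi$ and $\Pi(1)=\Pi'(1)=0$ (as shown in the proof of Lemma \ref{lem1}). With this, \eqref{eq30} follows by integrating \eqref{eq28} over $A$ and dividing by $c_{\mathrm{high}}(\gamma)$.

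No step is a real obstacle; the only care needed is the sign computation of $G'$ in the case $\gamma>1$.
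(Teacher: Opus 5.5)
Your proposal is correct and follows essentially the same route as the paper: show that $G(\rho)=\Pi(\rho)/\rho^\gamma$ satisfies $G'(\rho)=\gamma\rho^{-\gamma-1}(\rho-1)\ge 0$ on $[1,\infty)$ in both cases, evaluate at $\rho=4$ to get $c_{\mathrm{high}}(\gamma)$, then integrate over $A$. The only difference is minor: you get positivity of $c_{\mathrm{high}}$ from strict convexity of $\Pi$, whereas the paper reads it off from $G(1)=0$ and the strict increase of $G$.
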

	
	\begin{proof}
		For $\rho>0$, define
		\[
		G_\gamma(\rho):=\frac{\Pi(\rho)}{\rho^\gamma},
		\]
		where, in the case $\gamma=1$, this simply means $G_1(\rho)=\Pi(\rho)/\rho$. A direct computation from \eqref{eq16} shows that, in both cases $\gamma=1$ and $\gamma>1$,
		\[
		G_\gamma'(\rho)=\gamma\rho^{-\gamma-1}(\rho-1),
		\qquad \rho>0.
		\]
		Hence $G_\gamma$ is strictly increasing on $[1,\infty)$. Therefore, for every $\rho\ge 4$,
		\[
		\frac{\Pi(\rho)}{\rho^\gamma}
		=
		G_\gamma(\rho)
		\ge
		G_\gamma(4)
		=
		\frac{\Pi(4)}{4^\gamma}
		=
		c_{\mathrm{high}}(\gamma),
		\]
		which proves \eqref{eq28}. Formula \eqref{eq29} follows immediately from \eqref{eq16}, and moreover
		\[
		c_{\mathrm{high}}(\gamma)=G_\gamma(4)>0.
		\]
		Finally, integrating \eqref{eq28} over any measurable set
		\[
		A\subset\{x\in\mathbb{R}^N:\rho(x)\ge 4\}
		\]
		yields \eqref{eq30}.
	\end{proof}
	
	The previous two lemmas show that the relative internal energy is coercive both near the far-field state and in the high-density regime. Combined with the gradient control furnished by Proposition \ref{prop1}, this yields the following uniform $H^1$-estimate for $\rho^{\alpha-\frac12}-1$, which will be used repeatedly in the sequel.
	
	\begin{proposition}\label{prop3}
		Let $N\in\{2,3\}$. Assume that \eqref{eq4}, \eqref{eq5}, \eqref{eq6}, \eqref{eq7}, and Assumption \ref{ass1} hold. Set
		\[
		m:=\alpha-\frac12\in\left(0,\frac12\right).
		\]
		Then, for every $T>0$, there exists a positive constant
		\[
		C=C(N,\alpha,\gamma,\nu,\beta)
		\]
		such that
		\begin{equation}\label{eq31}
			\left\|\rho^m-1\right\|_{L^\infty(0,T;H^1(\mathbb{R}^N))}
			\le
			CE_0^{1/2}.
		\end{equation}
	\end{proposition}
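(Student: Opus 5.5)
The estimate splits into the gradient part and the $L^2$ part of $\rho^m-1$, with $m=\alpha-\frac12\in(0,\frac12)$. The gradient part comes straight from the energy. By Proposition \ref{prop1},
\[
\sup_{0\le t\le T}\int_{\mathbb{R}^N}\left|\nabla\rho^{m}\right|^2dx\le \frac{(2\alpha-1)^2}{2\varepsilon^2\alpha^2}E_0 .
\]
Because $\varepsilon=\nu\sqrt{1-\beta^2}$, this constant depends only on $\alpha,\nu,\beta$. The substance of the proof is therefore the $L^2$ bound on $\rho^m-1$, which must come from $\int\Pi(\rho)\,dx\le E_0$. I split $\mathbb{R}^N$ into $\{\rho\le4\}$ and $\{\rho>4\}$ at each time $t$.

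\emph{Low-density region $\{\rho\le 4\}$.} Here I use the elementary inequality $|s^m-1|\le C_m|s-1|$ for $s\in[0,4]$. It holds because $s\mapsto (s^m-1)/(s-1)$ is bounded on $[0,4]$: the function is continuous away from $s=1$, tends to $m$ as $s\to1$, and equals $1$ at $s=0$. Combining this with Lemma \ref{lem1} gives
\[
\int_{\{\rho\le4\}}|\rho^m-1|^2dx\le C_m^2\,c_{\mathrm{loc}}(\gamma)^{-1}\int\Pi(\rho)\,dx\le CE_0 .
\]

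\emph{High-density region $\{\rho>4\}$.} Since $0<m<\frac12\le\frac{\gamma}{2}$, we have $|\rho^m-1|^2\le\rho^{2m}\le\rho^\gamma$ when $\rho\ge4$. Lemma \ref{lem2}, in the form \eqref{eq30}, then gives
\[
\int_{\{\rho>4\}}|\rho^m-1|^2dx\le c_{\mathrm{high}}(\gamma)^{-1}\int\Pi(\rho)\,dx\le CE_0 .
\]

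Summing the two regions and adding the gradient bound, then taking the supremum over $t\in[0,T]$, yields \eqref{eq31}. The constant depends only on $N,\alpha,\gamma,\nu,\beta$ and not on $T$.

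The one step needing care is confirming that Proposition \ref{prop1} applies: it requires $\alpha>\frac{N-1}{N}$, and Assumption \ref{ass1} supplies this. One should also check that $m>0$, so that $\rho^m$ behaves well and the high-density comparison $\rho^{2m}\le\rho^\gamma$ holds. Both hold under the standing assumption $\alpha>\frac{\sqrt5-1}{2}>\frac12$, together with the $N=3$ bound $\alpha>\frac{9\sqrt{3}-4\sqrt{2}}{9\sqrt{3}-2\sqrt{2}}$, which exceeds $\frac23$. Beyond this, the argument is a routine consequence of the earlier lemmas.
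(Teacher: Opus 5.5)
Your proposal is correct and follows the paper's proof essentially step by step. The gradient bound comes from the energy identity of Proposition~\ref{prop1}, and the $L^2$ bound uses the split at $\rho=4$: on the low-density side, the Lipschitz bound $|s^m-1|\le C|s-1|$ on $[0,4]$ combined with Lemma~\ref{lem1}; on the high-density side, $|\rho^m-1|^2\le\rho^{2m}\le\rho^\gamma$ combined with Lemma~\ref{lem2}. The only cosmetic differences are that the paper writes the slightly sharper constant $4^{2m-\gamma}$ and also records the measure bound $|\Omega_2(t)|\le\Pi(4)^{-1}E_0$ for later use.
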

	
	\begin{proof}
		By the energy identity \eqref{eq20} in Proposition \ref{prop1},
		\begin{equation}\label{eq32}
			\sup_{0\le t\le T}\int_{\mathbb{R}^N}\Pi(\rho)(x,t)\,dx
			\le
			E_0.
		\end{equation}
		Moreover,
		\begin{equation}\label{eq33}
			\sup_{0\le t\le T}\int_{\mathbb{R}^N}\left|\nabla\rho^m\right|^2\,dx
			\le
			\frac{(2\alpha-1)^2}{2\varepsilon^2\alpha^2}E_0
			=
			\frac{(2\alpha-1)^2}{2\nu^2(1-\beta^2)\alpha^2}E_0,
		\end{equation}
		where the last identity follows from \eqref{eq8}.
		
		Fix $t\in[0,T]$, and define
		\[
		\Omega_1(t):=\{x\in\mathbb{R}^N:\rho(x,t)\le 4\},
		\qquad
		\Omega_2(t):=\{x\in\mathbb{R}^N:\rho(x,t)>4\}.
		\]
		Since the function
		\[
		z\longmapsto \frac{z^m-1}{z-1}
		\]
		extends continuously to $[0,4]$, there exists a constant $C>0$ such that
		\[
		|\rho^m-1|\le C|\rho-1|
		\qquad\text{on }\Omega_1(t).
		\]
		Hence, by Lemma \ref{lem1} and \eqref{eq32},
		\[
		\int_{\Omega_1(t)}|\rho^m-1|^2\,dx
		\le
		C\int_{\Omega_1(t)}|\rho-1|^2\,dx
		\le
		C\int_{\Omega_1(t)}\Pi(\rho)\,dx
		\le
		CE_0.
		\]
		
		On the other hand, on $\Omega_2(t)$ we have $\rho\ge 4$. Since
		\[
		2m=2\alpha-1<1\le \gamma,
		\]
		it follows that
		\[
		|\rho^m-1|^2
		\le
		\rho^{2m}
		\le
		4^{\,2m-\gamma}\rho^\gamma.
		\]
		Therefore, by Lemma \ref{lem2} and \eqref{eq32},
		\[
		\int_{\Omega_2(t)}|\rho^m-1|^2\,dx
		\le
		C\int_{\Omega_2(t)}\rho^\gamma\,dx
		\le
		C\int_{\Omega_2(t)}\Pi(\rho)\,dx
		\le
		CE_0.
		\]
		Combining the estimates on $\Omega_1(t)$ and $\Omega_2(t)$, we obtain
		\[
		\sup_{0\le t\le T}\|\rho^m-1\|_{L^2(\mathbb{R}^N)}^2
		\le
		CE_0.
		\]
		Together with \eqref{eq33}, this yields
		\[
		\|\rho^m-1\|_{L^\infty(0,T;H^1(\mathbb{R}^N))}
		\le
		CE_0^{1/2},
		\]
		which proves \eqref{eq31}.
		
		For later use, we also record that $\Pi$ is increasing on $[1,\infty)$, and hence, on $\Omega_2(t)$,
		\[
		\Pi(\rho)\ge \Pi(4).
		\]
		Therefore,
		\begin{equation}\label{eq34}
			|\Omega_2(t)|
			\le
			\Pi(4)^{-1}\int_{\Omega_2(t)}\Pi(\rho)\,dx
			\le
			\Pi(4)^{-1}E_0.
		\end{equation}
	\end{proof}
	In the three-dimensional case, we next derive a higher-order spacetime estimate for suitable powers of the density. Combined later with Proposition \ref{prop3}, this bound will provide the regularity input needed for the derivation of the uniform upper bound in the next subsection.
	\begin{proposition}\label{prop4}
		Assume that $N=3$, and that \eqref{eq4}, \eqref{eq5}, \eqref{eq6}, \eqref{eq7}, and Assumption \ref{ass1} hold.
		Let $(\rho,u)$ be a sufficiently smooth solution to \eqref{eq1} on $\mathbb{R}^3\times[0,T]$ such that $\rho>0$, and let $v$ be defined by \eqref{eq12}.
		Then there exists a positive constant
		\[
		C=C(\alpha,\nu,\varepsilon,E_0)
		\]
		such that
		\begin{equation}\label{eq35}
			\int_0^T\int_{\mathbb{R}^3}
			\left|
			\nabla \rho^{\frac{3\alpha-2}{4}}
			\right|^4\,dx\,dt
			+
			\int_0^T\int_{\mathbb{R}^3}
			\left|
			\nabla^2 \rho^{\frac32\alpha-1}
			\right|^2\,dx\,dt
			\le C.
		\end{equation}
	\end{proposition}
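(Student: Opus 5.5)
The plan is to derive a BD-type entropy identity in which the Korteweg stress produces a fourth-order dissipation term, and then to bound that term from below by the two quantities in \eqref{eq35} using a Jüngel--Matthes type functional inequality. Concretely, the BD weight is $\nabla\varphi(\rho)$ with $\varphi'(\rho)=\mu'(\rho)/\rho=\nu\alpha\rho^{\alpha-2}$, so that $v=u+c\,\alpha\rho^{\alpha-2}\nabla\rho=u+\tfrac{c}{\nu}\nabla\varphi(\rho)$. I test the original momentum equation in \eqref{eq1} with $\nabla\varphi(\rho)$, use the continuity equation to handle the time derivative, and add the result to the energy identity \eqref{eq20}.

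The Korteweg contribution is computed from \eqref{eq19}. After one integration by parts, using $\rho\nabla\varphi(\rho)\cdot\nabla=\nu\nabla\rho^\alpha\cdot\nabla$, it reads
\[
\int \operatorname{div}\mathbb{K}\cdot\nabla\varphi(\rho)\,dx
=-\frac{2\nu\varepsilon^2\alpha^2}{2\alpha-1}\int \Delta\rho^{\alpha}\,\rho^{\alpha-\frac32}\,\Delta\rho^{\alpha-\frac12}\,dx .
\]
This integrand is homogeneous of total degree $3\alpha-2$ in $\rho$. The other terms of the identity are handled as follows:
\begin{itemize}
\item The pressure term gives the good term $\int\rho^{\gamma+\alpha-3}|\nabla\rho|^2$, exactly as in Proposition \ref{prop2}.
\item The viscous terms and the term $\partial_t\int\rho\,u\cdot\nabla\varphi$ recombine into $\tfrac{d}{dt}\int\tfrac12\rho|u+\nabla\varphi|^2$.
\item Any cross terms left over are controlled by the bounds already available in \eqref{est: basic}, \eqref{eq24} and \eqref{eq31}.
\end{itemize}
Alternatively, since \eqref{eq24} already controls $\sup_t\int\rho|v|^2$, $\int_0^T\!\int\rho^\alpha|\nabla v|^2$ and $\int_0^T\!\int\rho^{\gamma+\alpha-3}|\nabla\rho|^2$, all the lower-order quantities are under control from Propositions \ref{prop1}--\ref{prop2}. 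Integrating in time then yields
\[
\int_0^T\!\!\int \rho^{\alpha-\frac32}\,\Delta\rho^\alpha\,\Delta\rho^{\alpha-\frac12}\,dx\,dt\le C(E_0).
\]

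The key step is the functional inequality: for $\alpha$ in the range of Assumption \ref{ass1} (in particular $\alpha>2/3$, so that $3\alpha-2>0$), there is $\kappa_\alpha>0$ with
\[
\int \rho^{\alpha-\frac32}\Delta\rho^\alpha\,\Delta\rho^{\alpha-\frac12}\,dx
\ge \kappa_\alpha\left(\int|\nabla^2\rho^{\frac32\alpha-1}|^2dx+\int|\nabla\rho^{\frac{3\alpha-2}{4}}|^4dx\right).
\]
I would prove it by the Jüngel--Matthes systematic integration by parts method. First I write everything in terms of $w=\rho^{\frac32\alpha-1}$ and expand the left side as a quadratic form in the variables $|\nabla w|^4/w^2$, $\Delta w\,|\nabla w|^2/w$, $\nabla^2w(\nabla w,\nabla w)/w$ and $|\nabla^2 w|^2$. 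Then I add the zero-integral identities obtained from
\[
\int\operatorname{div}\bigl(w^{-1}|\nabla w|^2\nabla w\bigr)=0,\qquad
\int\operatorname{div}\bigl(\nabla^2w\nabla w-\Delta w\nabla w\bigr)=0,
\]
with free coefficients. Finally I choose those coefficients so that the resulting pointwise quadratic form is positive definite. This uses $(\operatorname{tr}A)^2\le N|A|^2$ and the Cauchy--Schwarz bound for $A(\xi,\xi)$ in dimension $3$. I expect this algebraic positivity to be the main obstacle: the window of admissible $\alpha$ must be checked, and it is plausibly what produces part of the lower bound on $\alpha$ in \eqref{eq10}.

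A secondary technical point is justifying all the integrations by parts on $\mathbb{R}^3$. Since $\rho\to1$ at infinity and $\rho-1\in H^3$ on $[0,T]$ with $\rho$ bounded above and below for the local solution, every boundary term at infinity vanishes. However, the constant in \eqref{eq35} must depend only on $E_0$ and the parameters, so I keep only the a priori quantities from \eqref{est: basic}, \eqref{eq24} and \eqref{eq31} in the final bound. In particular, the $|\nabla\rho^{\frac{3\alpha-2}{4}}|^4$ and $\nabla^2\rho^{\frac32\alpha-1}$ norms must appear only on the left-hand side, never multiplied by a solution-dependent constant.
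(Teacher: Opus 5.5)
Your route differs from the paper's and can be completed, but as written its decisive step is only announced. The lower bound for $\int\rho^{\alpha-\frac32}\Delta\rho^\alpha\Delta\rho^{\alpha-\frac12}\,dx$ is the whole content of the proposition, and you leave the positivity of the Jüngel--Matthes quadratic form unchecked. It closes directly, without any such search.

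First, the BD identity needs no a priori bounds. With test function $\nabla\varphi$, the tested viscous terms $-\int(2\mu\mathbb{D}u:\nabla^2\varphi+\lambda\operatorname{div}u\,\Delta\varphi)\,dx$ equal exactly $-\frac{d}{dt}\int\rho|\nabla\varphi|^2dx$. After adding \eqref{eq20}:
the conserved density is $\frac12\rho|u+\nabla\varphi|^2+\frac12\rho|\nabla\varphi|^2$ plus the energy terms; the viscous dissipation is $\nu\int\rho^\alpha(|\nabla u|^2-(1-\alpha)(\operatorname{div}u)^2)\,dx\ge0$; and no cross terms are left over. The time integral of the fourth-order term is then bounded by the initial BD energy, $\le C(\nu,\varepsilon)E_0$, whatever its sign.

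Second, since $\kappa(\rho)=\frac{\varepsilon^2}{\nu^2}\mu'(\rho)^2/\rho$, a direct check gives
\[
\frac{2\varepsilon^2\alpha^2}{2\alpha-1}\rho^{\alpha-\frac32}\Delta\rho^{\alpha-\frac12}
=\frac{\varepsilon^2}{\nu^2}\Bigl(\rho\varphi'(\rho)\Delta\varphi+\tfrac12|\nabla\varphi|^2\Bigr).
\]
Using $\rho\nabla\varphi=\nabla\mu(\rho)$, \eqref{eq19} becomes the generalized Bohm identity $\operatorname{div}\mathbb{K}=\frac{\varepsilon^2}{\nu^2}\bigl[\operatorname{div}(\mu(\rho)\nabla^2\varphi)+\frac12\nabla(\lambda(\rho)\Delta\varphi)\bigr]$. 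Hence
\[
-\int\operatorname{div}\mathbb{K}\cdot\nabla\varphi\,dx
=\frac{\varepsilon^2}{\nu}\int\rho^\alpha\bigl(|\nabla^2\varphi|^2-(1-\alpha)(\Delta\varphi)^2\bigr)dx
\ge\frac{(3\alpha-2)\varepsilon^2}{\nu}\int\rho^\alpha|\nabla^2\varphi|^2dx.
\]
So your fourth-order term controls $M=\int_0^T\!\int\rho^\alpha|\nabla^2\rho^{\alpha-1}|^2\,dx\,dt$. Converting $M$ into the two quantities of \eqref{eq35} is the one-step argument described below. Only $\alpha>\frac23$ is used here. The threshold $\frac{9\sqrt3-4\sqrt2}{9\sqrt3-2\sqrt2}$ comes from the $\|\nabla\rho\|_{L^4}$ estimate in Proposition~\ref{prop: ho1}, not from this step.

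The paper never computes a fourth-order term. The capillarity is already absorbed into the effective velocity of \eqref{sys of v}. Since $\nabla v-\nabla u=-\theta\nabla^2\phi$ with $\phi=\rho^{\alpha-1}$ and $\theta=\frac{c\alpha}{1-\alpha}$ is a symmetric Hessian, one has $\rho^\alpha|\nabla v|^2\ge\frac{\theta^2}{2}\rho^\alpha|\nabla^2\phi|^2-2\rho^\alpha|\mathbb{D}u|^2$. The energy dissipation in \eqref{est: basic} and the BD dissipation in \eqref{eq24} then bound $M$ at once. You had this ingredient in your ``alternatively'' remark but did not exploit it.

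From there both routes coincide. With $w=\rho^{\frac32\alpha-1}$ one has $\rho^{-\alpha/2}\nabla w\propto\nabla\phi$. Expanding $\rho^\alpha|\nabla(\rho^{-\alpha/2}\nabla w)|^2$ gives $\int|\nabla^2w|^2+A+I$ with $A\propto\int|\nabla\rho^{\frac{3\alpha-2}{4}}|^4$. The cross term $I=\frac\alpha2\int\Delta\log\rho\,|\nabla w|^2$ is bounded below by $-CM-\frac12A$ after discarding a sign-definite part and one Young inequality.

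The paper's proof is shorter because it recycles \eqref{eq24}. Yours is self-contained in $(\rho,u)$ and exhibits explicitly the Bohm-type structure of the capillarity that the $v$-reformulation hides, at the price of re-deriving the BD entropy.
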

	
	\begin{proof}
		All integrations by parts below are justified by the smoothness of the solution together with the far-field conditions \eqref{eq5} and \eqref{eq15}. Throughout the proof, $C$ denotes a positive constant depending only on $\alpha$, $\nu$, $\varepsilon$, and $E_0$.
		
		Set
		\[
		\phi:=\rho^{\alpha-1},
		\qquad
		\theta:=\frac{c\alpha}{1-\alpha}.
		\]
		By \eqref{eq12},
		\[
		u=v+\theta\nabla\phi,
		\qquad
		\nabla v=\nabla u-\theta\nabla^2\phi.
		\]
		Since $\nabla^2\phi$ is symmetric, we have
		\[
		\nabla u:\nabla^2\phi=\mathbb{D}u:\nabla^2\phi.
		\]
		Hence
		\begin{equation}\label{est: nabla2 phi weighted}
			\begin{aligned}
				\rho^\alpha|\nabla v|^2
				&=
				\rho^\alpha|\nabla u|^2
				+\theta^2\rho^\alpha|\nabla^2\phi|^2
				-2\theta\rho^\alpha \mathbb{D}u:\nabla^2\phi \\
				&\ge
				\frac{\theta^2}{2}\rho^\alpha|\nabla^2\phi|^2
				-2\rho^\alpha|\mathbb{D}u|^2,
			\end{aligned}
		\end{equation}

		where we used Young's inequality in the last step. Integrating over $\mathbb{R}^3\times(0,T)$, we obtain
		\[
		\int_0^T\int_{\mathbb{R}^3}\rho^\alpha|\nabla^2\phi|^2\,dx\,dt
		\le
		C\int_0^T\int_{\mathbb{R}^3}\rho^\alpha|\nabla v|^2\,dx\,dt
		+
		C\int_0^T\int_{\mathbb{R}^3}\rho^\alpha|\mathbb{D}u|^2\,dx\,dt.
		\]
		By \eqref{est: basic} and \eqref{eq24},
		\[
		\int_0^T\int_{\mathbb{R}^3}\rho^\alpha|\mathbb{D}u|^2\,dx\,dt
		+
		\int_0^T\int_{\mathbb{R}^3}\rho^\alpha|\nabla v|^2\,dx\,dt
		\le C.
		\]
		Therefore,
		\[
		M:=
		\int_0^T\int_{\mathbb{R}^3}\rho^\alpha|\nabla^2\phi|^2\,dx\,dt
		\le C.
		\]
		
		We now convert this weighted Hessian bound into \eqref{eq35}. Set
		\[
		w:=\rho^{\frac32\alpha-1},
		\qquad
		z:=\rho^{\frac{3\alpha-2}{4}}.
		\]
		Since
		\[
		\rho^{-\frac{\alpha}{2}}\nabla w
		=
		\frac{3\alpha-2}{2(\alpha-1)}\nabla\phi,
		\]
		we infer that
		\[
		\left(\frac{3\alpha-2}{2(1-\alpha)}\right)^2 M
		=
		\int_0^T\int_{\mathbb{R}^3}
		\rho^\alpha
		\left|
		\nabla\!\left(\rho^{-\frac{\alpha}{2}}\nabla w\right)
		\right|^2\,dx\,dt.
		\]
		Expanding the square gives
		\[
		\left(\frac{3\alpha-2}{2(1-\alpha)}\right)^2 M
		=
		\int_0^T\int_{\mathbb{R}^3}|\nabla^2 w|^2\,dx\,dt
		+
		A
		+
		I,
		\]
		where
		\[
		A:=
		\int_0^T\int_{\mathbb{R}^3}
		\rho^\alpha
		\left|
		\nabla\rho^{-\frac{\alpha}{2}}
		\right|^2
		|\nabla w|^2\,dx\,dt,
		\]
		and
		\[
		I:=
		2\int_0^T\int_{\mathbb{R}^3}
		\rho^{\frac{\alpha}{2}}
		\nabla\rho^{-\frac{\alpha}{2}}
		\cdot\nabla^2 w\cdot\nabla w\,dx\,dt.
		\]
		A direct computation yields
		\[
		A
		=
		\frac{16\alpha^2}{(3\alpha-2)^2}
		\int_0^T\int_{\mathbb{R}^3}|\nabla z|^4\,dx\,dt.
		\]
		
		It remains to estimate the cross term $I$. Since
		\[
		\rho^{\frac{\alpha}{2}}\nabla\rho^{-\frac{\alpha}{2}}
		=
		-\frac{\alpha}{2}\rho^{-1}\nabla\rho,
		\]
		an integration by parts gives
		\[
		I
		=
		\frac{\alpha}{2}\int_0^T\int_{\mathbb{R}^3}
		\operatorname{div}(\rho^{-1}\nabla\rho)\,|\nabla w|^2\,dx\,dt
		=
		\frac{\alpha}{2(\alpha-1)}
		\int_0^T\int_{\mathbb{R}^3}
		\operatorname{div}(\rho^{1-\alpha}\nabla\phi)\,|\nabla w|^2\,dx\,dt.
		\]
		Moreover,
		\[
		\nabla\rho^{1-\alpha}\cdot\nabla\phi
		=
		-(\alpha-1)^2\rho^{-2}|\nabla\rho|^2
		\le 0.
		\]
		Since $\alpha<1$, the contribution of this term is nonnegative, and therefore
		\[
		I
		\ge
		\frac{\alpha}{2(\alpha-1)}
		\int_0^T\int_{\mathbb{R}^3}
		\rho^{1-\alpha}\Delta\phi\,|\nabla w|^2\,dx\,dt.
		\]
		Hence, for any $\delta>0$, Young's inequality yields
		\[
		I
		\ge
		-C_\alpha
		\int_0^T\int_{\mathbb{R}^3}
		\rho^{1-\alpha}|\Delta\phi|\,|\nabla w|^2\,dx\,dt
		\]
		and
		\[
		\rho^{1-\alpha}|\Delta\phi|\,|\nabla w|^2
		\le
		\delta\,\rho^\alpha|\Delta\phi|^2
		+
		\frac{1}{4\delta}\rho^{2-3\alpha}|\nabla w|^4.
		\]
		In dimension three,
		\[
		|\Delta\phi|^2\le 3|\nabla^2\phi|^2,
		\]
		while
		\[
		\rho^{2-3\alpha}|\nabla w|^4
		=
		\frac{(3\alpha-2)^2}{\alpha^2}
		\rho^\alpha
		\left|
		\nabla\rho^{-\frac{\alpha}{2}}
		\right|^2
		|\nabla w|^2.
		\]
		Consequently,
		\[
		I\ge -C_\alpha\delta\,M-\frac{C_\alpha}{\delta}\,A,
		\]
		where $C_\alpha>0$ depends only on $\alpha$. Choosing $\delta=\delta(\alpha)$ so that
		\[
		\frac{C_\alpha}{\delta}=\frac12,
		\]
		we obtain
		\[
		I\ge -C_\alpha M-\frac12 A.
		\]
		
		Substituting this estimate into the preceding expansion, we arrive at
		\[
		\int_0^T\int_{\mathbb{R}^3}|\nabla^2 w|^2\,dx\,dt
		+\frac12 A
		\le
		C_\alpha M
		\le C.
		\]
		Recalling the expression for $A$ and the definitions
		\[
		w=\rho^{\frac32\alpha-1},
		\qquad
		z=\rho^{\frac{3\alpha-2}{4}},
		\]
		we conclude that
		\[
		\int_0^T\int_{\mathbb{R}^3}
		\left|
		\nabla \rho^{\frac{3\alpha-2}{4}}
		\right|^4\,dx\,dt
		+
		\int_0^T\int_{\mathbb{R}^3}
		\left|
		\nabla^2 \rho^{\frac32\alpha-1}
		\right|^2\,dx\,dt
		\le C,
		\]
		which is exactly \eqref{eq35}. The proof is complete.
	\end{proof}
	As a direct consequence of Propositions \ref{prop3} and \ref{prop4}, we obtain the following spacetime $H^2$-estimate for $\rho^{\frac32\alpha-1}-1$ in dimension three.
	\begin{corollary}\label{cor1}
		Assume that $N=3$, and that \eqref{eq4}, \eqref{eq5}, \eqref{eq6}, \eqref{eq7}, and Assumption \ref{ass1} hold.
		Let $(\rho,u)$ be a sufficiently smooth strictly positive local strong solution to \eqref{eq1} on $\mathbb{R}^3\times[0,T^\ast)$, and let $v$ be defined by \eqref{eq12}.
		Then, for every $T\in(0,T^\ast)$, there exists a positive constant
		\[
		C=C(\alpha,\gamma,\nu,\beta,E_0,T^\ast)>0
		\]
		such that
		\begin{equation}\label{eq36}
			\left\|
			\rho^{\frac32\alpha-1}-1
			\right\|_{L^2(0,T;H^2(\mathbb{R}^3))}
			\le C.
		\end{equation}
	\end{corollary}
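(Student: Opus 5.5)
The plan is to control, for $w:=\rho^{\frac32\alpha-1}$, the three pieces of the $H^2$ norm of $w-1$ separately: the $L^2$ norm of $w-1$, the $L^2$ norm of $\nabla w$, and the $L^2$ norm of $\nabla^2 w$, each in $L^2(0,T)$. The Hessian part is already furnished by Proposition \ref{prop4}. So the work reduces to $\|w-1\|_{L^\infty(0,T;L^2)}$ and a bound on $\nabla w$ in $L^2(0,T;L^2)$. Integrating in time then costs only a factor $T\le T^\ast$, which explains the dependence of the constant on $T^\ast$.

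\textbf{Zeroth order.} Use the splitting $\Omega_1(t)\cup\Omega_2(t)$ from Proposition \ref{prop3}.
\begin{itemize}
\item On $\Omega_1(t)=\{\rho\le4\}$, the map $z\mapsto (z^{\frac32\alpha-1}-1)/(z-1)$ extends continuously to $[0,4]$. Here $\frac32\alpha-1>0$ in the range $\alpha>\frac23$ guaranteed by Assumption \ref{ass1}. Hence $|w-1|\le C|\rho-1|$, and Lemma \ref{lem1} with \eqref{eq32} gives $\int_{\Omega_1}|w-1|^2\le CE_0$.
\item On $\Omega_2(t)$, since $2(\frac32\alpha-1)=3\alpha-2<1\le\gamma$, we have $|w-1|^2\le\rho^{3\alpha-2}\le C\rho^\gamma$. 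Lemma \ref{lem2} then bounds this region by $CE_0$.
\end{itemize}
Thus $\sup_t\|w-1\|_{L^2}^2\le CE_0$.

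\textbf{First order.} I would avoid a direct pointwise comparison with $\nabla\rho^{m}$, $m=\alpha-\frac12$. Indeed $\nabla w=\frac{3\alpha-2}{2\alpha-1}\rho^{\frac{\alpha}{2}-\frac12}\nabla\rho^m$, and the weight $\rho^{(\alpha-1)/2}$ is unbounded where $\rho$ is small, since no lower bound is available yet. Instead I would use Gagliardo–Nirenberg/interpolation on $\mathbb{R}^3$: $\|\nabla(w-1)\|_{L^2}^2\le \|w-1\|_{L^2}\|\nabla^2 w\|_{L^2}$, which follows by integrating by parts $\int|\nabla w|^2=-\int (w-1)\Delta w$ and applying Cauchy–Schwarz with $|\Delta w|\le\sqrt3|\nabla^2w|$. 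Integrating in time gives
\[
\int_0^T\|\nabla w\|_{L^2}^2\,dt\le \sup_t\|w-1\|_{L^2}\,T^{1/2}\Big(\int_0^T\|\nabla^2w\|_{L^2}^2\,dt\Big)^{1/2}\le C(T^\ast).
\]

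\textbf{Main obstacle.} The only delicate points are justifying the integration by parts in the first-order step and checking the region $\rho\le4$ at zeroth order.
\begin{itemize}
\item For the integration by parts, $w-1\in H^3$ for the local strong solution because $\rho$ is bounded above and below on $[0,T]$ for $T<T^\ast$. This is only qualitative and does not enter the constants.
\item For the low-density region, the positivity of the exponent $\frac32\alpha-1$ is needed so that $w\le C$ on $\{\rho\le4\}$ with no appeal to a lower density bound.
\end{itemize}
Summing the three bounds yields \eqref{eq36}.
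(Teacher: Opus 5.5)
Your proof is correct and is essentially the paper's argument. The Hessian bound comes from Proposition~\ref{prop4}, the $L^\infty(0,T;L^2)$ bound on $\rho^{\frac32\alpha-1}-1$ comes from the energy, and the gradient is handled by the interpolation $\|\nabla f\|_{L^2}^2\le\|f\|_{L^2}\|\nabla^2 f\|_{L^2}$, integrated in time with Cauchy--Schwarz. The only cosmetic differences are two. First, the paper gets the zeroth-order bound from the pointwise inequality $|\rho^{\sigma}-1|\le|\rho^{m}-1|$ (valid since $\sigma/m\in(0,1)$, with $\sigma=\frac32\alpha-1$ and $m=\alpha-\frac12$) together with Proposition~\ref{prop3}, rather than redoing the $\Omega_1/\Omega_2$ splitting. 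Second, it proves the interpolation inequality via Plancherel rather than by integration by parts.
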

	
	\begin{proof}
		Set
		\[
		m:=\alpha-\frac12,
		\qquad
		\sigma:=\frac32\alpha-1.
		\]
		Since $N=3$, Assumption \ref{ass1} yields
		\[
		\alpha\in\left(\frac23,1\right),
		\]
		and hence
		\[
		0<\sigma<m.
		\]
		Let
		\[
		\theta:=\frac{\sigma}{m}\in(0,1).
		\]
		Then $\rho^\sigma=(\rho^m)^\theta$, and for every $y\ge0$ one has
		\[
		|y^\theta-1|\le |y-1|.
		\]
		Therefore,
		\[
		|\rho^\sigma-1|
		\le
		|\rho^m-1|
		\qquad\text{a.e. in }\mathbb{R}^3\times(0,T).
		\]
		Combining this with \eqref{eq31} in Proposition \ref{prop3}, we obtain
		\[
		\|\rho^\sigma-1\|_{L^\infty(0,T;L^2(\mathbb{R}^3))}
		\le
		\|\rho^m-1\|_{L^\infty(0,T;L^2(\mathbb{R}^3))}
		\le C.
		\]
		
		On the other hand, Proposition \ref{prop4} gives
		\[
		\|\nabla^2(\rho^\sigma-1)\|_{L^2(0,T;L^2(\mathbb{R}^3))}
		=
		\|\nabla^2\rho^\sigma\|_{L^2(0,T;L^2(\mathbb{R}^3))}
		\le C.
		\]
		
		Set
		\[
		f:=\rho^\sigma-1.
		\]
		Then, for almost every $t\in(0,T)$, Plancherel's theorem yields the standard interpolation inequality
		\[
		\|\nabla f(t)\|_{L^2(\mathbb{R}^3)}^2
		\le
		\|f(t)\|_{L^2(\mathbb{R}^3)}
		\|\nabla^2 f(t)\|_{L^2(\mathbb{R}^3)}.
		\]
		Integrating in time and using the Cauchy--Schwarz inequality, we infer that
		\begin{align*}
			\|\nabla f\|_{L^2(0,T;L^2(\mathbb{R}^3))}^2
			&\le
			\|f\|_{L^\infty(0,T;L^2(\mathbb{R}^3))}
			\int_0^T \|\nabla^2 f(t)\|_{L^2(\mathbb{R}^3)}\,dt \\
			&\le
			(T^\ast)^{1/2}
			\|f\|_{L^\infty(0,T;L^2(\mathbb{R}^3))}
			\|\nabla^2 f\|_{L^2(0,T;L^2(\mathbb{R}^3))} \\
			&\le C.
		\end{align*}
		Moreover,
		\[
		\|f\|_{L^2(0,T;L^2(\mathbb{R}^3))}
		\le
		T^{1/2}\|f\|_{L^\infty(0,T;L^2(\mathbb{R}^3))}
		\le
		(T^\ast)^{1/2}\|f\|_{L^\infty(0,T;L^2(\mathbb{R}^3))}
		\le C.
		\]
		Collecting the above bounds, we conclude that
		\[
		\|f\|_{L^2(0,T;H^2(\mathbb{R}^3))}\le C,
		\]
		where $C$ depends only on $\alpha,\gamma,\nu,\beta,E_0$, and $T^\ast$.
		Recalling that $f=\rho^\sigma-1$ and $\sigma=\frac32\alpha-1$, we obtain \eqref{eq36}.
		By \eqref{eq8}, the same dependence may equivalently be expressed in terms of $\alpha,\gamma,\nu,\varepsilon,E_0$, and $T^\ast$.
	\end{proof}
	\subsection{Upper bound for the density}\label{subsec1}
	
	To initiate the proof of the upper bound for the density, we first derive a weighted $L^{q+2}$-estimate for the effective velocity $v$. This estimate will be a key ingredient in the argument below. For the sake of narrative convenience, let
	\begin{equation}\label{eq37}
		q_{N,\ast}
		:=
		\frac{4(1-\beta)(N\alpha-N+1)}
		{\bigl(\beta+\sqrt{N}(1-\alpha)(1-\beta)\bigr)^2}.
	\end{equation}
	In fact, under Assumption \ref{ass1}, one has the bound
	\[
	q_{N,\ast}>2.
	\]
	Indeed, regarding $q_{N,\ast}$ as a function of $\beta$, we write
	\[
	q_{N,\ast}(\beta)
	=
	\frac{4(1-\beta)(N\alpha-N+1)}
	{\bigl(\beta+\sqrt{N}(1-\alpha)(1-\beta)\bigr)^2}
	=
	\frac{4(1-\beta)(N\alpha-N+1)}
	{\bigl(\sqrt{N}(1-\alpha)+(1-\sqrt{N}(1-\alpha))\beta\bigr)^2}.
	\]
	Since Assumption \ref{ass1} implies $\alpha>\frac{N-1}{N}$, we have
	\[
	N\alpha-N+1>0.
	\]
	Moreover, Assumption \ref{ass1} also yields $\alpha>1-\frac1{\sqrt N}$, and therefore
	\[
	\sqrt{N}(1-\alpha)<1.
	\]
	A direct differentiation gives
	\[
	\frac{d}{d\beta}q_{N,\ast}(\beta)
	=
	-\frac{4(N\alpha-N+1)\bigl(2-\beta-\sqrt{N}(1-\alpha)(1-\beta)\bigr)}
	{\bigl(\beta+\sqrt{N}(1-\alpha)(1-\beta)\bigr)^3}
	<0
	\qquad \text{for all } \beta\in[0,1).
	\]
	Hence $q_{N,\ast}(\beta)$ is strictly decreasing with respect to $\beta$.
	
	Next, let
	\[
	P_N(\beta;\alpha)
	:=
	\beta^2
	+
	\frac{2(1-\alpha)(N\alpha^2+\sqrt{N}\alpha+1-N)}
	{\alpha(1-\sqrt{N}(1-\alpha))^2}\,\beta
	+
	\frac{(1-\alpha)(-N\alpha^2-N\alpha+2N-2)}
	{\alpha(1-\sqrt{N}(1-\alpha))^2}.
	\]
	Then, by Assumption \ref{ass1}, $\beta_N^+(\alpha)$ is the unique positive root of
	\[
	P_N(\beta;\alpha)=0,
	\]
	and the admissible parameter range is precisely $0\le \beta<\beta_N^+(\alpha)$. Furthermore,
	\[
	P_N(0;\alpha)
	=
	\frac{(1-\alpha)(-N\alpha^2-N\alpha+2N-2)}
	{\alpha(1-\sqrt{N}(1-\alpha))^2}
	<0.
	\]
	Indeed, for $N=2$, the condition $\alpha>\frac{\sqrt5-1}{2}$ gives
	\[
	2\alpha^2+2\alpha-2>0,
	\]
	while for $N=3$, the condition
	\[
	\alpha>\frac{9\sqrt3-4\sqrt2}{9\sqrt3-2\sqrt2}
	\]
	implies
	\[
	3\alpha^2+3\alpha-4>0.
	\]
	Consequently, since $P_N(\beta;\alpha)$ has leading coefficient $1$, has a unique positive root $\beta_N^+(\alpha)$, and satisfies $P_N(0;\alpha)<0$, we infer that
	\[
	P_N(\beta;\alpha)<0
	\qquad \text{for all } \beta\in[0,\beta_N^+(\alpha)).
	\]
	
	We now use the identity
	\[
	\frac{\alpha}{2}
	\bigl(\beta+\sqrt{N}(1-\alpha)(1-\beta)\bigr)^2
	\bigl(q_{N,\ast}-2\bigr)
	=
	2(2\alpha-1)(1-\beta)(N\alpha-N+1)
	-
	\alpha(1-\sqrt{N}(1-\alpha))^2 P_N(\beta;\alpha),
	\]
	which follows from a straightforward algebraic computation. Under Assumption \ref{ass1}, we have $\alpha>\frac12$, $1-\beta>0$, $N\alpha-N+1>0$, and $-P_N(\beta;\alpha)>0$. Therefore the right-hand side is strictly positive, and thus
	\[
	q_{N,\ast}-2>0.
	\]
	This proves that
	\[
	q_{N,\ast}>2.
	\]
	\begin{proposition}\label{prop5}
		Assume that $N\in\{2,3\}$, and that \eqref{eq4}, \eqref{eq5}, \eqref{eq6}, \eqref{eq7}, and Assumption \ref{ass1} hold.
		Let $(\rho,u)$ be a sufficiently smooth strictly positive local strong solution to \eqref{eq1} on $\mathbb{R}^N\times[0,T^\ast)$, and let $v$ be defined by \eqref{eq12}.
		
		For $N=2$, let $q$ be any exponent satisfying
		\begin{equation}\label{eq38}
			2\le q<q_{2,\ast}.
		\end{equation}
		For $N=3$, assume in addition that
		\begin{equation}\label{prop5:gamma}
			1\le \gamma<\frac{9}{2}\alpha-2,
		\end{equation}
		and choose $q$ such that
		\begin{equation}\label{eq39}
			2\le q<q_{3,\ast},
			\qquad
			\gamma<3\alpha-1+\frac{6\alpha-4}{q+2}.
		\end{equation}
		
		Then, for every $T\in(0,T^\ast)$, there exists a positive constant
		\[
		C=C\!\left(
		N,\alpha,\gamma,\nu,\beta,E_0,T^\ast,q,
		\left\|\rho_0^{\frac{1}{q+2}}v_0\right\|_{L^{q+2}(\mathbb{R}^N)}
		\right)>0
		\]
		such that
		\begin{equation}\label{eq40}
			\sup_{0\le t\le T}\int_{\mathbb{R}^N}\rho|v|^{q+2}(x,t)\,dx
			\le C.
		\end{equation}
		Equivalently,
		\begin{equation}\label{eq41}
			\sup_{0\le t\le T}
			\left\|
			\rho^{\frac{1}{q+2}}v
			\right\|_{L^{q+2}(\mathbb{R}^N)}
			\le C.
		\end{equation}
	\end{proposition}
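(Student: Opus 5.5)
We test the momentum equation of \eqref{sys of v} against $|v|^qv$ and integrate over $\mathbb{R}^N$. By the continuity equation written as $\rho_t+\operatorname{div}(\rho u)=0$ with $u=v-c\alpha\rho^{\alpha-2}\nabla\rho$, the time derivative and convection terms combine into $\frac{1}{q+2}\frac{d}{dt}\int\rho|v|^{q+2}\,dx$.

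\textbf{Viscous terms.} Integrating by parts produces two contributions. The first is $\int\rho^\alpha|v|^q Q(\nabla v)\,dx$, with $Q$ the quadratic form from the proof of Proposition \ref{prop2}. The second consists of the cross terms generated by differentiating $|v|^q$. These cross terms have the form $q\int\rho^\alpha|v|^{q-2}$ times a bilinear expression in $\nabla v$ and $v\otimes\nabla|v|$.

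Writing $\nabla v$ through its antisymmetric, traceless symmetric and trace parts, and using $|\nabla|v||\le|\nabla v|$, we must find a coercivity constant $\delta_q>0$ such that the total integrand is bounded below by $\delta_q\rho^\alpha|v|^q|\nabla v|^2$. The worst case comes from the coefficients $\nu-c=-\nu\beta$ and $(\alpha-1)(2\nu-c)$. In the trace direction we use the estimate $|\operatorname{div}v|\le\sqrt N|\nabla v|$. An optimization over the split of $\nabla v$, i.e.\ a discriminant condition for a quadratic in $q$, should show that positivity holds exactly when $q<q_{N,\ast}$. This is consistent with the form of \eqref{eq37}, whose denominator $(\beta+\sqrt N(1-\alpha)(1-\beta))^2$ is precisely the squared size of the bad cross terms. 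This yields
\[
\frac{1}{q+2}\frac{d}{dt}\int\rho|v|^{q+2}dx+\delta_q\int\rho^\alpha|v|^q|\nabla v|^2dx\le C\Bigl|\int\nabla\rho^\gamma\cdot|v|^qv\,dx\Bigr|.
\]

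\textbf{Pressure term.} We integrate by parts once more, writing it as $\int(\rho^\gamma-1)\operatorname{div}(|v|^qv)$, which is bounded by $C\int|\rho^\gamma-1|\,|v|^q|\nabla v|$. By Young's inequality this is at most $\frac{\delta_q}{2}\int\rho^\alpha|v|^q|\nabla v|^2+C\int\rho^{-\alpha}|\rho^\gamma-1|^2|v|^q$.

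We then split $\mathbb{R}^N=\Omega_1(t)\cup\Omega_2(t)$ as in \eqref{eq:domain-split}.

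On $\Omega_1(t)$, the density lower bound is not yet available. We therefore avoid dividing by $\rho^\alpha$ there and instead keep the form $\int_{\Omega_1}\nabla\rho^\gamma\cdot|v|^qv$. We write $\nabla\rho^\gamma=\rho\cdot\gamma\rho^{\gamma-1}\nabla\rho$ and note that $\rho^{\gamma-1}\le C$ there. We then bound this by $\int\rho|v|^{q+2}$ plus $\int_{\Omega_1}\rho^{\gamma+\alpha-3}|\nabla\rho|^2\cdot(\text{weights})$, combining Hölder's inequality with the BD bound \eqref{eq24}, and interpolating $\int\rho|v|^{q+1}$ between $\int\rho|v|^2$ and $\int\rho|v|^{q+2}$. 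These terms are of Gronwall type: they have the form $C(1+h(t))\int\rho|v|^{q+2}$ with $h\in L^1(0,T)$ given by \eqref{eq24} and Proposition \ref{prop1}.

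On $\Omega_2(t)$, which has finite measure by \eqref{eq34}, the remaining term is $\int_{\Omega_2}\rho^{2\gamma-\alpha}|v|^q$. By Hölder's inequality it is bounded by
\[
\Bigl(\int\rho|v|^{q+2}\Bigr)^{\frac{q}{q+2}}\Bigl(\int_{\Omega_2}\rho^{\frac{(2\gamma-\alpha)(q+2)-q}{2}}\Bigr)^{\frac{2}{q+2}}.
\]
For $N=2$, the density integrability needed here follows from Proposition \ref{prop3}, since $\rho^m-1\in L^\infty H^1\subset L^\infty L^p$ for every $p<\infty$. This gives $\rho\in L^\infty L^p(\Omega_2)$ for all $p$, so any $\gamma\ge1$ is allowed. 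For $N=3$, we use Corollary \ref{cor1}: $\rho^{\frac32\alpha-1}\in L^2(0,T;L^\infty)$ together with $L^\infty L^6$ from Proposition \ref{prop3} gives a space-time bound on $\int_0^T(\int_{\Omega_2}\rho^{\zeta'})^{\cdots}dt$. The conditions \eqref{prop5:gamma} and \eqref{eq39} are exactly what make the required exponent of $\rho$ admissible and the time exponent at most one. Gronwall's inequality then closes \eqref{eq40}.

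\textbf{Main obstacle.} We expect the hardest step to be the three-dimensional exponent bookkeeping on $\Omega_2(t)$, namely matching the Hölder and interpolation exponents to the thresholds $\gamma<3\alpha-1+\frac{6\alpha-4}{q+2}$ and $\gamma<\frac92\alpha-2$. We would first try the interpolation $\|\rho^\sigma\|_{L^\infty}\lesssim\|\nabla\rho^\sigma\|_{L^2}^{1/2}\|\nabla^2\rho^\sigma\|_{L^2}^{1/2}$ with $\sigma=\frac32\alpha-1$, combined with the pointwise bound $\rho^{p}\le\|\rho\|_{L^\infty}^{p-\zeta}\rho^{\zeta}$ on $\Omega_2(t)$.
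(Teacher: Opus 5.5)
Your treatment of the viscous term, and of $\Omega_2(t)$ in two dimensions, matches the paper. However, there are two genuine gaps.

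\textbf{The $\Omega_1(t)$ step does not close.} If you keep $\int_{\Omega_1}\nabla\rho^\gamma\cdot|v|^qv\,dx$ in gradient form, you must control $\int_{\Omega_1}\rho^{\gamma-1}|\nabla\rho|\,|v|^{q+1}\,dx$.

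\begin{itemize}
\item The only gradient information available is the space-time bound $\int_0^T\!\int\rho^{\gamma+\alpha-3}|\nabla\rho|^2\le C$ from \eqref{eq24}, together with $\nabla\rho^{\alpha-\frac12}\in L^\infty_tL^2_x$.
\item Cauchy--Schwarz against this leaves $\int_{\Omega_1}\rho^{\gamma+1-\alpha}|v|^{2q+2}\,dx$. This is a moment of order $2q+2>q+2$, which nothing controls.
\item Any other splitting puts a weight $|v|^q$ on $|\nabla\rho|^2$, which is also uncontrolled.
\end{itemize}

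So the claimed Gronwall form $C(1+h(t))\int\rho|v|^{q+2}$ with $h\in L^1(0,T)$ is not obtained. Also, having already integrated by parts globally, you cannot revert to the gradient form on $\Omega_1$ alone without boundary terms.

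The vacuum singularity that forced this detour is self-inflicted. Since $|v|^qv\in W^{1,1}(\mathbb{R}^N)$, one has $\int\operatorname{div}(|v|^qv)\,dx=0$. Hence $\int\nabla\rho^\gamma\cdot|v|^qv\,dx=-\int\rho^\gamma\operatorname{div}(|v|^qv)\,dx$, with no $-1$. Young's inequality then produces $C\int\rho^{2\gamma-\alpha}|v|^q\,dx$. Because $2\gamma-\alpha>1$, you have $\rho^{2\gamma-\alpha}\le C\rho$ on $\Omega_1(t)$. Hölder between $\int\rho|v|^2$ and $\int\rho|v|^{q+2}$ then gives $C(1+\int\rho|v|^{q+2})$, with no lower bound on $\rho$ needed. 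This is the paper's route.

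\textbf{In three dimensions on $\Omega_2(t)$, the interpolations you name are not sharp enough for the stated range.} Write $m=\alpha-\frac12$, $\sigma=\frac32\alpha-1$, $\lambda=\frac{(2\gamma-\alpha)(q+2)-q}{2}$, and use $\int_{\Omega_2}\rho^\lambda\le\|\rho\|_{L^\infty(\Omega_2)}^{\lambda-6m}\int_{\Omega_2}\rho^{6m}$.

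\begin{itemize}
\item With $\rho^\sigma\in L^2_tL^\infty_x$ from Corollary \ref{cor1}, the quantity $(\int_{\Omega_2}\rho^\lambda)^{2/(q+2)}$ is integrable in time only if $\lambda\le 6m+\sigma(q+2)$, i.e. $\gamma\le 2\alpha-\frac12+\frac{6\alpha-4}{q+2}$.
\item With your interpolation $\|g\|_{L^\infty}\lesssim\|\nabla g\|_{L^2}^{1/2}\|\nabla^2 g\|_{L^2}^{1/2}$, which gives $g\in L^4_tL^\infty_x$, you get only $\gamma\le\frac{7\alpha-3}{2}+\frac{6\alpha-4}{q+2}$.
\end{itemize}

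Both fall strictly below the threshold $3\alpha-1+\frac{6\alpha-4}{q+2}$ in \eqref{eq39}. So your assertion that \eqref{eq39} is exactly what your scheme needs is not borne out.

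The missing ingredient is to interpolate the truncation $g=\eta(\rho)\rho^\sigma$ between two bounds:
\begin{itemize}
\item $L^\infty_tL^{p_0}_x$ with $p_0=6m/\sigma>6$, from Proposition \ref{prop3}, using $\rho^m\le C(\rho^m-1)$ on $\{\rho>2\}$;
\item $L^2_tW^{1,6}_x$, from Corollary \ref{cor1} plus the finite measure of $\{\rho>2\}$.
\end{itemize}
Gagliardo--Nirenberg then yields $\int_0^T(\int_{\Omega_2}\rho^{\zeta'})^{2/(q+2)}dt\le C$ with $\zeta'=\frac{(5\alpha-3)q+22\alpha-12}{2}$. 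The condition $\lambda<\zeta'$ is precisely \eqref{eq39}, and Gronwall is applied to $(1+\int\rho|v|^{q+2})^{2/(q+2)}$. Within your own pointwise scheme, the endpoint inequality $\|g\|_{L^\infty}\lesssim\|g\|_{L^{p_0}}^{p_0/(p_0+6)}\|\nabla g\|_{L^6}^{6/(p_0+6)}$ recovers the same threshold.
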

	
	\begin{proof}
		It is enough to establish \eqref{eq40}. We shall derive a differential inequality for
		\begin{equation}\label{eq42}
			\int_{\mathbb{R}^N}\rho|v|^{q+2}(x,t)\,dx.
		\end{equation}
		
		Set $\Psi=|v|^qv$. Testing the second equation in \eqref{sys of v} against $\Psi$ and using \eqref{eq26}, we obtain
		\begin{equation}\label{eq43}
			\int_{\mathbb{R}^N}
			\bigl(\rho v_t+\rho u\cdot\nabla v\bigr)\cdot\Psi\,dx
			=
			\frac{1}{q+2}\frac{d}{dt}\int_{\mathbb{R}^N}\rho|v|^{q+2}\,dx.
		\end{equation}
		For the pressure term, an integration by parts together with
		\[
		\bigl|\operatorname{div}(|v|^qv)\bigr|\le C_{N,q}|v|^q|\nabla v|
		\]
		gives
		\begin{equation}\label{eq44}
			\left|
			\int_{\mathbb{R}^N}\nabla\rho^\gamma\cdot\Psi\,dx
			\right|
			\le
			C_{N,q}\int_{\mathbb{R}^N}\rho^\gamma|v|^q|\nabla v|\,dx.
		\end{equation}
		
		For the viscous contribution, write $e=v/|v|$ and $\xi=\nabla|v|$ on $\{v\neq0\}$. A direct computation yields
		\begin{align}
			&\nu\int_{\mathbb{R}^N}\rho^\alpha\nabla v:\nabla\Psi\,dx
			+(\nu-c)\int_{\mathbb{R}^N}\rho^\alpha(\nabla v)^\top:\nabla\Psi\,dx
			\notag\\
			&\qquad
			+(\alpha-1)(2\nu-c)\int_{\mathbb{R}^N}\rho^\alpha(\operatorname{div}v)\operatorname{div}\Psi\,dx
			\notag\\
			&=
			\int_{\mathbb{R}^N}\rho^\alpha|v|^q
			\Bigl(
			\nu|\nabla v|^2
			+(\nu-c)\nabla v:(\nabla v)^\top
			+(\alpha-1)(2\nu-c)(\operatorname{div}v)^2
			\Bigr)\,dx
			\notag\\
			&\quad
			+
			q\int_{\mathbb{R}^N}\rho^\alpha|v|^q
			\Bigl(
			\nu|\xi|^2
			+(\nu-c)(\nabla v\,e)\cdot\xi
			+(\alpha-1)(2\nu-c)(\operatorname{div}v)\,e\cdot\xi
			\Bigr)\,dx.
			\label{eq45}
		\end{align}
		By the argument already used in Proposition \ref{prop2}, the quadratic form in the first integral on the right-hand side of \eqref{eq45} is bounded from below by $\mu_{\mathrm{GE}}|\nabla v|^2$. For the second integral we use
		\[
		|(\nabla v\,e)\cdot\xi|\le |\nabla v|\,|\xi|,
		\qquad
		|\operatorname{div}v|\le \sqrt{N}\,|\nabla v|,
		\]
		together with $c-\nu=\nu\beta$ and $2\nu-c=\nu(1-\beta)$, and complete the square. This gives
		\[
		\nu|\xi|^2
		+(\nu-c)(\nabla v\,e)\cdot\xi
		+(\alpha-1)(2\nu-c)(\operatorname{div}v)\,e\cdot\xi
		\ge
		-\frac{\bigl(c-\nu+\sqrt{N}(1-\alpha)(2\nu-c)\bigr)^2}{4\nu}
		|\nabla v|^2.
		\]
		Hence
		\begin{equation}\label{eq46}
			\begin{aligned}
				&\nu\int_{\mathbb{R}^N}\rho^\alpha\nabla v:\nabla\Psi\,dx
				+(\nu-c)\int_{\mathbb{R}^N}\rho^\alpha(\nabla v)^\top:\nabla\Psi\,dx\\
				&\qquad
				+(\alpha-1)(2\nu-c)\int_{\mathbb{R}^N}\rho^\alpha(\operatorname{div}v)\operatorname{div}\Psi\,dx\\
				&\ge
				\delta_q\int_{\mathbb{R}^N}\rho^\alpha|v|^q|\nabla v|^2\,dx,
			\end{aligned}
		\end{equation}
		where
		\begin{equation}\label{eq47}
			\delta_q
			:=
			\nu
			\left[
			(1-\beta)(N\alpha-N+1)
			-
			\frac{q}{4}
			\bigl(\beta+\sqrt{N}(1-\alpha)(1-\beta)\bigr)^2
			\right].
		\end{equation}
		Since \(q<q_{N,\ast}\), by \eqref{eq37} we have \(\delta_q>0\).
		Combining \eqref{eq43}, \eqref{eq44}, and \eqref{eq46}, and then applying Young's inequality to the pressure term, we arrive at
		\begin{equation}\label{eq48}
			\frac{1}{q+2}\frac{d}{dt}\int_{\mathbb{R}^N}\rho|v|^{q+2}\,dx
			+
			\frac{\delta_q}{2}\int_{\mathbb{R}^N}\rho^\alpha|v|^q|\nabla v|^2\,dx
			\le
			C\int_{\mathbb{R}^N}\rho^{2\gamma-\alpha}|v|^q\,dx.
		\end{equation}
		
		Fix $t\in[0,T]$ and decompose
		\[
		\Omega_1(t):=\{x\in\mathbb{R}^N:\rho(x,t)\le 4\},
		\qquad
		\Omega_2(t):=\{x\in\mathbb{R}^N:\rho(x,t)>4\}.
		\]
		Write
		\begin{equation}\label{eq49}
			\int_{\mathbb{R}^N}\rho^{2\gamma-\alpha}|v|^q\,dx
			=
			I_1(t)+I_2(t),
		\end{equation}
		where
		\[
		I_1(t):=\int_{\Omega_1(t)}\rho^{2\gamma-\alpha}|v|^q\,dx,
		\qquad
		I_2(t):=\int_{\Omega_2(t)}\rho^{2\gamma-\alpha}|v|^q\,dx.
		\]
		
		On $\Omega_1(t)$ one has $\rho^{2\gamma-\alpha}\le C\rho$, since $2\gamma-\alpha>1$. Therefore,
		\[
		I_1(t)\le C\int_{\mathbb{R}^N}\rho|v|^q\,dx.
		\]
		By Hölder's inequality,
		\[
		\int_{\mathbb{R}^N}\rho|v|^q\,dx
		\le
		\left(\int_{\mathbb{R}^N}\rho|v|^2\,dx\right)^{\frac{2}{q}}
		\left(\int_{\mathbb{R}^N}\rho|v|^{q+2}\,dx\right)^{\frac{q-2}{q}},
		\]
		with the usual convention that the second factor is $1$ when $q=2$. Using \eqref{eq24}, we infer
		\begin{equation}\label{eq50}
			I_1(t)\le
			C\left(1+\int_{\mathbb{R}^N}\rho|v|^{q+2}\,dx\right).
		\end{equation}
		
		We first consider the case $N=2$. Let
		\[
		m:=\alpha-\frac12\in\left(0,\frac12\right),
		\qquad
		\lambda:=\frac{(2\gamma-\alpha)(q+2)-q}{2}.
		\]
		By Hölder's inequality,
		\[
		I_2(t)
		\le
		\left(\int_{\mathbb{R}^2}\rho|v|^{q+2}\,dx\right)^{\frac{q}{q+2}}
		\left(
		\int_{\Omega_2(t)}\rho^\lambda\,dx
		\right)^{\frac{2}{q+2}}.
		\]
		On $\Omega_2(t)$ we have $\rho^m\le C(\rho^m-1)$, hence
		\[
		\int_{\Omega_2(t)}\rho^\lambda\,dx
		\le
		C\|\rho^m-1\|_{L^{\lambda/m}(\mathbb{R}^2)}^{\lambda/m}.
		\]
		Moreover,
		\[
		\frac{\lambda}{m}
		=
		\frac{(2\gamma-\alpha)(q+2)-q}{2\alpha-1}
		=
		\frac{(2\gamma-\alpha-1)q+4\gamma-2\alpha}{2\alpha-1}
		\ge
		\frac{4-2\alpha}{2\alpha-1}
		>
		2.
		\]
		Thus \eqref{eq31} and the Sobolev embedding $H^1(\mathbb{R}^2)\hookrightarrow L^r(\mathbb{R}^2)$ for all finite $r\ge2$ yield
		\[
		\sup_{0\le t\le T}\int_{\Omega_2(t)}\rho^\lambda\,dx\le C.
		\]
		Consequently,
		\begin{equation}\label{eq51}
			I_2(t)\le
			C\left(\int_{\mathbb{R}^2}\rho|v|^{q+2}\,dx\right)^{\frac{q}{q+2}}
			\le
			C\left(1+\int_{\mathbb{R}^2}\rho|v|^{q+2}\,dx\right).
		\end{equation}
		Combining \eqref{eq48}, \eqref{eq50}, and \eqref{eq51}, we obtain
		\[
		\frac{d}{dt}\int_{\mathbb{R}^2}\rho|v|^{q+2}\,dx
		\le
		C\left(1+\int_{\mathbb{R}^2}\rho|v|^{q+2}\,dx\right).
		\]
		Gronwall's inequality gives \eqref{eq40} in dimension two.
		
		It remains to treat the case $N=3$. Set
		\[
		m:=\alpha-\frac12\in\left(\frac16,\frac12\right),
		\qquad
		\sigma:=\frac32\alpha-1\in\left(0,\frac12\right),
		\qquad
		\lambda:=\frac{(2\gamma-\alpha)(q+2)-q}{2}.
		\]
		As above,
		\begin{equation}\label{eq52}
			I_2(t)
			\le
			\left(\int_{\mathbb{R}^3}\rho|v|^{q+2}\,dx\right)^{\frac{q}{q+2}}
			\left(
			\int_{\Omega_2(t)}\rho^\lambda\,dx
			\right)^{\frac{2}{q+2}}.
		\end{equation}
		
		Choose $\eta\in C^\infty([0,\infty))$ such that
		\[
		0\le \eta\le 1,
		\qquad
		\eta(s)=0 \ \text{for } 0\le s\le 2,
		\qquad
		\eta(s)=1 \ \text{for } s\ge 4,
		\]
		and define
		\[
		g:=\eta(\rho)\rho^\sigma.
		\]
		Then $g=\rho^\sigma$ on $\Omega_2(t)$. Let also
		\[
		\widetilde{\Omega}_2(t):=\{x\in\mathbb{R}^3:\rho(x,t)>2\}.
		\]
		Since $\Pi$ is increasing on $[1,\infty)$,
		\begin{equation}\label{eq53}
			|\widetilde{\Omega}_2(t)|
			\le
			\Pi(2)^{-1}\int_{\widetilde{\Omega}_2(t)}\Pi(\rho)\,dx
			\le
			\Pi(2)^{-1}E_0.
		\end{equation}
		In particular, $\{x\in\mathbb{R}^3:\ g(x,t)\neq0\}\subset \widetilde{\Omega}_2(t)$ for every $t$.
		
		Now let
		\[
		p_0:=\frac{6m}{\sigma}
		=
		\frac{12\alpha-6}{3\alpha-2}>6.
		\]
		On $\{\rho>2\}$ we have $\rho^m\le C(\rho^m-1)$, and therefore
		\[
		|g|^{p_0}\le C|\rho^m-1|^6.
		\]
		Hence, by \eqref{eq31} and the embedding $H^1(\mathbb{R}^3)\hookrightarrow L^6(\mathbb{R}^3)$,
		\begin{equation}\label{eq54}
			\|g\|_{L^\infty(0,T;L^{p_0}(\mathbb{R}^3))}\le C.
		\end{equation}
		Further, \eqref{eq53} and $p_0>6$ imply
		\[
		\|g(t)\|_{L^6(\mathbb{R}^3)}
		\le
		|\widetilde{\Omega}_2(t)|^{\frac16-\frac1{p_0}}
		\|g(t)\|_{L^{p_0}(\mathbb{R}^3)},
		\]
		so $\|g\|_{L^2(0,T;L^6(\mathbb{R}^3))}\le C$. On the other hand,
		\[
		\nabla g
		=
		\eta'(\rho)\rho^\sigma\nabla\rho+\eta(\rho)\nabla\rho^\sigma.
		\]
		Since \(\eta'(\rho)\neq0\) only when \(2\le \rho\le4\), where $\rho$ is uniformly bounded above and below, we have $|\nabla g|\le C|\nabla\rho^\sigma|$. Therefore, by \eqref{eq36} and the embedding $H^1(\mathbb{R}^3)\hookrightarrow L^6(\mathbb{R}^3)$,
		\[
		\|\nabla g\|_{L^2(0,T;L^6(\mathbb{R}^3))}\le C.
		\]
		Thus
		\begin{equation}\label{eq55}
			\|g\|_{L^2(0,T;W^{1,6}(\mathbb{R}^3))}\le C.
		\end{equation}
		
		We next interpolate between \eqref{eq54} and \eqref{eq55}. Choose $\theta\in(0,1)$ and $\zeta>1$ so that
		\begin{equation}\label{eq56}
			\frac{1}{\zeta}
			=
			\frac{\theta}{p_0}
			-
			\frac{1-\theta}{6}.
		\end{equation}
		Then the Gagliardo--Nirenberg inequality gives
		\begin{equation}\label{eq57}
			\|g\|_{L^\zeta(\mathbb{R}^3)}
			\le
			C
			\|g\|_{L^{p_0}(\mathbb{R}^3)}^\theta
			\|g\|_{W^{1,6}(\mathbb{R}^3)}^{1-\theta}.
		\end{equation}
		We now impose
		\begin{equation}\label{eq58}
			\frac{2}{q+2}\zeta=\frac{2}{1-\theta}.
		\end{equation}
		With this choice, \eqref{eq54}, \eqref{eq55}, \eqref{eq57}, and Hölder's inequality in time imply
		\begin{equation}\label{eq59}
			\|g\|_{L^{\frac{2}{q+2}\zeta}(0,T;L^\zeta(\mathbb{R}^3))}\le C.
		\end{equation}
		Solving \eqref{eq56} and \eqref{eq58}, we find
		\[
		\zeta
		=
		\frac{(5\alpha-3)q+22\alpha-12}{3\alpha-2}.
		\]
		Set
		\begin{equation}\label{eq60}
			\zeta'
			:=
			\sigma\zeta
			=
			\frac{(5\alpha-3)q+22\alpha-12}{2}.
		\end{equation}
		Since $g=\rho^\sigma$ on $\Omega_2(t)$, \eqref{eq59} yields
		\begin{equation}\label{eq61}
			\int_0^T
			\left(
			\int_{\Omega_2(t)}\rho^{\zeta'}\,dx
			\right)^{\frac{2}{q+2}}
			dt
			\le C.
		\end{equation}
		
		A short computation shows that \eqref{eq39} is equivalent to $\lambda<\zeta'$. Using \eqref{eq34} and Hölder's inequality on the set $\Omega_2(t)$, we obtain
		\[
		\left(
		\int_{\Omega_2(t)}\rho^\lambda\,dx
		\right)^{\frac{2}{q+2}}
		\le
		C
		\left(
		\int_{\Omega_2(t)}\rho^{\zeta'}\,dx
		\right)^{\frac{2\lambda}{(q+2)\zeta'}}.
		\]
		Substituting this into \eqref{eq52}, we obtain
		\begin{equation}\label{eq62}
			I_2(t)
			\le
			C
			\left(\int_{\mathbb{R}^3}\rho|v|^{q+2}\,dx\right)^{\frac{q}{q+2}}
			\left(
			\int_{\Omega_2(t)}\rho^{\zeta'}\,dx
			\right)^{\frac{2\lambda}{(q+2)\zeta'}}.
		\end{equation}
		
		Combining \eqref{eq48}, \eqref{eq50}, and \eqref{eq62}, we infer
		\[
		\frac{d}{dt}\left(1+\int_{\mathbb{R}^3}\rho|v|^{q+2}\,dx\right)
		\le
		C\left(1+\int_{\mathbb{R}^3}\rho|v|^{q+2}\,dx\right)
		+
		C\left(1+\int_{\mathbb{R}^3}\rho|v|^{q+2}\,dx\right)^{\frac{q}{q+2}}
		\left(
		\int_{\Omega_2(t)}\rho^{\zeta'}\,dx
		\right)^{\frac{2\lambda}{(q+2)\zeta'}}.
		\]
		Equivalently,
		\begin{equation}\label{eq63}
			\frac{d}{dt}\left(1+\int_{\mathbb{R}^3}\rho|v|^{q+2}\,dx\right)^{\frac{2}{q+2}}
			\le
			C\left(1+\int_{\mathbb{R}^3}\rho|v|^{q+2}\,dx\right)^{\frac{2}{q+2}}
			+
			C
			\left(
			\int_{\Omega_2(t)}\rho^{\zeta'}\,dx
			\right)^{\frac{2\lambda}{(q+2)\zeta'}}.
		\end{equation}
		Since $\lambda<\zeta'$, the exponent on the right is strictly smaller than $2/(q+2)$; therefore \eqref{eq61} implies
		\[
		\int_0^T
		\left(
		\int_{\Omega_2(t)}\rho^{\zeta'}\,dx
		\right)^{\frac{2\lambda}{(q+2)\zeta'}}
		dt
		\le C.
		\]
		Gronwall's inequality applied to \eqref{eq63} now yields \eqref{eq40} in dimension three as well. The proof is complete.
	\end{proof}
	The previous proposition treats the case $q\ge 2$. To cover the remaining range $1<q<2$ in dimension three, we next establish the corresponding weighted $L^{q+2}$-estimate for the effective velocity $v$, using the additional regularity provided by Corollary \ref{cor1}.
	\begin{proposition}\label{prop6}
		Assume that $N=3$, and that \eqref{eq4}, \eqref{eq5}, \eqref{eq6}, \eqref{eq7}, and Assumption \ref{ass1} hold.
		Let $(\rho,u)$ be a sufficiently smooth strictly positive local strong solution to \eqref{eq1} on $\mathbb{R}^3\times[0,T^\ast)$, and let $v$ be defined by \eqref{eq12}.
		
		Assume in addition that
		\begin{equation}\label{prop6:gamma}
			\frac32\alpha-\frac16\le \gamma<5\alpha-\frac73.
		\end{equation}
		Choose $q$ such that
		\begin{equation}\label{eq64}
			1<q<2,
			\qquad
			q<q_{3,\ast},
			\qquad
			\gamma<3\alpha-1+\frac{6\alpha-4}{q+2}.
		\end{equation}
		
		Then, for every $T\in(0,T^\ast)$, there exists a positive constant
		\[
		C=C\!\left(
		\alpha,\gamma,\nu,\varepsilon,E_0,T^\ast,q,
		\left\|\rho_0^{\frac{1}{q+2}}v_0\right\|_{L^{q+2}(\mathbb{R}^3)}
		\right)>0
		\]
		such that
		\begin{equation}\label{eq65}
			\sup_{0\le t\le T}\int_{\mathbb{R}^3}\rho|v|^{q+2}(x,t)\,dx\le C.
		\end{equation}
		Equivalently,
		\begin{equation}\label{eq66}
			\sup_{0\le t\le T}
			\left\|
			\rho^{\frac{1}{q+2}}v
			\right\|_{L^{q+2}(\mathbb{R}^3)}
			\le C.
		\end{equation}
	\end{proposition}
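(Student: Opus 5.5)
The plan is to reuse the differential inequality from the proof of Proposition \ref{prop5}. Testing the momentum equation of \eqref{sys of v} against $\Psi=|v|^qv$ and using \eqref{eq43} and \eqref{eq46} still gives
\[
\frac{1}{q+2}\frac{d}{dt}\int\rho|v|^{q+2}\,dx+\delta_q\int\rho^\alpha|v|^q|\nabla v|^2\,dx\le\Bigl|\int\nabla\rho^\gamma\cdot|v|^qv\,dx\Bigr|,
\]
with $\delta_q>0$ because $q<q_{3,\ast}$. This step does not use $q\ge2$. The restriction $q\ge 2$ entered Proposition \ref{prop5} only through the bound \eqref{eq50} on the low-density part $I_1$, where $\int\rho|v|^q$ was interpolated between $\int\rho|v|^2$ and $\int\rho|v|^{q+2}$. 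For $q<2$ that interpolation fails on the infinite-measure set $\Omega_1(t)$. So the pressure term must be split as in \eqref{eq:domain-split}, treating $\Omega_1(t)$ and $\Omega_2(t)$ by different mechanisms.

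\textbf{High-density part.} On $\Omega_2(t)$ I keep exactly the argument of Proposition \ref{prop5}. I integrate by parts and use Young's inequality to get $I_2(t)=\int_{\Omega_2}\rho^{2\gamma-\alpha}|v|^q$. Then Hölder gives \eqref{eq52}, and I interpolate the truncated quantity $g=\eta(\rho)\rho^\sigma$ between \eqref{eq54} and \eqref{eq55}, which rest on Corollary \ref{cor1}. This gives \eqref{eq61} with the same $\zeta'$, and the third condition in \eqref{eq64} is again equivalent to $\lambda<\zeta'$. I will recheck that the algebra solving \eqref{eq56} and \eqref{eq58} uses only $q>0$. It also has to be checked that $\theta\in(0,1)$ and $\zeta>1$ remain valid for $1<q<2$; the upper bound $\gamma<5\alpha-\frac73$ in \eqref{prop6:gamma} should be what guarantees this. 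The high-density contribution then enters \eqref{eq63} as an $L^1_t$ source.

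\textbf{Low-density part.} On $\Omega_1(t)$ I will not integrate by parts, since that would produce the weight $\rho^{-\alpha}$ near vacuum. Instead I write $\nabla\rho^\gamma=\gamma\rho^{\gamma-1}\nabla\rho$ and apply Cauchy--Schwarz:
\[
\int_{\Omega_1}\rho^{\gamma-1}|\nabla\rho||v|^{q+1}\le\Bigl(\int\rho^{\gamma+\alpha-3}|\nabla\rho|^2\Bigr)^{\frac12}\Bigl(\int_{\Omega_1}\rho^{\gamma-\alpha+1}|v|^{2q+2}\Bigr)^{\frac12}.
\]
The first factor is in $L^2_t$ by \eqref{eq24}. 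For the second, set $w:=|v|^{\frac{q+2}{2}}$. Then $\rho|v|^{q+2}=\rho w^2$, the dissipation controls $\int\rho^\alpha|\nabla w|^2$, and $|v|^{2q+2}=w^{r}$ with $r=\frac{4(q+1)}{q+2}\in(\frac83,3)$. The weight satisfies $\rho^{\gamma-\alpha+1}\le C\rho^{\alpha/2+5/6}$ on $\Omega_1$, because $\gamma\ge\frac32\alpha-\frac16$; this is where the lower bound in \eqref{prop6:gamma} is used. I would then apply a weighted Gagliardo--Nirenberg inequality on $\Omega_1$: write $\rho^{(\cdot)}w^r$ as a product of $(\rho w^2)$, of $(\rho^{\alpha/2}w)^{6}$-type terms controlled via $\nabla(\rho^{\alpha/2}w)$, and of the terms $w\nabla\rho^{\alpha/2}$ that arise. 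Since $\rho\le4$ on $\Omega_1$, those last terms can be controlled by $\nabla\rho^{\sigma}$ from \eqref{eq36} together with \eqref{eq24}. The goal is an estimate of the form
\[
\Bigl(\int_{\Omega_1}\rho^{\gamma-\alpha+1}|v|^{2q+2}\Bigr)^{\frac12}\le \frac{\delta_q}{4}\int\rho^\alpha|v|^q|\nabla v|^2+C\,h(t)\Bigl(1+\int\rho|v|^{q+2}\Bigr),\qquad h\in L^1(0,T).
\]

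\textbf{Closing and main obstacle.} Absorbing the dissipation, combining with \eqref{eq62}, and passing to the variable $(1+\int\rho|v|^{q+2})^{2/(q+2)}$ as in \eqref{eq63}, Gronwall's inequality gives \eqref{eq65}. The main obstacle is the low-density step. The weight $\rho^\alpha$ in the dissipation degenerates near vacuum, no lower bound on $\rho$ is available yet, and $r>2$ means an $L^2$-type bound on $\rho^{1/2}w$ alone is insufficient. I expect the bookkeeping of the weight powers, checking that every power of $\rho$ in the interpolation stays nonnegative on $\Omega_1$, to be exactly where the constraint $\gamma\ge\frac32\alpha-\frac16$ and $q>1$ are needed. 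If the direct weighted interpolation does not close, my fallback is to further split off $\{\rho<\frac12\}$, which has finite measure by the argument of \eqref{eq53} applied to $\Pi(\rho)\ge\Pi(\frac12)$. On that set I would use Hölder in the form $\int\rho|v|^q\le(\int\rho|v|^{q+2})^{\frac{q}{q+2}}|\{\rho<\frac12\}|^{\frac{2}{q+2}}$ after the integration by parts. The region $\frac12\le\rho\le4$, where $\rho\sim1$, would then be handled by the unweighted version of the estimate above.
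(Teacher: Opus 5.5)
Your split into $\Omega_1(t)$ and $\Omega_2(t)$, the $\Omega_2(t)$ argument copied from Proposition \ref{prop5}, and the Gronwall closing are fine. The gap is in the low-density step.

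\textbf{The target inequality on $\Omega_1(t)$ is false.} After Cauchy--Schwarz against $h(t)=\int\rho^{\gamma+\alpha-3}|\nabla\rho|^2$ you must control $\int_{\Omega_1}\rho^{\gamma-\alpha+1}|v|^{2q+2}$. Since $2q+2>q+2$, this can only come from the degenerate dissipation, uniformly in how close $\rho$ is to vacuum. Your displayed target fails in part of the admissible range. Fix a time and take the following configuration:
\begin{itemize}
\item $\rho\equiv\epsilon$ on the unit ball, rising to $1$ across a fixed shell with $\rho^{\sigma/2}$ smooth there. Every bound available at this stage then holds uniformly in $\epsilon$, and $h$ stays of order one.
\item $v=\epsilon^{-\frac{\alpha}{q+2}}\phi\,e$, with $\phi$ a fixed smooth bump supported in the ball and $e$ a fixed unit vector.
\end{itemize}
Then $\int\rho^\alpha|v|^q|\nabla v|^2\sim1$ and $\int\rho|v|^{q+2}\sim\epsilon^{1-\alpha}\to0$. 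For $\gamma=\frac32\alpha-\frac16$, however, $\int_{\Omega_1}\rho^{\gamma-\alpha+1}|v|^{2q+2}\sim\epsilon^{\frac\alpha2+\frac56-\frac{2\alpha(q+1)}{q+2}}$. This blows up whenever $\frac\alpha2+\frac56<\frac{2\alpha(q+1)}{q+2}$, e.g.\ $\alpha=0.9$, $q=1.9$, which Assumption \ref{ass1}, \eqref{prop6:gamma} and \eqref{eq64} allow. The original pressure integrand is zero here, because $\nabla\rho$ and $v$ have disjoint supports; the Cauchy--Schwarz split throws exactly this away.

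\textbf{The fallback does not repair it as written.} To reach $\int_{\{\rho<1/2\}}\rho|v|^q$ you need an integration by parts whose coefficient vanishes like $\rho^\gamma$ at vacuum, e.g.\ through $\nabla(\chi(\rho)\rho^\gamma)$. It is truncations such as $(\rho^\gamma-c)_-$, not integration by parts itself, that produce the $\rho^{-\alpha}$ you worried about. More seriously, the region $\frac12\le\rho\le4$ still rests on the same Cauchy--Schwarz plus a localized Gagliardo--Nirenberg inequality. You do not control its cutoff commutator $w\nabla\psi(\rho)$. The natural bound leaves a time factor $h^{1/2}\|\nabla\rho^\sigma\|_{L^6}^{\kappa}$ with $\kappa>1$, which \eqref{eq24} and \eqref{eq36} do not place in $L^1(0,T)$.

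\textbf{The missing idea.} The paper pairs the pressure on $\Omega_1(t)$ not with $h$ but with the stronger information from Corollary \ref{cor1}. Write $\nabla\rho^\gamma=\frac{\gamma}{\sigma}\rho^{\gamma-\sigma}\nabla\rho^\sigma$ with $\sigma=\frac32\alpha-1$, and apply H\"older with $\|\nabla\rho^\sigma\|_{L^6}\in L^2(0,T)$. This leaves $\bigl(\int_{\Omega_1}\rho^{\frac65(\gamma-\sigma)}|v|^{\frac65(q+1)}\bigr)^{5/6}$, and both difficulties disappear:
\begin{itemize}
\item The lower bound in \eqref{prop6:gamma} is exactly $\frac65(\gamma-\sigma)\ge1$. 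So the weight is at most $C\rho$ on $\{\rho\le4\}$, and vacuum is harmless.
\item Since $q>1$, the exponent $\frac65(q+1)$ lies in $(2,q+2)$. So the integral interpolates between $\int\rho|v|^2$ from \eqref{eq24} and $Y=\int\rho|v|^{q+2}$, and the infinite measure of $\Omega_1(t)$ is harmless.
\end{itemize}
No dissipation and no Gagliardo--Nirenberg inequality are needed. One gets $C\|\nabla\rho^\sigma\|_{L^6}(1+Y)^{1-\frac{2}{3q}}$ and closes by differentiating $(1+Y)^{2/(3q)}$. Here $q>1$ also makes the exponent $\frac{2}{3q}-1+\frac{q}{q+2}$ on the $\Omega_2(t)$ contribution negative.

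A minor point: $\theta$ and $\zeta$ in \eqref{eq56}, \eqref{eq58} depend only on $\alpha$ and $q$. The upper bound in \eqref{prop6:gamma} is only what guarantees that an admissible $q\in(1,2)$ exists.
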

	
	\begin{proof}
		It is enough to prove \eqref{eq65}. Arguing exactly as in the proof of Proposition \ref{prop5}, namely by testing the second equation in \eqref{sys of v} against $|v|^qv$, one finds that
		\[
		\frac{1}{q+2}\frac{d}{dt}\int_{\mathbb{R}^3}\rho|v|^{q+2}\,dx
		+\delta_q\int_{\mathbb{R}^3}\rho^\alpha|v|^q|\nabla v|^2\,dx
		\le
		C\left|\int_{\mathbb{R}^3}\nabla\rho^\gamma\cdot |v|^qv\,dx\right|,
		\]
		where $\delta_q$ is given by \eqref{eq47}. Since $q<q_{3,\ast}$, \eqref{eq37} yields $\delta_q>0$.
		
		Let
		\[
		\Omega_1(t):=\{x\in\mathbb{R}^3:\rho(x,t)\le4\},
		\qquad
		\Omega_2(t):=\{x\in\mathbb{R}^3:\rho(x,t)>4\},
		\qquad
		\sigma:=\frac32\alpha-1.
		\]
		By Assumption \ref{ass1}, in particular $\alpha\in(\frac23,1)$, hence $\sigma\in(0,\frac12)$. The lower bound in \eqref{prop6:gamma} gives  $\gamma\ge \sigma+\frac56$, and therefore $\frac65(\gamma-\sigma)\ge1$. Splitting the pressure term according to $\Omega_1(t)\cup\Omega_2(t)$, we obtain
		\[
		\left|\int_{\mathbb{R}^3}\nabla\rho^\gamma\cdot |v|^qv\,dx\right|
		\le
		\left|\int_{\Omega_1(t)}\nabla\rho^\gamma\cdot |v|^qv\,dx\right|
		+
		\left|\int_{\Omega_2(t)}\nabla\rho^\gamma\cdot |v|^qv\,dx\right|.
		\]
		
		On $\Omega_1(t)$, using $\nabla\rho^\gamma=\frac{\gamma}{\sigma}\rho^{\gamma-\sigma}\nabla\rho^\sigma$ and Hölder's inequality,
		\[
		\left|\int_{\Omega_1(t)}\nabla\rho^\gamma\cdot |v|^qv\,dx\right|
		\le
		C\|\nabla\rho^\sigma(t)\|_{L^6}
		\left(
		\int_{\Omega_1(t)}
		\rho^{\frac65(\gamma-\sigma)}|v|^{\frac65(q+1)}\,dx
		\right)^{\frac56}.
		\]
		Since $\rho\le4$ on $\Omega_1(t)$ and $\frac65(\gamma-\sigma)\ge1$, the last integral is bounded by
		\[
		C\left(\int_{\mathbb{R}^3}\rho|v|^{\frac65(q+1)}\,dx\right)^{\frac56}.
		\]
		Now $\frac65(q+1)\in(2,q+2)$ because $q\in(1,2)$, and
		\[
		\int_{\mathbb{R}^3}\rho|v|^{\frac65(q+1)}\,dx
		\le
		\left(\int_{\mathbb{R}^3}\rho|v|^2\,dx\right)^{\frac{4-q}{5q}}
		\left(\int_{\mathbb{R}^3}\rho|v|^{q+2}\,dx\right)^{\frac{6q-4}{5q}}.
		\]
		In view of \eqref{eq24}, this yields
		\[
		\left|\int_{\Omega_1(t)}\nabla\rho^\gamma\cdot |v|^qv\,dx\right|
		\le
		C\|\nabla\rho^\sigma(t)\|_{L^6}
		\left(1+\int_{\mathbb{R}^3}\rho|v|^{q+2}\,dx\right)^{1-\frac{2}{3q}}.
		\]
		Moreover, by \eqref{eq36} and the embedding $H^2(\mathbb{R}^3)\hookrightarrow W^{1,6}(\mathbb{R}^3)$,
		\[
		\|\nabla\rho^\sigma\|_{L^1(0,T;L^6(\mathbb{R}^3))}
		\le
		T^{1/2}\|\nabla\rho^\sigma\|_{L^2(0,T;L^6(\mathbb{R}^3))}
		\le C.
		\]
		
		For the contribution of $\Omega_2(t)$, integrating by parts gives
		\[
		-\int_{\Omega_2(t)}\nabla\rho^\gamma\cdot |v|^qv\,dx
		=
		\int_{\mathbb{R}^3}(\rho^\gamma-4^\gamma)_+\operatorname{div}(|v|^qv)\,dx,
		\]
		whence
		\[
		\left|\int_{\Omega_2(t)}\nabla\rho^\gamma\cdot |v|^qv\,dx\right|
		\le
		C\int_{\Omega_2(t)}\rho^\gamma|v|^q|\nabla v|\,dx.
		\]
		By Young's inequality,
		\[
		\left|\int_{\Omega_2(t)}\nabla\rho^\gamma\cdot |v|^qv\,dx\right|
		\le
		\frac{\delta_q}{4}\int_{\mathbb{R}^3}\rho^\alpha|v|^q|\nabla v|^2\,dx
		+
		C\int_{\Omega_2(t)}\rho^{2\gamma-\alpha}|v|^q\,dx.
		\]
		Set
		\[
		\lambda:=\frac{(2\gamma-\alpha)(q+2)-q}{2}.
		\]
		From the lower bound in \eqref{eq64},
		\[
		2\gamma-\alpha\ge2\alpha-\frac13>1,
		\]
		so that $\lambda>0$. Hence Hölder's inequality gives
		\[
		\int_{\Omega_2(t)}\rho^{2\gamma-\alpha}|v|^q\,dx
		\le
		\left(\int_{\mathbb{R}^3}\rho|v|^{q+2}\,dx\right)^{\frac{q}{q+2}}
		\left(\int_{\Omega_2(t)}\rho^\lambda\,dx\right)^{\frac{2}{q+2}}.
		\]
		
		It remains to control the last factor. Let
		\[
		m:=\alpha-\frac12,
		\qquad
		p_0:=\frac{6m}{\sigma}=\frac{12\alpha-6}{3\alpha-2}>6,
		\]
		and choose $\eta\in C^\infty([0,\infty))$ such that
		\[
		0\le\eta\le1,\qquad
		\eta\equiv0\ \text{on }[0,2],\qquad
		\eta\equiv1\ \text{on }[4,\infty).
		\]
		Write
		\[
		g:=\eta(\rho)\rho^\sigma,
		\qquad
		\widetilde\Omega_2(t):=\{x\in\mathbb{R}^3:\rho(x,t)>2\}.
		\]
		Since $\Pi$ is increasing on $[1,\infty)$, \eqref{eq20} implies
		\[
		|\widetilde\Omega_2(t)|
		\le
		\Pi(2)^{-1}\int_{\widetilde\Omega_2(t)}\Pi(\rho)\,dx
		\le
		\Pi(2)^{-1}E_0.
		\]
		On $\widetilde\Omega_2(t)$ one has $\rho^m\le C(\rho^m-1)$, and therefore
		\[
		|g|^{p_0}\le C|\rho^m-1|^6\mathbf 1_{\widetilde\Omega_2(t)}.
		\]
		Using \eqref{eq31} and the embedding $H^1(\mathbb{R}^3)\hookrightarrow L^6(\mathbb{R}^3)$, we infer that
		\[
		\|g\|_{L^\infty(0,T;L^{p_0}(\mathbb{R}^3))}\le C.
		\]
		The uniform bound on $|\widetilde\Omega_2(t)|$ and the fact that $p_0>6$ then imply
		\[
		\|g\|_{L^2(0,T;L^6(\mathbb{R}^3))}\le C.
		\]
		Furthermore,
		\[
		\nabla g=\eta'(\rho)\rho^\sigma\nabla\rho+\eta(\rho)\nabla\rho^\sigma,
		\]
		and since $\eta'$ is supported in $[2,4]$, one has $|\nabla g|\le C|\nabla\rho^\sigma|$. Hence \eqref{eq36} gives
		\[
		\|g\|_{L^2(0,T;W^{1,6}(\mathbb{R}^3))}\le C.
		\]
		
		Choose $\theta\in(0,1)$ and $\zeta>1$ so that
		\[
		\frac1\zeta=\frac{\theta}{p_0}-\frac{1-\theta}{6},
		\qquad
		\frac{\zeta}{q+2}=\frac1{1-\theta}.
		\]
		Then the Gagliardo--Nirenberg inequality yields
		\[
		\|g\|_{L^\zeta(\mathbb{R}^3)}
		\le
		C\|g\|_{L^{p_0}(\mathbb{R}^3)}^\theta
		\|g\|_{W^{1,6}(\mathbb{R}^3)}^{1-\theta},
		\]
		and therefore, by Hölder's inequality in time,
		\[
		\|g\|_{L^{\frac{2\zeta}{q+2}}(0,T;L^\zeta(\mathbb{R}^3))}\le C.
		\]
		A direct computation gives
		\[
		\zeta=\frac{(5\alpha-3)q+22\alpha-12}{3\alpha-2},
		\qquad
		\sigma\zeta=\frac{(5\alpha-3)q+22\alpha-12}{2}.
		\]
		Since $g=\rho^\sigma$ on $\Omega_2(t)$, it follows that
		\[
		\int_0^T
		\left(
		\int_{\Omega_2(t)}\rho^{\frac{(5\alpha-3)q+22\alpha-12}{2}}\,dx
		\right)^{\frac{2}{q+2}}
		dt
		\le C.
		\]
		The upper bound in \eqref{eq64} is equivalent to
		\[
		\lambda<\frac{(5\alpha-3)q+22\alpha-12}{2}.
		\]
		Hence, using again the uniform bound on $|\Omega_2(t)|$ from \eqref{eq34},
		\[
		\left(
		\int_{\Omega_2(t)}\rho^\lambda\,dx
		\right)^{\frac{2}{q+2}}
		\le
		C
		\left(
		\int_{\Omega_2(t)}\rho^{\frac{(5\alpha-3)q+22\alpha-12}{2}}\,dx
		\right)^{
			\frac{2\lambda}{(q+2)\bigl((5\alpha-3)q+22\alpha-12\bigr)/2}
		}.
		\]
		Since
		\[
		0<
		\frac{\lambda}{\frac{(5\alpha-3)q+22\alpha-12}{2}}
		<1,
		\]
		the elementary inequality $a^\vartheta\le1+a$ for $a\ge0$ and $\vartheta\in(0,1)$ gives
		\[
		\int_0^T
		\left(
		\int_{\Omega_2(t)}\rho^\lambda\,dx
		\right)^{\frac{2}{q+2}}
		dt
		\le C.
		\]
		
		Collecting the preceding bounds, we arrive at
		\[
		\frac{d}{dt}\int_{\mathbb{R}^3}\rho|v|^{q+2}\,dx
		\le
		C\|\nabla\rho^\sigma(t)\|_{L^6}
		\left(1+\int_{\mathbb{R}^3}\rho|v|^{q+2}\,dx\right)^{1-\frac{2}{3q}}
		+
		C\left(\int_{\Omega_2(t)}\rho^\lambda\,dx\right)^{\frac{2}{q+2}}
		\left(\int_{\mathbb{R}^3}\rho|v|^{q+2}\,dx\right)^{\frac{q}{q+2}}.
		\]
		Let $s:=\frac{2}{3q}\in(\frac13,\frac23)$. Multiplying by
		\[
		s\left(1+\int_{\mathbb{R}^3}\rho|v|^{q+2}\,dx\right)^{s-1},
		\]
		we obtain
		\[
		\frac{d}{dt}
		\left(
		1+\int_{\mathbb{R}^3}\rho|v|^{q+2}\,dx
		\right)^s
		\le
		C\|\nabla\rho^\sigma(t)\|_{L^6}
		+
		C\left(\int_{\Omega_2(t)}\rho^\lambda\,dx\right)^{\frac{2}{q+2}}
		\left(
		1+\int_{\mathbb{R}^3}\rho|v|^{q+2}\,dx
		\right)^{s-1+\frac{q}{q+2}}.
		\]
		Now
		\[
		s-1+\frac{q}{q+2}
		=
		-\frac{4(q-1)}{3q(q+2)}<0,
		\]
		so the last factor is bounded by $1$. Integrating over $(0,t)$ and using the preceding $L^1(0,T)$ bounds, we conclude that
		\[
		\sup_{0\le t\le T}
		\left(
		1+\int_{\mathbb{R}^3}\rho|v|^{q+2}(x,t)\,dx
		\right)^s
		\le
		C\left(
		1+
		\left\|\rho_0^{\frac{1}{q+2}}v_0\right\|_{L^{q+2}(\mathbb{R}^3)}^{q+2}
		\right).
		\]
		Since $s>0$, this implies \eqref{eq65}. The proof is complete.
	\end{proof}
	Combining Propositions \ref{prop5} and \ref{prop6}, we obtain the following uniform weighted $L^{q+2}$-estimate for the effective velocity $v$ in the whole admissible range $1<q<q_{3,\ast}$ in dimension three.
	\begin{corollary}\label{cor2}
		Assume that $N=3$, and that \eqref{eq4}, \eqref{eq5}, \eqref{eq6}, \eqref{eq7}, and Assumption \ref{ass1} hold.
		Let $(\rho,u)$ be a sufficiently smooth strictly positive local strong solution to \eqref{eq1} on $\mathbb{R}^3\times[0,T^\ast)$, and let $v$ be defined by \eqref{eq12}$.$
		
		Note that
		\[
		q_{3,\ast}>2,
		\qquad
		1\le \gamma<5\alpha-\frac73.
		\]
		Choose $q\in(1,q_{3,\ast})$ according to the following rule:
		\begin{equation}\label{cor2:q-choice}
			\left\{
			\begin{aligned}
				&2\le q<q_{3,\ast},
				\qquad
				\gamma<3\alpha-1+\frac{6\alpha-4}{q+2},
				&&\text{if }1\le \gamma<\frac92\alpha-2,\\[1mm]
				&1<q<2,
				\qquad
				\gamma<3\alpha-1+\frac{6\alpha-4}{q+2},
				&&\text{if }\frac92\alpha-2\le \gamma<5\alpha-\frac73.
			\end{aligned}
			\right.
		\end{equation}
		Such a choice is possible. Indeed, the function
		\[
		f(q):=3\alpha-1+\frac{6\alpha-4}{q+2}
		\]
		is continuous and strictly decreasing on $(1,q_{3,\ast})$, with
		\[
		f(2)=\frac92\alpha-2,
		\qquad
		\lim_{q\to1+}f(q)=5\alpha-\frac73.
		\]
		
		Then, for every $T\in(0,T^\ast)$, there exists a positive constant
		\[
		C=C\!\left(
		\alpha,\gamma,\nu,\varepsilon,E_0,T^\ast,q,
		\left\|\rho_0^{\frac{1}{q+2}}v_0\right\|_{L^{q+2}(\mathbb{R}^3)}
		\right)>0
		\]
		such that
		\[
		\sup_{0\le t\le T}\int_{\mathbb{R}^3}\rho|v|^{q+2}(x,t)\,dx\le C.
		\]
	\end{corollary}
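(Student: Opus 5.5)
The plan is to split into cases according to the value of $\gamma$ and apply either Proposition \ref{prop5} or Proposition \ref{prop6}. Most of the work is checking that their hypotheses hold. First I record the standing facts. Assumption \ref{ass1} with $N=3$ gives $\alpha>\frac{9\sqrt3-4\sqrt2}{9\sqrt3-2\sqrt2}>\frac23$. It also gives $\gamma<\frac{15\alpha-7}{3}=5\alpha-\frac73$. The computation preceding Proposition \ref{prop5} gives $q_{3,\ast}>2$. Because $6\alpha-4>0$, the function $f(q)=3\alpha-1+\frac{6\alpha-4}{q+2}$ is continuous and strictly decreasing on $(1,q_{3,\ast})$. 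Direct evaluation gives $f(2)=3\alpha-1+\frac{6\alpha-4}{4}=\frac92\alpha-2$ and $f(1)=3\alpha-1+\frac{6\alpha-4}{3}=5\alpha-\frac73$.

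\emph{Case $1\le\gamma<\frac92\alpha-2$.} I take $q=2$, or any $q\ge2$ close enough to $2$. Then $2\le q<q_{3,\ast}$, and $\gamma<\frac92\alpha-2=f(2)$. By continuity of $f$, the inequality $\gamma<f(q)$ persists for $q$ slightly larger than $2$. So \eqref{prop5:gamma} and \eqref{eq39} hold, and Proposition \ref{prop5} gives the uniform bound on $\sup_t\int\rho|v|^{q+2}\,dx$.

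\emph{Case $\frac92\alpha-2\le\gamma<5\alpha-\frac73$.} Here I use Proposition \ref{prop6}. First, its lower bound in \eqref{prop6:gamma} holds. Indeed, $\frac92\alpha-2\ge\frac32\alpha-\frac16$ is equivalent to $\alpha\ge\frac{11}{18}$, which is true since $\alpha>\frac23$. Next I pick $q$. Since $f$ decreases strictly from $f(1^+)=5\alpha-\frac73>\gamma$ to $f(2)=\frac92\alpha-2\le\gamma$, there is a unique $\bar q\in(1,2]$ with $f(\bar q)=\gamma$. Any $q\in(1,\bar q)$ then satisfies $1<q<2$ and $\gamma<f(q)$. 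It also satisfies $q<2<q_{3,\ast}$. These are exactly the conditions \eqref{eq64}, so Proposition \ref{prop6} yields the bound.

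In both cases the constant depends only on the listed quantities. The factor $\|\rho_0^{1/(q+2)}v_0\|_{L^{q+2}}$ is finite because $\rho_0$ is bounded and $v_0\in H^2\hookrightarrow L^{q+2}$. I do not expect a real obstacle, since the analytic work is already contained in Propositions \ref{prop5} and \ref{prop6}. The only delicate point is checking that the two $\gamma$-intervals together cover $[1,5\alpha-\frac73)$ without gaps. Concretely, this means verifying that the lower threshold $\frac32\alpha-\frac16$ of Proposition \ref{prop6} does not exceed $\frac92\alpha-2$, and that $f(2)$ and $f(1^+)$ match the endpoints of the two intervals.
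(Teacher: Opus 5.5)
Your proof is correct and follows the paper's argument exactly. The paper simply invokes Proposition \ref{prop5} in the first case of \eqref{cor2:q-choice} and Proposition \ref{prop6} in the second, and your checks (notably $\frac92\alpha-2\ge\frac32\alpha-\frac16$ because $\alpha>\frac23>\frac{11}{18}$, and $q<2<q_{3,\ast}$) are precisely the hypothesis verifications the paper leaves implicit; one small remark is that in the first case you need not restrict to $q$ near $2$, since for every $q$ allowed by the rule the conditions coincide verbatim with \eqref{prop5:gamma} and \eqref{eq39}.
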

	
	\begin{proof}
		The conclusion follows directly from Proposition \ref{prop5} in the first case of \eqref{cor2:q-choice}, and from Proposition \ref{prop6} in the second one.
	\end{proof}

	We now turn to the two-dimensional case. Since $q_{2,\ast}>2$, Proposition \ref{prop5} applies with $q=2$; interpolating the resulting $L^4$-bound with the basic BD entropy estimate yields the following family of weighted integrability estimates for the effective velocity, which will serve as the two-dimensional input for the upper-bound argument below.
	\begin{lemma}\label{lem3}
		Assume that $N=2$, and that \eqref{eq4}, \eqref{eq5}, \eqref{eq6}, \eqref{eq7}, and Assumption \ref{ass1} hold.
		Let $(\rho,u)$ be a sufficiently smooth strictly positive local strong solution to \eqref{eq1} on $\mathbb{R}^2\times[0,T^\ast)$, and let $v$ be defined by \eqref{eq12}$.$
		
		Then, for every $h\in\left(0,\frac12\right]$ and every $T\in(0,T^\ast)$, there exists a positive constant
		\[
		C_h=C_h\!\left(
		\alpha,\gamma,\nu,\varepsilon,E_0,T^\ast,
		\left\|\rho_0^{1/4}v_0\right\|_{L^4}
		\right)>0
		\]
		such that
		\begin{equation}\label{eq97}
			\sup_{0\le t\le T}
			\int_{\mathbb{R}^2}\rho |v|^{\frac{2}{1-h}}(x,t)\,dx
			\le C_h.
		\end{equation}
		Equivalently,
		\begin{equation}\label{eq98}
			\sup_{0\le t\le T}
			\left\|
			\rho^{\frac{1-h}{2}}v
			\right\|_{L^{\frac{2}{1-h}}(\mathbb{R}^2)}
			\le C_h.
		\end{equation}
	\end{lemma}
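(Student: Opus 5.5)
The plan is to interpolate with respect to the measure $\rho\,dx$ between the basic BD entropy bound \eqref{eq24} and the weighted $L^4$ bound from Proposition \ref{prop5} with $q=2$.

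First I check that Proposition \ref{prop5} applies with $q=2$ when $N=2$. Condition \eqref{eq38} only requires $2\le q<q_{2,\ast}$, and it was shown above that $q_{2,\ast}>2$ under Assumption \ref{ass1}. Hence
\[
\sup_{0\le t\le T}\int_{\mathbb{R}^2}\rho|v|^4\,dx\le C,
\]
with $C$ depending on $\|\rho_0^{1/4}v_0\|_{L^4}$ and the other listed quantities. By \eqref{eq24} we also have $\sup_t\int\rho|v|^2\,dx\le C$.

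Next, fix $h\in(0,\tfrac12]$ and set $r:=\frac{2}{1-h}$, so that $r\in(2,4]$. Write $\frac1r=\frac{1-\vartheta}{2}+\frac{\vartheta}{4}$; solving gives $\vartheta=2h\in(0,1]$. Hölder's inequality, applied to $|v|$ in the Lebesgue spaces with respect to $\rho\,dx$ (the log-convexity of $L^p$ norms), yields
\[
\int\rho|v|^{r}\,dx\le\Big(\int\rho|v|^2\,dx\Big)^{\frac{r(1-\vartheta)}{2}}\Big(\int\rho|v|^4\,dx\Big)^{\frac{r\vartheta}{4}}.
\]
Taking the supremum in $t$ gives \eqref{eq97}, and \eqref{eq98} is just a rewriting of it. The endpoint $h=\tfrac12$ is exactly the $L^4$ bound itself.

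I do not expect a genuine obstacle here. The only point needing care is that $q=2$ lies in the admissible range of Proposition \ref{prop5}, which follows from $q_{2,\ast}>2$. The rest is bookkeeping: the constant $C_h$ depends on $h$ only through the Hölder exponents.
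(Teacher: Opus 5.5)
Your proposal is correct and matches the paper's proof: it likewise applies Proposition \ref{prop5} with $q=2$ (justified by $q_{2,\ast}>2$) and interpolates between \eqref{eq24} and the resulting weighted $L^4$ bound via H\"older's inequality with respect to $\rho\,dx$. The only difference is cosmetic: the paper puts the weight $\vartheta=\frac{h}{1-h}$ directly on the integrands and treats $h=\frac12$ separately, and this agrees with your exponents $\frac{r\vartheta}{4}=\frac{h}{1-h}$ and $\frac{r(1-\vartheta)}{2}=\frac{1-2h}{1-h}$.
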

	
	\begin{proof}
		By the BD entropy estimate \eqref{eq24}, there exists a positive constant $C_2>0$ such that
		\[
		\sup_{0\le t\le T}\int_{\mathbb{R}^2}\rho|v|^2\,dx\le C_2.
		\]
		Since $q_{2,\ast}>2$, Proposition \ref{prop5} applies with $q=2$. Hence there exists a positive constant $C_4>0$ such that
		\[
		\sup_{0\le t\le T}\int_{\mathbb{R}^2}\rho|v|^4\,dx\le C_4.
		\]
		
		Fix $h\in\left(0,\frac12\right]$.
		
		When $0<h<\frac12$, set
		\[
		\vartheta:=\frac{h}{1-h}\in(0,1).
		\]
		Then
		\[
		\frac{2}{1-h}=2+2\vartheta=2(1-\vartheta)+4\vartheta.
		\]
		Therefore, for each fixed $t\in[0,T]$, Hölder's inequality yields
		\begin{align*}
			\int_{\mathbb{R}^2}\rho|v|^{\frac{2}{1-h}}\,dx
			&=
			\int_{\mathbb{R}^2}
			\bigl(\rho|v|^2\bigr)^{1-\vartheta}
			\bigl(\rho|v|^4\bigr)^{\vartheta}\,dx
			\\
			&\le
			\left(\int_{\mathbb{R}^2}\rho|v|^2\,dx\right)^{1-\vartheta}
			\left(\int_{\mathbb{R}^2}\rho|v|^4\,dx\right)^{\vartheta}
			\\
			&\le
			C_2^{1-\vartheta}C_4^\vartheta.
		\end{align*}
		
		When $h=\frac12$, we have $\frac{2}{1-h}=4$, and thus directly from the above $L^4$ bound,
		\[
		\sup_{0\le t\le T}\int_{\mathbb{R}^2}\rho|v|^{\frac{2}{1-h}}\,dx
		=
		\sup_{0\le t\le T}\int_{\mathbb{R}^2}\rho|v|^4\,dx
		\le C_4.
		\]
		
		Combining the two cases, we conclude that for every $h\in\left(0,\frac12\right]$, there exists a positive constant $C_h>0$, with the dependence stated above, such that
		\[
		\sup_{0\le t\le T}\int_{\mathbb{R}^2}\rho |v|^{\frac{2}{1-h}}\,dx
		\le C_h.
		\]
		This proves \eqref{eq97}.
		
		Finally, since
		\[
		\left\|
		\rho^{\frac{1-h}{2}}v
		\right\|_{L^{\frac{2}{1-h}}(\mathbb{R}^2)}^{\frac{2}{1-h}}
		=
		\int_{\mathbb{R}^2}\rho |v|^{\frac{2}{1-h}}\,dx,
		\]
		estimate \eqref{eq98} follows immediately after relabeling the constant.
	\end{proof}
	We are now in a position to establish the uniform upper bound for the density. The weighted integrability estimates for the effective velocity obtained above---namely, Corollary \ref{cor2} in three dimensions and Lemma \ref{lem3} in two dimensions---provide the key input for the following proposition.
	\begin{proposition}\label{prop7}
		Assume that $N\in\{2,3\}$, and that \eqref{eq4}, \eqref{eq5}, \eqref{eq6}, \eqref{eq7}, and Assumption \ref{ass1} hold.
		Let $(\rho,u)$ be a sufficiently smooth strictly positive local strong solution to \eqref{eq1} on $\mathbb{R}^N\times[0,T^\ast)$, and let $v$ be defined by \eqref{eq12}.\ignore{
			Assume moreover that either
			\begin{equation}\label{eq99}
				N=2,
				\qquad
				\alpha>\frac12,
				\qquad
				q_{2,\ast}>2,
			\end{equation}
			or
			\begin{equation}\label{eq100}
				N=3,
				\qquad
				\alpha\ge \frac79,
				\qquad
				q_{3,\ast}>2,
				\qquad
				\gamma<\frac{15\alpha-7}{3}.
		\end{equation}} Then, for every $T\in(0,T^\ast)$, there exists a positive constant
		\[
		C=C\!\left(
		N,\alpha,\gamma,\nu,\varepsilon,E_0,T^\ast,
		\overline{\rho_0},
		\|\rho_0^{1/4}v_0\|_{L^4}
		\right)>0
		\]
		such that
		\begin{equation}\label{eq101}
			\sup_{(x,t)\in\mathbb{R}^N\times[0,T]}\rho(x,t)\le C.
		\end{equation}
	\end{proposition}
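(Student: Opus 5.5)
We run a De Giorgi/Nash--Moser iteration on the truncated high-density part of the density equation. We use the first equation of \eqref{sys of v}, $\rho_t+\operatorname{div}(\rho v)=c\Delta\rho^\alpha$, which is a degenerate parabolic equation with drift $\rho v$. As in the outline (§2.3.2), fix $\eta\in C^\infty([0,\infty))$ with $\eta=0$ on $[0,2]$ and $\eta=1$ on $[4,\infty)$, and set $f:=\eta(\rho)\rho$. For $p\ge p_1$ large, test the density equation against $\eta(\rho)\,\rho^{p}$ (or, more cleanly, against $(\rho-k)_+^{p}$ with $k\ge2$). The support lies in $\widetilde\Omega_2(t)=\{\rho>2\}$, whose measure is bounded uniformly in $t$ by \eqref{eq53}. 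This replaces the finite measure of $\mathbb{T}^N$ and makes every $L^r$ norm on the support comparable. The diffusion term yields the dissipation
\[
c\alpha p\int \rho^{p+\alpha-2}|\nabla\rho|^2\mathbf 1_{\{\rho>4\}}\,dx\;\approx\;\frac{C}{p}\int\bigl|\nabla f^{\frac{p+\alpha}{2}}\bigr|^2dx .
\]
The cutoff region $2\le\rho\le4$ only contributes lower-order terms, since there $\rho$ is bounded above and below.

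The convection term becomes, after integration by parts, $p\int \rho^{p}v\cdot\nabla\rho\,\mathbf 1_{\{\rho>2\}}\,dx$. We split it by Young's inequality as
\[
\frac{1}{2}\,(\text{dissipation})\;+\;Cp\int_{\{\rho>2\}}\rho^{p+2-\alpha}|v|^2\,dx .
\]
We control the remainder with Hölder's inequality, writing $\rho|v|^2=(\rho|v|^{q+2})^{\frac{2}{q+2}}\rho^{\frac{q}{q+2}}$, so that
\[
\int\rho^{p+2-\alpha}|v|^2\le \Bigl(\int\rho|v|^{q+2}\Bigr)^{\frac2{q+2}}\Bigl(\int_{\{\rho>2\}}\rho^{(p+1-\alpha)\frac{q+2}{q}}\Bigr)^{\frac{q}{q+2}} .
\]
The first factor is bounded uniformly in time by Corollary \ref{cor2} for $N=3$, and by Lemma \ref{lem3} for $N=2$ with $\frac{2}{1-h}=q+2$ and $h$ small. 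Set $w:=f^{(p+\alpha)/2}$. The second factor is then an $L^{r}$ norm of $w$ with
\[
r=\frac{2(p+1-\alpha)(q+2)}{q(p+\alpha)}\;\longrightarrow\;\frac{2(q+2)}{q}\qquad(p\to\infty).
\]
This stays strictly below the Sobolev exponent $2^\ast=6$ for $N=3$ exactly when $q>1=N-2$. For $N=2$, $H^1$ embeds into every $L^r$, so any $q>0$ works, matching the outline's remark that $q>N-2$ is the threshold. By Gagliardo--Nirenberg on the support of finite measure, $\|w\|_{L^r}^2\le \epsilon\|\nabla w\|_{L^2}^2+C\epsilon^{-a}\|w\|_{L^{s}}^2$ with $s$ a smaller exponent, e.g. $s$ such that $w^{s}\sim \rho^{p}$. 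This gives the recursive inequality
\[
\frac{d}{dt}\int f^{p}\,dx+\frac1{Cp}\int|\nabla f^{\frac{p+\alpha}{2}}|^2\le C p^{b}\Bigl(1+\int f^{\lambda p}\,dx\Bigr)^{\kappa}
\]
with some $\lambda<1$ and fixed powers $b,\kappa$. One has to account for the mismatch between $p$ and $p+\alpha$; since $\alpha<1$, the gradient weight is weaker than in a linear problem, which is the usual source of the restrictions on $\alpha$.

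The iteration proceeds as follows. Take $p_k=\chi^k p_0$ with $\chi>1$ chosen so that the right-hand side at level $k+1$ involves only $\sup_t\int f^{p_k}$. We start from the base bound $\sup_t\int_{\Omega_2}\rho^{\gamma}\le C$ from Lemma \ref{lem2}, improved to $\sup_t\int f^{p_0}<\infty$ for a suitable $p_0$. This uses \eqref{eq31} and $H^1\hookrightarrow L^6$ (any $L^r$ when $N=2$), giving $\rho^{6m}$, and, for $N=3$, also the $L^2_tH^2$ bound \eqref{eq36}. With $M_k:=\max\{1,\sup_t\|f\|_{L^{p_k}}\}$ and initial data bounded by $\overline{\rho_0}$, integrating in time gives
\[
M_{k+1}\le (C\chi^k)^{1/p_{k}}M_k^{\theta_k},\qquad \textstyle\sum_k \frac{k}{p_k}<\infty,
\]
so $\sup_t\|f\|_{L^\infty}\le C$. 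Together with $\rho\le4$ off the support of $f$, this yields \eqref{eq101}.

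I expect the main obstacle to be the exponent bookkeeping that makes the iteration close with only the weighted $L^{q+2}$ control. Because $\alpha<1$, the dissipation $|\nabla \rho^{(p+\alpha)/2}|^2$ is weaker than the natural $|\nabla\rho^{p/2}|^2$. The drift term must be absorbed with the growth in $p$ kept polynomial and the ratio of the Hölder exponent to $p$ kept strictly less than one uniformly in $k$. The lower bound on $\alpha$ and the requirement $q>N-2$ enter precisely here. If a direct De Giorgi form fails, I would first try truncating levels $(\rho-k)_+$ with $k\to$ a fixed threshold and using the measure bound \eqref{eq53} to gain a small power at each step.
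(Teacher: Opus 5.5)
Your route is viable, but it is not the one the paper takes.

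\textbf{The paper's route.} The paper runs a parabolic Moser iteration in the mixed norms $L^{l}(0,T;L^{l/\theta}(\mathbb{R}^N))$, with $\theta=\frac{q}{q+2}$. Testing with $\eta(\rho)^{\lambda_p}\rho^{p+1}$, the drift is bounded by $\|f\|_{L^l(0,T;L^{l/\theta})}^{l}$, where $l=p+2+\theta-\alpha$. Hölder in space--time between $\|f^{\frac{p+2}{2}}\|_{L^\infty(0,T;L^2)}$ and $\|f^{\frac{p+\alpha+1}{2}}\|_{L^2(0,T;L^r)}$ then raises the exponent to $s>(1+a)l$, with $a=\theta-\frac2r$. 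When $N=3$ the first level $l_0/\theta$ lies above the bound $\sup_t\|f\|_{L^{6m}}$ coming from \eqref{eq31}. So the paper needs a separate bootstrap at $p_\star=\frac{l_0}{\theta}-2$, where a power $\mu_\star<2$ of $\|F_\star\|_{H^1}$ is absorbed.

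\textbf{Your route.} You absorb the drift at each fixed time. You interpolate by Gagliardo--Nirenberg between $\|\nabla w\|_{L^2}$ and a lower norm $\|f\|_{L^{\lambda(p+1)}}$ already controlled in $L^\infty_t$ at the previous step; this is an Alikakos-type iteration. It avoids mixed norms and the special three-dimensional base step, since it can start directly from $\sup_t\|f\|_{L^{6m}}$. The price is bookkeeping you left implicit:
\begin{itemize}
\item The Young constants must grow only polynomially in $p$. This holds as long as $r$ stays bounded away from $6$, so that the interpolation parameter stays away from $1$.
\item The exponent in $M_{k+1}\le C_kM_k^{\theta_k}$ must satisfy $\theta_k\le 1$, or at least $\theta_k=1+O(1/p_k)$. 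In fact $\theta_k=\frac{(p+\alpha)\beta_p}{2(p+1)}$, where the power $\beta_p$ produced by Young depends on $p$. So your $\kappa$ is not a fixed power.
\end{itemize}

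\textbf{A slip in the Hölder step.} You dropped the factor $\rho^{\frac{q}{q+2}}$ from your own identity. The correct bound is
\[
\int\rho^{p+2-\alpha}|v|^2\,dx\le\Bigl(\int\rho|v|^{q+2}\,dx\Bigr)^{\frac{2}{q+2}}\Bigl(\int_{\{\rho>2\}}\rho^{(p+1-\alpha)\frac{q+2}{q}+1}\,dx\Bigr)^{\frac{q}{q+2}},
\]
which is the paper's exponent $l/\theta$ after the shift $p\mapsto p-1$. Hence the drift is $\|w\|_{L^{r}}^{\mu}$ with $\mu=\frac{2}{p+\alpha}\bigl(p+1-\alpha+\frac{q}{q+2}\bigr)$.

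Both $\mu<2$ and $r<\frac{2(q+2)}{q}$ (for every $p$) hold iff $\alpha>\frac{q+1}{q+2}$, not merely $\alpha>\frac12$. This is exactly the paper's condition $\mu_\star<2$. Under Assumption 1 it holds as follows:
\begin{itemize}
\item In three dimensions, take $q\in(1,2]$; then $\frac{q+1}{q+2}\le\frac34<\frac79<\alpha$.
\item In two dimensions it reads $h<2\alpha-1$, which is what your ``$h$ small'' has to mean. The paper takes $h=\theta=\frac{2\alpha-1}{8}$.
\end{itemize}

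Finally, the time-derivative term at level $p$ is $\frac{d}{dt}\int\Phi_p(\rho)\,dx$ with $\Phi_p(\rho)\approx\frac{f^{p+1}}{p+1}$, not $\frac{d}{dt}\int f^p$.
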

	
	\begin{proof}
		Fix $T\in(0,T^\ast)$. Let $\eta\in C^\infty([0,\infty))$ be nondecreasing, with
		\[
		0\le \eta\le 1,\qquad \eta\equiv 0\ \text{on }[0,2],\qquad \eta\equiv 1\ \text{on }[4,\infty),
		\]
		and set
		\[
		f:=\eta(\rho)\rho,
		\qquad
		\widetilde\Omega_2(t):=\{x\in\mathbb{R}^N:\rho(x,t)>2\}.
		\]
		Exactly as in the proof of Proposition \ref{prop5},
		\[
		|\widetilde\Omega_2(t)|\le \Pi(2)^{-1}E_0=:K_0
		\qquad\text{for all }t\in[0,T].
		\]
		Hence
		\[
		\{x\in\mathbb{R}^N:f(x,t)\neq0\}\subset \widetilde\Omega_2(t),
		\qquad
		|\{x\in\mathbb{R}^N:f(x,t)\neq0\}|\le K_0
		\qquad\text{for all }t\in[0,T].
		\]
		
		We next choose $\theta$. If $N=2$, we set
		\[
		\theta:=\frac{2\alpha-1}{8}\in\Bigl(0,\frac12\Bigr).
		\]
		If $N=3$, we choose $q_\sharp\in(1,2]$ as follows. When $\gamma<\frac{9\alpha-4}{2}$, let $q_\sharp=2$. When
		\[
		\frac{9\alpha-4}{2}\le \gamma<\frac{15\alpha-7}{3},
		\]
		the function
		\[
		q\longmapsto 3\alpha-1+\frac{6\alpha-4}{q+2}
		\]
		is continuous and strictly decreasing on $(1,2]$, with limit $(15\alpha-7)/3$ as $q\to1+$; since $q_{3,\ast}>2$, one may therefore choose $q_\sharp\in(1,2)$ such that
		\[
		\gamma<3\alpha-1+\frac{6\alpha-4}{q_\sharp+2}.
		\]
		We then set
		\[
		\theta:=\frac{q_\sharp}{q_\sharp+2}\in\Bigl(\frac13,\frac12\Bigr].
		\]
		In either case,
		\begin{equation}\label{prop7:eq1}
			\sup_{0\le t\le T}\int_{\mathbb{R}^N}\rho|v|^{\frac{2}{1-\theta}}\,dx\le C_\theta .
		\end{equation}
		Indeed, in two dimensions this is Lemma \ref{lem3}. In three dimensions, if $q_\sharp=2$ this is Proposition \ref{prop5}; if $1<q_\sharp<2$, then by Hölder's inequality, \eqref{eq24} at $t=0$, and $\rho_0\le \overline{\rho_0}$,
		\[
		\int_{\mathbb{R}^3}\rho_0|v_0|^{q_\sharp+2}\,dx
		\le
		\left(\int_{\mathbb{R}^3}\rho_0|v_0|^2\,dx\right)^{\frac{2-q_\sharp}{2}}
		\left(\int_{\mathbb{R}^3}\rho_0|v_0|^4\,dx\right)^{\frac{q_\sharp}{2}}
		\le C,
		\]
		and Corollary \ref{cor2} applies.
		
		For $p\ge0$, define
		\[
		\lambda_p:=p+\alpha+1,
		\qquad
		l:=p+2+\theta-\alpha,
		\qquad
		\Phi_p(s):=\int_0^s \eta(y)^{\lambda_p}y^{p+1}\,dy.
		\]
		Then
		\[
		\Phi_p'(s)=\eta(s)^{\lambda_p}s^{p+1},
		\qquad
		(\eta(s)s)^{p+2}\le (p+2)\Phi_p(s)+4^{p+2}\mathbf 1_{\{s>2\}}.
		\]
		Testing the first equation in \eqref{sys of v},
		\[
		\rho_t+\operatorname{div}(\rho v)-c\Delta\rho^\alpha=0,
		\]
		against $\Phi_p'(\rho)$ gives
		\begin{align*}
			\frac{d}{dt}\int_{\mathbb{R}^N}\Phi_p(\rho)\,dx
			+c\alpha\int_{\mathbb{R}^N}\rho^{\alpha-1}\Phi_p''(\rho)|\nabla\rho|^2\,dx
			=
			\int_{\mathbb{R}^N}\rho v\cdot \nabla\!\bigl(\eta(\rho)^{\lambda_p}\rho^{p+1}\bigr)\,dx.
		\end{align*}
		Since $\eta'\ge0$,
		\[
		\Phi_p''(\rho)\ge (p+1)\eta(\rho)^{\lambda_p}\rho^p.
		\]
		Writing the right-hand side as
		\[
		(p+1)\int \eta(\rho)^{\lambda_p}\rho^{p+1}v\cdot\nabla\rho
		+\lambda_p\int \eta(\rho)^{\lambda_p-1}\eta'(\rho)\rho^{p+2}v\cdot\nabla\rho
		=:I_1+I_2,
		\]
		we estimate $I_1$ by Young's inequality:
		\[
		|I_1|
		\le
		\frac{c\alpha(p+1)}{4}\int \eta(\rho)^{\lambda_p}\rho^{p+\alpha-1}|\nabla\rho|^2
		+C(p+1)\int \eta(\rho)^{\lambda_p}\rho^{p+3-\alpha}|v|^2.
		\]
		Since $p+3-\alpha=l+1-\theta$ and
		\[
		\eta(\rho)^{\lambda_p}\rho^l\le f^l+4^l\mathbf 1_{\{2<\rho<4\}},
		\]
		Hölder's inequality together with \eqref{prop7:eq1} yields
		\begin{align*}
			\int \eta(\rho)^{\lambda_p}\rho^{p+3-\alpha}|v|^2
			&=
			\int \bigl(\rho^{\frac{1-\theta}{2}}|v|\bigr)^2\eta(\rho)^{\lambda_p}\rho^l
			\\
			&\le
			\left(\int \rho|v|^{\frac{2}{1-\theta}}\,dx\right)^{1-\theta}
			\left(\int \bigl(\eta(\rho)^{\lambda_p}\rho^l\bigr)^{1/\theta}\,dx\right)^\theta
			\\
			&\le
			C\left(\int f^{l/\theta}\,dx\right)^\theta+C4^{p+2},
		\end{align*}
		where we also used $l\le p+2$ and $|\widetilde\Omega_2(t)|\le K_0$.
		
		For $I_2$, let $m:=\alpha-\frac12>0$. Since $\operatorname{supp}\eta'\subset[2,4]$ and
		\[
		|\nabla\rho|
		=\frac1m\rho^{1-m}|\nabla(\rho^m-1)|
		\le C|\nabla(\rho^m-1)|
		\qquad\text{on }\{2<\rho<4\},
		\]
		Cauchy--Schwarz, \eqref{eq24}, and Proposition \ref{prop3} give
		\begin{align*}
			|I_2|
			&\le
			C(p+1)4^{p+2}\int_{\{2<\rho<4\}}|v||\nabla\rho|\,dx
			\\
			&\le
			C(p+1)4^{p+2}
			\left(\int \rho|v|^2\,dx\right)^{1/2}
			\left(\int |\nabla(\rho^m-1)|^2\,dx\right)^{1/2}
			\\
			&\le C(p+1)4^{p+2}.
		\end{align*}
		Therefore,
		\begin{align}
			&\sup_{0\le t\le T}\int_{\mathbb{R}^N}\Phi_p(\rho(x,t))\,dx
			+c_1(p+1)\int_0^T\int_{\mathbb{R}^N}\eta(\rho)^{\lambda_p}\rho^{p+\alpha-1}|\nabla\rho|^2\,dxdt
			\notag\\
			&\le
			\int_{\mathbb{R}^N}\Phi_p(\rho_0)\,dx
			+C(p+1)\|f\|_{L^l(0,T;L^{l/\theta}(\mathbb{R}^N))}^l
			+C(p+1)4^{p+2},
			\label{prop7:eq2}
		\end{align}
		for some $c_1>0$ depending only on $\alpha,\nu,\varepsilon$.
		
		The initial term is estimated by
		\[
		\Phi_p(s)\le \frac{s^{p+2}}{p+2}\mathbf 1_{\{s>2\}},
		\qquad
		\int_{\mathbb{R}^N}\Phi_p(\rho_0)\,dx\le C\overline{\rho_0}^{\,p+2}.
		\]
		Moreover,
		\[
		\sup_{0\le t\le T}\int_{\mathbb{R}^N}f^{p+2}(x,t)\,dx
		\le
		C(p+2)\sup_{0\le t\le T}\int_{\mathbb{R}^N}\Phi_p(\rho(x,t))\,dx
		+C4^{p+2},
		\]
		and, since $|\{x\in\mathbb{R}^N:f(x,t)\neq0\}|\le K_0$,
		\[
		\int_0^T\int_{\mathbb{R}^N}f^{p+\alpha+1}\,dxdt
		\le
		CT^\ast
		\left(
		1+\sup_{0\le t\le T}\int_{\mathbb{R}^N}f^{p+2}(x,t)\,dx
		\right).
		\]
		
		If
		\[
		g_p:=f^{\frac{p+\alpha+1}{2}},
		\]
		then, using $\operatorname{supp}\eta'\subset\{2<\rho<4\}$, 
		\[
		|\nabla g_p|
		\le
		C(p+2)\eta(\rho)^{\frac{p+\alpha+1}{2}}\rho^{\frac{p+\alpha-1}{2}}|\nabla\rho|
		+C(p+2)4^{\frac{p+\alpha+1}{2}}\mathbf 1_{\{2<\rho<4\}}|\nabla\rho|.
		\]
		Hence,
		\[
		\int_0^T\int_{\mathbb{R}^N}|\nabla g_p|^2\,dxdt
		\le
		C(p+2)^2\int_0^T\int_{\mathbb{R}^N}\eta(\rho)^{\lambda_p}\rho^{p+\alpha-1}|\nabla\rho|^2\,dxdt
		+C(p+2)^24^{p+2},
		\]
		where Proposition \ref{prop3} was used again to control the integral over $\{2<\rho<4\}$. Combining these estimates with \eqref{prop7:eq2}, and enlarging the constants if necessary, we obtain: there exist $C_0\ge1$ and $\Lambda\ge 8+\overline{\rho_0}$, depending only on
		\[
		N,\alpha,\gamma,\nu,\varepsilon,E_0,T^\ast,\overline{\rho_0},
		\|\rho_0^{1/4}v_0\|_{L^4},
		\]
		such that, for every $p\ge0$,
		\begin{equation}\label{prop7:eq3}
			\left\|f^{\frac{p+2}{2}}\right\|_{L^\infty(0,T;L^2(\mathbb{R}^N))}^2
			+
			\left\|f^{\frac{p+\alpha+1}{2}}\right\|_{L^2(0,T;H^1(\mathbb{R}^N))}^2
			\le
			C_0(p+2)^2\|f\|_{L^l(0,T;L^{l/\theta}(\mathbb{R}^N))}^l
			+
			C_0\Lambda^{p+2}.
		\end{equation}
		
		We now derive the reverse Hölder inequality. If $N=2$, let
		\[
		r:=\frac{8}{\theta},
		\qquad
		a:=\theta-\frac2r=\frac{3\theta}{4}>0.
		\]
		Since $\theta=\frac{2\alpha-1}{8}<\alpha$,
		\[
		\alpha(2+a)-1-\theta(1+a)
		=
		2\alpha-1-\theta+a(\alpha-\theta)>0.
		\]
		If $N=3$, let
		\[
		r:=6,
		\qquad
		a:=\theta-\frac13=\frac{2(q_\sharp-1)}{3(q_\sharp+2)}>0.
		\]
		A direct computation gives
		\[
		\alpha(2+a)-1-\theta(1+a)
		=
		\frac{2\bigl(4\alpha q_\sharp^2+13\alpha q_\sharp+10\alpha-4q_\sharp^2-8q_\sharp-6\bigr)}
		{3(q_\sharp+2)^2}.
		\]
		The numerator is concave in $q_\sharp$ and, since $\alpha\ge\frac79$, its values at $q_\sharp=1$ and $q_\sharp=2$ are
		\[
		27\alpha-18>0,
		\qquad
		52\alpha-38>0.
		\]
		Hence
		\begin{equation}\label{prop7:eq4}
			a=\theta-\frac2r>0,
			\qquad
			\alpha(2+a)-1-\theta(1+a)>0.
		\end{equation}
		
		For $p\ge0$, define
		\[
		s:=(1+a)p+\alpha+1+2a,
		\qquad
		\vartheta:=\frac2s,
		\qquad
		\xi:=\frac{2a}{s}.
		\]
		Then
		\[
		\frac{\xi(p+2)}{2}+\frac{\vartheta(p+\alpha+1)}{2}=1,
		\qquad
		\frac{\vartheta s}{2}=1,
		\qquad
		\frac{\xi}{2}+\frac{\vartheta}{r}=\frac{\theta}{s}.
		\]
		Therefore, Hölder's inequality in time and space yields
		\[
		\|f\|_{L^s(0,T;L^{s/\theta}(\mathbb{R}^N))}
		\le
		\left\|f^{\frac{p+2}{2}}\right\|_{L^\infty(0,T;L^2)}^\xi
		\left\|f^{\frac{p+\alpha+1}{2}}\right\|_{L^2(0,T;L^r)}^\vartheta.
		\]
		By the Sobolev embedding $H^1(\mathbb{R}^N)\hookrightarrow L^r(\mathbb{R}^N)$ and \eqref{prop7:eq3},
		\begin{equation}\label{prop7:eq5}
			\|f\|_{L^s(0,T;L^{s/\theta}(\mathbb{R}^N))}
			\le
			C_r^\vartheta
			\left(
			C_0(p+2)^2\|f\|_{L^l(0,T;L^{l/\theta}(\mathbb{R}^N))}^l
			+
			C_0\Lambda^{p+2}
			\right)^{\frac{\xi+\vartheta}{2}} .
		\end{equation}
		Moreover,
		\[
		s-(1+a)l=\alpha(2+a)-1-\theta(1+a)>0,
		\]
		hence
		\[
		\frac{\xi+\vartheta}{2}\,l=\frac{(1+a)l}{s}\le 1.
		\]
		Since
		\[
		\frac{l}{\theta}-(p+2)=\frac{(1-\theta)(p+2)+\theta-\alpha}{\theta}>0,
		\]
		we also have $p+2\le l/\theta$, and therefore $\Lambda^{p+2}\le (\Lambda^{1/\theta})^l$. Define
		\[
		\Upsilon(\lambda):=
		\max\left\{
		\|f\|_{L^\lambda(0,T;L^{\lambda/\theta}(\mathbb{R}^N))},
		\Lambda^{1/\theta}
		\right\},
		\qquad
		\lambda\ge l_0:=2+\theta-\alpha.
		\]
		Then \eqref{prop7:eq5} implies
		\begin{equation}\label{prop7:eq6}
			\Upsilon(s)
			\le
			(2C_r^2C_0)^{\frac{1+a}{s}}
			(p+2)^{\frac{2(1+a)}{s}}
			\Upsilon(l).
		\end{equation}
		
		We now iterate. Set $l_0:=2+\theta-\alpha$, and define
		\[
		p_k:=l_k-2-\theta+\alpha,
		\qquad
		l_{k+1}:=(1+a)p_k+\alpha+1+2a
		\qquad (k\ge0).
		\]
		Then
		\[
		l_{k+1}
		=
		(1+a)l_k+\alpha(2+a)-1-\theta(1+a),
		\]
		so that, by \eqref{prop7:eq4},
		\[
		l_{k+1}\ge (1+a)l_k,
		\qquad
		l_k\ge (1+a)^k l_0,
		\qquad
		l_k\to\infty.
		\]
		Applying \eqref{prop7:eq6} with $p=p_k$ and $l=l_k$ gives
		\[
		\Upsilon(l_{k+1})
		\le
		(2C_r^2C_0)^{\frac{1+a}{l_{k+1}}}
		(p_k+2)^{\frac{2(1+a)}{l_{k+1}}}
		\Upsilon(l_k).
		\]
		Since
		\[
		\sum_{k=0}^\infty \frac1{l_k}<\infty,
		\qquad
		p_k+2=l_k+\alpha-\theta\le 2l_k,
		\]
		the corresponding infinite product converges. It remains to show that $\Upsilon(l_0)$ is finite.
		
		If $N=2$, let again $m=\alpha-\frac12>0$. On $\widetilde\Omega_2(t)$ one has $\rho^m\le C(\rho^m-1)$, hence
		\[
		f\le C(\rho^m-1)^{1/m}\mathbf 1_{\widetilde\Omega_2(t)}.
		\]
		Since $l_0/(\theta m)<\infty$, the embedding $H^1(\mathbb{R}^2)\hookrightarrow L^{l_0/(\theta m)}(\mathbb{R}^2)$ and Proposition \ref{prop3} imply
		\[
		\|f(t)\|_{L^{l_0/\theta}(\mathbb{R}^2)}
		\le
		C\|\rho^m-1\|_{L^{l_0/(\theta m)}(\mathbb{R}^2)}^{1/m}
		\le
		C\|\rho^m-1\|_{H^1(\mathbb{R}^2)}^{1/m}
		\le C
		\]
		uniformly for $t\in[0,T]$. Therefore
		\[
		\|f\|_{L^{l_0}(0,T;L^{l_0/\theta}(\mathbb{R}^2))}
		\le
		(T^\ast)^{1/l_0}\|f\|_{L^\infty(0,T;L^{l_0/\theta}(\mathbb{R}^2))}
		\le C,
		\]
		and thus $\Upsilon(l_0)\le C$.
		
		If $N=3$, define
		\[
		p_\star:=\frac{l_0}{\theta}-2>0,
		\qquad
		l_\star:=p_\star+2+\theta-\alpha,
		\qquad
		F_\star:=f^{\frac{p_\star+\alpha+1}{2}}.
		\]
		Applying \eqref{prop7:eq3} with $p=p_\star$,
		\begin{equation}\label{prop7:eq7}
			\left\|f^{\frac{p_\star+2}{2}}\right\|_{L^\infty(0,T;L^2(\mathbb{R}^3))}^2
			+
			\|F_\star\|_{L^2(0,T;H^1(\mathbb{R}^3))}^2
			\le
			C_0(p_\star+2)^2\|f\|_{L^{l_\star}(0,T;L^{l_\star/\theta}(\mathbb{R}^3))}^{l_\star}
			+
			C_0\Lambda^{p_\star+2}.
		\end{equation}
		Moreover,
		\[
		\|f(t)\|_{L^{l_\star/\theta}(\mathbb{R}^3)}^{l_\star}
		=
		\|F_\star(t)\|_{L^{s_\star}(\mathbb{R}^3)}^{\mu_\star},
		\qquad
		s_\star:=\frac{2l_\star}{\theta(p_\star+\alpha+1)},
		\qquad
		\mu_\star:=\frac{2l_\star}{p_\star+\alpha+1}.
		\]
		A direct computation yields
		\[
		\mu_\star<2
		\iff
		\alpha>\frac{q_\sharp+1}{q_\sharp+2},
		\]
		and the right-hand side is at most $3/4<7/9\le\alpha$, so $0<\mu_\star<2$. Likewise,
		\[
		s_\star\ge2
		\iff
		(1-\alpha)q_\sharp^2+(3-2\alpha)q_\sharp+(4-2\alpha)\ge0,
		\]
		while
		\[
		s_\star\le6
		\iff
		(\alpha+1)q_\sharp^2+q_\sharp+2\alpha-4\ge0.
		\]
		The latter quadratic is strictly increasing for $q_\sharp>0$, and its value at $q_\sharp=1$ is $3\alpha-2>0$; hence
		\[
		2\le s_\star\le6,
		\qquad
		0<\mu_\star<2.
		\]
		Therefore, by $H^1(\mathbb{R}^3)\hookrightarrow L^{s_\star}(\mathbb{R}^3)$,
		\[
		\|f\|_{L^{l_\star}(0,T;L^{l_\star/\theta}(\mathbb{R}^3))}^{l_\star}
		\le
		C\int_0^T\|F_\star(t)\|_{H^1(\mathbb{R}^3)}^{\mu_\star}\,dt
		\le
		\delta\|F_\star\|_{L^2(0,T;H^1(\mathbb{R}^3))}^2+C_\delta
		\]
		for every $\delta>0$. Choosing
		\[
		\delta=\frac{1}{2C_0(p_\star+2)^2}
		\]
		and returning to \eqref{prop7:eq7}, we obtain
		\[
		\left\|f^{\frac{p_\star+2}{2}}\right\|_{L^\infty(0,T;L^2(\mathbb{R}^3))}^2
		+
		\|F_\star\|_{L^2(0,T;H^1(\mathbb{R}^3))}^2
		\le C.
		\]
		Since $p_\star+2=l_0/\theta$, this yields
		\[
		\|f\|_{L^\infty(0,T;L^{l_0/\theta}(\mathbb{R}^3))}\le C,
		\]
		and hence
		\[
		\|f\|_{L^{l_0}(0,T;L^{l_0/\theta}(\mathbb{R}^3))}
		\le
		(T^\ast)^{1/l_0}\|f\|_{L^\infty(0,T;L^{l_0/\theta}(\mathbb{R}^3))}
		\le C.
		\]
		Thus in both dimensions
		\[
		\Upsilon(l_0)\le C,
		\]
		and consequently
		\[
		\sup_{k\ge0}\Upsilon(l_k)\le C.
		\]
		
		Let $Q_T:=\mathbb{R}^N\times(0,T)$. Since $|\{x\in\mathbb{R}^N:f(x,t)\neq0\}|\le K_0$,
		\[
		\|f\|_{L^{l_k}(Q_T)}
		\le
		K_0^{\frac{1-\theta}{l_k}}
		\|f\|_{L^{l_k}(0,T;L^{l_k/\theta}(\mathbb{R}^N))}
		\le C
		\qquad\text{for all }k\ge0.
		\]
		Fix $\lambda>C$. Then Chebyshev's inequality gives
		\[
		\lambda^{l_k}|\{(x,t)\in Q_T:f(x,t)>\lambda\}|
		\le
		\|f\|_{L^{l_k}(Q_T)}^{l_k}
		\le
		C^{\,l_k}.
		\]
		Letting $k\to\infty$ and using $l_k\to\infty$, we infer that
		\[
		|\{(x,t)\in Q_T:f(x,t)>\lambda\}|=0.
		\]
		Hence $f\in L^\infty(Q_T)$, with
		\[
		\|f\|_{L^\infty(Q_T)}\le C.
		\]
		Since $f=\rho$ on $\{\rho\ge4\}$ and $\rho\le4$ on $\{\rho<4\}$, we have
		\[
		\rho\le 4+f \qquad \text{on }Q_T,
		\]
		and therefore
		\[
		\sup_{(x,t)\in\mathbb{R}^N\times(0,T)}\rho(x,t)\le C.
		\]
		Finally, $\rho_0\le \overline{\rho_0}$, so enlarging $C$ if necessary we obtain
		\[
		\sup_{(x,t)\in\mathbb{R}^N\times[0,T]}\rho(x,t)\le C.
		\]
		This proves \eqref{eq101}.
	\end{proof}

	\subsection{Positive lower bound for the density}\label{subsec2}
	In this subsection, building on the upper bound \eqref{eq101} established in Proposition \ref{prop7}, we first recast the density-weighted \(L^{q+2}\) estimate for the effective velocity obtained in the previous subsection in a form that no longer requires the additional three-dimensional restriction on \(\gamma\). We then use this refined estimate to derive a positive lower bound for the density. We continue to use the notation \eqref{eq37}.

	\begin{proposition}\label{prop8}
		Assume that \(N\in\{2,3\}\), and that \eqref{eq4}, \eqref{eq5}, \eqref{eq6}, \eqref{eq7}, and Assumption \ref{ass1} hold.
		Let \((\rho,u)\) be a sufficiently smooth strictly positive local strong solution to \eqref{eq1} on \(\mathbb{R}^N\times[0,T^\ast)\), and let \(v\) be given by \eqref{eq12}.
		
		Assume in addition that the hypotheses of Proposition \ref{prop7} are satisfied, so that \eqref{eq101} holds for every \(T\in(0,T^\ast)\).
		Fix any \(q\ge 2\) with \(q<q_{N,\ast}\), Then, for every \(T\in(0,T^\ast)\), there exists a positive constant
		\[
		C=C\!\left(
		N,\alpha,\gamma,\nu,\varepsilon,E_0,T^\ast,
		\overline{\rho_0},\|\rho_0^{1/4}v_0\|_{L^4},
		q,\left\|\rho_0^{\frac{1}{q+2}}v_0\right\|_{L^{q+2}}
		\right)
		\]
		such that
		\begin{equation}\label{eq164}
			\sup_{0\le t\le T}\int_{\mathbb{R}^N}\rho|v|^{q+2}(x,t)\,dx\le C.
		\end{equation}
	\end{proposition}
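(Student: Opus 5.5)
We follow the energy argument of Proposition \ref{prop5}, now using the upper bound \eqref{eq101}, which makes the treatment of the pressure term elementary and removes the three-dimensional constraint \eqref{prop5:gamma}. Fix $T\in(0,T^\ast)$ and $q\in[2,q_{N,\ast})$. We test the momentum equation in \eqref{sys of v} against $\Psi=|v|^qv$. Using \eqref{eq43} and the coercivity bound \eqref{eq46} (valid since $q<q_{N,\ast}$ gives $\delta_q>0$), we obtain, exactly as in \eqref{eq48},
\[
\frac{1}{q+2}\frac{d}{dt}\int_{\mathbb{R}^N}\rho|v|^{q+2}\,dx+\frac{\delta_q}{2}\int_{\mathbb{R}^N}\rho^\alpha|v|^q|\nabla v|^2\,dx\le C\int_{\mathbb{R}^N}\rho^{2\gamma-\alpha}|v|^q\,dx .
\]
None of these steps uses any restriction on $\gamma$.

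For the right-hand side we use \eqref{eq101}: $\rho\le \overline{C}$ on $\mathbb{R}^N\times[0,T]$. Since $2\gamma-\alpha>1$, we have $\rho^{2\gamma-\alpha}=\rho^{2\gamma-\alpha-1}\rho\le \overline{C}^{\,2\gamma-\alpha-1}\rho$ pointwise on all of $\mathbb{R}^N$. No splitting into $\Omega_1(t)$ and $\Omega_2(t)$ and no integrability of high powers of $\rho$ is needed. Hence the right-hand side is bounded by $C\int\rho|v|^q\,dx$. By H\"older's inequality between the weights $\rho|v|^2$ and $\rho|v|^{q+2}$, as in the derivation of \eqref{eq50}, and the BD entropy bound \eqref{eq24},
\[
\int_{\mathbb{R}^N}\rho|v|^q\,dx\le \Bigl(\int\rho|v|^2\Bigr)^{\frac2q}\Bigl(\int\rho|v|^{q+2}\Bigr)^{\frac{q-2}{q}}\le C\Bigl(1+\int_{\mathbb{R}^N}\rho|v|^{q+2}\,dx\Bigr).
\]

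This gives $\frac{d}{dt}Y\le C(1+Y)$ with $Y(t)=\int\rho|v|^{q+2}\,dx$. Gronwall's inequality on $[0,T]\subset[0,T^\ast)$ then yields \eqref{eq164}, with $C$ depending on $Y(0)=\|\rho_0^{1/(q+2)}v_0\|_{L^{q+2}}^{q+2}$, on $T^\ast$, and, through \eqref{eq101}, on $\overline{\rho_0}$ and $\|\rho_0^{1/4}v_0\|_{L^4}$. The initial quantity is finite because $v_0\in H^2\hookrightarrow L^2\cap L^\infty$ and $\rho_0\le\overline{\rho_0}$.

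The only step needing care is the justification of testing with $|v|^qv$ and of the integrations by parts on $\mathbb{R}^N$. By the strong-solution regularity ($v\in C([0,T];H^2)$, hence $v\in L^\infty$ and $v\to0$ at infinity), these are legitimate as already done in Proposition \ref{prop5}. I therefore do not expect a genuine obstacle. The substantive gain over Proposition \ref{prop5} is simply that the bound \eqref{eq101} replaces the interpolation argument behind \eqref{eq61}.
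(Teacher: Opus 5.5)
Your proposal is correct and follows essentially the same route as the paper. The paper also rederives \eqref{eq48}, uses the upper bound \eqref{eq101} together with $2\gamma-\alpha>1$ to get $\rho^{2\gamma-\alpha}\le C\rho$, interpolates $\int\rho|v|^q\,dx$ between the BD entropy bound \eqref{eq24} and $\int\rho|v|^{q+2}\,dx$, and closes with Gronwall's inequality.
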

	
	\begin{proof}
		Fix \(T\in(0,T^\ast)\). All integrals below are taken over \(\mathbb{R}^N\), and \(C\) denotes a positive constant depending only on the quantities displayed in the statement.
		
		Repeating the argument leading to \eqref{eq48} in the proof of Proposition \ref{prop5}, we obtain
		\begin{equation}\label{eq165}
			\frac{1}{q+2}\frac{d}{dt}\int \rho|v|^{q+2}\,dx
			+\frac{\delta_q}{2}\int \rho^\alpha |v|^q|\nabla v|^2\,dx
			\le C\int \rho^{2\gamma-\alpha}|v|^q\,dx.
		\end{equation}
		Since \(q<q_{N,\ast}\), \eqref{eq37} yields \(\delta_q>0\).
		
		By Proposition \ref{prop7}, there exists \(\overline C_T>0\) such that
		\[
		0<\rho(x,t)\le \overline C_T
		\qquad\text{for all }(x,t)\in\mathbb{R}^N\times[0,T].
		\]
		Moreover, Assumption \ref{ass1} gives \(\gamma\ge 1\) and \(\alpha<1\), hence \(2\gamma-\alpha>1\). Therefore
		\[
		\rho^{2\gamma-\alpha}(x,t)\le \overline C_T^{\,2\gamma-\alpha-1}\rho(x,t)
		\qquad\text{for all }(x,t)\in\mathbb{R}^N\times[0,T],
		\]
		and \eqref{eq165} reduces to
		\begin{equation}\label{eq166}
			\frac{1}{q+2}\frac{d}{dt}\int \rho|v|^{q+2}\,dx
			\le C\int \rho|v|^q\,dx.
		\end{equation}
		
		If \(q=2\), then \eqref{eq24} gives
		\[
		\sup_{0\le t\le T}\int \rho|v|^2\,dx\le C.
		\]
		If \(q>2\), Hölder's inequality together with \eqref{eq24} yields
		\begin{equation}\label{eq167}
			\int \rho|v|^q\,dx
			\le
			\left(\int \rho|v|^2\,dx\right)^{\frac2q}
			\left(\int \rho|v|^{q+2}\,dx\right)^{\frac{q-2}{q}}
			\le
			C\left(\int \rho|v|^{q+2}\,dx\right)^{\frac{q-2}{q}}
			\le
			C\left(1+\int \rho|v|^{q+2}\,dx\right),
		\end{equation}
		since \(\frac{q-2}{q}\in(0,1)\). In the case \(q=2\), the same bound is immediate from \eqref{eq24}. Consequently, \eqref{eq166} implies
		\[
		\frac{d}{dt}\int \rho|v|^{q+2}\,dx
		\le
		C\left(1+\int \rho|v|^{q+2}\,dx\right)
		\qquad\text{for a.e. }t\in(0,T).
		\]
		Gronwall's lemma therefore gives
		\[
		\sup_{0\le t\le T}\int \rho|v|^{q+2}(x,t)\,dx
		\le
		C\left(1+\int \rho_0|v_0|^{q+2}\,dx\right),
		\]
		this proves \eqref{eq164}.
	\end{proof}
	Combining the upper bound \eqref{eq101} from Proposition \ref{prop7} with the refined
	density-weighted $L^{q+2}$-estimate for the effective velocity obtained in Proposition
	\ref{prop8}, we next derive a positive lower bound for the density by estimating its
	reciprocal. Before doing so, we record that the exponent required in the argument below
	can indeed be chosen within the admissible range.
	
	Since Assumption \ref{ass1} implies $\alpha>\frac12$, the condition
	\[
	\alpha<\frac{q}{q+2}
	\]
	is equivalent to
	\[
	q>\frac{2\alpha}{1-\alpha},
	\]
	which is stronger than $q\ge2$. On the other hand, using the definition of
	$q_{N,\ast}$ in \eqref{eq37}, and denoting by $P_N(\beta;\alpha)$ the quadratic
	polynomial introduced above whose positive zero is $\beta_N^+(\alpha)$, we have the
	identity
	\begin{equation}\label{eq:prop9-qgap}
		\begin{aligned}
			&\frac{1-\alpha}{2}
			\bigl(\beta+\sqrt{N}(1-\alpha)(1-\beta)\bigr)^2
			\left(
			q_{N,\ast}-\frac{2\alpha}{1-\alpha}
			\right)  \\
			&\qquad\qquad
			=
			-\alpha\bigl(1-\sqrt{N}(1-\alpha)\bigr)^2 P_N(\beta;\alpha).
		\end{aligned}
	\end{equation}
	By Assumption \ref{ass1}, one has
	\[
	1-\sqrt{N}(1-\alpha)>0,
	\qquad
	P_N(\beta;\alpha)<0
	\qquad
	\text{for } \beta\in[0,\beta_N^+(\alpha)).
	\]
	Therefore \eqref{eq:prop9-qgap} gives
	\[
	q_{N,\ast}>\frac{2\alpha}{1-\alpha}.
	\]
	Consequently, the interval
	\[
	\left(\frac{2\alpha}{1-\alpha},\,q_{N,\ast}\right)
	\]
	is nonempty, and we may choose once and for all an exponent $q$ satisfying
	\begin{equation}\label{eq168}
		\frac{2\alpha}{1-\alpha}<q<q_{N,\ast}.
	\end{equation}
	For such a choice of $q$, both $q\ge2$ and $\alpha<q/(q+2)$ hold automatically.
	With this admissible exponent fixed, we prove the following $L^\infty$-bound for
	$\rho^{-1}$.
	
	\begin{proposition}\label{prop9}
		Assume that $N\in\{2,3\}$, and that \eqref{eq4}, \eqref{eq5}, \eqref{eq6},
		\eqref{eq7}, and Assumption \ref{ass1} hold.
		Let $(\rho,u)$ be a sufficiently smooth strictly positive local strong solution to
		\eqref{eq1} on $\mathbb{R}^N\times[0,T^\ast)$, and let $v$ be defined by
		\eqref{eq12}.
		
		Assume moreover that the hypotheses of Proposition \ref{prop7} hold, so that
		\eqref{eq101} is available for every $T\in(0,T^\ast)$.
		Let $q$ be any exponent satisfying \eqref{eq168}. Then, for every
		$T\in(0,T^\ast)$, there exists a positive constant
		\[
		C=C\!\left(
		N,\alpha,\gamma,\nu,\varepsilon,E_0,T^\ast,
		\overline{\rho_0},\|\rho_0^{1/4}v_0\|_{L^4},
		q,\left\|\rho_0^{\frac{1}{q+2}}v_0\right\|_{L^{q+2}},
		\left\|\rho_0^{-1}\right\|_{L^\infty}
		\right)>0
		\]
		such that
		\begin{equation}\label{eq169}
			\sup_{(x,t)\in\mathbb{R}^N\times[0,T]}\rho(x,t)^{-1}\le C.
		\end{equation}
	\end{proposition}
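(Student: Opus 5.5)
We mirror the truncated Nash–Moser scheme of Proposition \ref{prop7}, now applied to the inverse density $\tau:=\rho^{-1}$. We fix $q$ as in \eqref{eq168} and use Proposition \ref{prop8}. Proposition \ref{prop7} gives $\rho\le\overline C_T$, so \eqref{eq164} yields $\sup_t\int\rho^{1-\vartheta}|v|^{q+2}\,dx\le C$ for every $\vartheta\ge0$; these are the negatively weighted norms we will need.

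\textbf{Truncated energy inequalities.} Choose a nondecreasing $\eta\in C^\infty([0,\infty))$ with $\eta\equiv0$ on $[0,2]$ and $\eta\equiv1$ on $[4,\infty)$, and set $f:=\eta(\tau)\tau$. Its support lies in $\{\rho<1/2\}$. Since $\Pi(\rho)\ge\Pi(1/2)>0$ there (Lemma \ref{lem1}), we get $|\{f\neq0\}|\le\Pi(1/2)^{-1}E_0=:K_0$, uniformly in $t$.

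From the mass equation, $\tau$ satisfies
\[
\tau_t+v\cdot\nabla\tau-\tau\operatorname{div}v+c\,\tau^2\Delta\rho^\alpha=0 .
\]
We prefer to test the $\rho$-equation directly with $\Phi_p'(\rho)$, where $\Phi_p'(\rho)=-\eta(\tau)^{\lambda_p}\tau^{p+1}$ up to a sign and power shift chosen so that the time derivative produces $\frac{d}{dt}\int\Psi_p(\tau)$ with $\Psi_p(\tau)\simeq(\eta(\tau)\tau)^{p}$. The diffusion $-c\Delta\rho^\alpha$ then gives the good term
\[
c\alpha(p+1)\int\eta^{\lambda_p}\tau^{p+1-\alpha}|\nabla\tau|^2 ,
\]
which is comparable to $\|\nabla f^{(p+1-\alpha)/2+1}\|_{L^2}^2/p$. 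The convection term $\int\rho v\cdot\nabla\Phi_p'(\rho)$ becomes $\int \eta^{\lambda_p}\tau^{p}\,v\cdot\nabla\tau$ plus $\eta'$-terms. The $\eta'$-terms are supported in $\{1/4<\rho<1/2\}$ and are bounded by $Cp\,4^p$ exactly as $I_2$ in Proposition \ref{prop7}, using \eqref{eq24} and Proposition \ref{prop3}. The main term is handled by Young's inequality,
\[
\le\tfrac{c\alpha}{4}\,(\text{diffusion})+C\int\eta^{\lambda_p}\tau^{p-1+\alpha}|v|^2 .
\]
Writing $|v|^2=(\rho^{2/(q+2)}|v|^2)\,\tau^{2/(q+2)}$ and applying Hölder with exponents $\frac{q+2}{2},\frac{q+2}{q}$ gives a bound $C\|f\|_{L^{l/\theta}}^{l}$ with $\theta=q/(q+2)$ and $l=p-1+\alpha+2/(q+2)$, up to $4^p$ terms. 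This yields the analogue of \eqref{prop7:eq3}:
\[
\|f^{p/2}\|_{L^\infty_tL^2}^2+\|f^{(p+1+\alpha)/2}\|^2_{L^2_tH^1}\le C_0p^2\|f\|^l_{L^l_tL^{l/\theta}}+C_0\Lambda^p .
\]
The powers will need adjusting, since the diffusion weight $\tau^{p+1-\alpha}|\nabla\tau|^2$ corresponds to the gradient of $f^{(p+3-\alpha)/2}$, and this exponent bookkeeping must be redone carefully.

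\textbf{Reverse Hölder step and iteration.} Interpolate $L^\infty_tL^2$ against $L^2_tL^r$ (with $r=6$ for $N=3$ and $r$ large for $N=2$), as in \eqref{prop7:eq5}, to get $\Upsilon(s)\le (Cp)^{C/s}\Upsilon(l)$ with $s\ge(1+a)l$. The gain here is the exponent gap between the gradient power (carrying $+1-\alpha$ relative to $p$) and the Hölder loss exponent $l/\theta$. Positivity of the analogue of $\alpha(2+a)-1-\theta(1+a)$ should reduce exactly to $\alpha<q/(q+2)=\theta$, which is why \eqref{eq168} is imposed. We then iterate $l_{k+1}\ge(1+a)l_k$; the product $\prod (Cl_k)^{C/l_k}$ converges, and the Chebyshev argument at the end of Proposition \ref{prop7}, together with $|\{f\neq0\}|\le K_0$, gives $\|f\|_{L^\infty(Q_T)}\le C$. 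Hence $\tau\le4+f\le C$.

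\textbf{Starting point and main obstacle.} The starting level $\Upsilon(l_0)$ requires an a priori $L^{l_0/\theta}$ bound on $f$ with no integrability information on $\tau-1$. We will obtain it as in the $N=3$ branch of Proposition \ref{prop7}: apply the energy inequality at a small $p_\star$, bound the right side by $\int\|F_\star\|_{H^1}^{\mu_\star}$ with $\mu_\star<2$, and absorb it. This works for small $p$ because $\tau^{\epsilon}$-type quantities near vacuum are controlled via $\rho^m-1\in L^\infty_tH^1$ only through the gradient. Verifying $\mu_\star<2$ and $2\le s_\star\le r$ under \eqref{eq168} is the step we expect to be the main obstacle, together with checking the exponent condition $\alpha<\theta$ in the reverse Hölder inequality. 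The initial term $\int\Psi_p(\tau_0)$ is bounded by $C\|\rho_0^{-1}\|_{L^\infty}^pK_0$.
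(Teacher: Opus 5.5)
Your skeleton matches the paper: the truncation $f=\eta(\tau)\tau$ on $\{\rho<\frac12\}$, a set of measure $\le K_-$; testing with $\eta(\tau)^{\Lambda}\tau^{p+1}$; H\"older against \eqref{eq164} via $|v|^2=(\rho^{2/(q+2)}|v|^2)\tau^{2/(q+2)}$; and a gain condition driven by $\alpha<\frac{q}{q+2}$. But the two places where the lower bound genuinely differs from Proposition \ref{prop7} are left open, and the fixes you sketch do not work.

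\textbf{First gap: homogeneity of the dissipation.} Fix the exponents as the paper does, $\Phi_p(\tau)\sim\tau^{p+2}$. Then the dissipation controls only $\|\nabla f^{(p+3-\alpha)/2}\|_{L^2_tL^2}$, while the time supremum controls $\int f^{p+2}$. Because $p+3-\alpha>p+2$, the finite measure of the support no longer bounds $\int_0^T\!\int f^{p+3-\alpha}$ by the left-hand side. In Proposition \ref{prop7} it did, since there $p+\alpha+1<p+2$. So the $L^2_tH^1$ bound you write, and the reverse H\"older step through $H^1\hookrightarrow L^r$, are unjustified. In 3D with $r=6$, the homogeneous inequality $\|F\|_{L^6}\le C\|\nabla F\|_{L^2}$ would rescue the iteration step, but in 2D nothing in your argument does.

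The paper's device is as follows.
\begin{itemize}
\item Write $\int_0^T\!\int f^{p+3-\alpha}\le T^\ast\Theta_T^{1-\alpha}\sup_t\int f^{p+2}$, where $\Theta_T=\|\tau\|_{L^\infty(Q_T)}+1$ is finite for the local strong solution.
\item Carry the factor $\Theta_T^{(1-\alpha)(1+\theta)/s}$ through the iteration, choosing $r$ so that $\theta:=\frac{q}{q+2}-\frac2r>\frac1q$ (this is the paper's $\theta$, not yours), and start at $l_0=q$.
\item This gives $\|f\|_{L^\infty(Q_T)}\le C\Theta_T^{\sigma}\max\{\|f\|_{L^q(0,T;L^{q+2})},C\}$ with $\sigma=(1-\alpha)\sum_k\frac{1+\theta}{l_{k+1}}<1$.
\item Close with $\Theta_T\le C\Theta_T^\sigma+5$.
\end{itemize}
Alternatively, you could repair your route with a Poincar\'e-type bound for functions supported on sets of measure $\le K_-$. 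Sobolev plus H\"older in 3D, or Nash's inequality plus H\"older in 2D, gives $\|F\|_{L^2}\le CK_-^{1/N}\|\nabla F\|_{L^2}$. Some such ingredient is indispensable, and it is missing from your proposal.

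\textbf{Second gap: the starting level.} Imitating the 3D branch of Proposition \ref{prop7} requires $\|F_\star\|_{L^2_tH^1}^2$ on the left in order to absorb $\int_0^T\|F_\star\|_{H^1}^{\mu_\star}\,dt$. This fails for the same reason. Here the relevant Lebesgue exponent is below $6$, so the gradient alone does not suffice even in 3D. Your heuristic is also false: since $m=\alpha-\frac12>0$, one has $|\rho^m-1|\le1$ on $\{\rho<\frac12\}$, so $\rho^m-1\in L^\infty_tH^1$ gives no integrability of negative powers of $\rho$.

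The paper obtains the start differently.
\begin{itemize}
\item It returns to the differential inequality at $p=q$.
\item It applies Gagliardo--Nirenberg to $g_q=f^{(q+3-\alpha)/2}$, interpolating between $L^{b_q}$ and $\dot{H}^1$, where $b_q=\frac{2(q+2)}{q+3-\alpha}<2$ so that $\|g_q\|_{L^{b_q}}^{b_q}=\int f^{q+2}$.
\item This yields $\frac{d}{dt}\int\Phi_q(\tau)\,dx\le C\bigl(1+(\int\Phi_q(\tau)\,dx)^{\nu_q}\bigr)+CH(t)$, with $\nu_q<1$ (using $q>\frac{2\alpha}{1-\alpha}$) and $H\in L^1(0,T)$ the transition-layer term.
\item A nonlinear Gronwall argument then gives $\sup_t\int f^{q+2}\le C$, hence $\|f\|_{L^q(0,T;L^{q+2})}\le C$.
\end{itemize}

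Finally, a minor slip: $\rho\le\overline C_T$ gives $\int\rho^{1+\vartheta}|v|^{q+2}\le C$, not $\int\rho^{1-\vartheta}|v|^{q+2}\le C$. You never actually use the latter, so drop it.
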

	
	\begin{proof}
		Fix $T\in(0,T^\ast)$, and write
		\[
		\tau:=\rho^{-1},
		\qquad
		Q_T:=\mathbb{R}^N\times(0,T),
		\qquad
		\Theta_T:=\|\tau\|_{L^\infty(Q_T)}+1.
		\]
		All constants below depend only on the quantities displayed in the statement. From the first equation in \eqref{sys of v},
		\[
		\partial_t\tau
		-c\alpha\operatorname{div} \!\bigl(\tau^{1-\alpha}\nabla\tau\bigr)
		+2c\alpha\tau^{-\alpha}|\nabla\tau|^2
		+v\cdot\nabla\tau
		-\tau\operatorname{div} v
		=0.
		\]
		
		Choose $\eta\in C^\infty([0,\infty))$ nondecreasing such that
		\[
		0\le \eta\le 1,
		\qquad
		\eta\equiv 0 \ \text{on } [0,2],
		\qquad
		\eta\equiv 1 \ \text{on } [4,\infty),
		\]
		and set
		\[
		f:=\eta(\tau)\tau.
		\]
		By \eqref{eq27} and the basic energy inequality,
		\[
		\bigl|\{\tau(\cdot,t)>2\}\bigr|
		=
		\bigl|\{\rho(\cdot,t)<1/2\}\bigr|
		\le C\int_{\mathbb{R}^N}\Pi(\rho(x,t))\,dx
		\le C E_0
		=:K_-
		\qquad\text{for all }t\in[0,T].
		\]
		Hence
		\[
		\{x\in\mathbb{R}^N:f(x,t)\ne0\}\subset\{\tau(\cdot,t)>2\},
		\qquad
		|\{x\in\mathbb{R}^N:f(x,t)\ne0\}|\le K_-,
		\qquad
		\tau\le f+4.
		\]
		We also introduce
		\[
		H(t):=\int_{\{2<\tau<4\}}\bigl(|v|^2+|\nabla\tau|^2\bigr)(x,t)\,dx.
		\]
		Since $\{2<\tau<4\}=\{1/4<\rho<1/2\}$, one has
		\[
		|\nabla\tau|=\rho^{-2}|\nabla\rho|
		\le C\,|\nabla(\rho^{\alpha-\frac12}-1)|
		\qquad\text{on }\{2<\tau<4\},
		\]
		and therefore, by \eqref{eq31} and Proposition \ref{prop8},
		\[
		\int_0^T H(t)\,dt\le C.
		\]
		
		Let
		\[
		\delta:=\frac{2}{q+2},
		\qquad
		a:=\frac{q+2}{q},
		\]
		and fix $p\ge0$. We write
		\[
		\Lambda_p:=p+3-\alpha,
		\qquad
		l:=p+1+\alpha+\delta,
		\qquad
		\Phi_p(s):=\int_0^s \eta(\sigma)^{\Lambda_p}\sigma^{p+1}\,d\sigma.
		\]
		Testing the equation for $\tau$ against $\Phi_p'(\tau)=\eta(\tau)^{\Lambda_p}\tau^{p+1}$ and integrating by parts, we obtain
		\[
		\frac{d}{dt}\int_{\mathbb{R}^N}\Phi_p(\tau)\,dx
		+c\alpha(p+3)\int_{\mathbb{R}^N}\eta(\tau)^{\Lambda_p}\tau^{p+1-\alpha}|\nabla\tau|^2\,dx
		\le I_1+I_2,
		\]
		where
		\[
		I_1:=(p+3)\left|
		\int_{\mathbb{R}^N}\eta(\tau)^{\Lambda_p}\tau^{p+1}v\cdot\nabla\tau\,dx
		\right|,
		\]
		and
		\[
		I_2:=\Lambda_p\left|
		\int_{\mathbb{R}^N}\eta(\tau)^{\Lambda_p-1}\eta'(\tau)\tau^{p+2}v\cdot\nabla\tau\,dx
		\right|.
		\]
		Then, after one application of Young's inequality,
		\[
		I_1\le \frac{c\alpha(p+1)}{4}\int_{\mathbb{R}^N}\eta(\tau)^{\Lambda_p}\tau^{p+1-\alpha}|\nabla\tau|^2\,dx
		+C(p+3)\int_{\mathbb{R}^N}\eta(\tau)^{\Lambda_p}\tau^{p+1+\alpha}|v|^2\,dx,
		\]
		and, since $\operatorname{supp}\eta'\subset[2,4]$,
		\[
		I_2\le C(p+3)4^{p+2}H(t).
		\]
		Now
		\[
		\tau^{p+1+\alpha}|v|^2
		=
		\bigl(\rho|v|^{q+2}\bigr)^{\frac{2}{q+2}}\tau^l,
		\qquad
		\Lambda_p-l=2-2\alpha-\delta>0
		\]
		by \eqref{eq168}. Hence, using $0\le\eta\le1$ and Proposition \ref{prop8},
		\[
		\int_{\mathbb{R}^N}\eta(\tau)^{\Lambda_p}\tau^{p+1+\alpha}|v|^2\,dx
		\le
		\left(\int_{\mathbb{R}^N}\rho|v|^{q+2}\,dx\right)^{\frac{2}{q+2}}
		\left(\int_{\mathbb{R}^N}\eta(\tau)^{a\Lambda_p}\tau^{la}\,dx\right)^{\frac1a}
		\le C\|f(t)\|_{L^{la}}^l.
		\]
		Consequently,
		\begin{equation}\label{eq:prop9-basic}
			\frac{d}{dt}\int_{\mathbb{R}^N}\Phi_p(\tau)\,dx
			+\frac{c\alpha(p+1)}{2}\int_{\mathbb{R}^N}\eta(\tau)^{\Lambda_p}\tau^{p+1-\alpha}|\nabla\tau|^2\,dx
			\le
			C(p+3)\|f(t)\|_{L^{la}}^l
			+
			C(p+3)4^{p+2}H(t).
		\end{equation}
		
		On the other hand,
		\[
		f^{p+2}\le C\Bigl((p+2)\Phi_p(\tau)+4^{p+2}\mathbf 1_{\{\tau>2\}}\Bigr).
		\]
		Integrating \eqref{eq:prop9-basic} over $(0,T)$ and using the preceding bound, the estimate on $\int_0^T H$, and the fact that $\tau_0\in L^\infty$, we find a constant $C_0\ge1$, depending only on $K_-$ and $\|\tau_0\|_{L^\infty}$, such that
		\begin{equation}\label{eq:prop9-caccioppoli}
			\sup_{0\le t\le T}\int_{\mathbb{R}^N} f^{p+2}(x,t)\,dx
			+\int_0^T\!\!\int_{\mathbb{R}^N}\eta(\tau)^{\Lambda_p}\tau^{p+1-\alpha}|\nabla\tau|^2\,dxdt
			\le
			C(p+3)^2\Bigl(\|f\|_{L^l(0,T;L^{la})}^l+C_0^{p+2}\Bigr).
		\end{equation}
		
		Set
		\[
		g_p:=f^{\frac{p+3-\alpha}{2}}.
		\]
		Since $\operatorname{supp}\eta'\subset[2,4]$, the difference between $|\nabla g_p|$ and \(\eta(\tau)^{\Lambda_p/2}\tau^{\frac{p+1-\alpha}{2}}|\nabla\tau|\) is supported in $\{2<\tau<4\}$ and is controlled by $C(p+3)H(t)^{1/2}$. It follows from \eqref{eq:prop9-caccioppoli} that
		\[
		\int_0^T\!\!\int_{\mathbb{R}^N}|\nabla g_p|^2\,dxdt
		\le
		C(p+3)^2\Bigl(\|f\|_{L^l(0,T;L^{la})}^l+C_0^{p+2}\Bigr).
		\]
		Moreover,
		\[
		\int_0^T\!\!\int_{\mathbb{R}^N}g_p^2\,dxdt
		=
		\int_0^T\!\!\int_{\mathbb{R}^N}f^{p+3-\alpha}\,dxdt
		\le
		T^\ast\Theta_T^{1-\alpha}
		\sup_{0\le t\le T}\int_{\mathbb{R}^N}f^{p+2}(x,t)\,dx.
		\]
		Hence
		\begin{equation}\label{eq:prop9-H1}
			\left\|f^{\frac{p+2}{2}}\right\|_{L^\infty(0,T;L^2)}^2
			+
			\left\|f^{\frac{p+3-\alpha}{2}}\right\|_{L^2(0,T;H^1)}^2
			\le
			C\Theta_T^{1-\alpha}(p+3)^2
			\Bigl(\|f\|_{L^l(0,T;L^{la})}^l+C_0^{p+2}\Bigr).
		\end{equation}
		
		We now choose $r$ so that
		\[
		H^1(\mathbb{R}^N)\hookrightarrow L^r(\mathbb{R}^N),
		\qquad
		\theta:=\frac{q}{q+2}-\frac{2}{r}>\frac1q.
		\]
		For $N=2$ this is immediate with $r<\infty$ large. For $N=3$, Assumption \ref{ass1} gives $\alpha>\frac23$; since $\alpha<\frac{q}{q+2}$, we have $q>\frac{2\alpha}{1-\alpha}>4$, and then one may choose
		\[
		r\in\left(\frac{2q(q+2)}{q^2-q-2},\,6\right].
		\]
		Now define
		\[
		s:=(p+2)\theta+p+3-\alpha,
		\qquad
		\xi:=\frac{2\theta}{s},
		\qquad
		\zeta:=\frac{2}{s}.
		\]
		Then
		\[
		\frac{\xi}{2}(p+2)+\frac{\zeta}{2}(p+3-\alpha)=1,
		\qquad
		\frac{1}{as}=\frac{\xi}{2}+\frac{\zeta}{r},
		\]
		so that Hölder's inequality and Sobolev embedding yield
		\[
		\|f\|_{L^s(0,T;L^{as})}
		\le
		C_r^\zeta
		\left\|f^{\frac{p+2}{2}}\right\|_{L^\infty(0,T;L^2)}^\xi
		\left\|f^{\frac{p+3-\alpha}{2}}\right\|_{L^2(0,T;H^1)}^\zeta.
		\]
		Combining this with \eqref{eq:prop9-H1}, we get
		\[
		\|f\|_{L^s(0,T;L^{as})}
		\le
		C^{\frac{1+\theta}{s}}
		\Theta_T^{(1-\alpha)\frac{1+\theta}{s}}
		(p+3)^{\frac{2(1+\theta)}{s}}
		\Bigl(\|f\|_{L^l(0,T;L^{al})}^l+C_0^{p+2}\Bigr)^{\frac{1+\theta}{s}}.
		\]
		By \eqref{eq168},
		\[
		1-\alpha-\delta>0,
		\qquad
		2-2\alpha-\delta>0,
		\]
		and therefore
		\[
		s-(1+\theta)l
		=
		2-2\alpha-\delta+\theta(1-\alpha-\delta)
		>0.
		\]
		In particular,
		\[
		l\frac{1+\theta}{s}\le1.
		\]
		Since also $p+2\le C_\ast l$ for some constant $C_\ast=C_\ast(\alpha,q)$, we may rewrite the preceding estimate as
		\begin{equation}\label{eq:prop9-iteration}
			\max\Bigl\{\|f\|_{L^s(0,T;L^{as})},\,C_0^{C_\ast}\Bigr\}
			\le
			C^{\frac{1+\theta}{s}}
			\Theta_T^{(1-\alpha)\frac{1+\theta}{s}}
			(p+3)^{\frac{2(1+\theta)}{s}}
			\max\Bigl\{\|f\|_{L^l(0,T;L^{al})},\,C_0^{C_\ast}\Bigr\}.
		\end{equation}
		
		Set
		\[
		l_0:=q,
		\qquad
		p_k:=l_k-1-\alpha-\delta,
		\qquad
		l_{k+1}:=(p_k+2)\theta+p_k+3-\alpha.
		\]
		Then
		\[
		l_{k+1}=(1+\theta)l_k+A,
		\qquad
		A:=2-2\alpha-\delta+\theta(1-\alpha-\delta)>0.
		\]
		Hence $l_k\to\infty$, more precisely
		\[
		l_{k+1}\ge (1+\theta)^{k+1}q,
		\qquad
		p_k+3=l_k+2-\alpha-\delta\le C(1+\theta)^k.
		\]
		Iterating \eqref{eq:prop9-iteration} with $p=p_k$, and using
		\[
		\sum_{k=0}^\infty \frac{1+\theta}{l_{k+1}}<\infty,
		\qquad
		\sum_{k=0}^\infty \frac{\log(p_k+3)}{l_{k+1}}<\infty,
		\]
		we infer that
		\begin{equation}\label{eq:prop9-iterated}
			\max\Bigl\{\|f\|_{L^{l_k}(0,T;L^{al_k})},\,C_0^{C_\ast}\Bigr\}
			\le
			C\,\Theta_T^\sigma
			\max\Bigl\{\|f\|_{L^q(0,T;L^{q+2})},\,C_0^{C_\ast}\Bigr\}
		\end{equation}
		for all $k\ge0$, where
		\[
		\sigma:=(1-\alpha)\sum_{k=0}^\infty \frac{1+\theta}{l_{k+1}}.
		\]
		Moreover,
		\[
		0<\sigma
		\le
		(1-\alpha)\frac{1+\theta}{q\theta}
		<
		(1-\alpha)\Bigl(1+\frac1q\Bigr)
		<1.
		\]
		Indeed, $\theta>\frac1q$, $q\ge2$, and Assumption \ref{ass1} implies $\alpha>\frac{\sqrt5-1}{2}>\frac12$.
		
		Since \(|\{x\in\mathbb{R}^N:f(x,t)\neq0\}|\le K_-\), \eqref{eq:prop9-iterated} gives
		\[
		\|f\|_{L^{l_k}(Q_T)}
		\le
		C\|f\|_{L^{l_k}(0,T;L^{al_k})}
		\le
		C\,\Theta_T^\sigma
		\max\Bigl\{\|f\|_{L^q(0,T;L^{q+2})},\,C_0^{C_\ast}\Bigr\}.
		\]
		As $l_k\to\infty$ and $|\{(x,t)\in Q_T:f(x,t)\neq0\}|\le K_-T<\infty$, letting $k\to\infty$ yields
		\begin{equation}\label{eq:prop9-Linfty}
			\|f\|_{L^\infty(Q_T)}
			\le
			C\,\Theta_T^\sigma
			\max\Bigl\{\|f\|_{L^q(0,T;L^{q+2})},\,C_0^{C_\ast}\Bigr\}.
		\end{equation}
		
		It remains to control the initial norm on the right-hand side. To this end we return to \eqref{eq:prop9-basic} with $p=q$. Set
		\[
		l_q:=q+1+\alpha+\delta<q+2,
		\qquad
		g_q:=f^{\frac{q+3-\alpha}{2}},
		\]
		and
		\[
		m_q:=\frac{2(q+2)l_q}{q(q+3-\alpha)},
		\qquad
		r_q:=\frac{2l_q}{q+3-\alpha},
		\qquad
		b_q:=\frac{2(q+2)}{q+3-\alpha}.
		\]
		Then
		\[
		\|f\|_{L^{al_q}}^{l_q}=\|g_q\|_{L^{m_q}}^{r_q},
		\qquad
		\|g_q\|_{L^{b_q}}^{b_q}
		=
		\int_{\mathbb{R}^N}f^{q+2}\,dx.
		\]
		Since $r_q<2$, and $m_q<\infty$ for $N=2$ while $m_q<6$ for $N=3$, the Gagliardo--Nirenberg inequality and Young's inequality  give
		\[
		\|f\|_{L^{al_q}}^{l_q}
		\le
		\eta\|\nabla g_q\|_{L^2}^2
		+
		C_\eta
		\left(
		\int_{\mathbb{R}^N}f^{q+2}\,dx
		\right)^{\nu_q},
		\]
		where
		\[
		\nu_q
		=
		\frac{
			q^2+\bigl(3-(N-1)\alpha\bigr)q+4-(N-2)\alpha-2N
		}{
			q^2+\bigl(4-N\alpha\bigr)q+4-N-2N\alpha
		}.
		\]
		A direct computation yields
		\[
		1-\nu_q
		=
		\begin{cases}
			\dfrac{q(1-\alpha)+2-4\alpha}{q^2+(4-2\alpha)q+2-4\alpha}, & N=2,\\[3mm]
			\dfrac{q(1-\alpha)+3-5\alpha}{q^2+(4-3\alpha)q+1-6\alpha}, & N=3,
		\end{cases}
		\]
		so that $0<\nu_q<1$ because \eqref{eq168} implies $q>\dfrac{2\alpha}{1-\alpha}$.
		Using again the transition-layer estimate,
		\[
		\|\nabla g_q(t)\|_{L^2}^2
		\le
		C\int_{\mathbb{R}^N}\eta(\tau)^{q+3-\alpha}\tau^{q+1-\alpha}|\nabla\tau|^2\,dx
		+
		C H(t).
		\]
		Substituting these bounds into \eqref{eq:prop9-basic} with $p=q$ and taking $\eta$ sufficiently small, we arrive at
		\[
		\frac{d}{dt}\int_{\mathbb{R}^N}\Phi_q(\tau)\,dx
		\le
		C\left(
		\int_{\mathbb{R}^N}f^{q+2}\,dx
		\right)^{\nu_q}
		+
		C H(t).
		\]
		Since
		\[
		f^{q+2}\le C\,\Phi_q(\tau)+C\,\mathbf 1_{\{\tau>2\}},
		\qquad
		|\{\tau(\cdot,t)>2\}|\le K_-,
		\]
		we get
		\[
		\frac{d}{dt}\int_{\mathbb{R}^N}\Phi_q(\tau)\,dx
		\le
		C\left(1+\left(\int_{\mathbb{R}^N}\Phi_q(\tau)\,dx\right)^{\nu_q}\right)
		+
		C H(t).
		\]
		The nonlinear Gronwall inequality, together with $\int_0^T H(t)\,dt\le C$, now gives
		\[
		\sup_{0\le t\le T}\int_{\mathbb{R}^N}\Phi_q(\tau(x,t))\,dx\le C,
		\]
		and therefore
		\[
		\sup_{0\le t\le T}\int_{\mathbb{R}^N}f^{q+2}(x,t)\,dx\le C.
		\]
		In particular,
		\[
		\|f\|_{L^q(0,T;L^{q+2})}
		\le
		(T^\ast)^{1/q}\sup_{0\le t\le T}\|f(t)\|_{L^{q+2}}
		\le C.
		\]
		Returning to \eqref{eq:prop9-Linfty}, we conclude that
		\[
		\|f\|_{L^\infty(Q_T)}\le C\,\Theta_T^\sigma,
		\qquad 0<\sigma<1.
		\]
		Since $\tau\le f+4$, it follows that
		\[
		\Theta_T
		\le
		\|f\|_{L^\infty(Q_T)}+5
		\le
		C\,\Theta_T^\sigma+5.
		\]
		Because $\sigma<1$, Young's inequality implies $\Theta_T\le C$. Equivalently,
		\[
		\|\tau\|_{L^\infty(Q_T)}\le C.
		\]
		This is exactly \eqref{eq169}.
	\end{proof}
	\subsection{Higher-order estimates}\label{subsec3}

	As a preparation for the higher-order estimates, we first derive an auxiliary bound on $\nabla^3 \rho$.
	\begin{lemma}
		For $N \in \{2, 3\}$, suppose $0 < \underline{\rho} \leq \rho(x, t) \leq \bar{\rho} < \infty$ and Assumption \ref{ass1} hold. Then there exists a constant $C=C(\alpha, \underline{\rho}, \bar{\rho},N)>0$ such that
		\begin{equation}\label{est: rho H_dot3}
			\|\nabla^3\rho\|_{L^2}^2
			\le
			C\|\nabla\Delta\rho^\alpha\|_{L^2}^2
			+
			C\int_{\mathbb{R}^N}|\nabla\rho|^2|\nabla^2\rho|^2\,dx
			+
			C\int_{\mathbb{R}^N}|\nabla\rho|^6\,dx.
		\end{equation}
		Moreover, for $N=3$, it holds that
		\begin{equation}\label{est: rho H_dot3 3D}
			\|\nabla^3\rho\|_{L^2(\mathbb{R}^3)}^2
			\le
			C\|\nabla\Delta\rho^\alpha\|_{L^2(\mathbb{R}^{3})}^2
			+
			C\int_{\mathbb{R}^3}\rho^{\alpha-1}|\nabla\rho|^2|\nabla^2\rho|^2\,dx.
		\end{equation}
		For $N=2,$ assuming that $\sup_{0 \le s \le T}\|\nabla \rho^\alpha(s)\|_{L^2}$ is bounded, then the estimate simplifies to  \begin{equation}\label{est: rho H_dot3 2D}
			\|\nabla^3\rho\|_{L^2(\mathbb{R}^2)}^2
			\le
			C\|\nabla\Delta\rho^\alpha\|_{L^2(\mathbb{R}^{2})}^2
			.
		\end{equation}
	\end{lemma}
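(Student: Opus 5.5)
The plan is to write $\rho^\alpha$ in terms of $\rho$ and expand $\nabla\Delta\rho^\alpha$ so that the top-order term $\alpha\rho^{\alpha-1}\nabla\Delta\rho$ can be isolated. Since $\nabla\rho^\alpha=\alpha\rho^{\alpha-1}\nabla\rho$, we have
\[
\Delta\rho^\alpha=\alpha\rho^{\alpha-1}\Delta\rho+\alpha(\alpha-1)\rho^{\alpha-2}|\nabla\rho|^2 .
\]
Differentiating once more gives
\[
\nabla\Delta\rho^\alpha=\alpha\rho^{\alpha-1}\nabla\Delta\rho+\mathcal{R},
\qquad
|\mathcal{R}|\le C\bigl(|\nabla\rho|\,|\nabla^2\rho|+|\nabla\rho|^3\bigr),
\]
with $C$ depending on $\alpha,\underline\rho,\bar\rho$. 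Because $\rho^{\alpha-1}$ is bounded above and below, this yields
\[
\|\nabla\Delta\rho\|_{L^2}^2\le C\|\nabla\Delta\rho^\alpha\|_{L^2}^2+C\int|\nabla\rho|^2|\nabla^2\rho|^2\,dx+C\int|\nabla\rho|^6\,dx .
\]
On $\mathbb{R}^N$, Plancherel gives $\|\nabla^3\rho\|_{L^2}=\|\nabla\Delta\rho\|_{L^2}$, since $\nabla\rho\in H^2$ for the strong solution. This proves \eqref{est: rho H_dot3}.

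For \eqref{est: rho H_dot3 3D} we must absorb $\int|\nabla\rho|^6$. The approach is to integrate by parts:
\[
\int|\nabla\rho|^6\,dx=\int|\nabla\rho|^4\nabla\rho\cdot\nabla(\rho-1)\,dx=-\int(\rho-1)\operatorname{div}\bigl(|\nabla\rho|^4\nabla\rho\bigr)\,dx .
\]
The divergence is bounded by $C|\nabla\rho|^4|\nabla^2\rho|$. The factor $\rho-1$ should not be used this way, though, because it is bounded but not small. The better choice is to replace $\rho-1$ by $\rho-k$ for any constant $k$; since only $\nabla\rho$ appears, one may take $k$ to be the midpoint of $[\underline\rho,\bar\rho]$, so that $|\rho-k|\le\bar\rho$. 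Cauchy--Schwarz then gives
\[
\int|\nabla\rho|^6\,dx\le C\int|\nabla\rho|^4|\nabla^2\rho|\,dx\le C\Bigl(\int|\nabla\rho|^6\,dx\Bigr)^{1/2}\Bigl(\int|\nabla\rho|^2|\nabla^2\rho|^2\,dx\Bigr)^{1/2},
\]
and hence $\int|\nabla\rho|^6\le C\int|\nabla\rho|^2|\nabla^2\rho|^2$. Justifying the integration by parts needs $|\nabla\rho|^5(\rho-k)$ to decay, which is fine for $\rho-1\in H^3$. Finally, $\rho^{\alpha-1}\ge\bar\rho^{\alpha-1}>0$ inserts the weight, which yields \eqref{est: rho H_dot3 3D}. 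The same argument works in any dimension, so it also applies to $N=2$.

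For \eqref{est: rho H_dot3 2D} the task is to absorb $\int|\nabla\rho|^2|\nabla^2\rho|^2$ into $\|\nabla^3\rho\|^2$ using the uniform bound on $\|\nabla\rho\|_{L^2}$, which follows from the bound on $\|\nabla\rho^\alpha\|_{L^2}$ together with $\rho\ge\underline\rho$. By H\"older,
\[
\int|\nabla\rho|^2|\nabla^2\rho|^2\,dx\le\|\nabla\rho\|_{L^4}^2\|\nabla^2\rho\|_{L^4}^2 .
\]
Applying 2D Gagliardo--Nirenberg interpolation between $\|\nabla\rho\|_{L^2}$ and $\|\nabla^3\rho\|_{L^2}$ gives
\[
\|\nabla\rho\|_{L^4}\le C\|\nabla\rho\|_{L^2}^{3/4}\|\nabla^3\rho\|_{L^2}^{1/4},
\qquad
\|\nabla^2\rho\|_{L^4}\le C\|\nabla\rho\|_{L^2}^{1/4}\|\nabla^3\rho\|_{L^2}^{3/4}.
\]
The product is then $\le C\|\nabla\rho\|_{L^2}^2\|\nabla^3\rho\|_{L^2}^2$, which has the same scaling as $\|\nabla^3\rho\|^2$ with no smallness. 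This is the main obstacle: the scaling is critical, so plain interpolation cannot produce a small constant in front of $\|\nabla^3\rho\|^2$.

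To get around this, I would first try not to split into two $L^4$ norms. Instead, the plan is to use the nonlinear structure: write $\nabla\Delta\rho^\alpha$ exactly in terms of $w=\rho^\alpha$, so that $\nabla^3\rho$ is expressed through $\nabla^3 w$ plus lower-order terms in $w$. One then estimates $\|\nabla^3\rho\|\le C\|\nabla^3 w\|+C\||\nabla w||\nabla^2 w|\|+C\|\nabla w\|_{L^6}^3$ with $\|\nabla^3 w\|=\|\nabla\Delta w\|$. For the lower-order terms in $w$, Gagliardo--Nirenberg gives the same critical bound $C\|\nabla w\|_{L^2}\|\nabla^3 w\|_{L^2}$, which is again critical. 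Absorbing it would then require either a small $\|\nabla w\|_{L^2}$ or a Brezis--Gallouet/logarithmic refinement. If that fails, the fallback is to accept a constant that depends on $\sup_t\|\nabla\rho^\alpha\|_{L^2}$ through a Ladyzhenskaya-type splitting on high and low frequencies. A high/low frequency decomposition, with the low-frequency part controlled by $\|\nabla w\|_{L^2}$ and the high-frequency part made small, is the concrete route I would attempt first.
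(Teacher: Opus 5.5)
Your proof of \eqref{est: rho H_dot3 2D} has a genuine gap. You stop exactly where the argument is already finished, because you treat the lower-order terms as something that must be absorbed into the left-hand side. After passing to $w=\rho^\alpha$ you correctly have
\[
\|\nabla^3\rho\|_{L^2}\le C\|\nabla^3 w\|_{L^2}+C\bigl\||\nabla w|\,|\nabla^2 w|\bigr\|_{L^2}+C\|\nabla w\|_{L^6}^3 .
\]
Two-dimensional Gagliardo--Nirenberg together with $\|\nabla^2 w\|_{L^2}^2\le\|\nabla w\|_{L^2}\|\nabla^3 w\|_{L^2}$ then gives
\[
\bigl\||\nabla w|\,|\nabla^2 w|\bigr\|_{L^2}\le C\|\nabla w\|_{L^2}\|\nabla^3 w\|_{L^2},
\qquad
\|\nabla w\|_{L^6}^3\le C\|\nabla w\|_{L^2}^2\|\nabla^3 w\|_{L^2}.
\]
But $\|\nabla^3 w\|_{L^2}=\|\nabla\Delta\rho^\alpha\|_{L^2}$ is the \emph{right-hand side} of the inequality to be proved, not $\|\nabla^3\rho\|_{L^2}$. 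So nothing has to be absorbed and no smallness is needed. You get
\[
\|\nabla^3\rho\|_{L^2}\le C\bigl(1+\|\nabla w\|_{L^2}+\|\nabla w\|_{L^2}^2\bigr)\|\nabla\Delta\rho^\alpha\|_{L^2},
\]
and the hypothesis that $\sup_t\|\nabla\rho^\alpha\|_{L^2}$ is bounded only serves to turn the prefactor into a constant. This is precisely the paper's argument.

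Criticality was a real obstruction in your first attempt, where the lower-order terms were bounded by $\|\nabla\rho\|_{L^2}^2\|\nabla^3\rho\|_{L^2}^2$, i.e.\ by the left-hand side. The point of the change of variables is that afterwards they are bounded by the right-hand side. The Brezis--Gallou\"et and frequency-splitting detours are therefore unnecessary, and as written the proposal never actually establishes \eqref{est: rho H_dot3 2D}.

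Your arguments for \eqref{est: rho H_dot3} and \eqref{est: rho H_dot3 3D} are correct. In the latter, the step $X\le C X^{1/2}Y^{1/2}\Rightarrow X\le C^2Y$ needs no smallness, so $\rho-1$ would have worked as well as $\rho-k$. This is the same misconception about smallness that derails the 2D part.

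The paper proves the 3D bound differently. It integrates by parts in the weighted quantity $\int\rho^{\alpha-3}|\nabla\rho|^6\,dx$, using $\rho^{\alpha-3}\nabla\rho=\frac{1}{\alpha-2}\nabla\rho^{\alpha-2}$. This yields
\[
\int\rho^{\alpha-3}|\nabla\rho|^6\,dx\le\frac{27}{(2-\alpha)^2}\int\rho^{\alpha-1}|\nabla\rho|^2|\nabla^2\rho|^2\,dx,
\]
with a constant independent of $\underline\rho$ and $\bar\rho$. That explicit constant is reused later to make $\varepsilon_2>0$. For the lemma as stated, your constant depending on the density bounds is sufficient.
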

	\begin{proof}
		A direct calculation leads to
		\begin{equation}
			\begin{aligned}
				\nabla\Delta \rho
				=& \nabla \left[ \alpha^{-1}\rho^{1-\alpha}\left( \Delta(\rho^{\alpha})-\alpha(\alpha-1)\rho^{\alpha-2}\left| \nabla \rho \right| ^{2} \right)  \right] \\
				\leq&C\left( \left| \nabla \rho \right| \left| \nabla^{2}\rho \right|  +\left| \nabla\Delta \rho^{\alpha} \right| +\left| \nabla\rho\right| ^{3}\right) .
			\end{aligned}
		\end{equation}
		By Plancherel's theorem, we obtain
		\begin{equation}\label{est: ho pre general}
			\|\nabla^3\rho\|_{L^2}^2\le C\|\nabla\Delta\rho^\alpha\|_{L^2}^2+C\left\lVert \left| \nabla \rho \right| \left| \nabla^{2}\rho \right|  \right\rVert ^{2}_{L^{2}}+C\left\lVert \nabla \rho \right\rVert_{L^{6}}^{6} .
		\end{equation}
		For $N=3$ with $\alpha < 1$, it follows from integration by parts and Young's inequality that
		\begin{equation}\label{est: nabla rho L6 3D with weight}
			\begin{aligned}
				\int_{\mathbb{R}^{3}}\rho^{\alpha-3}\left| \nabla \rho \right|^{6}dx=& \frac{1}{\alpha-2}\sum^{3}_{i=1}\int_{\mathbb{R}^{3}}\partial_{i}(\rho^{\alpha-2})\partial_{i}\rho
				|\nabla \rho|^{4}dx\\
				=& \frac{1}{2-\alpha}\int_{\mathbb{R}^{3}}\rho^{\alpha-2}\left( \Delta\rho|\nabla \rho|^{4}+\sum_{i,j=1}^{3}4\left| \nabla \rho \right| ^{2} \partial_{i}\rho\partial_{j}\rho\partial_{ij}\rho \right)dx \\
				\leq& \frac{1}{2-\alpha}\int_{\mathbb{R}^{3}}\rho^{\alpha-2}|\nabla \rho|^{2}\left| \left( |\nabla \rho| ^{2}I+4\nabla \rho \otimes \nabla \rho\right) :\nabla^{2}\rho \right| dx\\
				\leq& \frac{3\sqrt{ 3 }}{2-\alpha}\int_{\mathbb{R}^{3}}\rho^{\alpha-2}|\nabla \rho|^{4}|\nabla^{2}\rho|dx\\
				\leq& \frac{1}{2} \int_{\mathbb{R}^{3}}\rho^{\alpha-3}|\nabla \rho|^{6}dx+ \frac{27}{2(2-\alpha)^{2}}\int_{\mathbb{R}^{3}}\rho^{\alpha-1}|\nabla \rho|^{2}\left| \nabla^{2}\rho \right| ^{2}dx.
			\end{aligned}
		\end{equation}
		From the uniform boundedness of $\rho$, we have
		\begin{equation}\label{est: nabla rho L6 3D}
			\begin{aligned}
				\left\lVert \nabla \rho \right\rVert _{L^{6}(\mathbb{R}^3)}^6\leq& \overline{\rho}^{3-\alpha}\int_{\mathbb{R}^{3}}\rho^{\alpha-3} \left| \nabla \rho \right|^6 dx\leq\frac{27\,\overline\rho^{3-\alpha}}{(2-\alpha)^2}\int_{\mathbb{R}^3}\rho^{\alpha-1}|\nabla\rho|^2|\nabla^2\rho|^2\,dx.
			\end{aligned}
		\end{equation}
		
		For $N=2$, by \eqref{est: basic}, integration by parts, H\"{o}lder's inequality, Plancherel's theorem, together with the boundedness of $\rho$, we deduce that
		\begin{equation}\label{nabla rho L6 2D}
			\begin{aligned}
				\left\lVert \nabla \rho \right\rVert _{L^{6}}^{6}\leq& C\left\lVert \nabla \rho^{\alpha} \right\rVert _{L^{6}}^{6}\leq C\left\lVert \nabla ^{2}\rho^{\alpha} \right\rVert _{L^{2}}^{4}\left\lVert \nabla \rho^{\alpha} \right\rVert _{L^{2}}^{2}\\
				\leq&C\left\lVert \nabla^{2}\rho^{\alpha} \right\rVert _{L^{2}}^{4}\leq C\left( \int_{\mathbb{R}^{2}} \left| \nabla \rho ^{\alpha}\right|\left| \nabla^{3}\rho^{\alpha} \right| dx \right)^{2} \\
				\leq& C\left\lVert \nabla \rho^{\alpha}\right\rVert _{L^{2}}^{2}\left\lVert \nabla^{3}\rho^{\alpha} \right\rVert _{L^{2}}^{2}\\
				\leq& C\left\lVert \nabla\Delta \rho^{\alpha} \right\rVert _{L^{2}}^{2}.
			\end{aligned}
		\end{equation}
		Using \eqref{nabla rho L6 2D}, Gagliardo–Nirenberg inequality, we have
		\begin{equation}\label{est: ho pre 2D tmp}
			\begin{aligned}
				\left\lVert \left| \nabla \rho \right| \left| \nabla^{2}\rho \right|  \right\rVert _{L^{2}}^{2}\leq&C\left\lVert  \left| \nabla \rho^{\alpha} \right| \left| \nabla^{2}\rho^{\alpha} \right| \right\rVert ^{2}_{L^{2}}+C\left\lVert \nabla \rho^{\alpha} \right\rVert ^{6}_{L^{6}}\\
				\leq&C\left\lVert \nabla \rho^{\alpha} \right\rVert ^{2}_{L^{4}}\left\lVert \nabla^{2}\rho^{\alpha} \right\rVert _{L^{4}}^{2}+C\left\lVert \nabla \rho^{\alpha} \right\rVert ^{6}_{L^{6}}\\
				\leq &C\left\lVert \nabla \rho^{\alpha} \right\rVert _{L^{2}}\left\lVert \nabla^{2}\rho^\alpha \right\rVert _{L^{2}}^{2}\left\lVert \nabla^{3}\rho^{\alpha} \right\rVert _{L^{2}}+C\left\lVert \nabla \rho^{\alpha} \right\rVert ^{6}_{L^{6}}\\
				\leq&C\left\lVert \nabla^{2} \rho^{\alpha} \right\rVert _{L^{2}}^{4}+C\left\lVert \nabla^{3}\rho^{\alpha} \right\rVert _{L^{2}}^{2}+C\left\lVert \nabla \rho^{\alpha} \right\rVert ^{6}_{L^{6}}\\
				\leq&C\left\lVert \nabla \rho^{\alpha}\right\rVert _{L^{2}}^{2}\left\lVert \nabla^{3}\rho^{\alpha} \right\rVert _{L^{2}}^{2}+C\left\lVert \nabla^{3}\rho^{\alpha} \right\rVert ^{2}_{L^{2}}+C\left\lVert \nabla \rho^{\alpha} \right\rVert ^{6}_{L^{6}}\\
				\leq&C\left\lVert \nabla\Delta \rho^{\alpha} \right\rVert _{L^{2}}^{2}.
			\end{aligned}
		\end{equation}
		It follows from \eqref{est: ho pre general}, \eqref{nabla rho L6 2D} and \eqref{est: ho pre 2D tmp} that \eqref{est: rho H_dot3 2D} holds, which completes the proof.
	\end{proof}
	The preceding lemmas, combined with the uniform positive upper and lower bounds on the density, allow us to establish the higher-order estimates for the solution.
	\begin{proposition}\label{prop: ho1}
		Let $N \in \{2, 3\}$, and assume that \eqref{eq4}-\eqref{eq7} along with Assumption \ref{ass1} hold. Let $(\rho,u)$ be a strong solution to the system \eqref{eq1} on $\mathbb{R}^N \times [0, T^*)$, and let $v$ be defined by \eqref{eq12}.
		
		Assume further that the conditions of Proposition \ref{prop7} and Proposition \ref{prop9} are satisfied on the interval $[0, T]$. Fix the exponent $q \ge 2$ chosen in Proposition \ref{prop9}, and denote $r := q + 2.$ Then $r > N$, and there exists a constant$$C = C\Bigl(N, \alpha, \gamma, \nu, \varepsilon, E_0, T^*, q, \overline{\rho_0}, \|\rho_0^{-1}\|_{L^\infty}, \|\rho_0^{1/r}v_0\|_{L^r}, \|\rho_0^{1/4}v_0\|_{L^4}, \|\rho_0-1\|_{H^3}, \|u_0\|_{H^2}\Bigr) $$such that
		\begin{equation}\label{est: ho1.1}
			\begin{aligned}
				&\sup_{0\le t\le T}
				\Bigl(
				\|\nabla^2\rho(t)\|_{L^2}^2
				+\|\nabla v(t)\|_{L^2}^2
				+\|\rho_t(t)\|_{L^2}^2
				\Bigr)\\
				&\qquad
				+\int_0^T
				\Bigl(
				\|\nabla^3\rho(t)\|_{L^2}^2
				+\|\nabla\rho_t(t)\|_{L^2}^2
				+\|v_t(t)\|_{L^2}^2
				+\|\nabla^2 v(t)\|_{L^2}^2
				\Bigr)\,dt
				\le C.
			\end{aligned}
		\end{equation}
		Furthermore, when $N=3$, it holds that
		\begin{equation}\label{est: ho1.2}
			\sup_{0\le t\le T}\|\nabla\rho(t)\|_{L^4(\mathbb{R}^3)}^4\le C.
		\end{equation}
		
	\end{proposition}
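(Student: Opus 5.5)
Throughout, Propositions \ref{prop7} and \ref{prop9} give $0<\underline\rho\le\rho\le\bar\rho$ on $\mathbb{R}^N\times[0,T]$, so all powers of $\rho$ are harmless weights. Proposition \ref{prop8} gives $\sup_t\|v\|_{L^r}\le C$ with $r=q+2$. Since $q>2\alpha/(1-\alpha)$ and $\alpha>\frac23$ when $N=3$, we have $r>6>N$. Proposition \ref{prop3} gives $\sup_t\|\nabla\rho\|_{L^2}\le C$. The plan is to close a coupled Gronwall inequality for
\[
Y(t):=\|\nabla^2\rho\|_{L^2}^2+\|\nabla v\|_{L^2}^2\ \Bigl(+\|\nabla\rho\|_{L^4}^4\ \text{if }N=3\Bigr).
\]
The bounds on $\rho_t$, $\nabla\rho_t$ and $v_t$ are then read off from the equations. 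Namely, $\rho_t=-\operatorname{div}(\rho v)+c\Delta\rho^\alpha$, and $\rho v_t$ is the rest of the momentum equation in \eqref{sys of v}, so their norms are bounded by $Y$ and the dissipation.

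\textbf{Density part.} Apply $\nabla$ to the mass equation $\rho_t+\operatorname{div}(\rho v)=c\Delta\rho^\alpha$ and test with $-\nabla\Delta\rho^\alpha$, equivalently $-\nabla\Delta\rho$ up to lower-order terms. This gives
\[
\frac{d}{dt}\|\nabla^2\rho\|_{L^2}^2+c_0\|\nabla\Delta\rho^\alpha\|_{L^2}^2\lesssim\|\nabla^2(\rho v)\|_{L^2}\|\nabla\Delta\rho^\alpha\|_{L^2}+(\text{commutator terms}).
\]
Expand $\nabla^2(\rho v)=\rho\nabla^2v+2\nabla\rho\nabla v+v\nabla^2\rho$. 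The terms are handled as follows.
\begin{itemize}
\item $\rho\nabla^2 v$ is absorbed by $\varepsilon$ times the velocity dissipation.
\item $|v||\nabla^2\rho|$ is bounded using $\|v\|_{L^r}$ with $r>N$ together with Gagliardo--Nirenberg on $\nabla^2\rho$ between $L^2$ and $\nabla^3\rho$.
\item $|\nabla\rho||\nabla v|$ and the commutator terms such as $|\nabla\rho||\nabla^2\rho|$ and $|\nabla\rho|^3$ are controlled via \eqref{est: rho H_dot3}.
\end{itemize}
In 2D, \eqref{est: rho H_dot3 2D} makes $\|\nabla^3\rho\|_{L^2}\lesssim\|\nabla\Delta\rho^\alpha\|_{L^2}$, and these terms close directly.

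\textbf{Velocity part.} Test the momentum equation of \eqref{sys of v} with $v_t$, or alternatively with $-\Delta v$. The viscous operator is strongly elliptic: by the computation of Proposition \ref{prop2}, $Q(A)\ge\mu_{\rm GE}|A|^2$. Hence we obtain
\[
\frac{d}{dt}\int\rho^\alpha Q(\nabla v)+\|\sqrt\rho v_t\|_{L^2}^2\lesssim\|v\cdot\nabla v\|_{L^2}^2+\|\nabla\rho\,\nabla v\|_{L^2}^2+\|\nabla\rho\|_{L^2}^2+\dots
\]
The term $\rho_t\nabla v\nabla v$ arising from $\partial_t\rho^\alpha$ is bounded with $\|\rho_t\|_{L^2}$. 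Elliptic regularity for the Lamé-type system with bounded coefficients then bounds $\|\nabla^2v\|_{L^2}$ by $\|v_t\|_{L^2}+\|v\cdot\nabla v\|_{L^2}+\|\nabla\rho\nabla v\|_{L^2}+\|\nabla\rho\|_{L^2}$. The drift $c\alpha\rho^{\alpha-2}\nabla\rho\cdot\nabla v$ is placed into the same term.

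\textbf{Main obstacle: the coupling term $\|\nabla\rho\,\nabla v\|_{L^2}^2$ and its mirror $\|\nabla\rho\,\nabla^2\rho\|_{L^2}^2$ in 3D.} In 2D, Ladyzhenskaya's inequality and the bound on $\|\nabla\rho\|_{L^2}$ give
\[
\|\nabla\rho\nabla v\|_{L^2}^2\le\|\nabla\rho\|_{L^4}^2\|\nabla v\|_{L^4}^2\le C\|\nabla^2\rho\|_{L^2}\|\nabla v\|_{L^2}\|\nabla^2v\|_{L^2},
\]
which Young's inequality turns into $\varepsilon$ times the dissipation plus $CY^2$. The quadratic growth is harmless because $\int_0^T(\|\nabla^2\rho\|^2+\|\nabla v\|^2)\,dt$ is already bounded. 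Indeed, $\int_0^T\|\nabla v\|_{L^2}^2$ follows from \eqref{eq24}, and $\int_0^T\|\nabla^2\rho\|_{L^2}^2$ from \eqref{est: nabla2 phi weighted}, using bounded density and $\nabla\rho\in L^4_{t,x}$. So the inequality $Y'\le C(1+a(t)Y)$ with $a\in L^1(0,T)$ closes.

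In 3D this does not close, so I add the $L^4$ estimate. Apply $\nabla$ to the mass equation and test with $|\nabla\rho|^2\nabla\rho$. The diffusion $c\Delta\rho^\alpha$ produces the good term $\int\rho^{\alpha-1}|\nabla\rho|^2|\nabla^2\rho|^2$, plus a bad cross term $\int\rho^{\alpha-2}|\nabla\rho|^4\Delta\rho$. I will control this cross term sharply via the integration by parts in \eqref{est: nabla rho L6 3D with weight}, with the explicit constant $3\sqrt3/(2-\alpha)$. Positivity of the net coefficient is exactly where the threshold on $\alpha$ in Assumption \ref{ass1} should enter. Then $\|\nabla\rho\nabla v\|_{L^2}^2\le\|\nabla\rho\|_{L^4}^2\|\nabla v\|_{L^4}^2$ is interpolated and absorbed using $\|\nabla\rho\|_{L^4}^4\in Y$. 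The $|\nabla\rho||\nabla^2\rho|$ and $|\nabla\rho|^6$ terms from \eqref{est: rho H_dot3 3D} are absorbed by the new dissipation. The transport term $\int|\nabla\rho|^2\nabla\rho\cdot\nabla\operatorname{div}(\rho v)$ is bounded by $\|\nabla\rho\|_{L^6}^3\|\nabla^2v\|_{L^2}$ plus similar terms.

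Summing the three estimates with suitable weights gives $Y'+D\le C(1+a(t))Y$ with $a\in L^1$, and Gronwall yields \eqref{est: ho1.1} and \eqref{est: ho1.2}. The initial value $Y(0)$ is controlled by $\|\rho_0-1\|_{H^3}$ and $\|u_0\|_{H^2}$.
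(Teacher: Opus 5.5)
Your two-dimensional argument is sound: Ladyzhenskaya leaves $C\|\nabla^2\rho\|_{L^2}^2\|\nabla v\|_{L^2}^2$, and $\int_0^T\|\nabla^2\rho\|_{L^2}^2\,dt<\infty$. It is a simpler route than the paper's there. The three-dimensional closure, however, fails at exactly the step you flag as the main obstacle. In 3D,
\[
\|\nabla\rho\,\nabla v\|_{L^2}^2\le\|\nabla\rho\|_{L^4}^2\|\nabla v\|_{L^2}^{1/2}\|\nabla^2 v\|_{L^2}^{3/2}\le\eta\|\nabla^2v\|_{L^2}^2+C_\eta\|\nabla\rho\|_{L^4}^8\|\nabla v\|_{L^2}^2 .
\]
The remainder is of size $\|\nabla\rho\|_{L^4}^4\,Y^2$. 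You only know $\|\nabla\rho\|_{L^4}^4\in L^1(0,T)$, so this is a superlinear Gronwall inequality and yields only a local-in-time bound. Better exponents do not help. With your $Y$, your dissipation (including $A(t)$) and $\sup_t\|\nabla\rho\|_{L^2}$, the term $\int|\nabla\rho|^2|\nabla v|^2$ is critical in 3D, and every H\"older/Gagliardo--Nirenberg splitting at these levels leaves a remainder superlinear in $Y$.

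The same defect appears in two further places:
\begin{itemize}
\item your bound of $\tfrac12\int(\rho^\alpha)_t\,Q(\nabla v)$ through $\|\rho_t\|_{L^2}\|\nabla v\|_{L^4}^2$, whose remainder is $\|\rho_t\|_{L^2}^4\|\nabla v\|_{L^2}^2$;
\item the piece $\int|\nabla\rho|^4|\nabla v|$ of the transport term in the $L^4$ estimate, which reduces to the same coupling term.
\end{itemize}
So the claimed inequality $Y'+D\le C(1+a(t))Y$ with $a\in L^1$ is not justified in 3D.

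The missing ingredient is the uniform bound $\sup_t\|v\|_{L^r}$ with $r=q+2>3$. This bound is subcritical, and you use it only for $|v||\nabla^2\rho|$. The paper integrates by parts,
\[
\int|\nabla\rho|^2|\nabla v|^2=-\int v\cdot\partial_j\bigl(\partial_j v\,|\nabla\rho|^2\bigr),
\]
so that $v$ appears undifferentiated. It then bounds the two resulting terms by
\[
\|v\|_{L^r}A(t)^{1/2}\|\nabla v\|_{L^2}^{1-\frac3r}\|\nabla^2 v\|_{L^2}^{\frac3r}
\quad\text{and}\quad
\|v\|_{L^r}\|\nabla^2v\|_{L^2}\|\nabla\rho^\alpha\|_{L^{\frac{4r}{r-2}}}^2 .
\]
The second is interpolated between $L^2$ and $L^{12}$, using $\|\nabla\rho^\alpha\|_{L^{12}}^2\le C\|\nabla\Delta\rho^\alpha\|_{L^2}$. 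Because $r>3$, the total power of the dissipative quantities is $<2$. This gives
\[
\eta\bigl(\|\nabla^2v\|_{L^2}^2+A(t)+\|\nabla\Delta\rho^\alpha\|_{L^2}^2\bigr)+C_\eta\bigl(1+\|\nabla v\|_{L^2}^2\bigr)
\]
with $\eta$ arbitrary, which is linear in $Y$.

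The other two terms must be routed into this one coupling term:
\begin{itemize}
\item Replace $(\rho^\alpha)_t$ using $\rho_t=c\Delta\rho^\alpha-\operatorname{div}(\rho v)$ and integrate by parts. This reduces it to $|\nabla\rho||\nabla v||\nabla^2v|$ and $|v||\nabla v|\bigl(|\nabla\rho||\nabla v|+|\nabla^2v|\bigr)$.
\item In the $L^4$ estimate, keep the transport term as $c^{-1}\int\operatorname{div}(\rho v)\operatorname{div}(|\nabla\rho|^2\nabla\rho)\le\frac{\varepsilon_2}{2}A+C\int|\nabla\rho|^2|\nabla(\rho v)|^2$.
\end{itemize}
The arbitrarily small $\eta$ is also what lets the three estimates be weighted consistently. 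The $L^4$ estimate must carry a large weight to absorb the $A$-terms produced by the $\nabla^2\rho$ estimate, so its own $\nabla^2 v$ contributions must come with small constants.
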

	\begin{proof}
		\noindent\textbf{Step1: Some lower-order estimates}

		Combining \eqref{eq41} with the uniform boundedness of $\rho$ yields
		\begin{equation}\label{est: v Lr}
			\sup_{0\le t\le T}\|v(t)\|_{L^r(\mathbb{R}^N)}\le C.
		\end{equation}
		Combining the restriction $\alpha < \frac{q}{q+2}$ in Proposition \ref{prop9} with Assumption \ref{ass1} (where $\alpha > 1/2$ for $N=2$ and $\alpha > 2/3$ for $N=3$), we obtain $r > N$. 
		Direct calculation yields
		\begin{equation}
			\begin{aligned}
				\int_0^T \|\nabla^2\rho\|_{L^2}^2 dt =&\int^{T}_{0}\left\| \frac{1}{\alpha-1}\rho^{2-\alpha}\nabla^2\rho^{\alpha-1} - \frac{\alpha-2}{(\alpha-1)^2}\rho^{3-2\alpha}\nabla\rho^{\alpha-1} \otimes \nabla\rho^{\alpha-1} \right\|_{L^{2}}^{2} dt\\
				\le& C \int_0^T \|\nabla^2(\rho^{\alpha-1})\|_{L^2}^2 dt + C \int_0^T \|\nabla(\rho^{\alpha-1})\|_{L^4}^4 dt.
			\end{aligned}
		\end{equation}
		Here \eqref{est: basic}, \eqref{eq24} and \eqref{est: nabla2 phi weighted} implies that
		\begin{equation}\label{est: ho1 step1 tmp1}
			\begin{aligned}
				\int_0^T \|\nabla^2(\rho^{\alpha-1})\|_{L^2}^2 dt\le &C\int_0^T \int_{\mathbb{R}^N} \rho^\alpha|\nabla^2(\rho^{\alpha-1})|^2 dx dt\\ \le& C \int_0^T \int_{\mathbb{R}^N} \rho^\alpha|\nabla v|^2 dx dt + C \int_0^T \int_{\mathbb{R}^N} \rho^\alpha|\mathbb{D}u|^2 dx dt\leq C. 
			\end{aligned}
		\end{equation}
		Furthermore, for $N=2$, by \eqref{est: basic}, \eqref{est: ho1 step1 tmp1} and Gagliardo–Nirenberg inequality, we obtain
		\begin{equation}
			\int_0^T \|\nabla(\rho^{\alpha-1})\|_{L^4}^4 dt \le C \left( \sup_{0 \le t \le T} \|\nabla(\rho^{\alpha-1})(t)\|_{L^2}^2 \right) \int_0^T \|\nabla^2(\rho^{\alpha-1})(t)\|_{L^2}^2 dt\leq C,
		\end{equation}
		whereas for $N=3$, by \eqref{eq35} and uniform boundness of $\rho,$ we deduce $ \int_0^T \|\nabla(\rho^{\alpha-1})\|_{L^4}^4 dt \le C.$ Thus, we have established that  \begin{equation}\label{est: rho L2t H2_hot}
			\int^{T}_{0}\left\lVert \nabla^{2}\rho(t) \right\rVert ^{2}_{L^{2}}dt\leq C.
		\end{equation}

		\noindent\textbf{Step2: Second-order bounds on the density}

		The continuity equation $\eqref{sys of v}_1$ can be recast into the form
		\begin{equation}
			\rho_{t}-c\alpha \rho^{\alpha-1}\Delta \rho=-\text{div}\left(\rho v\right)+c\alpha(\alpha-1)\rho^{\alpha-2}|\nabla \rho|^{2}.
		\end{equation}
		Multiplying by $\Delta \rho_{t}$, and integrating by parts yields
		\begin{equation}\label{eq: ho1tmp1}
			\begin{aligned}
				&\frac{c\alpha}{2} \frac{d}{dt}\int \rho^{\alpha-1}(\Delta \rho)^2 dx + \int | \nabla \rho_t |^2 dx \\
				=& \frac{c\alpha(\alpha-1)}{2}\int \rho^{\alpha-2}\rho_t(\Delta \rho)^2 dx  -  \int \nabla(\text{div}(\rho v))\cdot \nabla \rho_t dx  + c\alpha(\alpha-1)\int \nabla(\rho^{\alpha-2}|\nabla \rho|^2)\cdot\nabla \rho_t dx \\
				=&-\frac{c\alpha(\alpha-1)}{2}\int \rho^{\alpha-2}\text{div}\left(\rho v\right)(\Delta \rho)^{2}dx+\frac{c^{2}\alpha(\alpha-1)}{2}\int \rho^{\alpha-2}\Delta(\rho^{\alpha})(\Delta \rho)^{2}dx\\
				&-  \int \nabla(\text{div}(\rho v))\cdot \nabla \rho_t dx  + c\alpha(\alpha-1)\int \nabla(\rho^{\alpha-2}|\nabla \rho|^2)\cdot\nabla \rho_t dx\\
				=& K_{1}+K_{2}+K_{3}+K_{4}.
			\end{aligned}
		\end{equation}
		Integration by parts, combined with boundness of $\rho$, Young's inequality and \eqref{est: rho H_dot3}, we have
		\begin{equation}\label{est: ho1K1}
			\begin{aligned}
				\left| K_{1} \right|=&  \left| \frac{c\alpha(\alpha-1)(\alpha-2)}{2}\int \rho^{\alpha-2} (v \cdot \nabla \rho) |\Delta \rho|^{2} dx + c\alpha(\alpha-1)\int \rho^{\alpha-1} v \cdot \nabla\Delta \rho \Delta \rho dx \right|\\
				\le&C\int|v|\,|\nabla\rho|\,|\nabla^2\rho|^2\,dx+C\int|v|\,|\nabla^2\rho|\,|\nabla^3\rho|\,dx \\
				\leq&	\frac{c^2}{32}\|\nabla\Delta\rho^\alpha\|_{L^2}^2+C\int	\Bigl(|v|^2|\nabla^2\rho|^2+|\nabla\rho|^2|\nabla^2\rho|^2+|\nabla\rho|^6\Bigr)\,dx,
			\end{aligned}
		\end{equation}
		and 
		\begin{equation}
			\begin{aligned}
				|K_{2}|=&\left| \frac{c^{2}\alpha^{2}(\alpha-1)(\alpha-2)}{2}\int \rho^{2\alpha-4}|\nabla \rho|^{2}|\Delta \rho|^{2}dx+c^{2}\alpha^{2}(\alpha-1)\int \rho^{2\alpha-3}\Delta \rho \nabla\Delta \rho \cdot \nabla \rho dx \right| \\
				\leq&\frac{c^2}{32}\|\nabla\Delta\rho^\alpha\|_{L^2}^2+C\int\Bigl(|\nabla\rho|^2|\nabla^2\rho|^2+|\nabla\rho|^6\Bigr)\,dx.
			\end{aligned}
		\end{equation}
		Moreover, using Young's inequality and boundness of the density, it directly follows that
		\begin{equation}
			\begin{aligned}
				|K_3| \le& \frac{1}{4} \int |\nabla \rho_t|^2 dx + C \int |\nabla(\text{div}(\rho v))|^2 dx \\
				\leq&\frac{1}{4} \left\lVert \nabla \rho_{t} \right\rVert _{L^{2}}^2+C\int \left| \nabla v \right| ^{2}\left| \nabla \rho \right| ^{2}+\left| v \right|^{2} \left| \nabla^{2}\rho \right|^{2}+\left| \nabla^{2}v \right|^{2} dx
			\end{aligned}
		\end{equation}
		and
		\begin{equation}\label{eq: ho1K4}
			\begin{aligned}
				|K_4| \le& \frac{1}{4} \int |\nabla \rho_t|^2 dx + C\int \left| \nabla(\rho^{\alpha-2}|\nabla \rho|^2) \right|^2 dx \\
				\leq&\frac{1}{4} \left\lVert \nabla \rho_{t} \right\rVert _{L^{2}}^2+C\int \left| \nabla \rho \right| ^{6}+\left| \nabla \rho \right| ^{2}\left| \nabla^{2}\rho \right| ^{2}dx.
			\end{aligned}
		\end{equation}
		Taking the gradient of the continuity equation $\eqref{sys of v}_1$ followed by the $L^2$-norm gives 
		\begin{equation}\label{est: ho1tmp1}
			\frac{c^2}{8}\|\nabla\Delta\rho^\alpha\|_{L^2}^2\le\frac14\|\nabla\rho_t\|_{L^2}^2+\frac14\|\nabla\operatorname{div}(\rho v)\|_{L^2}^2.
		\end{equation}

		Substituting \eqref{est: ho1K1}-\eqref{eq: ho1K4} into \eqref{eq: ho1tmp1} and combining \eqref{est: ho1tmp1} we deduce 
		\begin{equation}\label{est: ho1step2}
			\begin{aligned}
				&\frac{c\alpha}{2}\frac{d}{dt}\int \rho^{\alpha-1}|\Delta\rho|^2\,dx+\frac14\|\nabla\rho_t\|_{L^2}^2+\frac{c^2}{16}\|\nabla\Delta\rho^\alpha\|_{L^2}^2\\
				\le&C_{\rho,v}\|\nabla^2 v\|_{L^2}^2+C\int \Bigl(|v|^2|\nabla^2\rho|^2	+|\nabla\rho|^2|\nabla v|^2
				+|\nabla\rho|^2|\nabla^2\rho|^2
				+|\nabla\rho|^6
				\Bigr)\,dx,\end{aligned}
		\end{equation}
		where $C_{\rho,v}>0$ depends only on the equation parameters and the density bounds.

		\noindent\textbf{Step3: First-order bounds on the effective velocity}

		Denote
		\begin{equation}
			\mathcal Q_+(\nabla v)
			:=
			\nu|\nabla v|^2
			+(\nu-c)\nabla v:(\nabla v)^\top
			+(\alpha-1)(2\nu-c)(\operatorname{div}v)^2.
		\end{equation}
		We have the following equivalence: 
		\begin{equation}\label{est: Q_v down}
			\mathcal Q_+(\nabla v)
			\ge
			(2\nu-c)(N\alpha-N+1)|\nabla v|^2,
		\end{equation}
		and 
		\begin{equation}\label{est: Q_v}
			|\mathcal Q_+(\nabla v)|
			\le
			\bigl(\nu+|\nu-c|+N(1-\alpha)(2\nu-c)\bigr)|\nabla v|^2,
		\end{equation}
		which follows from $|\nabla v:(\nabla v)^\top|\le |\nabla v|^2$ and $(\operatorname{div}v)^2\le N|\nabla v|^2.$ Testing $\eqref{sys of v}_2$ against $v_t$, integrating by parts, and utilizing $\eqref{sys of v}_1,$ we obtain
		\begin{equation}\label{eq: ho1tmp2}
			\begin{aligned}
				& \frac{1}{2}\frac{d}{dt}\int \rho^\alpha\mathcal Q_+(\nabla v)\,dx
				+\int \rho|v_t|^2\,dx\\
				=&-\int \rho u\cdot\nabla v\cdot v_t\,dx
				-\int \nabla\rho^\gamma\cdot v_t\,dx
				+ \frac{1}{2}\int (\rho^\alpha)_t\mathcal Q_+(\nabla v)\,dx.
				\\
				=&-\int \rho u\cdot\nabla v\cdot v_t\,dx
				-\int \nabla\rho^\gamma\cdot v_t\,dx
				\\&+\frac{c\alpha}{2}\int \rho^{\alpha-1}\Delta\rho^\alpha\mathcal Q_+(\nabla v)\,dx-\frac{\alpha}{2}\int \rho^{\alpha-1}\operatorname{div}(\rho v)\mathcal Q_+(\nabla v)\,dx\\
				=&L_{1}+L_{2}+L_{3}+L_{4}.
			\end{aligned}
		\end{equation}
		By $u=v-c\alpha\rho^{\alpha-2}\nabla\rho$ and the boundedness of $\rho$, we have
		\begin{equation}\label{est: ho1L1}
			\begin{aligned}
				\left| L_{1} \right| &= \left|\int \rho u\cdot\nabla v\cdot v_t\,dx\right|
				\\&\le
				\frac14\int \rho|v_t|^2\,dx
				+
				C\int \bigl(|v|^2+|\nabla\rho|^2\bigr)|\nabla v|^2\,dx,
			\end{aligned}
		\end{equation}
		and
		\begin{equation}
			\begin{aligned}
				\left| L_{2} \right| &= \left|\int \nabla\rho^\gamma\cdot v_t\,dx\right|\le\frac14\int \rho|v_t|^2\,dx
				+
				C\int |\nabla\rho|^2\,dx.
			\end{aligned}
		\end{equation}
		Before estimating $L_3,$ $L_4,$ it is necessary to first bound the term $\|\nabla^2 v(t)\|_{L^2}.$ Rewriting $\eqref{sys of v}_2$ in the form of
		\begin{equation}
			\nu\Delta v+a_0\nabla\operatorname{div}v=F,
		\end{equation}
		where $a_0:=(\nu-c)+(\alpha-1)(2\nu-c),$ and 
		\begin{equation}
			\begin{aligned}
				F=&\rho^{-\alpha}\Bigl(
				\rho v_t+\rho u\cdot\nabla v+\nabla\rho^\gamma\\
				&-\nu\nabla\rho^\alpha\cdot\nabla v
				-(\nu-c)\nabla\rho^\alpha\cdot(\nabla v)^\top
				-(\alpha-1)(2\nu-c)\nabla\rho^\alpha\operatorname{div}v
				\Bigr).
			\end{aligned}
		\end{equation}
		Noting that 
		\begin{equation}
			m_0:=\min\{\nu,\nu+a_0\}=\min\{\nu,\alpha(2\nu-c)\}>0,
		\end{equation}
		thus by integration by parts, we obtain
		\begin{equation}
			\begin{aligned}
				\left\lVert \nabla^{2}v \right\rVert ^{2}_{L^{2}}\leq& \frac{1}{m_{0}^{2}}\left\lVert F \right\rVert _{L^{2}}^{2}\\
				\leq& \frac{C}{m_{0}^{2}}\left( \left\lVert \sqrt{ \rho } v_{t}\right\rVert ^{2}_{L^{2}} +\left\lVert \left| v \right| \left| \nabla v \right|  \right\rVert _{L^{2}}^{2}+\left\lVert \left| \nabla \rho \right|  \left| \nabla v \right| \right\rVert^{2}_{L^{2}}+\left\lVert \nabla \rho \right\rVert^{2}_{L^{2}}  \right) ,
			\end{aligned}
		\end{equation}
		where $C=C(\alpha, \gamma, \nu, \varepsilon, N, \underline{\rho}, \bar{\rho}) > 0.$ Choosing $\varepsilon_1 = \frac{m_0^2}{4C},$ it holds that 
		\begin{equation}\label{est: v_H2_dot}
			\varepsilon_1\|\nabla^2 v(t)\|_{L^2}^2
			\le
			\frac14\int_{\mathbb{R}^N}\rho|v_t|^2\,dx
			+\frac14\int_{\mathbb{R}^N}
			\Bigl(
			|v|^2|\nabla v|^2
			+|\nabla\rho|^2|\nabla v|^2
			+|\nabla\rho|^2
			\Bigr)\,dx.
		\end{equation}
		By integration by parts, the boundedness of $\rho$, \eqref{est: Q_v}, and noting that $|\nabla\mathcal Q_+(\nabla v)|\le C|\nabla v||\nabla^2 v|$, an application of Young's inequality yields
		\begin{equation}
			\begin{aligned}
				\left| L_3 \right| 
				=&\left|  \frac{c\alpha}{2}\int \nabla(\rho^{\alpha-1})\cdot\nabla\rho^\alpha\,\mathcal Q_+(\nabla v)\,dx
				+\frac{c\alpha}{2}\int \rho^{\alpha-1}\nabla\rho^\alpha\cdot\nabla\mathcal Q_+(\nabla v)\,dx \right|\\
				\leq&C\int |\nabla\rho|^2|\nabla v|^2\,dx
				+
				C\int |\nabla\rho||\nabla v||\nabla^2 v|\,dx\\
				\leq&\frac{\varepsilon_1}{4}\|\nabla^2 v\|_{L^2}^2
				+C\int |\nabla\rho|^2|\nabla v|^2\,dx,
			\end{aligned}
		\end{equation}
		and
		\begin{equation}\label{est: ho1L4}
			\begin{aligned}
				\left| L_4 \right| =&\left| \frac{\alpha}{2}\int \rho v\cdot\nabla\bigl(\rho^{\alpha-1}\mathcal Q_+(\nabla v)\bigr)\,dx \right| \\
				\leq&C\int |v||\nabla\rho||\nabla v|^2\,dx
				+
				C\int |v||\nabla v||\nabla^2 v|\,dx\\
				\leq&\frac{\varepsilon_1}{4}\|\nabla^2 v\|_{L^2}^2
				+
				C\int 
				\Bigl(|v|^2|\nabla v|^2+|\nabla\rho|^2|\nabla v|^2\Bigr)\,dx.
			\end{aligned}
		\end{equation}
		Inserting \eqref{est: ho1L1}-\eqref{est: ho1L4} into \eqref{eq: ho1tmp2} and combining 
		\eqref{est: v_H2_dot}, it holds that 
		\begin{equation}\label{est: ho1step3}\begin{aligned}
				&\frac12\frac{d}{dt}\int \rho^\alpha\mathcal Q_+(\nabla v)\,dx	+\frac14\int \rho|v_t|^2\,dx
				+\frac{\varepsilon_1}{2}\|\nabla^2 v\|_{L^2}^2\\
				\le&C\int \Bigl(|v|^2|\nabla v|^2
				+|\nabla\rho|^2|\nabla v|^2
				+|\nabla\rho|^2
				\Bigr)\,dx.
			\end{aligned}
		\end{equation}

		\noindent\textbf{Step4: Closing the higher-order energy estimates}
		
		Defining
		\begin{equation}
			\mathcal E(t):=\frac{c\alpha}{2}\int \rho^{\alpha-1}|\Delta\rho|^2\,dx+\frac{2C_{\rho,v}}{\varepsilon_1}\int \rho^\alpha\mathcal Q_+(\nabla v)\,dx.
		\end{equation}
		We have 
		\begin{equation}\label{est: ho1 E}
			c_0\bigl(\|\nabla^2\rho\|_{L^2}^2+\|\nabla v\|_{L^2}^2\bigr)\le\mathcal E(t)\le C_0\bigl(\|\nabla^2\rho\|_{L^2}^2+\|\nabla v\|_{L^2}^2\bigr),
		\end{equation} for some $c_0,C_0>0,$ due to \eqref{est: Q_v down} and \eqref{est: Q_v}. By adding $\frac{4C_{\rho,v}}{\varepsilon_1}$ times \eqref{est: ho1step3} to \eqref{est: ho1step2}, we have
		\begin{equation}\label{est: ho1Step4}
			\begin{aligned}
				&\frac{d}{dt}\mathcal E(t)+\frac14\|\nabla\rho_t\|_{L^2}^2+\frac{c^2}{16}\|\nabla\Delta\rho^\alpha\|_{L^2}^2+\frac{C_{\rho,v}}{\varepsilon_1}\int_{\mathbb{R}^N}\rho|v_t|^2\,dx+C_{\rho,v}\|\nabla^2 v\|_{L^2}^2\\
				\le& C\int_{\mathbb{R}^N}\Bigl(|v|^2|\nabla^2\rho|^2+|v|^2|\nabla v|^2+|\nabla\rho|^2|\nabla v|^2+|\nabla\rho|^2|\nabla^2\rho|^2+|\nabla\rho|^6+|\nabla\rho|^2\Bigr)\,dx.
			\end{aligned}
		\end{equation}
		Now we proceed to estimate the right-hand side of \eqref{est: ho1Step4}.

		\noindent\textbf{Case1: $N=2$.}

		For some $r>2,$ by H\"{o}lder's inequality, Gagliardo–Nirenberg inequality, Young's inequality, \eqref{est: v Lr} and \eqref{est: ho1 E}, we obtain 
		\begin{equation}\label{est: ho1 2D tmp1}
			\begin{aligned}
				C\left\lVert \left| v \right| \left| \nabla^{2}\rho \right|  \right\rVert ^{2}_{L^{2}}\leq&C\left\lVert  v\right\rVert _{L^{r}}^{2}\left\lVert \nabla^{2}\rho \right\rVert ^{2}_{L^{\frac{2r}{r-2}}}\\
				\leq&C\left\lVert v \right\rVert ^{2}_{L^{r}}\left\lVert \nabla^{2}\rho \right\rVert ^{2- \frac{4}{r}}_{L^{2}}\left\lVert \nabla^{3}\rho \right\rVert ^{\frac{4}{r}}_{L^{2}}\\
				\leq& \eta \left\lVert \nabla^{3}\rho \right\rVert ^{2}_{L^{2}}+C_{\eta}\left\lVert \nabla^{2}\rho \right\rVert ^{2}_{L^{2}}\\
				\leq&\eta \left\lVert \nabla\Delta \rho^{\alpha} \right\rVert ^{2}_{L^{2}}+C_{\eta}\mathcal{E}(t).
			\end{aligned}
		\end{equation}
		Similarly, it holds that 
		\begin{equation}\label{est: ho1 2Dtmp2}
			\begin{aligned}
				C\left\lVert \left| v \right| \left| \nabla v \right|  \right\rVert_{L^{2}}^{2}\leq& C\left\lVert v \right\rVert_{L^{r}}^{2}\left\lVert \nabla v \right\rVert ^{2}_{L^{\frac{2r}{r-2}}} \\
				\leq&C\left\lVert v \right\rVert ^{2}_{L^{r}}\left\lVert \nabla v \right\rVert _{L^{2}}^{2- \frac{4}{r}}\left\lVert \nabla^{2}v \right\rVert^{\frac{4}{r}}_{L^{2}} \\
				\leq& \eta \left\lVert \nabla^{2}v \right\rVert _{L^2}^{2}+C_{\eta}\left\lVert \nabla v \right\rVert ^{2}_{L^{2}}\\
				\leq& \eta \left\lVert \nabla^{2}v \right\rVert _{L^2}^{2}+C_{\eta}\mathcal{E}(t).
			\end{aligned}
		\end{equation}
		It follows from \eqref{est: basic} and boundness of $\rho$ that 
		\begin{equation}\label{est: rho_H1_dot}
			\int_{\mathbb{R}^2}|\nabla\rho|^2\,dx\le C.
		\end{equation}
		Next, we deal with $\int|\nabla\rho|^2|\nabla v|^2dx.$ Integrating by parts yields 
		\begin{equation}\label{est: ho1 2d diff}
			\begin{aligned}
				C\int_{\mathbb{R}^2} \left| \nabla \rho \right| ^{2}\left| \nabla v \right| ^{2}dx=&-C\int _{\mathbb{R}^2}v\partial_{j}\left[ (\partial_{j}v)(\partial_{i}\rho) ^{2}\right] dx\\
				=& -C\int _{\mathbb{R}^2} v \partial_{j}v (2\partial_{i}\rho \partial_{i}\partial_{j}\rho)dx-C\int_{\mathbb{R}^2} v\partial_{j}^{2}v (\partial_{i}\rho)^{2}dx\\
				\leq&C\int _{\mathbb{R}^2}\left| v \right| \left| \nabla v \right| \left| \nabla \rho \right| \left| \nabla^{2}\rho \right| dx+C\int_{\mathbb{R}^2} \left| v \right| \left| \nabla^{2}v \right| \left| \nabla \rho \right| ^{2}dx\\
				=&I_{1}+I_{2}.
			\end{aligned}
		\end{equation}
		Applying \eqref{est: v Lr}, H\"{o}lder’s inequality, Gagliardo–Nirenberg inequality and Sobolev embedding, we have
		\begin{equation}\label{est: ho1I2}
			\begin{aligned}
				I_{2}=&C\int_{\mathbb{R}^2} \left| v \right| \left| \nabla^{2}v \right| \left| \nabla \rho \right| ^{2}dx\\
				\leq&\left\lVert v \right\rVert _{L^{r}}\left\lVert \nabla^{2}v \right\rVert _{L^{2}}\left\lVert \nabla \rho^{\alpha} \right\rVert ^{2}_{L^{\frac{4r}{r-2}}}\\
				\leq&C\left\lVert v \right\rVert _{L^{r}}\left\lVert \nabla^{2}v \right\rVert _{L^{2}}\left\lVert \nabla \rho^{\alpha} \right\rVert ^{\frac{r-2}{r}}_{L^{2}}\left\lVert \nabla^{2} \rho^{\alpha} \right\rVert ^{\frac{r+2}{r}}_{L^{2}}\\
				\leq& \eta\left\lVert \nabla^{2}v \right\rVert ^{2}_{L^{2}}+ C_{\eta}\left\lVert \nabla^{2} \rho^{\alpha} \right\rVert _{L^{2}}^{\frac{2(r+2)}{r}}\\
				{ \leq } &\eta \left\lVert \nabla^{2}v \right\rVert ^{2}_{L^{2}}+ \eta\left\lVert \nabla ^{2}\rho^{\alpha} \right\rVert ^{4}_{L^{2}}+C_{\eta},
			\end{aligned}
		\end{equation}
		and 
		\begin{equation}\label{est: hoI1}
			\begin{aligned}
				I_{1}\leq &\left\lVert v \right\rVert _{L^{r}}\left\lVert \nabla v \right\rVert _{L^{\frac{1}{\frac{1}{4}- \frac{1}{2r}}}}\left\lVert \left| \nabla \rho  \right| \left| \nabla^{2}\rho \right| \right\rVert _{L^{\frac{1}{\frac{3}{4}- \frac{1}{2r}}}}\\
				\leq &C\left\lVert \nabla v \right\rVert _{H_{1}}\left\lVert \left| \nabla \rho \right| \left| \nabla^{2}\rho \right|  \right\rVert _{L^{\frac{1}{\frac{3}{4}- \frac{1}{2r}}}}\\
				{ \leq }& \eta\left\lVert \nabla v \right\rVert ^{2}_{H^{1}}+C\left\lVert \left| \nabla \rho \right| \left| \nabla^{2}\rho \right|  \right\rVert _{L^{1}}^{2\left(\frac{1}{2}- \frac{1}{r}\right)}\left\lVert \left| \nabla \rho \right| \left| \nabla^{2}\rho \right| \right\rVert ^{2\left(\frac{1}{2}+ \frac{1}{r}\right)} _{L^{2}}\\
				\leq& \eta\left\lVert \nabla v \right\rVert ^{2}_{H^{1}}+ C\left\lVert \left| \nabla \rho \right|\left| \nabla^{2}\rho \right|   \right\rVert ^{2}_{L^{1}}+\delta \left\lVert \left| \nabla \rho \right| \left| \nabla^{2}\rho \right|  \right\rVert ^{2}_{L^{2}}.
			\end{aligned}
		\end{equation}
		Due to H\"{o}lder's inequality and \eqref{est: rho_H1_dot}, one has
		\begin{equation}\label{est: ho1I1_1}
			C\left\lVert \left| \nabla \rho \right| \left| \nabla^{2}\rho \right|  \right\rVert ^{2}_{L^{1}}\leq C\left\lVert \nabla \rho \right\rVert ^{2}_{L^{2}}\left\lVert \nabla^{2}\rho \right\rVert ^{2}_{L^{2}}\leq C\left\lVert \nabla^{2}\rho \right\rVert ^{2}_{L^{2}}.
		\end{equation}
		A direct calculation yields
		\begin{equation}\label{eq: rho and rho alpha}
			\nabla \rho^{\alpha} =\alpha \rho^{\alpha-1}\nabla \rho ,  \quad\nabla^{2}\rho^{\alpha} = \alpha \rho^{\alpha-1}\nabla^{2}\rho+ \alpha(\alpha-1)\nabla \rho \otimes \nabla \rho,
		\end{equation}
		indicating that
		\begin{equation}
			\left| \nabla \rho \right| \leq C\left| \nabla \rho^{\alpha} \right| ,\left| \nabla^{2}\rho \right| \leq C(\left| \nabla^{2}\rho ^{\alpha}\right| +\left| \nabla \rho ^{\alpha}\right| ^{2}).
		\end{equation}
		Consequently, we have
		\begin{equation}\label{est: ho1tmp4}
			\begin{aligned}
				\delta \left\lVert \left| \nabla \rho \right| \left| \nabla^{2}\rho \right| \right\rVert ^{2}_{L^{2}}&= \delta \int _{\mathbb{R}^2}\left| \nabla \rho \right| ^{2}\left| \nabla^{2}\rho \right| ^{2}dx\\
				&\leq \delta C \int _{\mathbb{R}^2}\left| \nabla \rho^{\alpha} \right| ^{2}\left| \nabla^{2}\rho^{\alpha} \right| ^{2}dx+\delta C\int _{\mathbb{R}^2}\left| \nabla \rho^{\alpha} \right| ^{6}dx.
			\end{aligned}
		\end{equation}
		Using Gagliardo–Nirenberg inequality, boundness of $\rho$ and \eqref{est: basic}, we derive
		\begin{equation}\label{est: ho1tmp2}
			\begin{aligned}
				\int_{\mathbb{R}^2}|\nabla \rho^{\alpha}|^{2}|\nabla^{2}\rho^{\alpha}|^{2}dx&\leq \left\lVert \nabla \rho^{\alpha} \right\rVert ^{2}_{L^{4}}\left\lVert \nabla^{2}\rho^{\alpha} \right\rVert ^{2}_{L^{4}}\\
				&\leq C\left( \left\lVert \nabla \rho^{\alpha} \right\rVert _{L^{2}}\left\lVert \nabla^{2}\rho^{\alpha} \right\rVert _{L^{2}} \right) \left( \left\lVert \nabla^{2} \rho^{\alpha} \right\rVert _{L^{2}}\left\lVert \nabla^{3}\rho^{\alpha} \right\rVert_{L^{2}}  \right) \\
				&\leq C\left\lVert \nabla^{2}\rho^{\alpha} \right\rVert ^{2}_{L^{2}}\left\lVert  \nabla^{3}\rho^{\alpha}\right\rVert _{L^{2}},
			\end{aligned}
		\end{equation}
		and
		\begin{equation}\label{est: ho1tmp3}
			\left\lVert \nabla \rho^{\alpha} \right\rVert _{L^{6}}^{6}\leq C\left\lVert \nabla \rho^{\alpha} \right\rVert ^{2}_{L^{2}}\left\lVert \nabla^{2}\rho^{\alpha} \right\rVert^{4} _{L^{2}}\leq C\left\lVert \nabla^{2}\rho^{\alpha} \right\rVert^{4}_{L^{2}} .
		\end{equation}
		Therefore, substituting \eqref{est: ho1tmp2}, \eqref{est: ho1tmp3} into \eqref{est: ho1tmp4} gives 
		\begin{equation}\label{est: ho1I1_2}
			\begin{aligned} \delta \int _{\mathbb{R}^2}\left| \nabla \rho \right|^2 \left| \nabla^{2}\rho \right|^2 dx \leq & \delta C \int_{\mathbb{R}^2} \left| \nabla^{3}\rho^{\alpha} \right| ^{2}dx + \delta C\left\lVert \nabla^{2}\rho^{\alpha} \right\rVert^{4}_{L^{2}}. \end{aligned}
		\end{equation}
		Inserting \eqref{est: ho1I2}-\eqref{est: ho1I1_2} into \eqref{est: ho1 2d diff}, we obtain 
		\begin{equation}
			\begin{aligned}
				C\int _{\mathbb{R}^2}\left| \nabla \rho \right| ^{2}\left| \nabla v \right| ^{2}dx\le&2 \eta\left\lVert  \nabla^{2}v \right\rVert ^{2}_{L^{2}}+ \delta C\left\lVert \nabla^{3} \rho^{\alpha} \right\rVert ^{2}_{L^{2}}\\
				&+C_{\eta,\delta}(1+\left\lVert \nabla^{2}\rho \right\rVert ^{2}_{L^{2}}+\|\nabla v\|^{2}_{L^2}+\|\nabla^2(\rho^{\alpha})\|_{L^{2}}^4).
			\end{aligned}
		\end{equation}
		Recalling \eqref{eq: rho and rho alpha} and \eqref{est: ho1 E}, along with Gagliardo–Nirenberg inequality, one gets 
		\begin{equation}
			\left\lVert \nabla^{2}(\rho^{\alpha}) \right\rVert _{L^{2}}^{2}\leq C\left\lVert \nabla^{2}\rho \right\rVert ^{2}_{L^{2}}+C\left\lVert \nabla \rho \right\rVert^{4}_{L^{4}}\leq C\left\lVert \nabla^{2}\rho \right\rVert  ^{2}_{L^{2}}+C\left\lVert \nabla \rho \right\rVert ^{2}_{L^{2}}\left\lVert \nabla^{2}\rho \right\rVert ^{2}_{L^{2}}\leq C\mathcal{E}(t),
		\end{equation}
		which implies 
		\begin{equation}\label{est: ho1 2Dtmp3}
			\begin{aligned}
				C\int _{\mathbb{R}^2}\left| \nabla \rho \right| ^{2}\left| \nabla v \right| ^{2}dx\le&2 \eta\left\lVert  \nabla^{2}v \right\rVert ^{2}_{L^{2}}+\delta C\left\lVert \nabla^{3} \rho^{\alpha} \right\rVert ^{2}_{L^{2}}\\
				&+C_{\eta,\delta}(1+\|\nabla^{2}\rho\|^{2}_{L^2})(\mathcal{E}(t)+1).
			\end{aligned}
		\end{equation}
		Repeating the similar procedure used for \eqref{est: ho1tmp4}-\eqref{est: ho1tmp3}, we have 
		\begin{equation}\label{est: ho1 2Dtmp4}
			C\int _{\mathbb{R}^2}(|\nabla\rho|^2|\nabla^2\rho|^2+|\nabla\rho|^6)dx\le \eta\|\nabla^{3}\rho^{\alpha}\|_{L^2}^2+C\|\nabla^{2}\rho^{\alpha}\|^{4}_{L^2}\le \eta\|\nabla^{3}\rho^{\alpha}\|_{L^2}^2+C\|\nabla^{2}\rho\|^{2}_{L^2}\mathcal{E}(t).
		\end{equation}
		Subsituting \eqref{est: ho1 2D tmp1}, \eqref{est: ho1 2Dtmp2}, \eqref{est: ho1 2Dtmp3}, \eqref{est: ho1 2Dtmp4} into \eqref{est: ho1Step4} and choosing $\eta,\delta\ll 1$
		,it follows from Gronwall’s inequality and \eqref{est: rho L2t H2_hot} that 
		\begin{equation}\label{est: ho1 2D final}
			\begin{aligned}
				& \sup_{0\le t\le T} \Bigl( \|\nabla^2\rho(t)\|_{L^2}^2 + \|\nabla v(t)\|_{L^2}^2 \Bigr) \\
				& + \int_0^T \Bigl( \|\nabla\rho_t\|_{L^2}^2 + \|\nabla\Delta(\rho^\alpha)\|_{L^2}^2 + \|v_t\|_{L^2}^2 + \|\nabla^2 v\|_{L^2}^2 \Bigr) \,dt \le C.
			\end{aligned}
		\end{equation}

		\noindent\textbf{Case2: $N=3$.}

		Before proceeding the proof to  it is necessary to obtain an auxiliary $L^4$-estimate for $\nabla \rho$. Taking the inner product of the gradient of $\eqref{sys of v}_1$ with $c^{-1}|\nabla\rho|^2\nabla\rho$, and performing integration by parts, we obtain
		\begin{equation}\label{est: ho1 3D nabla rho L4 tmp1}
			\frac{1}{4c}\frac{d}{dt}\left\lVert \nabla \rho \right\rVert ^{4}_{L^{4}}-\int_{\mathbb{R}^{3}}\nabla\Delta \rho^{\alpha}\left( \left| \nabla \rho \right|^{2}\nabla \rho \right) dx=c^{-1}\int_{\mathbb{R}^{3}}\text{div}\left(\rho v\right)\text{div}\left(\left| \nabla \rho \right|^{2}\nabla \rho \right)dx.
		\end{equation}
		Define 
		\begin{equation}
			A(t):=\int_{\mathbb{R}^3}\rho^{\alpha-1}|\nabla\rho|^2|\nabla^2\rho|^2\,dx.
		\end{equation}
		By integration by parts and \eqref{est: nabla rho L6 3D with weight}
		\begin{equation}\label{est: ho1 3D nabla rho L4 tmp2}
			\begin{aligned} 
				&-\int_{\mathbb{R}^{3}}\nabla\Delta \rho^{\alpha}\left( \left| \nabla \rho \right|^{2}\nabla \rho \right) dx \\
				=&\alpha \int_{\mathbb{R}^3} \rho^{\alpha-1} \Big[ |\nabla\rho|^2 |\nabla^2\rho|^2 + 2 |\nabla^2\rho \cdot \nabla\rho|^2 - \frac{3(1-\alpha)}{\rho}|\nabla\rho|^2 (\nabla\rho \cdot \nabla^2\rho \cdot \nabla\rho) \Big] dx\\
				\geq&\alpha \int_{\mathbb{R}^3} \rho^{\alpha-1} \Big[ |\nabla\rho|^2 |\nabla^2\rho|^2  - \frac{9(1-\alpha)^2}{8\rho^2}|\nabla\rho|^6 \Big] dx\\
				=& \alpha A(t) - \frac{9\alpha(1-\alpha)^2}{8} \int_{\mathbb{R}^3} \rho^{\alpha-3}|\nabla\rho|^6 dx\\
				\geq&\alpha\left(
				1-
				\frac{243(1-\alpha)^2}{8(2-\alpha)^2}
				\right)
				\int_{\mathbb{R}^3}\rho^{\alpha-1}|\nabla\rho|^2|\nabla^2\rho|^2\,dx
				=
				\varepsilon_2 A(t),
			\end{aligned}
		\end{equation}
		where $\varepsilon_2
		:=
		\alpha\left(
		1-
		\frac{243(1-\alpha)^2}{8(2-\alpha)^2}
		\right)>0,$ which necessitates the condition $1>\alpha> \frac{9\sqrt{3}-4\sqrt{2}}{9\sqrt{3}-2\sqrt{2}}$ introduced in Assumption \ref{ass1}. Substituting \eqref{est: ho1 3D nabla rho L4 tmp2} into \eqref{est: ho1 3D nabla rho L4 tmp1}, combining Young's inequality and boundness of $\rho,$ one gets
		\begin{equation}
			\begin{aligned}
				\frac1{4c}\frac{d}{dt}\|\nabla\rho\|_{L^4}^4+\varepsilon_2 A(t)\le&
				C\int_{\mathbb{R}^3}|\nabla(\rho v)|\,|\nabla^2\rho|\,|\nabla\rho|^2\,dx\\
				\le&\frac{\varepsilon_2}{2}\int_{\mathbb{R}^3}\rho^{\alpha-1}|\nabla\rho|^2|\nabla^2\rho|^2\,dx\\
				&+C\int_{\mathbb{R}^3}\rho^{1-\alpha}|\nabla\rho|^2|\nabla(\rho v)|^2\,dx\\
				\le&\frac{\varepsilon_2}{2}A(t)+C\int_{\mathbb{R}^3}|\nabla\rho|^2|\nabla(\rho v)|^2\,dx,
			\end{aligned}
		\end{equation}
		which implies
		\begin{equation}\label{eq135}
			\frac1{4c}\frac{d}{dt}\|\nabla\rho\|_{L^4(\mathbb{R}^3)}^4
			+
			\frac{\varepsilon_2}{2}
			\int_{\mathbb{R}^3}\rho^{\alpha-1}|\nabla\rho|^2|\nabla^2\rho|^2\,dx
			\le
			C\int_{\mathbb{R}^3}|\nabla\rho|^2|\nabla(\rho v)|^2\,dx.
		\end{equation}
		We now return to handle right hand side of \eqref{est: ho1Step4}.
		By Hölder's inequality,\eqref{est: v Lr}, and Sobolev embedding, together with $r > 3$, we deduce that 
		\begin{equation}\label{est: ho1 3D tpm1}
			\begin{aligned}
				\int_{\mathbb{R}^3}|v|^2|\nabla v|^2\,dx
				&\le
				\|v\|_{L^r}^2\|\nabla v\|_{L^{\frac{2r}{r-2}}}^2
				\notag\\
				&\le
				C\|\nabla v\|_{L^2}^{2-\frac6r}\|\nabla^2 v\|_{L^2}^{\frac6r}
				\le
				\eta\|\nabla^2 v\|_{L^2}^2
				+
				C_\eta\|\nabla v\|_{L^2}^2.
			\end{aligned}
		\end{equation}
		Likewise, we deduce that 
		\begin{equation}
			\begin{aligned}
				\int_{\mathbb{R}^3}|v|^2|\nabla^2\rho|^2\,dx
				&\le
				\|v\|_{L^r}^2\|\nabla^2\rho\|_{L^{\frac{2r}{r-2}}}^2
				\notag\\
				&\le
				C\|\nabla^2\rho\|_{L^2}^{2-\frac6r}\|\nabla^3\rho\|_{L^2}^{\frac6r}
				\notag\\
				&\le
				\eta\|\nabla^3\rho\|_{L^2}^2
				+
				C_\eta\|\nabla^2\rho\|_{L^2}^2
				\notag\\
				&\le
				\eta\|\nabla\Delta\rho^\alpha\|_{L^2}^2
				+\eta A(t)
				+
				C_\eta\mathcal E(t),
			\end{aligned}
		\end{equation}
		where we use \eqref{est: rho H_dot3 3D} and \eqref{est: ho1 E} in the last step. Integrating by parts once more, we have
		\begin{equation}
			C\int_{\mathbb{R}^3}|\nabla\rho|^2|\nabla v|^2\,dx
			\le
			C\int_{\mathbb{R}^3}|v||\nabla v||\nabla\rho||\nabla^2\rho|\,dx
			+
			C\int_{\mathbb{R}^3}|v||\nabla^2v||\nabla\rho|^2\,dx
			=:J_{31}+J_{32}.
		\end{equation}
		Applying Hölder's and Young's inequalities, along with \eqref{est: v Lr} and the Sobolev embedding, yields
		\begin{equation}
			\begin{aligned}
				J_{31}
				&\le
				C\|v\|_{L^r}\,\||\nabla\rho||\nabla^2\rho|\|_{L^2}\,\|\nabla v\|_{L^{\frac{2r}{r-2}}}\\
				&\le
				C A(t)^{1/2}\|\nabla v\|_{L^2}^{1-\frac3r}\|\nabla^2 v\|_{L^2}^{\frac3r}\\
				&\le
				\eta A(t)+\eta\|\nabla^2 v\|_{L^2}^2+C_\eta\|\nabla v\|_{L^2}^2.
			\end{aligned}
		\end{equation}
		Noting that 
		\begin{equation}\label{est: nabla rho L12}
			\begin{aligned}
				\left\lVert \nabla(\rho^{\alpha}) \right\rVert _{L^{12}}^{12}&= -\int \rho^{\alpha}\text{div}\left(\left| \nabla(\rho^{\alpha}) \right| ^{10}\nabla \rho^{\alpha}\right)dx\\
				&\leq C\int \left| \nabla (\rho^{\alpha}) \right| ^{10}\left| \nabla^{2}(\rho^{\alpha}) \right| dx\\
				&\leq \frac{1}{2}\left\lVert \nabla(\rho^{\alpha}) \right\rVert _{L^{12}}^{12}+C\left\lVert \nabla^{2}(\rho^{\alpha}) \right\rVert _{L^{6}}^{6},
			\end{aligned}
		\end{equation}
		combining \eqref{est: v Lr}, \eqref{est: basic}, boundness of $\rho,$ together with $r > 3$, we have 
		\begin{equation}
			\begin{aligned}
				J_{32}&\leq C\left\lVert v \right\rVert _{L^{r}}\left\lVert \nabla^{2}v \right\rVert _{L^{2}}\left\lVert \nabla \rho^{\alpha} \right\rVert _{L^{\frac{4r}{r-2}}}^{2}\\
				&\leq C\left\lVert \nabla^{2}v \right\rVert _{L^{2}}\left\lVert \nabla (\rho^{\alpha}) \right\rVert _{L^{12}}^{\frac{6(r+2)}{5r}}\left\lVert \nabla (\rho^{\alpha}) \right\rVert _{L^{2}}^{\frac{4(r-3)}{5r}}\\
				&\leq C\left\lVert \nabla^{2}v \right\rVert _{L^{2}}\left\lVert \nabla\Delta (\rho^{\alpha}) \right\rVert _{L^{2}}^{\frac{3(r+2)}{5r}}\left\lVert \nabla(\rho^{\alpha}) \right\rVert _{L^{2}}^{\frac{4(r-3)}{5r}}\\
				&\leq \eta \left\lVert \nabla^{2}v \right\rVert ^{2}_{L^{2}}+\eta \left\lVert \nabla\Delta \rho^{\alpha} \right\rVert ^{2}_{L^{2}}+C_{\eta}.
			\end{aligned}
		\end{equation}
		Therefore, we obtain
		\begin{equation}\label{est: ho1 3D tmp3}
			C\int_{\mathbb{R}^3}|\nabla\rho|^2|\nabla v|^2\,dx
			\le
			2\eta\|\nabla^2 v\|_{L^2}^2
			+
			2\eta A(t)
			+
			C_\eta\bigl(1+\|\nabla v\|_{L^2}^2\bigr).
		\end{equation}
		\eqref{est: nabla rho L6 3D} directly yields 
		\begin{equation}
			\int_{\mathbb{R}^3}\Bigl(|\nabla\rho|^2|\nabla^2\rho|^2+|\nabla\rho|^6\Bigr)\,dx
			\le C A(t).
		\end{equation}
		It holds that 
		\begin{equation}\label{est: ho1 3D tmp2}
			\int_{\mathbb{R}^3}|\nabla\rho|^2\,dx\le C.
		\end{equation}
		Inserting \eqref{est: ho1 3D tpm1}-\eqref{est: ho1 3D tmp2} into \eqref{est: ho1Step4} and choosing $\eta>0$ sufficiently small, we obtain
		\begin{equation}\label{est: ho1 3D tmp7}
			\begin{aligned}
				&\frac{d}{dt}\mathcal E(t)
				+\frac14\|\nabla\rho_t\|_{L^2}^2
				+\frac{c^2}{32}\|\nabla\Delta\rho^\alpha\|_{L^2}^2
				+\frac{C_{\rho,v}}{\varepsilon_1}\int_{\mathbb{R}^3}\rho|v_t|^2\,dx
				+\frac{C_{\rho,v}}{2}\|\nabla^2 v\|_{L^2}^2
				\\
				\le&
				C_{\mathcal E,A}\Bigl(1+\mathcal E(t)+A(t)\Bigr).
			\end{aligned}
		\end{equation}
		To close the estimate, it remains to deal with $A(T).$ Recalling \eqref{eq135}, we have 
		\begin{equation}\label{est: ho1 3D tmp5}
			\begin{aligned}
				\frac1{4c}\frac{d}{dt}\|\nabla\rho\|_{L^4}^4
				+\frac{\varepsilon_2}{2}A(t)
				\le
				C\int_{\mathbb{R}^3}
				\Bigl(
				|\nabla\rho|^2|\nabla v|^2+|v|^2|\nabla\rho|^4
				\Bigr)\,dx.
			\end{aligned}
		\end{equation}
		It follows from H\"{o}lder's inequality, Sobolev embedding, Young's inequality, \eqref{est: basic}, \eqref{est: v Lr}, \eqref{est: nabla rho L12} and boundness of $\rho$ that 
		\begin{equation}\label{est: ho1 3D tmp4}
			\begin{aligned}
				C\int_{\mathbb{R}^{3}}|v|^{2}|\nabla \rho|^{4}dx &\leq C\int_{\mathbb{R}^{3}}|v|^{2}|\nabla(\rho^{\alpha})|^{4}dx \\
				&\leq C\| v \|_{L^{r}}^{2}\| \nabla(\rho^{\alpha}) \|_{L^{\frac{4r}{r-2}}}^{4}\\
				&\leq C\| v \|_{L^{r}}^{2}\| \nabla(\rho^{\alpha}) \|_{L^{12}}^{\frac{12(r+2)}{5r}}\| \nabla(\rho^{\alpha}) \|_{L^{2}}^{\frac{8(r-3)}{5r}}\\
				&\leq C\| \nabla^{2}(\rho^{\alpha}) \|_{L^{6}}^{\frac{6(r+2)}{5r}}\\
				&\leq C\| \nabla\Delta(\rho^{\alpha}) \|_{L^{2}}^{\frac{6(r+2)}{5r}}\\
				&\leq \eta \| \nabla\Delta(\rho^{\alpha}) \|_{L^{2}}^{2} + C_{\eta}.
			\end{aligned}
		\end{equation}
		Substituting \eqref{est: ho1 3D tmp3}, \eqref{est: ho1 3D tmp4} into \eqref{est: ho1 3D tmp5} and combining \eqref{est: ho1 E}, we have
		\begin{equation}\label{est: ho1 3D tmp6}
			\frac1{4c}\frac{d}{dt}\|\nabla\rho\|_{L^4}^4
			+\frac{\varepsilon_2}{4}A(t)
			\le
			2\eta\|\nabla^2 v\|_{L^2}^2
			+\eta\|\nabla\Delta(\rho^{\alpha})\|_{L^2}^2+
			C_\eta\bigl(1+\mathcal E(t)\bigr),
		\end{equation}
		With \eqref{est: ho1 3D tmp6} multiplied by $\frac{6C_{\mathcal E,A}}{\varepsilon_2}$ and added to \eqref{est: ho1 3D tmp7}, we choose $\eta>0$ sufficiently small and arrive at
		\begin{equation}
			\begin{aligned}
				&\frac{d}{dt}\mathcal E_3(t)
				+\frac14\|\nabla\rho_t\|_{L^2}^2
				+\frac{c^2}{64}\|\nabla\Delta\rho^\alpha\|_{L^2}^2
				+\frac{C_{\rho,v}}{\varepsilon_1}\int_{\mathbb{R}^3}\rho|v_t|^2\,dx
				+\frac{C_{\rho,v}}{4}\|\nabla^2 v\|_{L^2}^2
				+\frac{C_{\mathcal E,A}}{2}A(t)
				\notag\\
				\le&
				C\bigl(1+\mathcal E(t)\bigr),
			\end{aligned}
		\end{equation}
		where 
		\begin{equation}
			{ \mathcal E_3(t)}
			:=
			\mathcal E(t)
			+\frac{3C_{\mathcal E,A}}{2c\varepsilon_2}\|\nabla\rho(t)\|_{L^4(\mathbb{R}^3)}^4.
		\end{equation}
		Noting that $\mathcal E(t) \le \mathcal E_3(t)$, by Gronwall's inequality, we obtain
		\begin{equation}\label{est: ho1 3D final1}
			\sup_{0\le t\le T}\mathcal E_3(t)
			+
			\int_0^T
			\Bigl(
			\|\nabla\rho_t\|_{L^2}^2
			+\|\nabla\Delta\rho^\alpha\|_{L^2}^2
			+\|v_t\|_{L^2}^2
			+\|\nabla^2 v\|_{L^2}^2
			+A(t)
			\Bigr)dt
			\le C.
		\end{equation}
		Then, \eqref{est: ho1 E} immediately yields
		\begin{equation}\label{est: ho1 3D final2}
			\sup_{0\le t\le T}
			\Bigl(
			\|\nabla^2\rho(t)\|_{L^2}^2+\|\nabla v(t)\|_{L^2}^2+\|\nabla\rho(t)\|_{L^4}^4
			\Bigr)
			\le C.
		\end{equation}
		This completes the proof of \eqref{est: ho1.2}.

		\noindent\textbf{Step5: Third-order derivative bounds on the density}

		For $N = 2$, combining \eqref{est: rho H_dot3 2D} with \eqref{est: ho1 2D final} yields  
		\begin{equation}\label{est: ho1 2D rhoH3dot}
			\int_0^T\|\nabla^3\rho(t)\|_{L^2}^2\,dt
			\le
			C\int_0^T\|\nabla\Delta\rho^\alpha(t)\|_{L^2}^2\,dt
			\le C.
		\end{equation}
		For $N = 3$, it follows from \eqref{est: rho H_dot3 3D} and \eqref{est: ho1 3D final1} that
		\begin{equation}\label{est: ho1 3D rhoH3dot}
			\int_0^T\|\nabla^3\rho(t)\|_{L^2}^2\,dt
			\le
			C\int_0^T\Bigl(\|\nabla\Delta\rho^\alpha(t)\|_{L^2}^2+A(t)\Bigr)\,dt
			\le C.
		\end{equation}

		\noindent\textbf{Step6: Time derivative bounds on the density}

		Note that $\|\nabla\rho\|_{L^4}^2\le C\|\nabla\rho\|_{L^2}\|\nabla^2\rho\|_{L^2}$ for $N=2,$ and $\sup_{0\le t\le T}\|\nabla\rho(t)\|_{L^4}\le C$ for $N=3$ due to \eqref{est: ho1 3D final2}. Consequently, by the boundedness of $\rho$, $\eqref{sys of v}_1,$ \eqref{est: ho1 2D final}, \eqref{est: ho1 3D final2}, \eqref{est: v Lr}, H\"{o}lder's inequality and Sobolev embedding, we obtain
		\begin{equation}\label{est: ho1 rho_t}
			\begin{aligned}
				\left\lVert \rho_{t} \right\rVert _{L^{2}}\leq& C\left( \left\lVert v\cdot \nabla \rho \right\rVert _{L^{2}}+\left\lVert \rho \text{div}v\right\rVert _{L^{2}}+\left\lVert \nabla^{2}\rho \right\rVert _{L^{2}}+\left\lVert \left| \nabla \rho \right| ^{2} \right\rVert _{L^{2}} \right) \\
				\leq& C\left( \left\lVert v \right\rVert _{L^{r}}\left\lVert \nabla \rho \right\rVert_{L^{\frac{2r}{r-2}}}+\left\lVert \nabla v \right\rVert _{L^{2}} +\left\lVert \nabla^{2}\rho \right\rVert _{L^{2}}+\left\lVert \nabla \rho \right\rVert _{L^{4}}^{2} \right) \\
				\leq&C\left( \left\lVert \nabla \rho \right\rVert _{H^{1}}+\left\lVert \nabla v \right\rVert _{L^{2}} +\left\lVert \nabla \rho \right\rVert ^{2}_{L^{4}}\right) \leq C.
			\end{aligned}
		\end{equation}
		Collecting the estimates \eqref{est: ho1 2D final}, \eqref{est: ho1 3D final1}, \eqref{est: ho1 3D final2}, \eqref{est: ho1 2D rhoH3dot}, \eqref{est: ho1 3D rhoH3dot} and \eqref{est: ho1 rho_t}, together with the boundedness of $\rho$, this completes the proof.
		
	\end{proof}
	Building upon the first higher-order estimates, we turn to improve the regularity of the effective velocity.
	\begin{proposition}
		Given the conditions in Proposition \ref{prop: ho1}, we have
		\begin{equation}\label{est: ho2}
			\sup_{0\le t\le T}\Bigl(\|v(t)\|_{H^2}^2+\|v_t(t)\|_{L^2}^2\Bigr)
			+\int_0^T\Bigl(\|v(t)\|_{H^3}^2+\|v_t(t)\|_{H^1}^2\Bigr)\,dt
			\le C,
		\end{equation}
		for some constant $C > 0$ depending only on the quantities therein.
		
	\end{proposition}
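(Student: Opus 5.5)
The plan is the standard second-level energy estimate for the momentum equation of \eqref{sys of v}, done by differentiating in time. We use as given the density bounds $\underline{\rho}\le\rho\le\bar\rho$ from Propositions \ref{prop7} and \ref{prop9}, the bound \eqref{est: v Lr}, and all bounds in \eqref{est: ho1.1} (and \eqref{est: ho1.2} when $N=3$). With these, $\rho-1\in L^\infty(0,T;H^2)\cap L^2(0,T;H^3)$ and $\rho_t\in L^\infty(0,T;L^2)\cap L^2(0,T;H^1)$.

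\textbf{Step 1: $v_t$-equation and its energy identity.} Differentiate $\eqref{sys of v}_2$ in $t$, test with $v_t$ and use $\eqref{sys of v}_1$. This gives
\[
\frac12\frac{d}{dt}\int\rho|v_t|^2\,dx+\int\rho^\alpha\mathcal Q_+(\nabla v_t)\,dx=\sum_i R_i,
\]
and \eqref{est: Q_v down} bounds the left-hand dissipation from below by $c\|\nabla v_t\|_{L^2}^2$. The remainders $R_i$ are of the following types:
\begin{itemize}
\item $\int\rho_t|v_t|^2$ and $\int\rho\, u\cdot\nabla(|v_t|^2)$, coming from the $\rho v_t$ term;
\item $\int(\rho u)_t\cdot\nabla v\cdot v_t$ and $\int \rho u\cdot\nabla v_t\cdot v_t$, from the convection;
\item $\int\partial_t\nabla\rho^\gamma\cdot v_t=-\int\gamma\rho^{\gamma-1}\rho_t\operatorname{div}v_t$;
\item $\int(\rho^\alpha)_t\nabla v:\nabla v_t$-type terms, from differentiating the viscous coefficients.
\end{itemize}
We recall $u=v-c\alpha\rho^{\alpha-2}\nabla\rho$, so $u_t$ involves $\nabla\rho_t$, which is in $L^2_tL^2_x$.

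Each term is estimated with Hölder, Gagliardo--Nirenberg and Young, with a small multiple of $\|\nabla v_t\|_{L^2}^2$ absorbed. Some examples:
\begin{itemize}
\item $\int|\rho_t||v_t|^2\le\|\rho_t\|_{L^2}\|v_t\|_{L^4}^2\le\eta\|\nabla v_t\|_{L^2}^2+C\|v_t\|_{L^2}^2$, since $\|\rho_t\|_{L^\infty L^2}\le C$ and $N\le3$.
\item $\int|u||\nabla v_t||v_t|\le \|u\|_{L^r}\|\nabla v_t\|_{L^2}\|v_t\|_{L^{2r/(r-2)}}$, handled by interpolation. Here $u\in L^\infty_t L^r$ because $v\in L^\infty L^r$ and $\nabla\rho\in L^\infty(H^1)$ with $r\le6$.
\item $\int|\nabla\rho_t||\nabla v||v_t|\le\|\nabla\rho_t\|_{L^2}\|\nabla v\|_{L^3}\|v_t\|_{L^6}\le\eta\|\nabla v_t\|^2_{L^2}+C\|\nabla\rho_t\|_{L^2}^2\|\nabla v\|_{H^1}^2$.
\item $\int|\rho_t||\nabla v||\nabla v_t|\le \|\rho_t\|_{L^3}\|\nabla v\|_{L^6}\|\nabla v_t\|_{L^2}$, giving $C\|\rho_t\|_{H^1}^2\|\nabla^2v\|_{L^2}^2$ after Young. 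This has an $L^1_t$ coefficient times a bounded quantity only once $\|\nabla^2 v\|_{L^2}$ is controlled, so it must be treated as in Step 2.
\end{itemize}

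\textbf{Step 2: elliptic control of $\nabla^2 v$ and closing.} From \eqref{est: v_H2_dot} we have
\[
\|\nabla^2 v\|_{L^2}\le C\bigl(\|\sqrt\rho v_t\|_{L^2}+\||v||\nabla v|\|_{L^2}+\||\nabla\rho||\nabla v|\|_{L^2}+\|\nabla\rho\|_{L^2}\bigr).
\]
Interpolating as in \eqref{est: ho1 3D tpm1} and \eqref{est: ho1 3D tmp3}, and using the $L^\infty_t$ bounds on $\|\nabla v\|_{L^2}$, $\|\nabla^2\rho\|_{L^2}$ and $\|\nabla\rho\|_{L^4}$, this yields
\[
\|\nabla^2 v\|_{L^2}\le C(1+\|\sqrt\rho v_t\|_{L^2}).
\]
Substituting into Step 1 produces
\[
\frac{d}{dt}\|\sqrt\rho v_t\|_{L^2}^2+\|\nabla v_t\|_{L^2}^2\le C\,g(t)\bigl(1+\|\sqrt\rho v_t\|_{L^2}^2\bigr),
\]
where $g=1+\|\nabla\rho_t\|_{L^2}^2+\|\nabla^2 v\|_{L^2}^2+\|\nabla^3\rho\|_{L^2}^2$. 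By \eqref{est: ho1.1}, $g\in L^1(0,T)$.

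\emph{Initial value.} We need $\|\sqrt{\rho}v_t(0)\|_{L^2}$. We read it off from the equation: it is bounded by $\|v_0\|_{H^2}$, $\|\rho_0-1\|_{H^3}$ and $\|\rho_0^{-1}\|_{L^\infty}$, which are controlled by \eqref{eq14}. Gronwall then gives
\[
\sup_t\|v_t\|_{L^2}^2+\int_0^T\|\nabla v_t\|_{L^2}^2\,dt\le C,
\]
and hence $\sup_t\|\nabla^2v\|_{L^2}\le C$. The $L^\infty_tL^2$ bound on $v$ itself follows from \eqref{eq24} and the lower bound on $\rho$.

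\textbf{Step 3: $H^3$ bound.} Apply $\nabla$ to the elliptic form $\nu\Delta v+a_0\nabla\operatorname{div}v=F$ and use $\|\nabla^3v\|_{L^2}\le C\|\nabla F\|_{L^2}$. The factor $\nabla F$ contains:
\begin{itemize}
\item $\nabla v_t$, which is in $L^2_t$;
\item $\nabla\rho\, v_t$, bounded by $\|\nabla\rho\|_{L^3}\|v_t\|_{L^6}$;
\item $\nabla(u\cdot\nabla v)$, containing $\nabla^2 u\sim\nabla^3\rho$ times $\nabla v\in L^\infty_t H^1$;
\item $\nabla^2\rho^\gamma$;
\item $\nabla^2\rho^\alpha\,\nabla v$ and $\nabla\rho^\alpha\nabla^2 v$.
\end{itemize}
Each is bounded in $L^2_tL^2$ using $\nabla^3\rho\in L^2_tL^2$, $\nabla^2\rho\in L^\infty_tL^2\subset L^\infty_tL^3$ via $H^1$ when paired with $\nabla v\in L^\infty_tL^6$, and $\nabla\rho\in L^\infty_t L^6$. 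This gives $\int_0^T\|v\|_{H^3}^2\,dt\le C$.

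\textbf{Main obstacle.} The delicate part is Step 1, specifically the terms involving $u_t$, i.e.\ $\nabla\rho_t$ times $\nabla v$ or times $v$. There is only $L^2_t$ control on $\nabla\rho_t$, so these terms must be arranged to carry an $L^1_t$ coefficient multiplying $1+\|v_t\|_{L^2}^2$. We do this by putting $\nabla v$ in $L^3\subset H^1$ and invoking the elliptic bound $\|\nabla^2 v\|\lesssim1+\|v_t\|$, so that the coefficient is $\|\nabla\rho_t\|_{L^2}^2$.
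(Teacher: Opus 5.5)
Your proof is correct, but it is organized differently from the paper's.

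\textbf{The paper's route.} The paper divides the momentum equation by $\rho$ and differentiates the non-divergence form $v_t-\nu\rho^{\alpha-1}\Delta v-a_0\rho^{\alpha-1}\nabla\operatorname{div}v=\cdots$ in time. The differentiated coefficients, together with an integration by parts of the $\nabla\rho_t$-terms, produce $\rho_t\nabla^2v\cdot v_t$ and $\rho_t\nabla v\cdot\nabla v_t$. These are bounded using $\|\nabla^2v\|_{L^3}$ and $\|\nabla v\|_{L^\infty}$, that is, using $\nabla^3v$. The paper therefore couples the unweighted $\|v_t\|_{L^2}^2$ estimate to the $H^3$ Lam\'e estimate $\|\nabla^3v\|_{L^2}^2\le C_1\|\nabla v_t\|_{L^2}^2+C\Psi(1+\|v_t\|_{L^2}^2)$, with $\Psi\in L^1(0,T)$ built from \eqref{est: ho1.1}. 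A single Gronwall step then gives $\sup_t\|v_t\|_{L^2}^2+\int_0^T(\|\nabla v_t\|_{L^2}^2+\|\nabla^3v\|_{L^2}^2)\,dt\le C$. Only afterwards does it recover $\sup_t\|\nabla^2v\|_{L^2}$ from the $H^2$ elliptic estimate.

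\textbf{Your route.} You differentiate the conservative, $\rho$-weighted form instead.
\begin{itemize}
\item $\int(\rho v_{tt}+\rho u\cdot\nabla v_t)\cdot v_t$ is exactly $\frac12\frac{d}{dt}\int\rho|v_t|^2$, by $\rho_t+\operatorname{div}(\rho u)=0$.
\item The differentiated viscosity only pairs $(\rho^\alpha)_t\nabla v$ with $\nabla v_t$.
\item You keep $\nabla\rho_t$ in $L^2$ as the Gronwall weight.
\end{itemize}
So the only higher norm that enters is $\|\nabla^2v\|_{L^2}$, which you control pointwise in time by $C(1+\|\sqrt\rho v_t\|_{L^2})$ via \eqref{est: v_H2_dot}. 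This keeps $\nabla^3v$, $\|\nabla v\|_{L^\infty}$ and $\|\nabla\rho\|_{L^\infty}$ out of the energy step. It also gives $\sup_t\|\nabla^2v\|_{L^2}$ together with $\sup_t\|v_t\|_{L^2}$. The price is a separate a posteriori $H^3$ elliptic step. The paper's route gets $\nabla^3v\in L^2_tL^2$ in the same step but needs the stronger norms inside the Gronwall loop. You also verify that $\sqrt{\rho_0}\,v_t(0)\in L^2$, which the paper leaves implicit.

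\textbf{Details to correct.} None of these affects validity.
\begin{itemize}
\item In three dimensions the exponent $r=q+2$ of Proposition \ref{prop: ho1} exceeds $6$, since $q>2\alpha/(1-\alpha)>4$. So ``$u\in L^\infty_tL^r$ with $r\le6$'' should instead use $v\in L^\infty_tH^1\subset L^\infty_tL^6$. In any case $\int\rho u\cdot\nabla v_t\cdot v_t$ cancels against the $\rho_t$-part of $\int\rho v_{tt}\cdot v_t$.
\item In Step 3, $\nabla(u\cdot\nabla v)$ contains $\nabla u\sim\nabla^2\rho+|\nabla\rho|^2$ (not $\nabla^2u$), plus the unlisted term $u\cdot\nabla^2v$.
\item Also in Step 3, $\nabla^2\rho$ is not in $L^\infty_tL^3$. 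What works is $\|\nabla^2\rho\|_{L^3}\lesssim\|\nabla^2\rho\|_{H^1}\in L^2_t$, paired with $\|\nabla v\|_{L^6}\in L^\infty_t$.
\item For $u\cdot\nabla^2v$, use $\|u\|_{L^\infty}\|\nabla^2v\|_{L^2}\in L^2_t$, which holds because $\|\nabla\rho\|_{L^\infty}\lesssim\|\nabla\rho\|_{H^2}\in L^2_t$ and $v\in L^\infty_tH^2$.
\end{itemize}
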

	\begin{proof}
		Denote \begin{equation}\label{def: Psi}
			\Psi(t):=
			1+\|\nabla\rho(t)\|_{H^2}^2+\|\rho_t(t)\|_{H^1}^2+\|v(t)\|_{H^1}^2+\|\nabla^2v(t)\|_{L^2}^2.
		\end{equation}
		We already have 
		\begin{equation}\label{est: Psi}
			\int_0^T \Psi(t)\,dt\le C
		\end{equation} 
		by applying \eqref{eq24}, \eqref{eq31}, boundness of the density and \eqref{est: ho1.1}. 
		
		Since the density has a positive lower bound, we can rewrite system \eqref{sys of v} as
		\begin{equation}\label{Equ7}
			\left\{
			\begin{aligned}
				&\rho_t - \alpha c\rho^{\alpha-1} \Delta \rho - \alpha(\alpha-1)c\rho^{\alpha-2}|\nabla \rho|^2 = -\operatorname{div}(\rho v), \\
				&v_t - \nu \rho^{\alpha-1} \Delta v - \left[ \nu - c + (\alpha-1)(2\nu-c) \right] \rho^{\alpha-1} \nabla \operatorname{div} v \\
				&\quad = - \left( v - c \alpha \rho^{\alpha-2} \nabla \rho \right) \cdot \nabla v - \gamma \rho^{\gamma-2} \nabla \rho \\
				&\qquad + \alpha \rho^{\alpha-2} \nabla \rho \cdot \left[ \nu \nabla v + (\nu-c) (\nabla v)^t + (\alpha-1)(2\nu-c) (\operatorname{div} v) \mathbb{I} \right] .\\
			\end{aligned}
			\right.
		\end{equation}
		Taking the partial derivative of $\eqref{Equ7}_2$ with respect to $t,$ testing with $v_t$, and applying integration by parts, H\"{o}lder's inequality and Sobolev embedding, we obtain
		\begin{equation}\label{est: ho2tmp1}
			\begin{aligned}
				&\frac12\frac{d}{dt}\|v_t\|_{L^2}^2 +\nu\int \rho^{\alpha-1}|\nabla v_t|^2\,dx +a_0\int \rho^{\alpha-1}\nabla v_t^{\top}:\nabla v_t\,dx \\
				\le& C\bigg( \int |\rho_t|\,|\nabla^2 v|\,|v_t|\,dx +\int |\nabla\rho|\,|\nabla v_t|\,|v_t|\,dx +\int |\nabla v|\,|v_t|^2\,dx \\
				&+\int |v|\,|\nabla v_t|\,|v_t|\,dx +\int |\rho_t|\,|\nabla\rho|\,|\nabla v|\,|v_t|\,dx \\
				& +\int |\rho_t|\,|\nabla v|\,|\nabla v_t|\,dx +\int |\rho_t|\,|\nabla\rho|\,|v_t|\,dx +\int |\nabla\rho_t|\,|v_t|\,dx \bigg)\\
				\leq&C\left( \left\lVert \rho_{t} \right\rVert _{L^{2}}\left\lVert \nabla^{2}v \right\rVert _{L^{3}}\left\lVert v_{t} \right\rVert _{L^{6}}+\left\lVert \nabla \rho \right\rVert _{L^{\infty}}\left\lVert \nabla v_{t} \right\rVert _{L^{2}}\left\lVert v_{t} \right\rVert _{L^{2}}
				\right.\\
				&+\left.
				\left\lVert \nabla v \right\rVert _{L^{3}}\left\lVert v_{t} \right\rVert _{L^{2}}\left\lVert v_{t} \right\rVert _{L^{6}}+\left\lVert v \right\rVert _{L^{\infty}}\left\lVert \nabla v_{t} \right\rVert_{L^{2}} \left\lVert v_{t} \right\rVert _{L^{2}} 
				\right.\\
				&+\left.
				\left\lVert \rho_{t} \right\rVert_{L^{6}}\left\lVert \nabla \rho \right\rVert_{L^{6}}\left\lVert \nabla v \right\rVert _{L^{2}}\left\lVert v_{t} \right\rVert_{L^{6}}+\left\lVert \rho_{t} \right\rVert _{L^{2}} \left\lVert \nabla v \right\rVert _{L^{\infty}}\left\lVert \nabla v_{t} \right\rVert _{L^{2}}
				\right.\\
				&+\left.
				\left\lVert \rho_{t} \right\rVert _{L^{6}}\left\lVert \nabla \rho \right\rVert _{L^{3}}\left\lVert v_{t} \right\rVert _{L^{2}}+\left\lVert \nabla \rho_{t} \right\rVert_{L^{2}}\left\lVert v_{t} \right\rVert _{L^{2}}   \right) \\
				\leq&C\left( \left\lVert \nabla^{3}v \right\rVert _{L^{2}}^{\frac{N}{6}}\left\lVert \nabla^{2}v \right\rVert _{L^{2}}^{1- \frac{N}{6}}\left\lVert v_{t} \right\rVert _{H^{1}}
				+\Psi(t)^{\frac{1}{2}}\left\lVert \nabla v_{t} \right\rVert _{L^{2}}\left\lVert v_{t} \right\rVert _{L^{2}}
				\right.\\
				&+\left.
				\Psi(t)^{\frac{1}{2}}\left\lVert v_{t} \right\rVert _{L^{2}}\left\lVert v_{t} \right\rVert _{L^{6}}
				+\Psi(t)^{\frac{1}{2}}\left\lVert \nabla v_{t} \right\rVert_{L^{2}} \left\lVert v_{t} \right\rVert _{L^{2}} 
				\right.\\
				&+\left.
				\Psi(t)^{\frac{1}{2}}\left\lVert \nabla v_{t} \right\rVert_{L^{2}}
				+ \left\lVert \nabla ^{3}v \right\rVert _{L^{2}}^{\frac{1}{2}}\Psi(t)^{\frac{N-2}{4}}\left\lVert \nabla v_{t} \right\rVert _{L^{2}}
				+\Psi(t)^{\frac{1}{2}}\left\lVert v_{t} \right\rVert _{L^{2}}+\Psi(t)^{\frac{1}{2}}\left\lVert v_{t} \right\rVert _{L^{2}}   \right)\\
				\leq& \left( \eta \left\lVert v_{t} \right\rVert ^{2}_{H^{1}} +\eta \left\lVert \nabla^{3}v \right\rVert ^{2}_{L^{2}}+C\Psi(t)\right)+\left( \eta \left\lVert \nabla v_{t} \right\rVert ^{2}_{L^{2}}+C\Psi(t)\left\lVert v_{t} \right\rVert ^{2}_{L^{2}} \right)\\
				&+\left( \eta \left\lVert v_{t} \right\rVert_{H^{1}}^{2}+C\Psi(t)\left\lVert v_{t} \right\rVert ^{2}_{L^{2}}  \right)+\left( \eta \left\lVert\nabla v_{t} \right\rVert_{L^{2}}^{2}+C\Psi(t)\left\lVert v_{t} \right\rVert ^{2}_{L^{2}}  \right) \\
				&+\left( \eta \left\lVert v_{t} \right\rVert_{H^{1}}^{2}+C\Psi(t) \right)+\left( \eta \left\lVert \nabla v_{t} \right\rVert ^{2}_{L^{2}}+\eta \left\lVert \nabla^{3}v \right\rVert ^{2}_{L^{2}}+C\Psi(t) \right) \\
				& + C\Psi(t)\left( 1+\left\lVert v_{t} \right\rVert ^{2}_{L^{2}} \right) +C\Psi(t)\left( 1+\left\lVert v_{t} \right\rVert ^{2}_{L^{2}} \right)\\ 
				\leq& 6\eta \left\lVert \nabla v_{t} \right\rVert_{L^{2}}+2\eta \left\lVert \nabla^{3}v \right\rVert ^{2}_{L^{2}}+C\Psi(t)(1+\left\lVert v_{t} \right\rVert ^{2}_{L^{2}})  ,
			\end{aligned}
		\end{equation}
		where $a_{0}:=(\nu-c)+(\alpha-1)(2\nu-c)<0.$ Furthermore, here we use $\left\lVert \nabla \rho \right\rVert_{L^{\infty}}\leq C\Psi(t)^{\frac{1}{2}}$ and $\left\lVert v \right\rVert_{L^{\infty}}\leq C\Psi(t)^{\frac{1}{2}}$ due to the embedding $H^{2}(\mathbb{R}^{N})\hookrightarrow L^{\infty}(\mathbb{R}^{N})$, as well as 
		\begin{equation}
			\left\lVert \nabla v \right\rVert _{L^{\infty}}\leq C\left\lVert \nabla^{3}v \right\rVert ^{\frac{1}{2}}_{L^{2}}\left\lVert \nabla^{2}v \right\rVert _{L^{2}}^{\frac{N-2}{2}}\left\lVert \nabla v \right\rVert _{L^{2}}^{\frac{3-N}{2}}\leq C\left\lVert \nabla^{3} v \right\rVert _{L^{2}}^{\frac{1}{2}}\Psi(t)^{\frac{N-2}{4}}.
		\end{equation}
		Noting that 
		\begin{equation}
			\nu \left| \nabla v_{t} \right| ^{2}+a_{0}\nabla v_{t}^{\top}:\nabla v_{t}\geq (\nu+a_{0})\left| \nabla v_{t} \right| ^{2}=\alpha(2\nu-c)\left| \nabla v_{t}\right|^{2},
		\end{equation}
		we have
		\begin{equation}
			\nu\int \rho^{\alpha-1}|\nabla v_t|^{2}\,dx +a_0\int \rho^{\alpha-1}\nabla v_t^{\top}:\nabla v_t\,dx \ge{\varepsilon_{3}}\left\lVert \nabla v_{t} \right\rVert ^{2}_{L^{2}}.
		\end{equation}
		Choosing $\eta< \frac{\varepsilon_{3}}{12}$ leads to 
		\begin{equation}\label{v_t}
			\frac{1}{2}\frac{d}{dt}\|v_t\|_{L^2}^2
			+\frac{\varepsilon_3}{2}\|\nabla v_t\|_{L^2}^2
			\le 2\eta\|\nabla^3v\|_{L^2}^2
			+C\Psi(t)\bigl(1+\|v_t\|_{L^2}^2\bigr).
		\end{equation}
		Next, we proceed to estimate $\left\lVert \nabla^{3}v \right\rVert^{2}_{L^{2}}.$ Reformulating $\eqref{Equ7}_2$, we obtain the equivalent form
		\begin{equation}\label{elliptic}
			\begin{aligned}
				-\nu\Delta v-a_0\nabla\operatorname{div}v
				=F,
			\end{aligned}
		\end{equation}where 
		\begin{equation}
			\begin{aligned}
				F:=&\rho^{1-\alpha}\Big\{
				-v_t-\bigl(v-c\alpha\rho^{\alpha-2}\nabla\rho\bigr)\cdot\nabla v
				-\gamma\rho^{\gamma-2}\nabla\rho \\
				&+\alpha\rho^{\alpha-2}\nabla\rho\cdot
				\bigl[\nu\nabla v+(\nu-c)(\nabla v)^t+(\alpha-1)(2\nu-c)(\operatorname{div}v)\mathbb{I}\bigr]
				\Big\}.
			\end{aligned}
		\end{equation}
		Thanks to the condition $\nu+a_0=\alpha(2\nu-c)>0$, standard elliptic regularity for the Lam\'e system on $\mathbb{R}^N$ implies
		\begin{equation}
			\begin{aligned}
				\left\lVert \nabla^{3}v \right\rVert ^{2}_{L^{2}}\leq &C\left\lVert \nabla F \right\rVert ^{2}_{L^{2}}
				\leq C\left( \int \left| \nabla v_{t} \right| ^{2}+\left| \nabla \rho \right|^{2}\left| v_{t} \right| ^{2}+ \left| \nabla v \right| ^{4}+\left| v \right|^{2} \left| \nabla^{2}v \right| ^{2}+\left| \nabla \rho \right|^{2}\left| v \right| ^{2}\left| \nabla v \right| ^{2}\right.\\
				&+\left. \left| \nabla^{2}\rho \right| ^{2}\left| \nabla v \right| ^{2}+\left| \nabla \rho \right| ^{2}\left| \nabla^{2}v \right|^{2} +\left| \nabla \rho \right|^{4}\left| \nabla v \right|^{2} +\left| \nabla^{2}\rho \right|^{2} +\left| \nabla \rho \right|^{4}   \right)dx \\
				\leq& C\bigg(\left\lVert \nabla v_{t} \right\rVert ^{2}_{L^{2}}+\left\lVert \nabla \rho \right\rVert^{2}_{L^{\infty}} \left\lVert v_{t} \right\rVert ^{2}_{L^{2}} +\left\lVert \nabla v \right\rVert^{2}_{L^{\infty}}\left\lVert \nabla v \right\rVert ^{2}_{L^{2}}+\left\lVert v \right\rVert _{L^{6}}^{2}\left\lVert \nabla^{2}v \right\rVert ^{2}_{L^{3}}
				\\
				&+\left\lVert \nabla \rho \right\rVert _{L^{6}}^{2} \left\lVert v \right\rVert _{L^{6}}^{2}\left\lVert \nabla v \right\rVert ^{2}_{L^{6}}
				+\left\lVert \nabla^{2}\rho \right\rVert_{L^{2}}^{2}\left\lVert \nabla v \right\rVert ^{2}_{L^{\infty}}
				\\
				&+\left\lVert \nabla \rho \right\rVert ^{2}_{L^{6}}\left\lVert \nabla^{2}v \right\rVert _{L^{3}}^{2}+ \left\lVert \nabla \rho \right\rVert^{4}_{L^{6}}\left\lVert \nabla v \right\rVert ^{2}_{L^{6}} +\left\lVert \nabla^{2}\rho \right\rVert^{2}_{L^{2}}+\left\lVert \nabla \rho \right\rVert _{L^{2}}^{2}\left\lVert \nabla \rho \right\rVert ^{2}_{L^{\infty}} \bigg)\\
				\leq&C\left\lVert \nabla v_{t} \right\rVert ^{2}_{L^{2}}
				+C\Psi(t)\left\lVert v_{t} \right\rVert _{L^{2}}^{2}
				+C\left\lVert \nabla^{3} v \right\rVert _{L^{2}}\Psi(t)^{\frac{N-2}{2}}
				+C\left\lVert \nabla^{3}v \right\rVert _{L^{2}}^{\frac{N}{3}}\left\lVert \nabla^{2}v \right\rVert _{L^{2}}^{2- \frac{N}{3}}
				\\
				&+C\Psi(t)
				+C\left\lVert \nabla^{3} v \right\rVert _{L^{2}}\Psi(t)^{\frac{N-2}{2}}
				\\
				&+C\left\lVert \nabla^{3} v \right\rVert _{L^{2}}\Psi(t)^{\frac{N-2}{2}}
				+C\Psi(t)+C+C\Psi(t)
				\\
				\leq&\delta \left\lVert \nabla^{3}v \right\rVert ^{2}_{L^{2}}+C\left\lVert \nabla v_{t} \right\rVert ^{2}_{L^{2}}+C_{\delta}\Psi(t)\left( 1+\left\lVert v_{t} \right\rVert ^{2}_{L^{2}} \right) .
			\end{aligned}
		\end{equation}
		By taking $\delta>0$ to be sufficiently small, we arrive at 
		\begin{equation}\label{est: ho2tmp2}
			\|\nabla^3v\|_{L^2}^2 \le
			{C_1}\|\nabla v_t\|_{L^2}^2 +C\Psi(t)\bigl(1+\|v_t\|_{L^2}^2\bigr),
		\end{equation}       
		for some positive constant $C_1.$ Substituting \eqref{est: ho2tmp2} into \eqref{v_t} and choosing $\eta< \frac{\varepsilon_{3}}{8C_{1}},$ we obtain 
		\begin{equation}
			\begin{aligned}
				\frac12\frac{d}{dt}\|v_t\|_{L^2}^2 +\frac{\varepsilon_{3}}{4}\left\lVert \nabla v_{t} \right\rVert ^{2}_{L^{2}} 
				\leq C\Psi(t)(1+\left\lVert v_{t} \right\rVert ^{2}_{L^{2}}) .
			\end{aligned}
		\end{equation} 
		Combining Gronwall’s inequality with \eqref{est: Psi} yields 
		\begin{equation}\label{est: hotmp3}
			\sup_{0\le t\le T}\|v_t(t)\|_{L^2}^2
			+\int_0^T\Bigl(\|\nabla v_t(t)\|_{L^2}^2+\|\nabla^3v(t)\|_{L^2}^2\Bigr)\,dt
			\le C.
		\end{equation}
		Finally, we estimate $\left\lVert \nabla^{2}v \right\rVert_{L^{\infty}_tL^{2}}.$ Employing standard elliptic regularity for \eqref{elliptic} we conclude that
		\begin{equation}\label{est: ho2tmp4}
			\begin{aligned}
				\left\lVert \nabla^2 v \right\rVert _{L^2}^2 \leq& C \int \Big( |v_t|^2 + |v|^2|\nabla v|^2 + |\nabla\rho|^2|\nabla v|^2 + |\nabla\rho|^2 \Big) dx\\
				\leq&C\left( \left\lVert v_{t} \right\rVert ^{2}_{L^{2}}+\left\lVert v \right\rVert ^{2}_{L^{6}}\left\lVert \nabla v \right\rVert ^{2}_{L^{3}}+\left\lVert \nabla \rho \right\rVert _{L^{6}}^{2}\left\lVert \nabla v \right\rVert ^{2}_{L^{3}}+\left\lVert \nabla \rho \right\rVert _{L^{2}}^{2} \right) \\
				\leq& C+C\left\lVert \nabla^{2}v \right\rVert_{L^2} ^{\frac{N}{3}}\left\lVert \nabla v \right\rVert _{L^{2}}^{2- \frac{N}{3}}\\
				\leq & \frac{1}{2}\left\lVert \nabla^{2}v \right\rVert _{L^{2}}^2+C.
			\end{aligned} 
		\end{equation}
		Combining  \eqref{eq24}, \eqref{est: ho1.1}, \eqref{est: hotmp3} as well as \eqref{est: ho2tmp4}, we complete the proof.
	\end{proof}
	Now, it remains to close the highest-order regularity bound for the density.
	\begin{proposition}\label{prop: ho3}
		Given the conditions in Proposition \ref{prop: ho1}, we have
		\begin{equation}\label{est: ho3}
			\sup_{0 \le t \le T} \left( \|\rho(t)-1\|_{H^3}^2 + \|\rho_t(t)\|_{H^1}^2 \right)
			+\int_0^T \left( \|\rho(t)-1\|_{H^4}^2 + \|\rho_t(t)\|_{H^2}^2 + \|\rho_{tt}(t)\|_{L^2}^2 \right) dt \le C,
		\end{equation}
		for some constant $C > 0$ depending only on the quantities therein.
	\end{proposition}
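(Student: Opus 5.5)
The density equation is uniformly parabolic once $0<\underline\rho\le\rho\le\bar\rho$ is known, so I will obtain the top-order density bounds by one more energy estimate on $\nabla^2\rho$ (equivalently $\nabla\Delta\rho$). The needed velocity regularity is already supplied by Proposition \ref{prop: ho1} and \eqref{est: ho2}. I work with the form $\eqref{Equ7}_1$,
\[
\rho_t-\alpha c\rho^{\alpha-1}\Delta\rho=G:=-\operatorname{div}(\rho v)+\alpha(\alpha-1)c\rho^{\alpha-2}|\nabla\rho|^2 .
\]
By \eqref{est: ho1.1} and \eqref{est: ho2}, the quantities $\|\nabla\rho\|_{H^1}$, $\|v\|_{H^2}$ and $\|\rho_t\|_{L^2}$ are bounded uniformly on $[0,T]$. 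Moreover $\int_0^T(\|\nabla^3\rho\|_{L^2}^2+\|v\|_{H^3}^2+\|v_t\|_{H^1}^2)\,dt\le C$. In particular $\|\nabla\rho\|_{L^6}$ and $\|v\|_{L^\infty}$ are uniformly bounded, and $\|\nabla\rho\|_{L^\infty}^2\le C\|\nabla\rho\|_{H^2}^2$ is integrable in time.

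\emph{Step 1 ($\nabla^3\rho$ in $L^\infty_tL^2$, $\nabla^4\rho$ in $L^2_tL^2$).} I apply $\nabla$ to the equation, multiply by $-\nabla\Delta^2\rho$ (equivalently, test the equation with $\Delta^2\rho$ after one derivative), and integrate by parts. This gives
\[
\frac12\frac{d}{dt}\|\nabla\Delta\rho\|_{L^2}^2+\alpha c\int\rho^{\alpha-1}|\nabla^2\Delta\rho|^2\,dx\le\Big|\int\nabla(\rho^{\alpha-1})\cdot\ldots\Big|+\Big|\int\nabla^2G:\nabla^2\Delta\rho\,dx\Big|.
\]
The commutator terms are bounded by
\[
C\int\bigl(|\nabla\rho||\Delta\rho|+|\nabla\rho|^3+|\nabla\rho||\nabla^2\rho|\bigr)|\nabla^2\Delta\rho|
+C\int|\nabla\rho||\nabla\Delta\rho||\nabla^2\Delta\rho|.
\]
For the source, $\|\nabla^2G\|_{L^2}$ is controlled by
\[
\|\nabla^3v\|_{L^2}+\|v\|_{L^\infty}\|\nabla^3\rho\|_{L^2}+\|\nabla^2v\|_{L^3}\|\nabla\rho\|_{L^6}+\|\nabla v\|_{L^3}\|\nabla^2\rho\|_{L^6}
\]
plus lower-order density products such as $\|\nabla\rho\|_{L^\infty}\|\nabla^3\rho\|_{L^2}$ and $\|\nabla\rho\|_{L^6}^2\|\nabla^2\rho\|_{L^6}$. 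After Young's inequality every term is bounded by $\eta\|\nabla^4\rho\|_{L^2}^2+C\Psi(t)(1+\|\nabla^3\rho\|_{L^2}^2)+C\|v\|_{H^3}^2$, with $\Psi$ integrable by \eqref{est: Psi} and \eqref{est: ho2}. Since $\|\nabla^2\Delta\rho\|_{L^2}\simeq\|\nabla^4\rho\|_{L^2}$ by Plancherel, Gronwall's inequality (the initial datum is controlled by $\|\rho_0-1\|_{H^3}$) gives $\sup_t\|\nabla^3\rho\|_{L^2}^2+\int_0^T\|\nabla^4\rho\|_{L^2}^2\,dt\le C$. Combined with \eqref{eq31} and the upper bound of the density, which together give $\rho-1\in L^\infty_tL^2$, this yields the $H^3$ and $H^4$ parts of \eqref{est: ho3}.

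\emph{Step 2 (time derivatives).} Reading $\rho_t$ off the equation, $\|\rho_t\|_{H^1}\le C(\|\nabla\rho\|_{H^2}+\|v\|_{H^2})$, which is uniformly bounded. Likewise $\|\rho_t\|_{H^2}\le C(\|\nabla\rho\|_{H^3}+\|v\|_{H^3})$, up to products already controlled, and this is in $L^2_t$. Differentiating the continuity equation in $t$ gives
\[
\rho_{tt}=-\operatorname{div}(\rho_tv+\rho v_t)+c\Delta\bigl(\alpha\rho^{\alpha-1}\rho_t\bigr),
\]
so that
\[
\|\rho_{tt}\|_{L^2}\le C\bigl(\|\rho_t\|_{H^2}+\|v_t\|_{H^1}+\|\rho_t\|_{H^1}(\|v\|_{W^{1,\infty}}+\|\nabla\rho\|_{L^\infty}+\|\nabla^2\rho\|_{L^3})\bigr).
\]
Here $\|v\|_{W^{1,\infty}}\le C\|v\|_{H^3}$, which is in $L^2_t$, and $\|\nabla\rho\|_{L^\infty}\le C\|\rho-1\|_{H^3}$ is now bounded. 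Hence $\int_0^T\|\rho_{tt}\|_{L^2}^2\,dt\le C$.

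\emph{Main obstacle.} The delicate point is the bookkeeping in Step 1. Every top-order term must carry at most one factor of $\nabla^4\rho$ and be absorbable by the dissipation $\alpha c\int\rho^{\alpha-1}|\nabla^2\Delta\rho|^2$. The remaining factors, especially in the three-dimensional case, must have time integrability supplied by \eqref{est: ho1.1}, \eqref{est: ho1.2} and \eqref{est: ho2}. The term $\|\nabla\rho\|_{L^\infty}^2\|\nabla^3\rho\|_{L^2}^2$ is handled by the $L^1_t$ bound on $\|\nabla\rho\|_{H^2}^2$ and Gronwall, not by a uniform bound. The term $\|\nabla^3v\|_{L^2}$ enters only linearly as a source, and it is in $L^2_t$ by \eqref{est: ho2}.
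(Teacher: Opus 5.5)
Your proposal is correct, but it runs in the opposite order from the paper.

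\textbf{The paper's route.} The paper does no top-order spatial energy estimate. It differentiates the continuity equation in $t$, obtaining $\rho_{tt}-c\alpha\operatorname{div}(\rho^{\alpha-1}\nabla\rho_t)=G$, and tests this with $\rho_{tt}$. It controls $\|\nabla^2\rho_t\|_{L^2}$ elliptically by $\|\rho_{tt}\|_{L^2}$, and closes a Gronwall bound for $\int\rho^{\alpha-1}|\nabla\rho_t|^2\,dx$ with dissipation $\|\rho_{tt}\|_{L^2}^2$. Since $G$ contains $\operatorname{div}(\rho v_t)$, this step relies on $v_t\in L^2(0,T;H^1)$ from \eqref{est: ho2} and on $\rho_t(0)\in H^1$. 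Only afterwards does it obtain $\sup_t\|\nabla^3\rho\|_{L^2}$ and $\int_0^T\|\nabla^4\rho\|_{L^2}^2\,dt$. It does so by elliptic regularity, applied at each fixed time to $-\alpha c\Delta\rho=-\rho^{1-\alpha}\rho_t+\alpha(\alpha-1)c\rho^{-1}|\nabla\rho|^2-\rho^{1-\alpha}\operatorname{div}(\rho v)$.

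\textbf{Your route.} You propagate $\|\nabla\Delta\rho\|_{L^2}^2$ directly as a parabolic energy, then read $\rho_t$ and $\rho_{tt}$ off the equation.

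\textbf{What each buys.} Your density bounds use the velocity only through $\sup_t\|v\|_{H^2}$ and $\int_0^T\|v\|_{H^3}^2\,dt$; $v_t$ enters only at the end, for $\rho_{tt}$. The paper's route keeps its multiplier $\rho_{tt}$ inside the regularity class of the local solution. It also handles the top spatial level by a pointwise-in-time elliptic estimate rather than a Gronwall argument with commutators.

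\textbf{Fixes needed in your version.} Write the identity as $\frac12\frac{d}{dt}\|\nabla\Delta\rho\|_{L^2}^2=-\int\Delta\rho_t\,\Delta^2\rho\,dx$ before inserting ($\Delta$ of) the equation. This is justified because $\rho_t\in L^2(0,T;H^2)$ and $\rho-1\in L^2(0,T;H^4)$. It avoids the five-derivative multiplier $\nabla\Delta^2\rho$, and it also fixes the sign of your multiplier. The dissipation is then $\alpha c\int\rho^{\alpha-1}|\Delta^2\rho|^2\,dx$.

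Your list of commutator terms is missing a derivative. The actual terms are $(|\nabla\rho||\nabla^3\rho|+|\nabla^2\rho|^2+|\nabla\rho|^2|\nabla^2\rho|)|\nabla^4\rho|$, and $\Delta G$ also contains $|\nabla^2\rho|^2$. All of these are absorbed as you indicate. For example, use $\|\nabla^2\rho\|_{L^4}^4\le C\|\nabla^2\rho\|_{L^2}^{4-N}\|\nabla^3\rho\|_{L^2}^{N}$ together with $\|\nabla^3\rho\|_{L^2},\|\nabla\rho\|_{L^\infty}^2\in L^1(0,T)$ in Gronwall.
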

	\begin{proof}
		From \eqref{eq24}, \eqref{eq31}, boundness of the density, \eqref{est: ho1.1} and \eqref{est: ho2}, we know that
		\begin{equation}\label{est: ho3tmp1}
			\begin{aligned}
				& \sup_{0 \le t \le T} \left( \|\rho(t)-1\|_{H^2}^2 + \|\rho_t(t)\|_{L^{2}}^2+\|v(t)\|^2_{H^2} \right)
				\\
				&+\int_0^T \left( \|\rho(t)-1\|_{H^3}^2 + \|\rho_t(t)\|_{H^1}^2 + \|v_t(t)\| ^2_{H^1}+\|v(t)\|_{H^3}\right) dt \le C.
			\end{aligned}
		\end{equation}
		Now, we first derive the estimates for $\left\lVert \rho_{tt} \right\rVert_{L^{2}}$ and $\left\lVert \nabla^{2} \rho_{t} \right\rVert_{L^{2}}.$ Differentiating $\eqref{sys of v}_1$ with respect to $t$ yields 
		\begin{equation}\label{eq: ho3tmp1}
			\rho_{tt} - c\alpha \text{div}(\rho^{\alpha-1}\nabla\rho_t) = G,
		\end{equation}
		where 
		\begin{equation}
			\begin{aligned}
				G:=&-\text{div}(\rho_t v + \rho v_t) + c\alpha(\alpha-1)\text{div}(\rho^{\alpha-2}\rho_t\nabla\rho) \\
				=& -v\cdot\nabla\rho_t - \rho_t\text{div}v - \rho\text{div}v_t - v_t\cdot\nabla\rho \\&+ c\alpha(\alpha-1)\left[ \rho^{\alpha-2}\nabla\rho_t\cdot\nabla\rho + \rho^{\alpha-2}\rho_t\Delta\rho + (\alpha-2)\rho^{\alpha-3}\rho_t|\nabla\rho|^2 \right].
			\end{aligned}
		\end{equation}
		Testing \eqref{eq: ho3tmp1} against $\rho_{tt}$ we obtain 
		\begin{equation}\label{eq: ho3tmp2}
			\int \rho_{tt}^2 dx + \frac{c\alpha}{2} \frac{d}{dt} \int \rho^{\alpha-1}|\nabla\rho_t|^2 dx =\int G \rho_{tt} dx + \frac{c\alpha(\alpha-1)}{2} \int \rho^{\alpha-2}\rho_t|\nabla\rho_t|^2 dx.
		\end{equation}
		By H\"{o}lder's inequaity, Sobolev embedding, \eqref{est: ho2} and \eqref{est: ho3tmp1}, it holds that
		\begin{equation}\label{est: ho3tmp2}
			\begin{aligned}
				\left| \int G\rho_{tt}dx \right| \leq &\frac{1}{4}\left\lVert \rho_{tt} \right\rVert ^{2}_{L^{2}}+C\left\lVert G \right\rVert ^{2}_{L^{2}}
				\\
				\leq&\frac{1}{4}\left\lVert \rho_{tt} \right\rVert ^{2}_{L^{2}}+C\left( \left\lVert v \right\rVert _{L^{\infty}}^{2}\left\lVert \nabla \rho_{t} \right\rVert _{L^{2}}^{2}+ \left\lVert \rho_{t} \right\rVert _{L^{6}}^{2}\left\lVert \nabla v \right\rVert _{L^{3}}^{2}\right.\\
				&+\left.\left\lVert \rho \right\rVert_{L^{\infty}}^{2}\left\lVert \nabla v_{t} \right\rVert _{L^{2}}^{2}+{\left\lVert v_{t} \right\rVert _{L^{6}}^{2}\left\lVert \nabla \rho \right\rVert _{L^{3}}^{2}}\right.\\
				&+\left.{\left\lVert \nabla \rho \right\rVert _{L^{6}}^{2}}\left\lVert \nabla \rho_{t} \right\rVert _{L^{3}}^{2}+ \left\lVert \rho_{t} \right\rVert _{L^{6}}^{2}{\left\lVert \nabla^{2}\rho \right\rVert _{L^{3}}^{2}}+\left\lVert \rho_{t} \right\rVert _{L^{6}}^{2}\left\lVert \nabla \rho \right\rVert_{L^{6}}^{4} \right) 
				\\
				\leq&\frac{1}{4}\left\lVert \rho_{tt} \right\rVert ^{2}_{L^{2}}+C\left( \left\lVert v \right\rVert _{H^2}^{2}\left\lVert  \rho_{t} \right\rVert _{H^{1}}^{2} 
				\right.\\
				&+\left.\left\lVert \nabla v_{t} \right\rVert _{L^{2}}^{2}+{\left\lVert v_{t} \right\rVert _{H^1}^{2}\left\lVert \nabla \rho \right\rVert _{H^1}^{2}}\right.\\
				&+\left.{\left\lVert \nabla \rho \right\rVert _{H^1}^{2}}\left\lVert \nabla^{2}\rho_{t} \right\rVert _{L^{2}}^{\frac{N}{3}}\left\lVert \nabla \rho_{t} \right\rVert_{L^2}^{2- \frac{N}{3}}+ \left\lVert \nabla\rho_{t} \right\rVert _{L^{2}}^{2}\left\lVert \nabla^{2}\rho \right\rVert _{L^{2}}^{2- \frac{N}{3}}\left\lVert \nabla^{3}\rho \right\rVert^{\frac{N}{3}}_{L^{2}} +\left\lVert \nabla\rho_{t} \right\rVert _{L^{2}}^{2}\left\lVert \nabla \rho \right\rVert_{H^{1}}^{4} \right) 
				\\
				\leq&\frac{1}{4}\left\lVert \rho_{tt} \right\rVert ^{2}_{L^{2}}+C\left(1+\left\lVert  v_{t} \right\rVert _{H^{1}}^{2}+\left\lVert \nabla^{2}\rho_{t} \right\rVert _{L^{2}}^{\frac{N}{3}}\left\lVert \nabla \rho_{t} \right\rVert_{L^2}^{2- \frac{N}{3}}+\left\lVert \nabla \rho_{t} \right\rVert ^{2}_{L^{2}}\left( 1+\left\lVert \nabla^{3}\rho \right\rVert ^{2} _{L^{2}} \right) \right) 
				\\
				\leq& \frac{1}{4}\left\lVert \rho_{tt} \right\rVert ^{2}_{L^{2}}+ \delta \left\lVert \nabla^{2}\rho_{t} \right\rVert ^{2}_{L^{2}}+C_{\delta}\left( 1+\left\lVert v_{t} \right\rVert _{H^{1}}^{2} +\left\lVert \nabla^{3}\rho \right\rVert^{2}_{L^{2}} \right)\left( 1+\left\lVert \nabla \rho_{t} \right\rVert^{2}_{L^{2}}  \right)  .
			\end{aligned}
		\end{equation}
		Meanwhile, we have 
		\begin{equation}\label{est: ho3tmp2_}
			\begin{aligned}
				\left|  \frac{c\alpha(\alpha-1)}{2} \int \rho^{\alpha-2}\rho_t|\nabla\rho_t|^2 dx \right| \leq&C\left\lVert \rho_{t} \right\rVert _{L^{2}}\left\lVert \nabla \rho_{t} \right\rVert _{L^{3}}\left\lVert \nabla \rho_{t} \right\rVert _{L^{6}}\\
				\leq&C\left\lVert \nabla^{2}\rho_{t} \right\rVert _{L^{2}}^{\frac{N}{6}}\left\lVert \nabla \rho_{t} \right\rVert _{L^{2}}^{1- \frac{N}{6}}\left\lVert \nabla^{2}\rho_{t} \right\rVert_{L^2} ^{\frac{N}{3}}\left\lVert \nabla \rho_{t} \right\rVert _{L^{2}}^{1- \frac{N}{3}}\\
				\leq&C\left\lVert \nabla^{2}\rho_{t} \right\rVert_{L^2}^{\frac{N}{2}} \left\lVert \nabla \rho_{t} \right\rVert_{L^{2}}^{2- \frac{N}{2}}\\
				\leq&\delta \left\lVert \nabla^{2}\rho_{t} \right\rVert ^{2}_{L^{2}}+C_{\delta}\left\lVert \nabla \rho_{t} \right\rVert _{L^{2}}^{2}.
			\end{aligned}
		\end{equation}
		To handle the term $\left\lVert \nabla^{2}\rho_{t} \right\rVert_{L^{2}}$ on the RHS, we rewrite \eqref{eq: ho3tmp1} as
		\begin{equation}
			-c\alpha \Delta\rho_t = \rho^{1-\alpha}(-\rho_{tt} + G + c\alpha(\alpha-1)\rho^{\alpha-2}\nabla\rho\cdot\nabla\rho_t ).
		\end{equation}
		Standard elliptic estimate gives
		\begin{equation}
			\begin{aligned}
				\left\lVert \nabla^2\rho_t \right\rVert _{L^2}^2 \le& C \left( \left\lVert \rho_{tt} \right\rVert _{L^2}^2 + \left\lVert G \right\rVert _{L^2}^2 + \left\lVert \nabla\rho \right\rVert _{L^{6}}^2\left\lVert \nabla\rho_t \right\rVert _{L^3}^2 \right) \\
				\leq&C\left\lVert \rho_{tt} \right\rVert ^{2}_{L^{2}}+ \frac{1}{2}\left\lVert \nabla^{2}\rho_{t} \right\rVert ^{2}_{L^{2}}+C\left( 1+\left\lVert v_{t} \right\rVert _{H^{1}}^{2} +\left\lVert \nabla^{3}\rho \right\rVert^{2}_{L^{2}} \right)\left( 1+\left\lVert \nabla \rho_{t} \right\rVert^{2}_{L^{2}}  \right) ,
			\end{aligned}
		\end{equation}
		i.e. 
		\begin{equation}\label{est: ho3tmp3}
			\begin{aligned}
				\left\lVert \nabla^2\rho_t \right\rVert _{L^2}^2
				\leq&{C_{2}}\left\lVert \rho_{tt} \right\rVert ^{2}_{L^{2}}+C\left( 1+\left\lVert v_{t} \right\rVert _{H^{1}}^{2} +\left\lVert \nabla^{3}\rho \right\rVert^{2}_{L^{2}} \right)\left( 1+\left\lVert \nabla \rho_{t} \right\rVert^{2}_{L^{2}}  \right) .
			\end{aligned}
		\end{equation}
		Substituting \eqref{est: ho3tmp2}, \eqref{est: ho3tmp2_}, \eqref{est: ho3tmp3} into \eqref{eq: ho3tmp2} and choosing $\delta< \frac{1}{8C_{2}},$ we deduce
		\begin{equation}\label{est: ho3tmp4}
			\begin{aligned}
				& \frac{c\alpha}{2} \frac{d}{dt} \int \rho^{\alpha-1}|\nabla\rho_t|^2 dx +\int \rho_{tt}^2 dx
				\\
				\leq&\frac{1}{4}\left\lVert \rho_{tt} \right\rVert ^{2}_{L^{2}}+ 2\delta \left\lVert \nabla^{2}\rho_{t} \right\rVert ^{2}_{L^{2}}+C_{\delta}\left( 1+\left\lVert v_{t} \right\rVert _{H^{1}}^{2} +\left\lVert \nabla^{3}\rho \right\rVert^{2}_{L^{2}} \right)\left( 1+\left\lVert \nabla \rho_{t} \right\rVert^{2}_{L^{2}}  \right)  
				\\
				\leq& \frac{1}{2}\left\lVert \rho_{tt} \right\rVert ^{2}_{L^{2}}+ C\left( 1+\left\lVert v_{t} \right\rVert _{H^{1}}^{2} +\left\lVert \nabla^{3}\rho \right\rVert^{2}_{L^{2}} \right)\left( 1+\left\lVert \nabla \rho_{t} \right\rVert^{2}_{L^{2}}  \right)  .
			\end{aligned}
		\end{equation}
		Applying Gronwall's inequality to \eqref{est: ho3tmp4}, along with \eqref{est: ho3tmp1}, \eqref{est: ho3tmp3}, we obtain
		\begin{equation}\label{est: ho3tmp5}
			\sup_{0\le t\le T}\|\rho_t(t)\|_{H^1}^2
			+
			\int_0^T\|\rho_t(t)\|_{H^2}^2\,dt
			\le C.
		\end{equation}
		Here, we use the fact that $\rho_{0}-1\in H^{3}(\mathbb{R}^{N})$ and $v_{0}\in H^{2}(\mathbb{R}^{N})$, and thus $ \rho_{t}(0)=-\text{div}\left(\rho_{0}v_{0} \right)+c{ \Delta(\rho_{0}^{\alpha}) }\in H^{1}(\mathbb{R}^{N}).$

		Next, we prove that $\nabla^{3}\rho \in L^{\infty}(0,T; L^{2}(\mathbb{R}^N)).$ 
		Expressing $\eqref{sys of v}_1$ in the form of 
		\begin{equation}\label{eq: elliptic of rho}
			-\alpha c\,\Delta\rho
			=
			-\rho^{1-\alpha}\rho_t
			+\alpha(\alpha-1)c\,\rho^{-1}|\nabla\rho|^2
			-\rho^{1-\alpha}\operatorname{div}(\rho v),
		\end{equation} 
		and applying elliptic estimate, H\"{o}lder's inequality, Sobolev embedding, \eqref{est: ho3tmp1} together with \eqref{est: ho3tmp5}, it follows that
		\begin{equation}
			\begin{aligned}
				\left\lVert \nabla^{3}\rho \right\rVert ^{2}_{L^{2}}\leq&C\left( \left\lVert  \nabla \rho \rho_{t}\right\rVert  _{L^{2}}^{2}+\left\lVert  \nabla \rho_{t} \right\rVert _{L^{2}}^{2}+\left\lVert \left| \nabla \rho \right| ^{3} \right\rVert _{L^{2}}^{2}+\left\lVert \left| \nabla \rho  \right| \left| \nabla^{2}\rho \right| \right\rVert^{2}_{L^{2}}\right.\\
				&+\left.\left\lVert \left| \nabla \rho \right| ^{2} \left| v \right| \right\rVert  ^{2}_{L^{2}}+\left\lVert  \left| \nabla^{2}\rho \right| v\right\rVert ^{2}_{L^{2}}+\left\lVert\left|  \nabla \rho  \right| \left| \nabla v \right| \right\rVert^{2}_{L^{2}}+\left\lVert \nabla^{2}v \right\rVert ^{2}_{L^{2}}  \right) \\
				\leq&C\left( \left\lVert \rho_{t} \right\rVert _{L^{6}}^{2}\left\lVert \nabla \rho \right\rVert _{L^{3}} ^{2}+\left\lVert  \nabla \rho_{t} \right\rVert _{L^{2}}^{2}+\left\lVert \nabla \rho \right\rVert^{6}_{L^{6}} +\left\lVert \nabla \rho \right\rVert _{L^{6}}^{2}\left\lVert \nabla^{2}\rho \right\rVert ^{2}_{L^{3}}\right.\\
				&+\left.\left\lVert \nabla \rho \right\rVert ^{4}_{L^{6}}\left\lVert v \right\rVert_{L^{6}} ^{2}+\left\lVert \nabla^{2}\rho \right\rVert_{L^{3}}^{2}\left\lVert v \right\rVert _{L^{6}}^{2}+\left\lVert \nabla \rho \right\rVert _{L^{3}}^{2} \left\lVert \nabla v \right\rVert _{L^{6}}^{2}+\left\lVert \nabla^{2}v \right\rVert ^{2}_{L^{2}} \right) 
				\\
				\leq&C\left( \left\lVert \rho_{t} \right\rVert^{2}_{H^{1}} \left\lVert \nabla\rho \right\rVert ^{2}_{H^{1}}+\left\lVert \rho_{t} \right\rVert ^{2}_{H^{1}}+ \left\lVert \nabla \rho \right\rVert _{H^{1}}^{6}+\|\nabla\rho\|_{H^1}^2\left\lVert \nabla^{3}\rho \right\rVert _{L^{2}}^{\frac{N}{3}}\left\lVert \nabla^{2}\rho \right\rVert_{L^{2}}^{2- \frac{N}{3}} \right.\\
				&+\left.\left\lVert \nabla \rho \right\rVert^{4}_{H^{1}}\left\lVert v \right\rVert^{2}_{H^{1}} +\|\nabla^3\rho\|_{L^2}^{\frac{N}{3}} \|\nabla^2\rho\|_{L^2}^{2-\frac{N}{3}}\|v\|_{H^1}^2 +\left\lVert \nabla \rho \right\rVert ^{2}_{H^{1}}\left\lVert \nabla v \right\rVert ^{2}_{H^{1}}+\left\lVert v \right\rVert ^{2}_{H^{2}}  \right) 
				\\
				\leq& \frac{1}{2}\left\lVert \nabla^{3}\rho \right\rVert ^{2}_{L^{2}}+C,
			\end{aligned}
		\end{equation}
		which implies 
		\begin{equation}\label{est: ho3tmp6}
			\sup_{0\le t\le T}\|\nabla^3\rho(t)\|_{L^2}^2\le C.
		\end{equation}
		Finally, we turn to estimate $\left\lVert \nabla^{4}\rho \right\rVert _{L^{2}}.$ Using the elliptic regularity theory on \eqref{eq: elliptic of rho} once again, we deduce that
		\begin{equation}\label{est: ho3tmp7}
			\begin{aligned}
				\left\lVert \nabla^{4}\rho \right\rVert _{L^{2}}\leq& C\left( \left\lVert \rho^{1-\alpha}\rho_{t} \right\rVert _{H^{2}}+\left\lVert \rho^{-1}\left| \nabla \rho \right| ^{2} \right\rVert _{H^{2}}+\left\lVert \rho^{1-\alpha}\text{div}\left(\rho v\right) \right\rVert_{H^{2}}  \right) \\
				\leq&C\left( \left\lVert (\rho^{1-\alpha}-1)\rho_{t} \right\rVert_{H^{2}}+\left\lVert \rho_{t} \right\rVert _{H^{2}}+\left\lVert (\rho^{-1}-1)\left| \nabla \rho \right| ^{2} \right\rVert _{H^{2}}\right.\\
				&+\left. \left\lVert \left| \nabla \rho  \right| ^{2}\right\rVert _{H^{2}} +\left\lVert (\rho^{1-\alpha}-1)\text{div}\left(\rho v\right) \right\rVert _{H^{2}}+\left\lVert \text{div}\left(\rho v\right) \right\rVert_{H^{2}} \right) \\
				\leq&C\big[ \left\lVert \rho-1 \right\rVert_{H^{2}}\left\lVert \rho_{t} \right\rVert _{H^{2}}+\left\lVert \rho_{t} \right\rVert  _{H^{2}}+\left\lVert \rho-1 \right\rVert _{H^{2}}\left\lVert \nabla\rho \right\rVert_{H^{2}}^{2}+\left\lVert \nabla \rho \right\rVert^{2}_{H^{2}}\\
				&+(\left\lVert \rho-1 \right\rVert _{H^{2}}+1)(\left\lVert \nabla\rho \right\rVert _{H^{2}}\left\lVert v \right\rVert  _{H^{2}}+ \left\lVert \rho -1\right\rVert _{H^{2}}\left\lVert \nabla v \right\rVert _{H^{2}}+\left\lVert \nabla v \right\rVert _{H^{2}} )\big] \\
				\leq&C \Big( 1 + \left\lVert \rho_{t} \right\rVert _{H^{2}} + \left\lVert \nabla v \right\rVert _{H^{2}} \Big) \\
				\leq&C \Big( 1 + \left\lVert \nabla^{2}\rho_{t} \right\rVert _{L^{2}} + \left\lVert \nabla v \right\rVert _{H^{2}} \Big) .
			\end{aligned}
		\end{equation}
		Integrating \eqref{est: ho3tmp7} with respect to $t,$ along with \eqref{est: ho3tmp1} and \eqref{est: ho3tmp5} yields 
		\begin{equation}\label{est: ho3tmp8}
			\int_0^T\|\nabla^4\rho(t)\|_{L^2}^2\,dt\le C.
		\end{equation}
		Collecting \eqref{est: ho3tmp1}, \eqref{est: ho3tmp5}, \eqref{est: ho3tmp6} and \eqref{est: ho3tmp8}, we complete the proof.
	\end{proof}
	\section{Proof of Theorem \ref{thm1}}\label{sec3}
	Assume, by contradiction, that the maximal existence time $T^*$ is finite. For any $T \in (0, T^*)$, it follows from Propositions \ref{prop: ho1}-\ref{prop: ho3} that 
	\begin{equation}\label{est: ho total}
		\begin{aligned}
			&\sup_{0 \le t \le T} \left( \|\rho(t)-1\|_{H^3}^2
			+ \|\rho_t(t)\|_{H^1}^2 +\|v(t)\|_{H^2}^2+\|v_t(t)\|_{L^2}^2\right)\\&
			+\int_0^T \left( \|\rho(t)-1\|_{H^4}^2 + \|\rho_t(t)\|_{H^2}^2 + \|\rho_{tt}(t)\|_{L^2}^2+\|v(t)\|_{H^3}^2+\|v_{t}(t)\|_{H^1}^2 \right) dt \le C,
		\end{aligned}
	\end{equation}
	for some constant $C > 0$ depending solely on $N$, $\alpha$, $\gamma$, $\nu$, $\varepsilon,$ $E_0,$ $T^*,$ $r,$ $\underline{\rho_{0}}$, $\overline{\rho_{0}}$, $\|\rho_0^{1/r}v_0\|_{L^r},$ $\|\rho_0-1\|_{H^3},$ $\|v_0\|_{H^2},$ but not on $T.$ 
	From \eqref{est: ho total} we deduce the time continuity:$$\rho - 1 \in C([0,T]; H^3(\mathbb{R}^N)), \quad v \in C([0,T]; H^2(\mathbb{R}^N)).$$
	This allows us to continuously extend the solution to $t = T^*$ by setting$$(\rho(T^*), v(T^*)) := \lim_{t \to T^*} (\rho(t), v(t)).$$
	Passing to the limit preserves the far-field behavior and the regularity, yielding $\rho(T^*)-1 \in H^3$ and $v(T^*) \in H^2.$ Moreover, since we have the uniform bound $\sup_{0\le t < T}\|\rho^{-1}(t)\|_{L^{\infty}}\le C$, where $C$ is independent of $T,$ it follows that $\|\rho^{-1}(T^*)\|_{L^{\infty}}\le C.$ 
	
	Taking $(\rho(T^*), v(T^*))$ as initial data allows us to extend the solution to $[0, T^* + \delta)$, for some $\delta>0,$ which contradicts the maximality of $T^*$. Thus, $T^ *= \infty$. The uniqueness proof is standard and hence omitted.
	\section*{Acknowledgments}
	X. Huang is partially supported by Chinese Academy of Sciences Project for Young Scientists in Basic Research (Grant No. YSBR-031), National Natural Science Foundation of China (Grant Nos. 12494542, 11688101) and National Key R\&D Program of China (Grant No. 2021YFA1000801). 
	
	\vspace{1cm}
	\noindent\textbf{Data availability statement.} Data sharing is not applicable to this article.
	
	\vspace{0.3cm}
	\noindent\textbf{Conflict of interest.} The authors declare that they have no conflict of interest.

	\printindex
\end{document}